\newenvironment{Algorithm}[2][tb]%
{\begin{myalgo}[#1]
\centering
\begin{minipage}{#2}
\begin{algorithm}[H]}{\end{algorithm}
\end{minipage}
\end{myalgo}}
\tikzstyle{rond}=[draw,fill,minimum size=1mm] 
\tikzstyle{every child}=[level distance=2.5mm]
\tikzstyle{level 1}=[sibling distance=4mm]
\tikzstyle{level 2}=[sibling distance=2.8mm]
\tikzstyle{every node}=[circle,inner sep=0,minimum size=0mm] 
\tikzstyle{mat}=[ampersand replacement=\&,rectangle,matrix anchor =west ]
\newcommand{\nrp}{node[rond]{}}
\definecolor{mygreen}{rgb}{0.10,0.70,0.20}  																	
\newcounter{example}
\newenvironment{example}{\refstepcounter{example} 
\smallskip
\noindent {\em Example~\theexample.} }   
{\smallskip}
\newcommand{\eset}{\varnothing}
\def\bd{\boldsymbol}
\def\bs#1{{\boldsymbol #1}}   
\def\bc#1{\boldsymbol{{\mathcal #1}}}
\def\N{{\mathbb N}}
\def\Id{{\mathbf{Id}}} 
\def\SP{{\bc S\!\bc P}} 
\def\val{\operatorname{val}}
\def\Seq{\textsc{Seq}}
\def\Cyc{\textsc{Cyc}} 
\def\Set{\textsc{Set}}
\def\PSet{\textsc{PSet}}
\def\Sh{\operatorname{str}}
\def\ds{\displaystyle } 
\def\M{{\mathsf M}}
\newcommand{\jac}[1]{\bs \partial \bc H/\bs \partial \bc Y({\mathcal Z}, \bc Y^{[#1]})}
\newcommand{\bjac}[1]{\bs \partial \bc H/\bs \partial \bc Y(\bc Z, \bc Y^{[#1]})}
\newtheorem{theorem}{Theorem}[section]
\newtheorem{lemma}[theorem]{Lemma}
\newtheorem{proposition}[theorem]{Proposition}
\newtheorem{corollary}[theorem]{Corollary}
\newtheorem{property}[theorem]{Property}
\theoremstyle{definition}
\newtheorem{definition}[theorem]{Definition}
\title{Algorithms for Combinatorial Structures:\\ Well-Founded Systems and Newton Iterations\footnote{Partial support was provided by the grant ANR 2010 MAGNUM BLAN 0204 and by the Microsoft-Research-Inria Joint Centre.}}
\author{Carine Pivoteau\footnote{Universit\'e Paris-Est, LIGM (CNRS UMR 8049), Marne-la-Vall\'ee, France.}\and Bruno Salvy\footnote{Inria}\and Mich{\`e}le Soria\footnote{Université Pierre et Marie Curie}
}
\begin{document}
\date{\today}
\maketitle

\begin{quote}
\hfill \begin{minipage}{4.5cm}\em This article is dedicated to the 
memory of Philippe Flajolet\end{minipage}
\end{quote}

\begin{abstract}
We consider systems of recursively defined combinatorial structures. We give algorithms checking that these systems are well founded, computing generating series and providing numerical values.  
Our framework is an articulation of the constructible classes of Flajolet \& Sedgewick with Joyal's species theory. 
We extend the implicit species theorem to structures of size zero. A quadratic iterative Newton method is shown to solve well-founded systems combinatorially. From there, truncations of the corresponding generating series are obtained in quasi-optimal complexity. This iteration transfers to a numerical scheme that converges unconditionally to the values of the generating series inside their disk of convergence. These results provide important subroutines in random generation. Finally, the approach is extended to combinatorial differential systems. 
\end{abstract}

\tableofcontents               

\bigskip      

\section*{Introduction}\label{sec:intro}
Generating series play a central role in enumerative combinatorics. They obey functional equations derived from decompositions of combinatorial structures. These equations offer a route of choice to the enumeration sequences of these structures: they let one compute the first terms of these sequences, they sometimes lead to a closed formula for the $n$th term, and often to its asymptotic behavior. Reference books on this topic include Stanley's \emph{Enumerative Combinatorics}~\cite{Stanley1986,Stanley1999}, the treatise on species theory by Bergeron, Labelle and Leroux~\cite{BeLaLe98} and the recent \emph{Analytic Combinatorics} by Flajolet and Sedgewick~\cite{FlSe09}.

We explore this area from the computational perspective. We present an algorithmic toolkit that starts from a system of recursive combinatorial equations and produces an efficient computation of enumeration sequences and numerical values of the corresponding series. The central idea is to provide an iteration scheme   converging to the combinatorial solution,
and transfer this iteration scheme, both at the series and numerical levels.

Our work is motivated in particular by the needs of random generation in discrete simulation. The recursive method~\cite{FlZiVa94} requires the coefficients of generating series for indices up to the size of the objects being generated. 
This method is {exact}, in the sense that it inputs a size and returns an object of that size, uniformly at random among all objects of its size. 
The more recent Boltzmann sampler~\cite{DuFlLoSc04,FlFuPi07} can draw much larger objects with this uniformity property, the size itself becoming a random variable. This sampler relies on an \emph{oracle}, that computes numerical values of the generating series inside their disk of convergence. We provide such an oracle for a large class of combinatorial structures and also give fast algorithms for the computation of enumeration sequences.

We articulate the combinatorial framework of \emph{species}~\cite{BeLaLe98,Joyal81} with the framework of \emph{constructible classes}~\cite{FlSe09}\footnote{Except for the powerset operator, that we treat separately at the end of this article (\S\ref{sec:powerset}).}: our results hold for
combinatorial structures defined by systems of equations using the operations of union (denoted by `$+$'), Cartesian product (denoted by `$\cdot$'), grouping in a set ($\Set$), a sequence ($\Seq$), or a cycle ($\Cyc$), possibly with cardinality restrictions. 
There are actually two enumeration problems for such combinatorial classes. The \emph{labeled} one deals with structures whose individual atoms are all considered as distinct. In the \emph{unlabeled} enumeration problem, the individual atoms are considered as identical, and it is necessary to account for internal symmetries of the structures.
Many recursive structures fall into this framework; numerous examples can be found in the literature, see e.g., \cite{BeLaLe98,FlSe09,Stanley1999}.
Illustrations in this article are based on typical equations describing trees: ${\mathcal T}={\mathcal Z}\cdot\Seq({\mathcal T})$ for Catalan trees, i.e. planar trees whose nodes have unbounded arity; ${\mathcal G}={\mathcal Z}\cdot\Set({\mathcal G})$ for Cayley trees, i.e., unordered rooted trees;  and a system describing series-parallel graphs:
\[\{{\mathcal C}={\mathcal Z}+{\mathcal S}+{\mathcal P},{\mathcal S}=\Seq_{\ge2}({\mathcal Z}+{\mathcal P}),{\mathcal P}=\Set_{\ge2}({\mathcal Z}+{\mathcal S})\}.
\]
(The precise meaning of these equations is described in Section~\ref{species}.)

Our main result concerning enumeration consists of algorithms that are quasi-optimal: their complexity is linear, up to logarithmic factors, in the size of their output. More precisely, we show that for any constructible class, the first $N$ terms of both the {unlabeled} and {labeled} enumeration problems can be computed in~$O(N\log N)$ arithmetic operations; the required number of bit operations is~$O(N^2\log^2N\log\log N)$ for the \emph{unlabeled} problem and $O(N^2\log^3N\log\log N)$ for the \emph{labeled} problem. We also give efficient numerical algorithms computing the values of the generating series of constructible classes inside their disk of convergence.

The key tool in this work is \emph{a combinatorially meaningful Newton iteration}. This originates in the work of Labelle and his co-authors~\cite{DeLaLe82,Labelle86b,Labelle86}. The combinatorial basis of the iteration leads to a numerical iteration which is always convergent.
In the classical numerical context, under good conditions, Newton's iteration converges to a root that depends on the choice of its initial point, usually close to the root. In our combinatorial context, we show that when started at the origin, the iterates always converge to the solution corresponding to the generating series of interest rather than to a closer one. This is illustrated in Figure~\ref{fig:numerical_newton}, where for each value of $z$ in an interval, we have plotted the real solutions $(z,C_0)$ of the system of equations over generating functions corresponding to a combinatorial structure ${\mathcal C}_0$, defined by a recursive combinatorial specification. The curve marked in red corresponds to \emph{the actual} generating series for ${\mathcal C}_0$. Newton's iteration converges to this solution, and the crosses in the zoomed area indicate the successive values of Newton's iteration starting from $C_0=0$, for $z=0.275$.

The use of Newton's iteration over power series is well-known to be very efficient in terms of complexity, leading to the best known algorithms for many operations and making it a standard tool in computer algebra~\cite{BrKu78,GaGe99}. We show that the systems of equations for generating series of constructible classes can be treated this way, that the iterates converge quadratically to the generating series and that this computation can be performed in good complexity. 
We presented the basic ideas of Newton's iteration on combinatorial systems in the labeled case in~\cite{PiSaSo08}.  
\begin{figure}
\centerline{
\hskip-0cm\begin{minipage}[b]{4cm}
\begin{small}
\begin{align*}
	{\mathcal C}_0 &= {\mathcal Z} {\mathcal C}_1 {\mathcal C}_2 {\mathcal C}_3({\mathcal C}_1+{\mathcal C}_2)\\
	{\mathcal C}_1 &= {\mathcal Z}+{\mathcal Z} \Seq({\mathcal C}_1^2 {\mathcal C}_3^2)\\
	{\mathcal C}_2 &= {\mathcal Z}+{\mathcal Z}^2 \Seq({\mathcal Z} C_2^2 \Seq({\mathcal Z})) \Seq({\mathcal C}_2)\\
	{\mathcal C}_3 &= {\mathcal Z}+{\mathcal Z} (3 {\mathcal Z}+{\mathcal Z}^2+{\mathcal Z}^2 {\mathcal C}_1 {\mathcal C}_3) \Seq({\mathcal C}_1^2)   \\
	 & 
\end{align*}
\end{small}
\end{minipage}\qquad
\includegraphics[height=3.3cm]{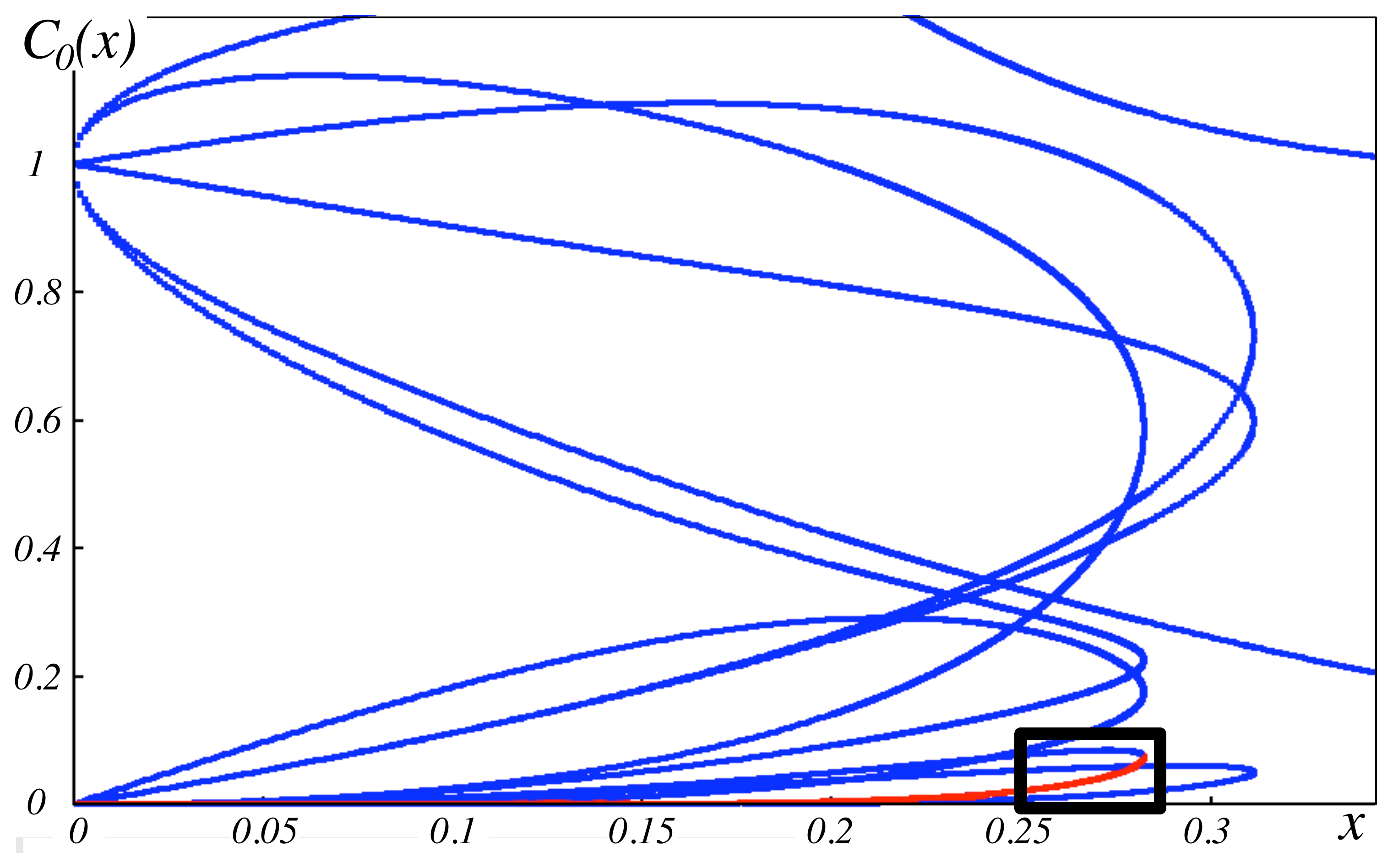}\quad\includegraphics[height=3.3cm]{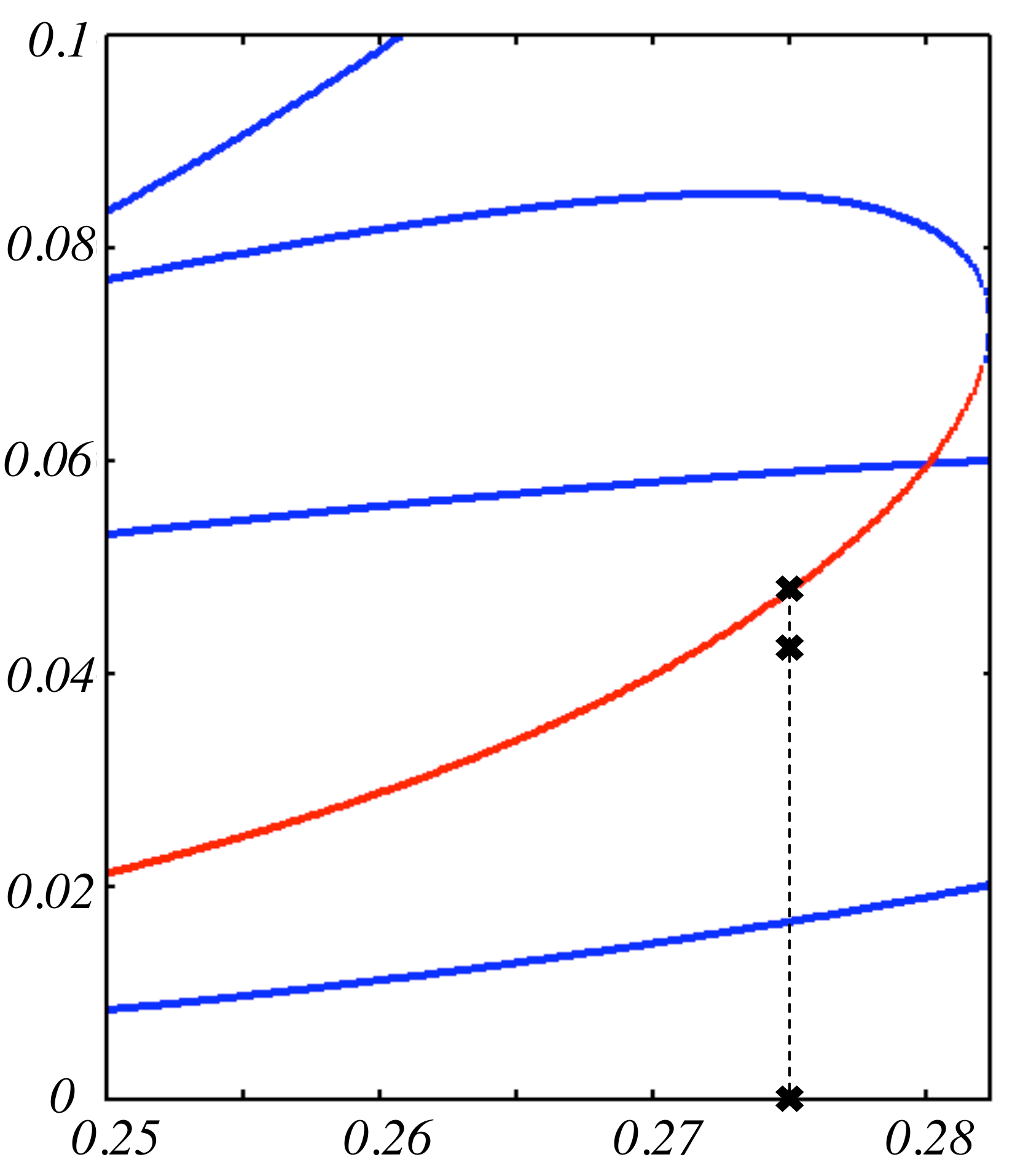}
}\vspace{-3mm}
\caption{\small Combinatorial system (left); real values of~$C_0$ solutions of the corresponding system of equations over generating series (middle), with the generating series in red and the other real solutions in blue, for values of~$z$ between 0 and 0.35; zoom on the rectangular area and iterates of Newton's iteration started at~0 (right).\label{fig:numerical_newton}}
\end{figure}
In the unlabeled case, new difficulties arise since inner symmetries make different labeled objects become identical when the labels are removed. Ordinary generating series do not compose or differentiate well. This is dealt with using \emph{Pólya operators}, that are nicely explained using the theory of species of structures~\cite{BeLaLe98,Joyal81}.
For instance, the generating series of unlabeled Cayley trees above satisfies the functional equation
\[\tilde{G}(z)=z\exp\left(\sum_{j\ge1}\frac{\tilde{G}(z^j)}{j}\right).\]
Using the framework of species of structures, Labelle and his co-authors obtained Newton's iteration for this type of equation.
  In the case of Cayley trees, the resulting Newton operator is
\[{\mathcal Y}\mapsto {\mathcal Y}+\Seq({\mathcal Z}\cdot\Set({\mathcal Y}))\cdot({\mathcal Z}\cdot\Set({\mathcal Y})-{\mathcal Y}),\]
which yields the corresponding Newton operator for power series:
\[\tilde{Y}(z)\mapsto\tilde{Y}(z)+\frac{\tilde{B}(z)-\tilde{Y}(z)}{1-\tilde{B}(z)},\quad\text{with}\quad\tilde{B}(z)=z\exp\left(\sum_{j\ge1}\frac{\tilde{Y}(z^j)}{j}\right).\]
Iterating this operator starting from~0 converges to~$\tilde{G}(z)$ by doubling the number of correct coefficients at each step. Such a convergence is called quadratic.

Newton's iteration on species extends to systems. In this article, we also present an optimized Newton operator that requires fewer operations. For instance, the ordinary generating series of series-parallel graphs are given as solution to the system of functional equations:
\[\tilde{C}(z)=z+\tilde{S}(z)+\tilde{P}(z),\quad \tilde{S}(z)=\frac{1}{1-z-\tilde{P}(z)}-1-z-\tilde{P}(z),\quad\tilde{P}(z)=\exp\left(\sum_{j\ge1}\frac{z^j+\tilde{S}(z^j)}{j}\right)-1-z-S(z).\]
Our method yields a \emph{completely mechanical} derivation of the following efficient iteration (where the upper brackets contain the indices of iteration, and 
 mod~$z^N$ means that the series is truncated at precision~$N$.)
\[
\begin{pmatrix}\tilde{S}^{[n+1]}(z)\\ \tilde{P}^{[n+1]}(z)\end{pmatrix}=
\begin{pmatrix}\tilde{S}^{[n]}(z)\\ \tilde{P}^{[n]}(z)\end{pmatrix}
+{\bd{\tilde U}}^{[n+1]}(z)
\begin{pmatrix}s-1-z-\tilde{P}^{[n]}(z)-\tilde{S}^{[n]}(z)\\ p-1-z-\tilde{S}^{[n]}(z)-\tilde{P}^{[n]}(z)\end{pmatrix}\bmod z^{2^{n+1}}
\]

\[\text{with}\quad s=(1-z-\tilde{P}^{[n]}(z))^{-1}\bmod z^{2^{n+1}},\qquad p=\exp\left(\sum_{j\ge1}({z^j+\tilde{S}^{[n]}(z^j)})/{j}\right)\bmod z^{2^{n+1}},\] 

\[\text{and}\quad{\bd{\tilde U}}^{[n+1]}(z)={\bd{\tilde U}}^{[n]}(z)+{\bd{\tilde U}}^{[n]}(z)
\left(
\begin{pmatrix}0&s^2-1\\ p-1&0\end{pmatrix}
{\bd{\tilde U}}^{[n]}(z)+\Id-{\bd{\tilde U}}^{[n]}(z)\right)\bmod z^{2^{n}}.
\]
	
Initialized with~$\tilde{S}^{[0]}(z)=\tilde{P}^{[0]}(z)=0$ and~${\bd{\tilde U}}^{[0]}(z)=\Id$, 
this iteration converges quadratically to the ordinary generating series~$\tilde{S}(z)=\lim\tilde{S}^{[n]}(z)$ and $\tilde{P}(z)=\lim\tilde{P}^{[n]}(z)$.

Joyal's \emph{Implicit Species Theorem}~\cite{Joyal81} provides the natural context for these operations. It gives conditions under which a square system of combinatorial equations admits a unique vector of species solutions, up to isomorphism.
We extend the implicit species theorem to allow for structures of size~0. This covers all cases of constructible structures we are interested in, and we show that Newton's iteration solves them all. We also show that our definition of well-founded systems is essentially optimal and give an effective criterion to check whether a system is well-founded. In passing, we give a combinatorial interpretation to the iterates in Newton's iteration: they generate the structures of the solution by increasing Strahler number. In order to complete the bridge between species theory and the constructible classes of~\cite{FlSe09}, we define constructible species and analytic species. From there, we prove the analyticity of both exponential and ordinary generating series of the constructible species and give the numerical versions of Newton's iteration. We also deal with the case of integral equations relevant to the study of ordered structures.

{}From the point of view of constructible classes, our contributions are: efficient algorithms for enumeration (cor.~\ref{prop:combSystem}, p.~\pageref{prop:combSystem} and thm.~\ref{th:constructible_linear_species}) improving by a factor~$\log N$ the theoretical arithmetic complexity that can be deduced from the best previous result~\cite{Hoeven2002}; an analysis of the bit complexity of this computation for both ordinary generating series (cor.~\ref{coro:sgo-constructible}, p.~\pageref{coro:sgo-constructible}) and exponential generating series (cor.~\ref{coro:sge-constructible}, p.~\pageref{coro:sge-constructible}); numerical oracles for both exponential (th.~\ref{th:newtonnum}, p.~\pageref{th:newtonnum}) and ordinary generating series  (th.~\ref{th:newt_num_ord}, p.~\pageref{th:newt_num_ord}); a criterion   to decide whether a combinatorial system is well-founded (def.~\ref{def:wf}, p.~\pageref{def:wf}) that is easy to implement; also possibly new is the proof that all constructible classes have an analytic ordinary generating series (th.~\ref{cor:ogf}, p.~\pageref{cor:ogf}). 
As regards random generation, the numerical computations give oracles for the Boltzmann sampler for all constructible classes, and with the algorithms for enumeration, we improve the precomputation stage of the recursive method so that this stage is no longer a limiting factor for the size of objects being generated.

{}From the point of view of species theory, we mainly extend existing ideas to make them applicable to all constructible classes: we give a complete and self-contained presentation of Newton's iteration for implicit species, we treat truncated (cor.~\ref{coro:newton_trunc}, p.~\pageref{coro:newton_trunc}) and nontruncated (th.~\ref{th:newton}, p.~\pageref{th:newton}) variants of Newton's iteration for systems with species of size~0, as well as an optimized version (prop.~\ref{prop:newt_opt}, p.~\pageref{prop:newt_opt}); we deal with polynomial implicit species in detail (sec.~\ref{subsub:partpol}, p.~\pageref{subsub:partpol}); we extend the implicit species theorem to species of size~0 (th.~\ref{th:GIST}, p.~\pageref{th:GIST}); we define analytic species (def.~\ref{def:analyticSpecies}, p.~\pageref{def:analyticSpecies}) as a first step towards analytic combinatorics with species; we completely solve integral systems with Newton's iteration (th.~\ref{newtdiffsys} p.~\pageref{newtdiffsys}).

This article is structured as follows. Part~\ref{part:combi} deals with the combinatorial side of the iteration.
The basic definitions and properties in the theory of species are first recalled, so that this article is self-contained and can be used as a dictionary between the theory of species and the symbolic method of Flajolet and Sedgewick~\cite{FlSe09}. The proof of the implicit species theorem is given using the vocabulary of Bergeron, Labelle and Leroux~\cite{BeLaLe98}. Special classes of species are then presented, including constructible, flat and polynomial species. Then we consider implicit species with structures of size~0. We conclude this section by the combinatorial avatar of Newton's iteration.
Part~\ref{part:computation} deals with the computational side of this work.
Section~\ref{sec:series} is devoted to generating series. Again, we start by recalling the basic facts in the theory, then we present the iterations on power series, analyze their arithmetic complexity and show how the bit complexity can be maintained small. The numerical iteration is treated in Section~\ref{sec:num}. For the computation of numerical values to make sense, the generating series need to be convergent. Accordingly, we define a notion of analytic species and give its basic properties. In particular, constructible species are shown to be analytic. The iterations on power series are then transferred to the numerical domain, using \emph{ad hoc} techniques to deal with Pólya operators in the case of ordinary generating series.
At this stage, all the main results have been presented. Section~\ref{sec:extend} extends many of these results to systems that occur when the integral operator is used to impose orders on the labels of the structures.
We conclude by dealing with the strange case of powersets.

\subsection*{Notations}
We use boldfaced characters for vectors, matrices, or tuples of species; for example, a multisort species ${\mathcal H}({\mathcal Y}_1,{\mathcal Y}_2, \dots, {\mathcal Y}_k)$ is written ${\mathcal H}(\bc Y)$, where $\bc Y$ stands for the vector $({\mathcal Y}_1,{\mathcal Y}_2, \dots, {\mathcal Y}_k)$; and a vector of multisort species $({\mathcal H}_1(\bc Y), {\mathcal H}_2(\bc Y),\dots, {\mathcal H}_m(\bc Y))$  is consistently written $\bc H(\bc Y)$.

We use Gantmacher's notation $a_{1:k}$ to denote the $k$-tuple $(a_1,\dots, a_k)$. Thus, the species~$\bc H(\bc Y)$ can also be written~$\bc H_{1:m}(\bc Y_{1:k})$ if we need its dimensions explicitly.

The coefficient of~$z^n$ in a power series~$f(z)$ is denoted~$[z^n]f(z)$.


\part{Combinatorial Systems}\label{part:combi}


In this part, we explore the combinatorial side of the iteration, within the framework of species of structures.
In Section~\ref{species}, we first recall basic definitions of the theory of species of structures in order to express Joyal's \emph{Implicit Species Theorem} (theorem \ref{th:IST}). Joyal's proof consists in showing that, provided some conditions on $\bc H$ are satisfied,  the iteration
\begin{equation}\label{joyal-iteration}
  \bc Y^{[n+1]}=\bc H\left(\bc Z,\bc Y^{[n]}\right),\qquad \bc Y^{[0]}=\bs{0}
\end{equation}
converges, and  the limit is the unique solution of the system $\bc Y=\bc H(\bc Z,\bc Y)$, $\bc Y(\bs{0})=\bs{0}$, up to isomorphism. 
Section~\ref{seqspecies} is devoted to describing this proof, in the language of species (for which we follow~\cite{BeLaLe98}), and  isolating some building blocks that are used in the rest of the article.

Section~\ref{subseq:wf0} characterizes combinatorial systems that we call \emph{well-founded at $0$}, \textit{i.e.} systems such that $\bc H(\bs{0},\bs{0})=\bs{0}$ and iteration~\eqref{joyal-iteration} converges to a limit without zero coordinates. This constitutes the starting point  for our extension  of the Implicit Species Theorem that includes combinatorial systems allowing for structures of size 0 (theorem \ref{th:GIST}). In Section~\ref{subseq:polspecies},  we first introduce  polynomial species, which have a finite number of structures, and the corresponding notion of partially polynomial species in the multisort case.  Section~\ref{subsec:gen_IST} then focuses on the general notion of well-founded combinatorial systems, where $\bc H(\bs{0},\bs{0})$ is not necessarily $\bs{0}$, providing conditions for Joyal's iteration to converge in this case, and leading to a General Implicit Species Theorem (\ref{th:GIST}).

Joyal's iteration~\eqref{joyal-iteration} is sufficient to derive algorithms for computing enumeration sequences and numerical values of generating series. However, it is well-known that Newton's iteration leads to much better efficiency, at least when it converges. Newton's iteration, lifted to species of structures, writes
\[                                      
\bc Y^{[n+1]}=\bc Y^{[n]} + \left(\Id-\frac{\bc\partial\bc H}{\bc\partial\bc Y}\left(\bc Z,\bc Y^{[n]}\right)\right)^{-1}  \cdot\left(\bc H\left(\bc Z,\bc Y^{[n]}\right)-\bc Y^{[n]}\right).
\]
In Section~\ref{subsec:newton}, we show that Newton's iteration applies whenever the General Implicit Species Theorem holds.
Finally Section~\ref{sec:special} gathers some additional information on special classes of species useful from the analytic point of view of the second part of this article.

\section{Species Theory}\label{species} 
We gather here the basic facts of species theory that we use in this article. We begin by briefly introducing  some  vocabulary, and refer to the book by Bergeron, Labelle, Leroux~\cite{BeLaLe98} for more intuition and examples. A reader familiar with species theory may notice that our notations slightly differ from those in~\cite{BeLaLe98}: ours are borrowed from Flajolet and Sedgewick's \emph{Analytic Combinatorics}~\cite{FlSe09} and are convenient to make a bridge between these two theories, in particular in sections~\ref{sec:series} and~\ref{sec:num}.

\begin{definition}\label{def:species}
A \emph{species of structures}~${\mathcal F}$ is a rule that, for each finite set~$U$ produces a finite set~${\mathcal F}[U]$; and for each bijection~$\sigma:U\rightarrow V$ produces a bijection~${\mathcal F}[\sigma]:{\mathcal F}[U]\rightarrow {\mathcal F}[V]$, in such a way that for two bijections $\sigma$ and $\tau$, ${\mathcal F}[\tau\circ\sigma]={\mathcal F}[\tau]\circ {\mathcal F}[\sigma]$ and ${\mathcal F}[\operatorname{Id}_U]=\operatorname{Id}_{{\mathcal F}[U]}$ (this is called the transport of structures). An element $s$ of ${\mathcal F}[U]$ is called an ${\mathcal F}$-{\em structure} on $U$. The \emph{size} of an ${\mathcal F}$-structure is the cardinality of its underlying~set. An element of ${\mathcal F}[U]$ is graphically depicted as in Figure~\ref{fig:def_esp}, with  dots representing  elements of $U$. 
\end{definition}

\begin{figure}%
  \centering
  \begin{tabular}[c]{lll}
    \begin{minipage}[c]{4cm}
 \centering
      \includegraphics{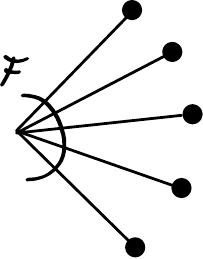}
 \end{minipage}%
    &\qquad&
   \begin{minipage}[c]{10cm}
 \centering
      \includegraphics[scale=0.9]{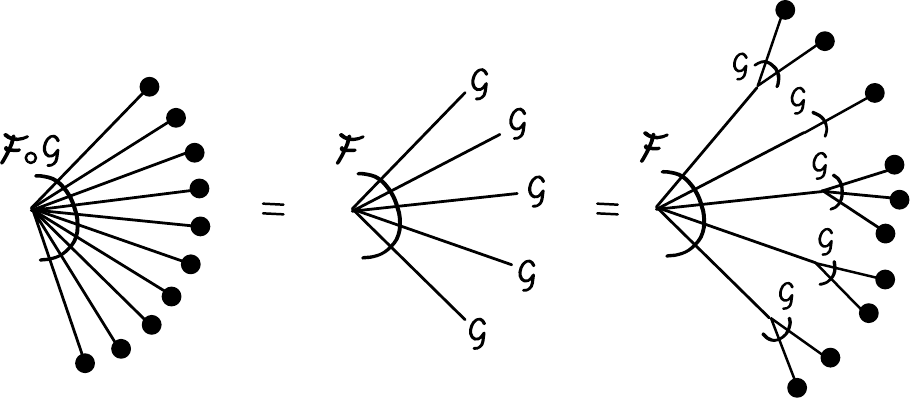} 
 \end{minipage}%
\\
    \begin{minipage}{4cm}
 \centering
      \caption{${\mathcal F}$-structure.}%
      \label{fig:def_esp}
    \end{minipage}%
    &&
    \begin{minipage}{10cm}%
 \centering
       \caption{${\mathcal F}\circ {\mathcal G}$-structure.}%
      \label{fig:def_subs}%
    \end{minipage}%
  \end{tabular}%
\end{figure}

\subsection{Explicit Species}  Species can be defined in different ways. A few special cases are explicit enough to be given directly (in each case, the transport of structures is obvious): the \emph{empty} species, denoted by~$0$, is defined by~$0[U]=\eset$ for all~$U$; 
the species~$1$, characteristic of the empty set, is defined by~$1[U]=\eset$ if~$U\neq\eset$ and~$1[\eset]=\{\eset\}$; the species~$\mathcal Z$ of \emph{singletons} is defined by~${\mathcal Z}[U]=\{U\}$ if~$|U|=1$ and~${\mathcal Z}[U]=\eset$ otherwise.
Among all  nontrivial species, we specially focus on sets, sequences and cycles, that are basic constructors of combinatorial structures in the framework of~\cite{FlSe09}. Examples of structures are given by Figure~\ref{fig:ex-basic-species}.

\begin{itemize}  
	\item The species of {\em sets}, denoted
		by $\Set$ is defined by $\Set[U]=\{U\}$. 
	\item The species ${\mathcal P}$ of permutations defined by ${\mathcal P}[U]=\{\psi: U\rightarrow U \mid \forall v \in U, \exists ! \,u\in U, \psi(u)=v \}$. In particular ${\mathcal P}_n={\mathcal P}[\{1,\dots,n\}]$ denotes the set of permutations over~$\{1,\dots,n\}$. 
  	\item The species $\Seq$ of \emph{sequences} (or linear orders) can be described by $\Seq[\eset]=\{\eset\}$ and for $U=\{u_1,\dots,u_n\}\neq\eset$,
$\Seq[U]=\{(u_{\sigma(1)},\dots,u_{\sigma(n)})\mid\sigma\in{\mathcal P}_n\}$.
			\item The species of {\em cycles}, denoted by $\Cyc$, composed of cyclic ordered lists can be described by $\Cyc[\eset]=\eset$ and for $U\neq\eset$, $\Cyc[U]=\{\sigma\mid\sigma\in{\mathcal P}[U] \mbox{ is composed of a unique cycle}\}$.
\end{itemize}

\begin{figure}[h]%
 \includegraphics[width=\textwidth]{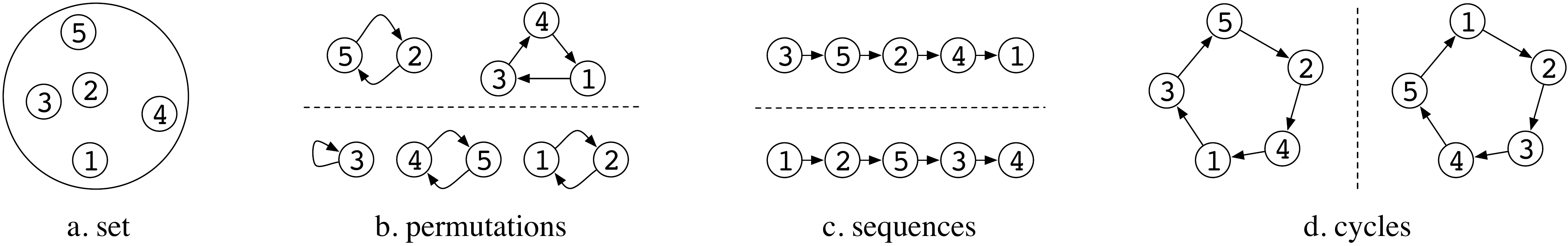}	
\caption{${\mathcal F}$-structures on $U=\{1,\dots,5\}$, with a. ${\mathcal F}=\Set$, b. ${\mathcal F}={\mathcal P}$, c. ${\mathcal F}=\Seq$, d.~${{\mathcal F}=\Cyc}$.}\label{fig:ex-basic-species}
\end{figure}

\subsection{Operations on Species}
Many operations on species are defined, such as sum, product, substitution and differentiation. In this short presentation we only give the action on finite sets, the bijections obeying natural constraints. The \emph{sum} of species is defined by
\[
({\mathcal F}+{\mathcal G})[U]={\mathcal F}[U]+{\mathcal G}[U]
\]
where `$+$' in the right-hand side denotes disjoint union of sets. The symbol~$\sum$ is also used for sums of several species. The \emph{product} of two species~${\mathcal F}$ and~${\mathcal G}$, denoted by ${\mathcal F} \cdot {\mathcal G}$ or ${\mathcal F} {\mathcal G}$, is given by  
\[
({\mathcal F} \cdot {\mathcal G})[U]=\sum_{\substack{(U_1,U_2),\\U=U_1 + U_2}}{{\mathcal F}[U_1]\times {\mathcal G}[U_2]},
\]
where the sum is over all decompositions of~$U$ as a disjoint union and `$\times$' on the right-hand side denotes the Cartesian product. 

Let ${\mathcal F}$ and ${\mathcal G}$ be two species such that ${\mathcal G}[\eset]=\eset$ (there is no ${\mathcal G}$-structure of size~$0$). \emph{Composition} of ${\mathcal F}$ with ${\mathcal G}$ is denoted by ${\mathcal F}\circ {\mathcal G}$ or ${\mathcal F}({\mathcal G})$; the $({\mathcal F}\circ {\mathcal G})$-structures are \emph{${\mathcal F}$-assemblies} whose \emph{members} are  ${\mathcal G}$-structures, more precisely:
\[
({\mathcal F}\circ {\mathcal G})[U]=\sum_{\text{$\pi$ partition of $U$}}{{\mathcal F}[\pi]\times\prod_{p\in\pi}{{\mathcal G}[p]}}.
\]
A graphical description of the composition of species is given in Figure~\ref{fig:def_subs}. Note that, using composition, the property ${\mathcal G}[\eset]=\eset$ is equivalent to ${\mathcal G}(0)=0$.

\subsection{Relations Between Species}\label{parag:rel-species}
Two species ${\mathcal F}$ and ${\mathcal G}$ are \emph{equal} if they produce the same sets and bijections. The definitions in the theory of species are set up in such a way that classical equalities of calculus still hold between species.   
More generally, equality leads to equations and systems, whose solutions we set to study in this work. 

An \emph{isomorphism} from ${\mathcal F}$ to ${\mathcal G}$ is a family of bijections~${\alpha_U:{\mathcal F}[U]\rightarrow {\mathcal G}[U]}$, that makes the expected diagrams commute, that is, for any bijection $\sigma: U \to V$ between two finite sets and for any~${\mathcal F}$-structure~$s$,  ${\mathcal G}[\sigma](\alpha_U (s)) = \alpha_V ({\mathcal F}[\sigma](s))$. 
Even if weaker than equality, isomorphism implies that the structures possess the same combinatorial properties; hence, following~\cite{BeLaLe98},  we  say that there is a \emph{combinatorial equality} between two isomorphic species  ${\mathcal F}$ and ${\mathcal G}$, and write  ${\mathcal F}={\mathcal G}$. For example, the combinatorial equality~${\mathcal F}={\mathcal F}({\mathcal Z})$ holds for any species~${\mathcal F}$. 

Another type of isomorphism exists between \emph{structures} of the same species.
Two ${\mathcal F}$-structures~$s$ and~$t$ over~$\{1,\dots,n\}$ are \emph{isomorphic} when there exists a permutation~$\pi\in{\mathcal P}_n$ such that ${\mathcal F}[\pi](s) = t$. An \emph{isomorphism type} of ${\mathcal F}$-structures over $\{1,\dots,n\}$ is an equivalence class modulo this isomorphism. Such an equivalence class is also called an \emph{unlabeled} ${\mathcal F}$-structure of size~$n$.

The notion of \emph{equipotence} that only replaces set equalities by bijections is even weaker: two species~${\mathcal F}$ and ${\mathcal G}$ are \emph{equipotent} when the numbers of ${\mathcal F}$-structures and ${\mathcal G}$-structures are equal on all finite sets; this is denoted by ${\mathcal F}\equiv{\mathcal G}$. A typical example is that of sequences and permutations: ${\mathcal P}\equiv\Seq$ but ${\mathcal P}\neq\Seq$ since these two species are not transported in the same way along bijections.

A species~${\mathcal F}$ is a \emph{subspecies} of~${\mathcal G}$, denoted by~${\mathcal F}\subset {\mathcal G}$, when for any finite set~$U$, ${\mathcal F}[U]\subset {\mathcal G}[U]$ and for any bijection~$\sigma:U\rightarrow V$, ${\mathcal F}[\sigma]={\mathcal G}[\sigma]|_{{\mathcal F}[U]}$.
For ${\mathcal F}\subset {\mathcal G}$, the \emph{subtraction} ${\mathcal H}={\mathcal G}-{\mathcal F}$ is defined by the equation ${\mathcal G}={\mathcal F}+{\mathcal H}$. When~${\mathcal F}\subset {\mathcal G}$ with ${\mathcal G}(0)=0$, the inclusion is preserved by composition with an arbitrary~${\mathcal H}$: ${\mathcal H}\circ{\mathcal F}\subset{\mathcal H}\circ{\mathcal G}$. 
Two species~${\mathcal F}$ and ${\mathcal G}$ are called \emph{disjoint} if for all finite sets~$U$, ${\mathcal F}[U]\cap{\mathcal G}[U]=\eset$. If the species~${\mathcal F}$ and ${\mathcal G}$ are subspecies of ${\mathcal H}$ and they are disjoint, then~${\mathcal F}+{\mathcal G}\subset {\mathcal H}$.

\subsection{Derivative and Related Species}
The \emph{derivative} ${\mathcal F}'$ of a species ${\mathcal F}$ is defined by~${\mathcal F}'[U]={\mathcal F}[U+\{\star\}]$, where~$\star$ is an element chosen outside of~$U$.  For instance, derivatives of the explicit species introduced earlier are given by Table~\ref{tab:deriv}. 

\begin{table}[ht]
\begin{center}
\begin{tabular}{|l|l|l|l|l|l|l|l|l|} 
\hline
species & 0& 1&${\mathcal Z}$&${\mathcal A}+ {\mathcal B} $ & ${\mathcal A}\cdot {\mathcal B}\phantom{\Bigl|}$ & $\Seq $ & $\Set $ & $\Cyc $ \\[2pt]
\hline
derivative$\phantom{\Bigl|}$ & 0& 0&1& ${\mathcal A}'+ {\mathcal B}'$ & ${\mathcal A}'\cdot {\mathcal B} + {\mathcal A}\cdot {\mathcal B}'$ & $\Seq  \cdot \Seq $ & $\Set $ & $\Seq $ \\
\hline
\end{tabular}            
\end{center}
\caption{Derivatives of classical species.}\label{tab:deriv}	
\end{table}

An $ {\mathcal H}'({\mathcal Z})$-structure can be interpreted as an ${\mathcal H}$-assembly  where the element $\star$ (called a~\emph{bud} by Labelle~\cite{Labelle85b}) marks one of the possible locations for a singleton~${\mathcal Z}$ (see Figure~\ref{fig:def_diff_z}).

\begin{figure}%
  \centering
  \begin{tabular}[c]{lll}
    \begin{minipage}[c]{7.3cm}
 \centering
      \includegraphics[scale=0.95]{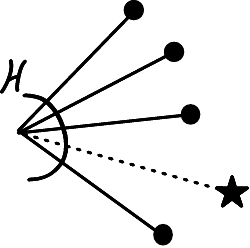}
 \end{minipage}%
    &\qquad&
   \begin{minipage}[c]{8cm}
 \centering
      \includegraphics[scale=1]{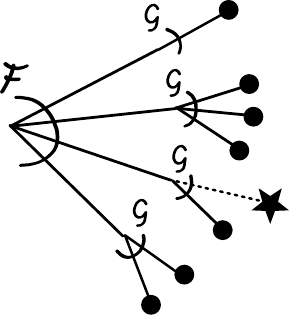}
 \end{minipage}%
\\
    \begin{minipage}{7.3cm}
 \centering
      \caption{${\mathcal H}'$-structure.}%
      \label{fig:def_diff_z}
    \end{minipage}%
    &&
    \begin{minipage}{8cm}%
 \centering
       \caption{$({\mathcal F}\circ {\mathcal G})'$-structure}%
      \label{fig:def_subs_diff}%
    \end{minipage}%
  \end{tabular}%
\end{figure}

For example, the derivative of the composition of two species is given by $({\mathcal F}\circ {\mathcal G})'=({\mathcal F}'\circ {\mathcal G})\cdot{\mathcal G}'$. The interpretation is the following: to replace one of the singletons of~${\mathcal F}\circ{\mathcal G}$ by a $\star$, one first marks the branch of the~${\mathcal F}$-structure where this is going to take place and then grafts on this branch a~${\mathcal G}$-structure with one of its element replaced by a~$\star$, \emph{i.e.}, a ${\mathcal G}'$-structure (see Figure~\ref{fig:def_subs_diff}).

For any species~${\mathcal H}$ and any two species~${\mathcal A}\supset {\mathcal B}$, such that ${\mathcal A}(0)=0$, the following inclusion holds, up to isomorphism:
\begin{equation}\label{eq:taylor1}
{\mathcal H}({\mathcal A})\supset{\mathcal H}({\mathcal B})+{\mathcal H}'({\mathcal B})\cdot({\mathcal A}-{\mathcal B}).
\end{equation}
The interpretation is as follows: the structures on the right-hand side are either in~${\mathcal H}({\mathcal B})$, that is to say ${\mathcal H}$-assemblies of only ${\mathcal B}$-structures; or in the disjoint species~${\mathcal H}'({\mathcal B})\cdot {\mathcal A}$ whose structures are ${\mathcal H}$-assemblies whose members are~${\mathcal B}$-structures, except for exactly one member which is an~${\mathcal A}$-structure and not a ${\mathcal B}$-structure. 
Labelle actually developed a complete Taylor formula in this context~\cite{Labelle90} that generalizes this inclusion.

\subsection{Multisort Species}
Species can also be defined for structures constructed on sets with several sorts of elements, as for functions of several variables. Such a species is called a \emph{multisort} species, and denoted by ${\mathcal F}[U_1,\dots,U_k]$. It produces a set from each $k$-tuple of finite sets~$U_1,\dots,U_k$. 
Then, the size of a multisort structure is the sum of the cardinalities of its underlying sets.

The operations of sum and product easily extend to multisort species. 
For composition, the multisort analogue is more complicated: we present for example the case of an~${\mathcal H}$-assembly of~${\mathcal G}_1$ and~${\mathcal G}_2$ structures, where~${\mathcal H}(\bc Z)$ is two-sort, while~${\mathcal G}_1$ and~${\mathcal G}_2$ are unisort:
\begin{equation}\label{eq:2sort-comp}
{\mathcal H}({\mathcal G}_1,{\mathcal G}_2)[U]=\sum_{\substack{\text{$\pi$ partition of~$U$}\\ \pi_1+\pi_2=\pi}}{{\mathcal H}[\pi_1,\pi_2]\times\prod_{s\in\pi_1} {\mathcal G}_1[s]\times\prod_{t\in\pi_2} {\mathcal G}_2[t]}.
\end{equation}
For instance, sums and products are special cases of multisort species: $+({\mathcal G}_1,{\mathcal G}_2)[U]={\mathcal G}_1[U]+{\mathcal G}_2[U]$ and $\cdot({\mathcal G}_1,{\mathcal G}_2)[U]={\mathcal G}_1[U]\cdot{\mathcal G}_2[U]$ are obtained by defining $+[U,V]$ as~$\{\eset\}$ if either $|U|=1$ or $|V|=1$ 
and $\cdot[U,V]$ as $\{\eset\}$ when $|U|=|V|=1$ and $\eset$ otherwise. Thus, in the sequel, we consider these operations as species.

The notion of derivative also extends to multisort species: for a $k$-sort species ${\mathcal H}({\mathcal Y}_{1:k})$, one sets
\[
\frac{ \partial {\mathcal H}}{\partial{\mathcal Y}_{i}}[U_1,\dots,U_k]= {\mathcal H}[U_1,\dots,U_{i-1},U_i+\{\star_i\},U_{i+1},\dots,U_k].
\]

A $\partial{\mathcal H}/\partial{\mathcal Y}_{i}$-structure can be interpreted as an ${\mathcal H}$-assembly  where the bud~$\star_i$ of sort $i$ marks one of the possible locations for a ${\mathcal Y}_{i}$-structure. Figure~\ref{fig:def_diff} illustrates the case of a two-sort species ${\mathcal H}({\mathcal Z},{\mathcal Y})$, where dots represent the species ${\mathcal Z}$.   
For instance, the  structures in the  product species 
\[
\frac{\partial{\mathcal H}}{\partial {\mathcal Y}}({\mathcal Z},{\mathcal Y}) \cdot \frac{\partial{\mathcal H}}{\partial {\mathcal Y}}({\mathcal Z},{\mathcal Y}) = \left(\frac{\partial{\mathcal H}}{\partial {\mathcal Y}}({\mathcal Z},{\mathcal Y})\right)^2
\]
are~${\mathcal H}$-assemblies whose members are singletons and  ${\mathcal Y}$-structures, except for one member which is a ${\partial{\mathcal H}}/{\partial {\mathcal Y}}$-structure (in the location for a ${\mathcal Y}$-structure), as depicted by Figure~\ref{fig:ex_dL2}. 
More generally, a sequence~$\Seq\left({\partial {\mathcal H}}/{\partial {\mathcal Y}}({\mathcal Z},{\mathcal Y})\right)$ consists of trees built up by iterating this process.

The derivative of a composition behaves as in the classical case. For example, the composition of the species~${\mathcal F}({\mathcal X},{\mathcal Y})$ with two unisort species~${\mathcal G}_1$ and ${\mathcal G}_2$ is differentiated as
\begin{equation}\label{eq:diffcomp}
({\mathcal F}({\mathcal G}_1,{\mathcal G}_2))'=
\frac{\partial{\mathcal F}}{\partial{\mathcal X}}\cdot{\mathcal G}_1'+
\frac{\partial{\mathcal F}}{\partial{\mathcal Y}}\cdot{\mathcal G}_2'.
\end{equation}

\begin{figure}%
  \centering
  \begin{tabular}[c]{lll}
    \begin{minipage}[c]{8.3cm}
 \centering
      \includegraphics[scale=0.95]{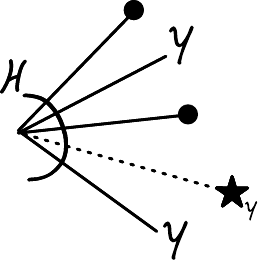}
 \end{minipage}%
    &\qquad&
   \begin{minipage}[c]{7cm}
 \centering
      \includegraphics[scale=1]{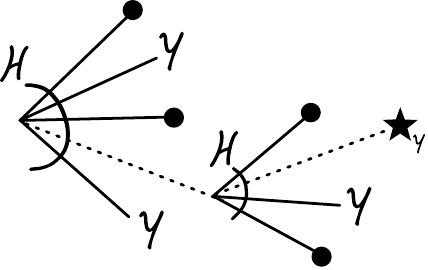}
 \end{minipage}%
\\
    \begin{minipage}{8.3cm}
 \centering
      \caption{A structure of the species $\cal\partial {\mathcal H}({\mathcal Z},{\mathcal Y})/\cal\partial{\mathcal Y}$}%
      \label{fig:def_diff}
    \end{minipage}%
    &&
    \begin{minipage}{7cm}%
 \centering
       \caption{$(\cal\partial {\mathcal H}/\cal\partial{\mathcal Y})^2$-structure}%
      \label{fig:ex_dL2}%
    \end{minipage}%
  \end{tabular}%
\end{figure}

\subsection{Jacobian Matrix}
Matrices and vectors of species are defined as usual; they can likewise be viewed as species whose structures are matrices or vectors, the size of a structure being the sum of the sizes of its components.
The product of a matrix by a matrix or a vector is given by the usual rules, sums and products being replaced by sums and products of species. The identity matrix for species, denoted by $\Id$, is naturally defined as the matrix whose entries are the species~$1$ on the diagonal and~$0$ anywhere else.

Let~$\bc H=({\mathcal H}_{1:m})$ be a vector of $m+1$-sort species, and let $\bc Y=({\mathcal Y}_{1:m})$ be a vector of species. As in the classical case, the \emph{Jacobian matrix} of the vector of species~$\bc H(\bc{Z}, \bc Y)$
with respect to~$\bc Y$, denoted by~$\bc\partial\bc H/\bc\partial\bc Y$, is the matrix whose entry $(i,j)$ is $\partial {\mathcal H}_i(\bc{Z}, \bc Y)/\partial {\mathcal Y}_j$ for $1\le i,j\le m$. 
Finally, a matrix of combinatorial species is nilpotent if one of its powers is~$\bs{0}$ (all its entries are the~0 species). The order of nilpotence (the minimal power such that~$\bs{0}$ is reached) is bounded by the dimension of the matrix, exactly like in classical linear algebra.

\begin{example}\label{ex:spg} Series-parallel graphs are specified by~$({\mathcal Y}_1,{\mathcal Y}_2)=\SP({\mathcal Z},{\mathcal Y}_1,{\mathcal Y}_2)$, with
\[\SP({\mathcal Z},{\mathcal Y}_1,{\mathcal Y}_2)=\begin{pmatrix}\Seq_{\ge2}({\mathcal Z}+{\mathcal Y}_2)\\ \Set_{\ge2}({\mathcal Z}+{\mathcal Y}_1)\end{pmatrix},\]
denoting by~$\Seq_{\ge k}$ 
the species $\Seq$ restricted to structures of size at least~$k$ and similarly for~$\Set_{\ge k}$. Linearity of the derivative implies that
\[\frac{\partial\Seq_{\ge k}({\mathcal Y})}{\partial{\mathcal Y}}=\Seq_{\ge k-1}({\mathcal Y})\times{\mathcal Y}^\star\times\Seq({\mathcal Y})+{\mathcal Y}^\star\times\Seq_{\ge k-1}({\mathcal Y}),\quad
\frac{\partial\Set_{\ge k}({\mathcal Y})}{\partial{\mathcal Y}}=\Set_{\ge k-1}({\mathcal Y})\times{\mathcal Y}^\star.\]
The Jacobian matrix is therefore
\[
\frac{\bc\partial\SP}{\bc\partial\bc Y}=\begin{pmatrix} 
0&\Seq_{\ge1}({\mathcal Z}+{\mathcal Y}_2)\times{\mathcal Y}_2^\star\times\Seq({\mathcal Z}+{\mathcal Y}_2)+{\mathcal Y}_2^\star\times\Seq_{\ge1}({\mathcal Z}+{\mathcal Y}_2)\\ 
\Set_{\ge 1}({\mathcal Z}+{\mathcal Y}_1)\times {\mathcal Y}_1^\star&0 \end{pmatrix}.
\]       
This matrix evaluated at~$({\mathcal Z},\bc Y)=(0,\bs{0})$ gives $\left(\begin{smallmatrix}0&0\\ 0&0\end{smallmatrix}\right)$, which makes it nilpotent of order~1.
Considering graphs that are either series or parallel graphs, leads to a system with a third equation~${\mathcal Y}_3={\mathcal Y}_1+{\mathcal Y}_2$. In this extended case, the  $3\times 3$ Jacobian matrix at~$({\mathcal Z},\bc Y)=(0,\bs{0})$ is nilpotent of order 3.
\end{example}
 
\paragraph{Combinatorial interpretation of the Jacobian matrix}
The Jacobian matrix plays an important role in the characterization of species implicitly defined by a system of equations $\bc Y=\bc H(\bc Z,\bc Y)$. Such a system can be seen as a set of rewriting rules  stating how to construct the coordinates of $\bc Y$, and the Jacobian matrix $\bc J= \bc\partial\bc H/\bc\partial\bc Y$ encodes a valued dependency graph of the system. Each entry ${(i,j)}$ of $\bc J$ expresses how the species ${\mathcal Y}_i$ depends on~${\mathcal Y}_j$.

The $p$-th power of the Jacobian matrix thus describes the paths of length $p$ in the dependency graph.
When $\bc J^p(\bc Z,\bc Y)=\bs{0}$ (the matrix is nilpotent), the graph has no cycle; this will be a crucial condition for the finiteness of the number of structures in the solution (Prop.~\ref{prop:polynomial_implicit}). 
The weaker condition $\bc J^p(\bs{0},\bs{0})=\bs{0}$ is one of the basic conditions for the implicit species theorem to hold (Theorem~\ref{th:IST}).
It corresponds to the absence of cycles preserving the size of structures.

\section{Joyal's Implicit Species Theorem}\label{seqspecies}

This section is devoted to Joyal's implicit species theorem, which constitutes a pillar in the theory of species, since it gives a meaning to solutions of equations. Our interest in this presentation is an analysis of the proof, aiming both at introducing notions and techniques on species that will be useful in the rest of our article, and at focusing on the hypotheses of this theorem, that we extend later.

\subsection{Implicit Species}
We consider vectors of species, implicitly defined by a recursive square system of combinatorial equations $\bc Y=\bc H(\bc Z,\bc Y)$, where $\bc Y=({\mathcal Y}_{1:m})$ and $\bc H=({\mathcal H}_{1:m})$ are vectors of species.
The Implicit Species Theorem~\cite{Joyal81} requires hypotheses ensuring that  such a system actually defines a species of structures. 
The first condition,  $\bc H(\bs{0},\bs{0})=\bs{0}$, is a restriction on species, implying that there is no structure on the empty set (we give conditions to remove this restriction in Section~\ref{subsec:gen_IST}). The second condition, on the nilpotence of the Jacobian matrix, prevents from building infinitely many structures of the same size.

\begin{theorem}[Implicit Species Theorem \cite{Joyal81}] \label{th:IST}
Let~$\bc H$ be a vector of multisort species, with ${\bc H(\bs{0},\bs{0})=\bs{0}}$ and such that the Jacobian matrix ${\bd\partial \bc H}/{\bd\partial\bc Y}(\bs{0},\bs{0})$ is nilpotent.
The system of equations
\begin{equation}\label{impliciteq}
\bc Y=\bc H(\bc Z,\bc Y), \quad\text{where}\quad \bc Y=({\mathcal Y}_{1:m})
\end{equation} 
admits a vector of species solution~$\bc S$  such that~$\bc S(\bs{0})=\bs{0}$, which is unique  up to isomorphism.  
\end{theorem}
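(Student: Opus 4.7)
My plan is to build the solution~$\bc S$ as the limit of Joyal's iteration~\eqref{joyal-iteration} and to handle uniqueness by a parallel size-by-size argument. I would first verify by induction on~$n$ that the iterates form an increasing chain $\bc Y^{[n-1]}\subset\bc Y^{[n]}$: the base case is $\bc Y^{[0]}=\bs{0}\subset\bc H(\bc Z,\bs{0})=\bc Y^{[1]}$, and the inductive step uses that every basic species operation (sum, product, multisort composition) is monotone with respect to~$\subset$, so that $\bc Y^{[n-1]}\subset\bc Y^{[n]}$ implies $\bc H(\bc Z,\bc Y^{[n-1]})\subset\bc H(\bc Z,\bc Y^{[n]})$. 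The hypothesis $\bc H(\bs{0},\bs{0})=\bs{0}$ propagates through the iteration and guarantees that no~$\bc Y^{[n]}$ contains a structure of size~$0$.

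The technical heart of the argument is to show that the differences $\bc D^{[n]}:=\bc Y^{[n]}-\bc Y^{[n-1]}$ have valuations (minimum size of a component structure) tending to infinity, so that on each finite set~$U$ the iteration stabilises after finitely many steps. Using the combinatorial content of~\eqref{eq:taylor1}, any element of $\bc D^{[n+1]}_i$ is an $\bc H_i$-assembly over $\bc Y^{[n]}$-structures that contains at least one substructure drawn from some~$\bc D^{[n]}_j$; marking one such substructure gives an equivariant surjection
\[
\sum_j \tfrac{\bc\partial\bc H_i}{\bc\partial\bc Y_j}(\bc Z,\bc Y^{[n]})\cdot\bc D^{[n]}_j\ \twoheadrightarrow\ \bc D^{[n+1]}_i,
\]
whence the valuation inequality $\val(\bc D^{[n+1]}_i)\ge\min_j\bigl(\val(\bc J^{[n]}_{ij})+\val(\bc D^{[n]}_j)\bigr)$, where $\bc J^{[n]}:=(\bc\partial\bc H/\bc\partial\bc Y)(\bc Z,\bc Y^{[n]})$. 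Since no~$\bc Y^{[n]}$ carries size-zero structures, an entry $\bc J^{[n]}_{ij}$ has valuation~$0$ exactly when $\bc J_{ij}(\bs{0},\bs{0})\ne\bs{0}$. Iterating the inequality $p$ times along a path $i=i_0\to\cdots\to i_p$ of the dependency graph, if every edge had valuation~$0$ then the product $\bc J_{i_0 i_1}\cdots\bc J_{i_{p-1}i_p}(\bs{0},\bs{0})$ would be a nonzero summand of $\bc J(\bs{0},\bs{0})^p$, contradicting nilpotence. Hence every length-$p$ path contributes at least~$1$ to the valuation sum, giving $\val(\bc D^{[n+p]})\ge\val(\bc D^{[n]})+1$, so $\val(\bc D^{[n]})\to\infty$. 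The union $\bc S_i[U]:=\bigcup_n\bc Y^{[n]}_i[U]$ is then a well-defined species on which transport of structures passes to the limit, and taking limits in~\eqref{joyal-iteration} produces $\bc S=\bc H(\bc Z,\bc S)$.

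For uniqueness, given any other solution~$\bc T$ with $\bc T(\bs{0})=\bs{0}$, an immediate induction using monotonicity of~$\bc H$ and $\bs{0}\subset\bc T$ yields $\bc Y^{[n]}\subset\bc T$ for all~$n$, hence $\bc S\subset\bc T$. Applying the same surjection-plus-nilpotence argument to the residue $\bc T-\bc S=\bc H(\bc Z,\bc T)-\bc H(\bc Z,\bc S)$ and iterating $p$ times gives $\val(\bc T-\bc S)\ge\val(\bc T-\bc S)+1$, which forces $\bc T-\bc S=\bs{0}$ and thus $\bc T=\bc S$ as combinatorial species. The main obstacle I foresee is the rigorous translation between the algebraic statement ``$\bc J(\bs{0},\bs{0})^p=\bs{0}$'' and the combinatorial statement ``every length-$p$ path in the dependency graph uses at least one edge with positive valuation''; unpacking the multisort composition formula~\eqref{eq:2sort-comp} is needed to verify that the mark-a-new-substructure map is a genuine equivariant surjection across all the positions where the partition of~$U$ can split, and to keep track of which path-summands actually survive at~$(\bs{0},\bs{0})$.
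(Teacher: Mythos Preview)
Your proof is correct and follows essentially the same route as the paper: the iterates increase, every $p$ applications of~$\bc H$ raise the contact by at least one (the paper packages this as Lemma~\ref{lem:increasing-contact}, arguing directly that a minimal-size structure in the difference must land in $(\bs\partial\bc H/\bs\partial\bc Y(\bs{0},\bs{0}))^p=\bs{0}$, whereas you express the same content as valuation inequalities along paths of the dependency graph), and uniqueness is then obtained by the same mechanism applied to a second solution. The obstacle you flag is not one: the $(i_0,i_p)$-entry of $(\bs\partial\bc H/\bs\partial\bc Y(\bs{0},\bs{0}))^p$ is the disjoint species sum over all length-$p$ paths, so its vanishing forces every individual path product to be~$\bs{0}$, which is precisely the graph statement you need.
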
   

The solution of the implicit system of Theorem~\ref{th:IST} is the species of $\bc H$-rooted trees, that is to say  $\bc H$-assemblies of $\bc Y$-structures, that are, recursively, $\bc H$-rooted trees. A graphical representation of such a system is given in Figure~\ref{fig:H-rooted}, together with a representation of a structure of its solution. A proof of this theorem is given in the next section. A generalization is given in Theorem~\ref{th:GIST}.

\begin{figure}                                           
	\centerline{\includegraphics[scale=0.95]{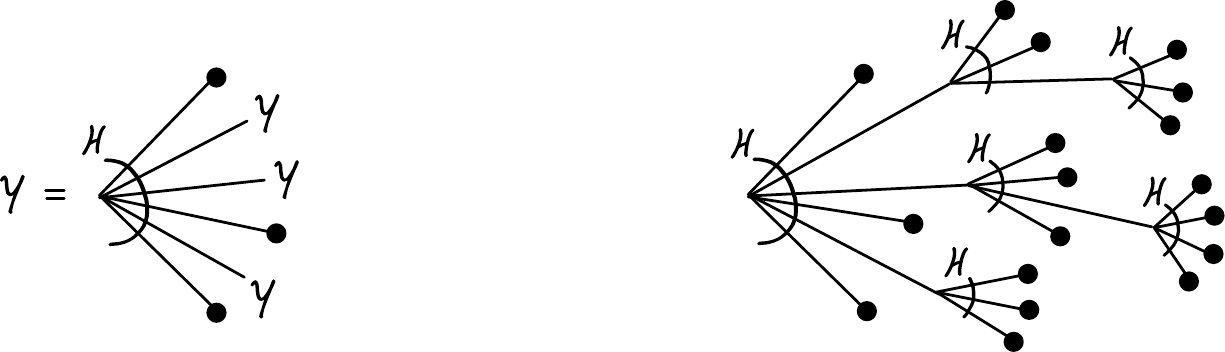}}
\caption{Left: graphical representation of $\bc Y=\bc H({\mathcal Z},\bc Y)$. Right: an example of an $\bc H$-rooted tree. \label{fig:H-rooted}}	                                 
\end{figure}

Joyal~\cite{Joyal81} and Labelle~\cite{Labelle85} give two different constructive proofs of the implicit species theorem. Whereas Labelle's proof is a generalization of the method of blooming, the original proof by Joyal follows the classical proof of the  implicit function theorem, and asserts the existence and uniqueness of the solution of implicit combinatorial systems. Joyal's proof is obtained by constructing an iterated sequence of species that converges (slowly) to the solution. We extract the basic blocks from this proof; they are used further in the rest of this combinatorial section.

\subsection{Contact and Convergence}
Two (possibly multisort) species ${\mathcal F}$ and~${\mathcal G}$ have a \emph{contact} of order~$p$, denoted by ${\mathcal F}=_p{\mathcal G}$, when there exists a species isomorphism from~${\mathcal F}_{\le p}$ to~${\mathcal G}_{\le p}$, where~${\mathcal F}_{\le p}$ denotes the species~${\mathcal F}$ restricted to ${\mathcal F}$-structures of size at most~$p$. Similarly, ${\mathcal F}_{\ge p}$ denotes the restriction to structures of size at least~$p$.

\begin{definition}[Convergence of a sequence of species]
The sequence of species~$({\mathcal Y}^{[n]})_{n\in\N}$ \emph{converges} to a species~${\mathcal Y}$ if for all~$p\ge 0$, there exists~$N\ge0$ such that for all~$n\ge N$, ${\mathcal Y}^{[n]}=_p {\mathcal Y}$. This is denoted by~$\lim_{n\rightarrow\infty}{\mathcal Y}^{[n]}={\mathcal Y}$.         
\end{definition}
In addition, a  sequence of species~$({\mathcal Y}^{[n]})_{n\in\N}$ is \emph{increasing} if ${\mathcal Y}^{[n-1]}\subset  {\mathcal Y}^{[n]}$, for all $n\geq 1$.

\begin{lemma}\label{lemma1} Let~$({\mathcal Y}^{[n]})_{n\in\N}$ be a
		sequence of species and let~$(u_n)_{n\in\N}$ be a sequence of positive integers such that ${\mathcal Y}^{[n]}=_{u_n}{\mathcal Y}^{[n+1]}$. If~$\lim_{n\rightarrow\infty}{u_n}=\infty$, then there exists a species~${\mathcal Y}$ to which the sequence~$({\mathcal Y}^{[n]})_{n\in\N}$ converges. This limit is unique up to isomorphism.
\end{lemma}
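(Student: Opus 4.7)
\medskip

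\noindent\textbf{Proof plan.} The plan is to build the candidate limit~$\mathcal{Y}$ by a levelwise choice procedure, then verify the convergence and uniqueness.

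First, I would unpack the hypothesis. Since $u_n\to\infty$, for every integer $p\geq 0$ there exists an index $N_p$ such that $u_n\geq p$, hence $\mathcal{Y}^{[n]}=_p\mathcal{Y}^{[n+1]}$, for all $n\geq N_p$. The contact isomorphism at each such step is an isomorphism of species between the restrictions $\mathcal{Y}^{[n]}_{\leq p}$ and $\mathcal{Y}^{[n+1]}_{\leq p}$; composing a finite number of them yields, for any $N_p\leq n\leq m$, a species isomorphism $\mathcal{Y}^{[n]}_{\leq p}\cong\mathcal{Y}^{[m]}_{\leq p}$. This is the analogue of ``the sequence is Cauchy at every level.''

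Next, I would construct the limit species. For every size $p\geq 0$, pick once and for all an integer $n_p\geq N_p$ (any choice will do). For a finite set $U$, set $\mathcal{Y}[U]:=\mathcal{Y}^{[n_{|U|}]}[U]$, and for a bijection $\sigma:U\to V$ (necessarily with $|U|=|V|$) set $\mathcal{Y}[\sigma]:=\mathcal{Y}^{[n_{|U|}]}[\sigma]$. Because $n_{|U|}$ depends only on the common size of $U$ and $V$, the species axioms transfer directly from~$\mathcal{Y}^{[n_{|U|}]}$: $\mathcal{Y}[\mathrm{Id}_U]=\mathrm{Id}_{\mathcal{Y}[U]}$ and $\mathcal{Y}[\tau\circ\sigma]=\mathcal{Y}[\tau]\circ\mathcal{Y}[\sigma]$. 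Hence $\mathcal{Y}$ is a bona fide species.

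Then I would verify that $\mathcal{Y}^{[n]}\to\mathcal{Y}$. Given $p\geq 0$, take any $n\geq N_p$. Composing (or inverting) the chain of contact isomorphisms linking $\mathcal{Y}^{[n]}$ to $\mathcal{Y}^{[n_p]}$ yields an isomorphism $\mathcal{Y}^{[n]}_{\leq p}\cong\mathcal{Y}^{[n_p]}_{\leq p}=\mathcal{Y}_{\leq p}$, so $\mathcal{Y}^{[n]}=_p\mathcal{Y}$ as required.

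Finally, for uniqueness, suppose $\widetilde{\mathcal{Y}}$ is another limit of the sequence. For every $p$ one may choose $n$ large enough that both $\mathcal{Y}^{[n]}=_p\mathcal{Y}$ and $\mathcal{Y}^{[n]}=_p\widetilde{\mathcal{Y}}$, whence $\mathcal{Y}=_p\widetilde{\mathcal{Y}}$ for all $p$, i.e.\ $\mathcal{Y}_{\leq p}$ and $\widetilde{\mathcal{Y}}_{\leq p}$ are isomorphic for every $p$. The step I expect to be the most delicate is to assemble these levelwise isomorphisms into a single global isomorphism $\mathcal{Y}\cong\widetilde{\mathcal{Y}}$. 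Since every $\mathcal{Y}[U]$ is finite, the set of isomorphisms $\mathcal{Y}_{\leq p}\cong\widetilde{\mathcal{Y}}_{\leq p}$ is finite for each $p$, and restriction defines a map from level $p+1$ to level $p$; a König-lemma argument on the resulting finitely-branching tree of compatible restrictions produces a coherent family $\alpha_U$ of bijections that commutes with every bijective transport, which is the sought species isomorphism.
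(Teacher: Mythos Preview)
Your proof is correct and follows essentially the same approach as the paper: construct~$\mathcal{Y}$ levelwise from the eventually stable values of~$\mathcal{Y}^{[n]}$, verify the species axioms and convergence, then argue uniqueness from contact at every order. You are in fact more careful than the paper on the uniqueness step, correctly flagging that the levelwise isomorphisms $\mathcal{Y}_{\le p}\cong\widetilde{\mathcal{Y}}_{\le p}$ need a K\"onig/inverse-limit argument to be threaded into a single species isomorphism; the paper simply asserts this passage (and one small slip on your side: $\mathcal{Y}_{\le p}$ is only \emph{isomorphic} to $\mathcal{Y}^{[n_p]}_{\le p}$, not literally equal, since different sizes~$q\le p$ use different indices~$n_q$).
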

\begin{proof}  %
We define the species~${\mathcal Y}$ by giving its values for all sizes~$p\in\N$. Let thus~$p$ be a nonnegative integer. The limit of~$(u_n)$ implies that there exists~$N$ such that for all~$n\ge N$, $u_n\ge p$. Therefore, for all such~$n$, ${\mathcal Y}^{[n]}=_p{\mathcal Y}^{[n-1]}=_p\dotsb=_p{\mathcal Y}^{[N]}$. As a consequence, for all~$n\ge N$, ${\mathcal Y}^{[n]}$ and~${\mathcal Y}^{[N]}$ coincide on all finite sets of size~$p$, as well as on all bijections between them. We then define their common values as those of~${\mathcal Y}$. The properties required of ${\mathcal Y}[\tau\circ\sigma]$ and~${\mathcal Y}[\operatorname{Id}_U]$ then follow from the same properties for~${\mathcal Y}^{[N]}$. By definition of the limit, one then has $\lim_{n\rightarrow\infty}{\mathcal Y}^{[n]}={\mathcal Y}$.

The existence of isomorphic limits is rooted in the definition of limits: a species~${\mathcal W}$ is another limit of~$({\mathcal Y}^{[n]})$ if and only if~${\mathcal Y}=_p{\mathcal W}$ for all~$p\ge0$; this in turn implies the existence of a species isomorphism from~${\mathcal Y}_{\le p}$ to~${\mathcal W}_{\le p}$ for all~$p$, which gives an isomorphism between~${\mathcal Y}$ and~${\mathcal W}$.
\end{proof}

Convergence of vectors or matrices of species is defined as component-wise convergence. The next building block in the proof of the implicit function theorem is the following.
\begin{lemma}\label{lemma2} 
	Let~$(\bc Y^{[n]})_{n\in\N}$ be a sequence of vectors of species converging to~$\bc Y$. If the sequence $\bc H(\bc Z,\bc Y^{[n]})$ also converges to~$\bc Y$, then~$\bc Y$ is a solution of~$\bc Y=\bc H(\bc Z,\bc Y)$.
\end{lemma}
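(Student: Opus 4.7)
The plan is to establish the combinatorial equality $\bc Y = \bc H(\bc Z,\bc Y)$ by showing that $\bc Y$ and $\bc H(\bc Z,\bc Y)$ have contact of every order $p$, and then invoke the uniqueness clause of Lemma~\ref{lemma1}: since a convergent sequence of species has a limit that is unique up to isomorphism, any two species with contact of all orders are isomorphic.

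Fix $p \ge 0$. Using the hypothesis $\lim_{n\to\infty}\bc Y^{[n]} = \bc Y$, I would pick $N_1$ such that $\bc Y^{[n]} =_p \bc Y$ for every $n \ge N_1$. Using the hypothesis $\lim_{n\to\infty}\bc H(\bc Z,\bc Y^{[n]}) = \bc Y$, I would pick $N_2$ such that $\bc H(\bc Z,\bc Y^{[n]}) =_p \bc Y$ for every $n \ge N_2$. The goal is then to splice these two relations together at some common $n \ge \max(N_1,N_2)$, i.e.\ to deduce $\bc Y =_p \bc H(\bc Z,\bc Y^{[n]}) =_p \bc H(\bc Z,\bc Y)$.

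The only nontrivial step, and the main obstacle, is the second contact: one must verify that substituting species that agree up to size $p$ into $\bc H$ yields species that agree up to size $p$, that is, $\bc Y^{[n]} =_p \bc Y$ implies $\bc H(\bc Z,\bc Y^{[n]}) =_p \bc H(\bc Z,\bc Y)$. This is a continuity-type property of the species operations sum, product and composition defined in Section~\ref{species}. Concretely, an $\bc H(\bc Z,\bc Y)$-structure on a set $U$ of cardinality at most $p$ decomposes, via formula~\eqref{eq:2sort-comp}, into an assembly over a partition of~$U$ whose blocks all have size at most~$p$; hence every $\bc Y$-component used in constructing such a structure lives in $\bc Y_{\le p}$. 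Consequently any isomorphism between $\bc Y^{[n]}_{\le p}$ and $\bc Y_{\le p}$ lifts functorially, block by block, to an isomorphism between $\bc H(\bc Z,\bc Y^{[n]})_{\le p}$ and $\bc H(\bc Z,\bc Y)_{\le p}$, and this lifting is compatible with transport along bijections of~$U$.

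Once this continuity lemma is in hand, the chain $\bc Y =_p \bc H(\bc Z,\bc Y^{[n]}) =_p \bc H(\bc Z,\bc Y)$ yields $\bc Y =_p \bc H(\bc Z,\bc Y)$. Since $p$ was arbitrary, both sides are limits (in the sense of the preceding definition) of the constant sequence $\bc Y$, and the uniqueness in Lemma~\ref{lemma1} gives the combinatorial equality $\bc Y = \bc H(\bc Z,\bc Y)$, completing the proof.
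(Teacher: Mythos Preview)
Your proof is correct and follows essentially the same approach as the paper's. The paper's one-line proof (``both sides coincide on finite sets and their bijections, which follows from their convergence'') leaves implicit precisely the continuity step you spell out: that $\bc Y^{[n]} =_p \bc Y$ forces $\bc H(\bc Z,\bc Y^{[n]}) =_p \bc H(\bc Z,\bc Y)$, so that $\bc H(\bc Z,\bc Y^{[n]})$ converges to both $\bc Y$ and $\bc H(\bc Z,\bc Y)$, whence equality by uniqueness of limits.
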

\begin{proof}
In order to show that~$\bc Y=\bc H(\bc Z,\bc Y)$, it is sufficient to prove that both sides of the equation coincide on finite sets and their bijections, which follows from their convergence.
\end{proof}

\subsection{Proof of the Implicit Species Theorem}\label{IST_proof}

\def\jaco{\bc{\partial \bc H}/\bc{\partial \bc Y}(\bs{0},\bs{0})}    

Joyal's proof of the Implicit Species Theorem (in the case when~$m=1$) is based on a sequence of species defined by a simple iteration. 
\begin{proposition}\label{prop:is-cvg} Let $\bc Y=\bc H(\bc Z,\bc Y)$ be a system such that $\bc H(\bs{0},\bs{0})=\bs{0}$ and the matrix~$\jaco$ is nilpotent.
The sequence~$(\bc Y^{[n]})_{n\in\N}$  defined by
\begin{equation}\label{phi-iteration}\tag{\ref{joyal-iteration}}
        \bc Y^{[0]}=\bd{0}\quad\text{and}\quad \bc Y^{[n+1]}=\bc H(\bc Z,\bc Y^{[n]}),\quad n\ge0
\end{equation}
is convergent.
\end{proposition}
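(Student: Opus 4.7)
The plan is to apply Lemma~\ref{lemma1} by exhibiting integers $u_n \to \infty$ such that $\bc Y^{[n]} =_{u_n} \bc Y^{[n+1]}$. I will first show that the iterates form an increasing chain, so that the consecutive differences $\bc D^{[n]} := \bc Y^{[n+1]} - \bc Y^{[n]}$ are well-defined subspecies, and then bound from below the minimum size of a structure in $\bc D^{[n]}$. For monotonicity, $\bc Y^{[0]} = \bs 0 \subset \bc H(\bc Z, \bs 0) = \bc Y^{[1]}$, and an immediate induction using $\bc H(\bs 0, \bs 0) = \bs 0$ gives $\bc Y^{[n]}(\bs 0) = \bs 0$ for all $n$. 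The preservation of inclusion under composition recalled in Section~\ref{parag:rel-species} then propagates $\bc Y^{[n-1]} \subset \bc Y^{[n]}$ to $\bc H(\bc Z, \bc Y^{[n-1]}) \subset \bc H(\bc Z, \bc Y^{[n]})$, i.e.\ $\bc Y^{[n]} \subset \bc Y^{[n+1]}$.

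For the size growth, let $v_n^i$ denote the minimum size of a structure in ${\mathcal D}_i^{[n]}$ (set $v_n^i = +\infty$ if this difference is empty, and stop the proof early if all ${\mathcal D}_i^{[n]}$ vanish), and let $M_n := \min_i v_n^i$. The hypothesis $\bc H(\bs 0, \bs 0) = \bs 0$ yields $M_0 \ge 1$. For the inductive step, a structure in ${\mathcal D}_i^{[n]}$ is an ${\mathcal H}_i$-assembly whose ${\mathcal Y}_j$-members are filled with ${\mathcal Y}_j^{[n]}$-structures, at least one of which must lie in ${\mathcal D}_j^{[n-1]}$, since otherwise the assembly would already belong to ${\mathcal H}_i(\bc Z, \bc Y^{[n-1]})$. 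Two sub-cases arise. In the \emph{linear} sub-case, the ambient ${\mathcal H}_i$-assembly has no $\bc Z$-atom and exactly one $\bc Y$-atom, of some sort $j$; it then contributes a wrapping from the entry $\partial {\mathcal H}_i/\partial {\mathcal Y}_j(\bs 0,\bs 0)$ of the Jacobian at origin and preserves size, giving a structure of size exactly $v_{n-1}^j$. In every other sub-case, at least one extra $\bc Z$-atom or extra filled $\bc Y$-atom is present, contributing at least $1$ to the size, so the structure has size $\ge M_{n-1} + 1$.

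Combining the two sub-cases shows that $M_n$ is non-decreasing. If one had $M_{n+p} = M_n$ with $p$ the nilpotence order of the Jacobian at origin, then the structure in $\bc D^{[n+p]}$ realizing $M_{n+p}$ could not come from the non-linear sub-case; unrolling the linear sub-case $p$ times would then produce a chain of indices $i_0, i_1, \dots, i_p$ along which every Jacobian entry at origin is nonempty, contradicting $({\bd\partial \bc H}/{\bd\partial \bc Y})^p(\bs 0,\bs 0) = \bs 0$. Therefore $M_{n+p} \ge M_n + 1$, whence $M_n \to \infty$ and Lemma~\ref{lemma1} concludes.

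The main technical obstacle is formalizing the decomposition of $\bc D^{[n]}$ into its linear and non-linear parts in the multisort setting and identifying the linear part precisely with an application of the Jacobian at origin. This amounts to a Taylor-style argument in the spirit of~\eqref{eq:taylor1}, lifted to several variables, combined with the combinatorial interpretation of the Jacobian recalled at the end of Section~\ref{species}.
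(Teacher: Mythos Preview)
Your proof is correct and follows essentially the same approach as the paper's. The paper packages the core step as a separate lemma (Lemma~\ref{lem:increasing-contact}): if $\bc B\subset\bc A$ and $\bc B=_k\bc A$, then $\bc H^p(\bc Z,\bc B)=_{k+1}\bc H^p(\bc Z,\bc A)$, proved by exactly your linear/non-linear dichotomy---a structure of minimal size in the difference must have all but one member of size~$0$, hence factors through an entry of~$\jaco$, and iterating $p$ times contradicts nilpotence. Your tracking of $M_n$ and the paper's tracking of contact orders are two phrasings of the same argument; the paper's packaging as a lemma has the minor advantage of being reused later (e.g., for uniqueness in the Implicit Species Theorem).
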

Given this property, Lemma~\ref{lemma2} shows that the limit~$\bc Y$ of the sequence~$(\bc Y^{[n]})$ of Proposition~\ref{prop:is-cvg} 
is actually a solution of the system~$\bc Y=\bc H(\bc{Z},\bc Y)$. If $\bc S$ is another solution of the system with~$\bc S(\bs{0})=\bs{0}$, then $\bc S=_0\bc Y^{[0]}=_0\bc Y$ and by induction using Lemma~\ref{lem:increasing-contact} below, $\bc S=_k\bc Y^{[pk]}=_k\bc Y$ for all $k\geq 0$. 
Thus there exists a unique solution with~$\bc S(\bs{0})=\bs{0}$, up to isomorphism.

\begin{proof}[Proof of Proposition~\ref{prop:is-cvg}]      
The first step is to show that~$(\bc Y^{[n]})$ is an increasing sequence of species. This is proved by induction: for~$n=0$, the assertion comes from the definition of the 0 species; then the inclusion~$\bc Y^{[n]}\subset \bc Y^{[n+1]}$ is preserved by composition with $\bc H$: $\bc H(\bc Z,\bc Y^{[n]})\subset\bc H(\bc Z,\bc Y^{[n+1]})$, and by definition of the iteration, this is~$\bc Y^{[n+1]}\subset\bc Y^{[n+2]}$. 

The rest of the proof consists in showing that the sequence $(\bc Y^{[n]})$ converges. This is a consequence of the following lemma, which states that  $p$ iterations of a species $\bc H$ 
with index of nilpotence $p$ increase the contact. We denote by~$\bc H^i$  the $i$th iterate of $\bc Y\mapsto\bc H(\bc Z,\bc Y)$.
\begin{lemma}\label{lem:increasing-contact}
        Let $\bc H(\bc Z,\bc Y)$ be a vector of multisort species such that $\bc H(\bs{0},\bs{0})=\bs{0}$ and the Jacobian matrix~$\jaco$ is nilpotent.
        Let~$\bc A$ and~$\bc B$ be two species such that $\bc B\subset\bc A$. If~$\bc B=_k\bc A$, then~$\bc H^p(\bc Z,\bc B)=_{k+1}\bc H^p(\bc Z,\bc A)$ where~$1\le p\le m$ is the index of nilpotence of the Jacobian matrix. 
\end{lemma}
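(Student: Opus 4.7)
The plan is to argue directly at the level of structures, rather than trying to extract a minimum-size bound from a Taylor-type inclusion. Using the multisort composition formula~\eqref{eq:2sort-comp} inductively, any $\bc H^p(\bc Z,\bc A)$-structure $s$ on a set $U$ admits a canonical depth-$p$ tree decomposition: at each level $\ell=0,\dots,p-1$ there is an $\bc H_{i_\ell}$-assembly on a subset $U_\ell\subseteq U$ (with $U_0=U$) whose $\bc Y$-slots recursively carry the structures at level $\ell+1$; at the bottom, each remaining $\bc Y_j$-slot is filled with a true $\bc A_j$-structure (a ``leaf''). Since $\bc B\subset\bc A$, the structure $s$ belongs to $\bc H^p(\bc Z,\bc B)$ exactly when every leaf is a $\bc B$-structure. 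Hence, if $s$ lies in the difference $\bc H^p(\bc Z,\bc A)-\bc H^p(\bc Z,\bc B)$, at least one leaf $L$ is in $\bc A-\bc B$, and the hypothesis $\bc B=_k\bc A$ forces $|L|\ge k+1$.

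Next, follow the unique path in the tree from $L$ up to the root. At each level $\ell$, distinguishing the $\bc Y_{i_{\ell+1}}$-slot through which this path passes turns the level-$\ell$ assembly into a structure of the Jacobian entry $\partial\bc H_{i_\ell}/\partial\bc Y_{i_{\ell+1}}$ evaluated at $(\bc Z,\bc H^{p-1-\ell}(\bc Z,\bc A))$; the size of this Jacobian structure equals $|U_\ell|-|U_{\ell+1}|$, since the bud contributes no atoms. Telescoping gives
\[
|s|=|L|+\sum_{\ell=0}^{p-1}\bigl(|U_\ell|-|U_{\ell+1}|\bigr),
\]
so the lemma reduces to showing that the $p$ Jacobian contributions cannot all vanish.

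For this, I would use the elementary observation that for any composition $\bc F(\bc G_1,\dots,\bc G_k)$ the size-0 part coincides with $\bc F[\emptyset,\dots,\emptyset]$: evaluated on an empty underlying set, only the empty partition contributes in~\eqref{eq:2sort-comp}. Applied to the Jacobian at each level, this identifies the possible ``size-0'' Jacobians on the path with the entries of $\bc J(\bs{0},\bs{0})$. A path on which every level contributes size zero then assembles into the $(i_0,i_p)$-entry of the matrix power $\bc J(\bs{0},\bs{0})^p$, which is $\bs{0}$ by the nilpotence hypothesis. Therefore at least one level contributes size $\ge 1$, whence $|s|\ge k+2$, and the contact $\bc H^p(\bc Z,\bc A)=_{k+1}\bc H^p(\bc Z,\bc B)$ is established.

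The main obstacle I foresee is the bookkeeping of the tree decomposition in the multisort setting: one must verify that forgetting the content of the marked slot at each level genuinely yields a Jacobian structure in the sense of the definition ${\mathcal F}'[U]={\mathcal F}[U+\{\star\}]$, consistently across the $p$ levels, so that the ``all size-0'' concatenation is precisely an element of $\bc J(\bs{0},\bs{0})^p$. Once this combinatorial dictionary is in place, the nilpotence hypothesis closes the argument in one stroke.
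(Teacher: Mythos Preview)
Your proposal is correct and follows essentially the same approach as the paper. Both arguments identify, inside a structure of $\bc H^p(\bc Z,\bc A)-\bc H^p(\bc Z,\bc B)$, a chain of $p$ Jacobian structures leading to an $(\bc A-\bc B)$-structure, and conclude via the nilpotence of $\jaco$; the paper phrases this as a level-by-level induction (assuming size $\le k+1$ and showing each level collapses to a $\jaco$-factor), while you phrase it as a direct telescoping bound along a root-to-leaf path, but the combinatorial content is the same.
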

\begin{proof}
        
The idea is that if an $\bc H$-rooted tree of the species~$\bc H^p(\bc Z,\bc A)-\bc H^p(\bc Z,\bc B)$ had size~$\le k+1$, then one of its branches would contain a structure of~$(\jaco)^p$, but this is $\bs{0}$. The subtraction (and all those that appear in this proof) is defined since inclusion is preserved by composition.
We first show that, for $i\ge 1$, any structure $\gamma$ belonging to~$\bc H^i(\bc Z,\bc A)-\bc H^i(\bc Z,\bc B)$ with size at most~$k+1$ rewrites as a structure of~$\jaco (\bc H^{i-1}(\bc Z,\bc A)-\bc H^{i-1}(\bc Z,\bc B))$. 
By definition, the structure~$\gamma$ is an $\bc H$-assembly of $\bc H^{i-1}(\bc Z,\bc A)$-structures. 
At least one of these structures, say $\beta$,  belongs to the species~$\bc H^{i-1}(\bc Z,\bc A)-\bc H^{i-1}(\bc Z,\bc B)$, otherwise $\gamma$ would be an~$\bc H$-assembly of~$\bc H^{i-1}(\bc Z,\bc B)$-structures, {\em i.e.}, an~$\bc H^{i}(\bc Z,\bc B)$-structure. 
Since contact of order $k$ is maintained by composition, the hypotheses imply that  $\bc H^{i-1}(\bc Z,\bc A)=_k\bc H^{i-1}(\bc Z,\bc B)$; thus~$\beta$ is of size~$>k$, that is exactly~$k+1$, since $\gamma$ is of size at most $k+1$. Moreover, all the other structures composing the structure~$\gamma$ are of size~$0$. But, given that~$\bc H(\bs{0},\bs{0})=\bs{0}$, there is no $\bc H$-rooted tree of size~$0$ and thus the structure~$\gamma$ is an $\bc H$-assembly whose unique member is~$\beta$. Therefore, $\gamma$ belongs to the species~$\jaco (\bc H^{i-1}(\bc Z,\bc A)-\bc H^{i-1}(\bc Z,\bc B))$.

Iterating this, we deduce that a structure of size at most equal to~$k+1$, and belonging to the species $\bc H^p(\bc Z,\bc A)-\bc H^p(\bc Z,\bc B)$, rewrites as a structure of~$(\jaco)^p(\bc A-\bc B)$, which is $\bs{0}$. In other words:
\[
\bc H^p(\bc Z,\bc A)-\bc H^p(\bc Z,\bc B)=_{k+1} \bs{0}.\qedhere
\]      
\end{proof}

Lemma~\ref{lem:increasing-contact}, with $\bc A=\bc Y^{[n+1]}$ and $\bc B=\bc Y^{[n]}$, shows that~$\bc Y^{[n+p]}=_{k+1}\bc Y^{[n+p+1]}$ where~$1\le p\le m$.
The proof of Proposition~\ref{prop:is-cvg} is now concluded by invoking Lemma~\ref{lemma1}.
\end{proof}

\begin{example}\label{ex:catalan}
The species of Catalan trees is defined by the implicit equation ${\mathcal T} ={\mathcal Z} \cdot \Seq({\mathcal T})$, ${\mathcal T}(0)=0$.  
Figure~\ref{fig:trees-slow} shows the structures (omitting the elements of the underlying sets) produced in the first five iterations of ${\mathcal T}^{[n+1]}={\mathcal Z}\cdot \Seq({\color{blue}{\mathcal T}^{[n]}})$.
Rectangles enclose structures of a given size when all structures  of the limit~${\mathcal T}$ with this size have been produced: for example iteration ${\mathcal T}^{[4]}$ contains all structures of the solution up to size~4; and iteration ${\mathcal T}^{[5]}$ contains all structures of the solution up to size~5, i.e.,  ${\mathcal T}^{[5]}=_5{\mathcal T}$      
\end{example} 
\begin{figure}[ht] 
\noindent \begin{tikzpicture} 	
	\tikzstyle{level 1}=[sibling distance=2mm]            
	\node[rectangle,minimum width=14cm,minimum height=2mm,anchor=west]{};
	\matrix[mat, inner sep=2mm, row sep=0mm, column sep=2mm]{  
	\node[rectangle,anchor=west]{${\mathcal T}^{[0]}=~0~~~~~$}; \& 
	\node[rectangle]{${\mathcal T}^{[1]}=$}; \&
	\node[yshift=1mm][rond]{};\&  
	\node[rectangle]{$~~~~~{\mathcal T}^{[2]}=$}; \&
	\node[yshift=1mm][rond]{}; \& 
	\node[yshift=1mm][rond] {} child{node[rond][color=blue] {}} ;\&
	\node[yshift=1mm][rond] {} child{node[rond][color=blue] {}} 
	                        child{node[rond][color=blue] {}};\& 
	\node[yshift=1mm][rond] {} child{node[rond][color=blue] {}}
			   	child{node[rond][color=blue] {}}
			   	child{node[rond][color=blue] {}} ;\&
	\node[yshift=1mm][rond] {} child{node[rond][color=blue] {}} 
			   	child{node[rond][color=blue] {}}
				child{node[rond][color=blue] {}}
			   	child{node[rond][color=blue] {}} ;\&
	\node[yshift=1mm][rond] {} child{node[rond][color=blue] {}} 
			   	child{node[rond][color=blue] {}}
			   	child{node[rond][color=blue] {}}
				child{node[rond][color=blue] {}}
				child{node[rond][color=blue] {}} ;\&
	\node[yshift=1mm][rond] {} child{node[rond][color=blue] {}} 
			   	child{node[rond][color=blue] {}}
				child{node[rond][color=blue] {}}
				child{node[rond][color=blue] {}}
				child{node[rond][color=blue] {}}
			   	child{node[rond][color=blue] {}} ;\&  
	\node[yshift=1mm][rond] {} child{node[rond][color=blue] {}} 
			   	child{node[rond][color=blue] {}}
				child{node[rond][color=blue] {}}
				child{node[rond][color=blue] {}}
				child{node[rond][color=blue] {}}
				child{node[rond][color=blue] {}}
			   	child{node[rond][color=blue] {}} ;\&
        \node{$\dots$};\& \hfill 	
	\\ };     
\node[rectangle, draw, draw opacity=1,fill opacity=0, minimum width=0.25cm, minimum height=0.5cm] at (6.05,0){};  
\node[rectangle, draw, draw opacity=1,fill opacity=0, minimum width=0.25cm, minimum height=0.3cm] at (5.75,0.1){};
\node[rectangle, draw, draw opacity=1,fill opacity=0, minimum width=0.25cm, minimum height=0.3cm] at (3.7,0.1){}; 
\end{tikzpicture} 

\vskip-1mm
\noindent \begin{tikzpicture}          
\tikzstyle{level 1}=[sibling distance=2mm]
\tikzstyle{level 2}=[sibling distance=2mm]
	\tikzstyle{every node}=[circle,inner sep=0,minimum size=0mm]   
	\node[rectangle,minimum width=14cm,minimum height=2mm,anchor=west]{};
	\matrix[mat, inner sep=2mm, row sep=0mm, column sep=2mm]{
	\node[rectangle]{${\mathcal T}^{[3]}=$}; \&  
	\node[yshift=2mm][rond]{}; \&  
	\node[yshift=2mm][rond] {} child{node[rond][color=blue] {}} ;\&
	\node[yshift=2mm][rond] {} child{node[color=blue][rond]{}} 
	                        child{node[color=blue][rond]{}} ;\&  
	\node[yshift=2mm][rond] {} child{node[rond][color=blue] {} 
	                        child[color=blue]{\nrp}} ;\&
	\node[yshift=2mm][rond] {} child{node[rond][color=blue] {}} 
			   	child{node[rond][color=blue] {}}
			   	child{node[rond][color=blue] {}} ; \&
        \node[yshift=2mm][rond] {} child{ node[rond][color=blue]{}  
					child[color=blue]{\nrp}} 
					child{node[color=blue][rond]{}};\&
        \node[yshift=2mm][rond] {} child{ node[rond][color=blue]{}} 
					child{node[color=blue][rond]{}
   					child[color=blue]{\nrp}};\& 
	\node[yshift=2mm][rond] {} child{node[rond][color=blue] {} 
					child[color=blue]{\nrp}
					child[color=blue]{\nrp}} ; \&
        \node[yshift=2mm][rond] {} child{node[rond][color=blue] {}} 
				   	child{node[rond][color=blue] {}}
					child{node[rond][color=blue] {}}
				   	child{node[rond][color=blue] {}} ; \& 
        \node[yshift=2mm][rond] {} child{node[rond][color=blue] {} 
			   		child[color=blue]{\nrp}}
					child{node[rond][color=blue] {}}
			   		child{node[rond][color=blue] {}} ; \& 
        \node[yshift=2mm][rond] {} child{node[rond][color=blue] {}}
	     				child{node[rond][color=blue] {}
						  child[color=blue]{\nrp}}
			      		child{node[rond][color=blue] {}} ; \&
	\node[yshift=2mm][rond] {} child{node[rond][color=blue] {}}
					child{node[rond][color=blue] {}}
					child{node[rond][color=blue] {}
						child[color=blue]{\nrp}} ; \&
        \node[yshift=2mm][rond] {} child{node[color=blue][rond] {} 
					child[color=blue]{\nrp} 
					child[color=blue]{\nrp}} 
				        child{ node[rond][color=blue]{}};\& 
   	\node[yshift=2mm][rond] {} child{node[color=blue][rond] {}
				        child[color=blue]{\nrp}}
				        child{ node[rond][color=blue]{}  	
					child[color=blue]{\nrp}};\&
        \node[yshift=2mm][rond] {} child{node[color=blue][rond] {}} 
				        child{ node[rond][color=blue]{} 
				 		child[color=blue]{\nrp}      	
				  		child[color=blue]{\nrp}};\&    
  	\node[yshift=2mm][rond] {} child{node[rond][color=blue] {}    
					child[color=blue]{\nrp}  
					child[color=blue]{\nrp} 	
					child[color=blue]{\nrp}} ; \&  
        \node[yshift=2mm][rond] {} child{node[rond][color=blue] {}} 
			   		child{node[rond][color=blue] {}}
					child{node[rond][color=blue] {}} 
					child{node[rond][color=blue] {}}
			   		child{node[rond][color=blue] {}} ; \& 
	\node[yshift=2mm][rond] {} child{node[rond][color=blue] {} 
					child[color=blue]{\nrp}}    
		   			child{node[rond][color=blue] {}}	
		   			child{node[rond][color=blue] {}}
		   	   		child{node[rond][color=blue] {}} ;\& 							
        \node[yshift=2mm][rond] {} child{node[rond][color=blue] {} 
   	   				child[color=blue]{\nrp} 
   	   				child[color=blue]{\nrp}}  
    	   		                child{node[rond][color=blue] {}} 
	   			        child{node[rond][color=blue] {}} ;\&  							
        \node[yshift=2mm][rond] {} child{node[rond][color=blue] {} 
   	   				child[color=blue]{\nrp}    
   	   				child[color=blue]{\nrp}
   	   				child[color=blue]{\nrp}}  
    	   		                child{node[rond][color=blue] {}} ;\&
        \node{$\dots$};
	\\ };     
\node[rectangle,draw, draw opacity=1,fill opacity=0, minimum width=0.25cm, minimum height=0.3cm] at (1.42,0.25){};   
\node[rectangle,draw, draw opacity=1,fill opacity=0, minimum width=0.25cm, minimum height=0.6cm] at (1.75,0.1){};
\node[rectangle,draw, draw opacity=1,fill opacity=0, minimum width=0.72cm, minimum height=0.8cm] at (2.33,0){};
\end{tikzpicture}  

\vskip-1mm      
\noindent \begin{tikzpicture}  
	\tikzstyle{level 1}=[sibling distance=2mm]
	\tikzstyle{level 2}=[sibling distance=2mm] 
	\tikzstyle{level 3}=[sibling distance=2mm]
	\tikzstyle{level 4}=[sibling distance=2mm]             
	\node[rectangle,minimum width=14cm,minimum height=2mm,anchor=west]{};
	\matrix[mat,inner sep=2mm,row sep=3mm, column sep=2mm]{ 
	\node[rectangle]{${\mathcal T}^{[4]}=$}; \& 
	\node[yshift=2mm][rond]{}; \&  
	\node[yshift=2mm][rond] {} child{node[rond][color=blue] {}} ;\&
	\node[yshift=2mm][rond] {} child{node[color=blue][rond]{}} 
	                        child{node[color=blue][rond]{}} ;\&  
	\node[yshift=2mm][rond] {} child{node[rond][color=blue] {} 
	                        child[color=blue]{\nrp}} ;\&
	\node[yshift=2mm][rond] {} child{node[rond][color=blue] {}} 
			   	child{node[rond][color=blue] {}}
			   	child{node[rond][color=blue] {}} ; \&
        \node[yshift=2mm][rond] {} child{ node[rond][color=blue]{}  
				child[color=blue]{\nrp}} 
				child{node[color=blue][rond]{}};\&
        \node[yshift=2mm][rond] {} child{ node[rond][color=blue]{}} 
				child{node[color=blue][rond]{}
				child[color=blue]{\nrp}};\& 
        \node[yshift=2mm][rond] {} child{node[rond][color=blue] {} 
		   		child[color=blue]{\nrp}
		   		child[color=blue]{\nrp}} ; \&  
        \node[yshift=2mm][rond] {} child{node[rond][color=blue] {} 
				child[color=blue]{\nrp 
				child[color=blue]{\nrp}}} ;\&
        \node[yshift=2mm][rond] {} child{node[rond][color=blue] {}} 
			   	child{node[rond][color=blue] {}}
				child{node[rond][color=blue] {}}
			   	child{node[rond][color=blue] {}} ; \& 
        \node[yshift=2mm][rond] {} child{node[rond][color=blue] {} 
		   			child[color=blue]{\nrp}}
				child{node[rond][color=blue] {}}
		   		child{node[rond][color=blue] {}} ; \& 
        \node[yshift=2mm][rond] {} child{node[rond][color=blue] {}}
     				child{node[rond][color=blue] {}
					  	child[color=blue]{\nrp}}
		      		child{node[rond][color=blue] {}} ; \&
        \node[yshift=2mm][rond] {} child{node[rond][color=blue] {}}
			   	child{node[rond][color=blue] {}}
			        child{node[rond][color=blue] {}
				   	child[color=blue]{\nrp}} ; \&

        \node[yshift=2mm][rond] {} child{node[color=blue][rond] {}  
					child[color=blue]{\nrp} 
					child[color=blue]{\nrp}} 
				child{ node[rond][color=blue]{}};\& 
        \node[yshift=2mm][rond] {} child{node[color=blue][rond] {}
					child[color=blue]{\nrp}}
				child{ node[rond][color=blue]{}  	
					child[color=blue]{\nrp}};\&
        \node[yshift=2mm][rond] {} child{node[color=blue][rond] {}} 
			        child{ node[rond][color=blue]{} 
			 		child[color=blue]{\nrp}      	
			  		child[color=blue]{\nrp}};\&  
        \node[yshift=2mm][rond] {} child{node[rond][color=blue] {} 
					child[color=blue]{\nrp 
						child[color=blue]{\nrp}}}
				child[color=blue]{\nrp} ;		\& 
        \node[yshift=2mm][rond] {} child[color=blue]{\nrp}
				child{node[rond][color=blue] {} 
					child[color=blue]{\nrp 
						child[color=blue]{\nrp}}} ;\&
        \node[yshift=2mm][rond] {} child{node[rond][color=blue] {}    
				child[color=blue]{\nrp}  
				child[color=blue]{\nrp} 	
				child[color=blue]{\nrp}} ; \&  
        \node[yshift=2mm][rond] {} child{node[rond][color=blue] {} 
				child[color=blue]{\nrp 
					child[color=blue]{\nrp}}
					child[color=blue]{\nrp}} ;\&  
        \node[yshift=2mm][rond] {} child{node[rond][color=blue] {}		
        			child[color=blue]{\nrp} 
        			child[color=blue]{\nrp 
        				child[color=blue]{\nrp}}} ;\&   
        \node[yshift=2mm][rond] {} child{node[rond][color=blue] {} 
				child[color=blue]{\nrp
					child[color=blue]{\nrp} 
					child[color=blue]{\nrp}}} ; \& 
        \node[yshift=2mm][rond] {} child{node[rond][color=blue] {}} 
		   		child{node[rond][color=blue] {}}
				child{node[rond][color=blue] {}} 
				child{node[rond][color=blue] {}}
		   		child{node[rond][color=blue] {}} ; \&
        \node (n) {}; 
        	\\	 }; 
\node[rectangle,draw, draw opacity=1,fill opacity=0, minimum width=0.25cm, minimum height=0.3cm] at (1.42,0.4){};   
\node[rectangle,draw, draw opacity=1,fill opacity=0, minimum width=0.25cm, minimum height=0.6cm] at (1.75,0.25){};
\node[rectangle,draw, draw opacity=1,fill opacity=0, minimum width=0.72cm, minimum height=0.8cm] at (2.33,0.15){};
\node[rectangle,draw, draw opacity=1,fill opacity=0, minimum width=2.5cm, minimum height=1.1cm] at (4.02,0){};    
\node[xshift=1mm] at (n) {$\dots$};
\end{tikzpicture}    

\vskip-1mm      
\noindent \begin{tikzpicture}  
	\tikzstyle{level 1}=[sibling distance=2mm]
	\tikzstyle{level 2}=[sibling distance=2mm] 
	\tikzstyle{level 3}=[sibling distance=2mm]
	\tikzstyle{level 4}=[sibling distance=2mm]    
	\node[rectangle,minimum width=14cm,minimum height=2mm,anchor=west]{};
	\matrix[mat,inner sep=2mm,row sep=3mm, column sep=2mm]{ 
	\node[rectangle]{${\mathcal T}^{[5]}=$}; \& 
	\node[yshift=2mm][rond]{}; \&  
	\node[yshift=2mm][rond] {} child{node[rond][color=blue] {}} ;\&
	\node[yshift=2mm][rond] {} child{node[color=blue][rond]{}} 
	                        child{node[color=blue][rond]{}} ;\&  
	\node[yshift=2mm][rond] {} child{node[rond][color=blue] {} 
	                        child[color=blue]{\nrp}} ;\&
	\node[yshift=2mm][rond] {} child{node[rond][color=blue] {}} 
			   	child{node[rond][color=blue] {}}
			   	child{node[rond][color=blue] {}} ; \&
        \node[yshift=2mm][rond] {} child{ node[rond][color=blue]{}  
                			child[color=blue]{\nrp}} 
                		child{node[color=blue][rond]{}};\&
        \node[yshift=2mm][rond] {} child{ node[rond][color=blue]{}} 
				child{node[color=blue][rond]{}
					child[color=blue]{\nrp}};\& 
        \node[yshift=2mm][rond] {} child{node[rond][color=blue] {} 
		   		child[color=blue]{\nrp}
		   		child[color=blue]{\nrp}} ; \&  
        \node[yshift=2mm][rond] {} child{node[rond][color=blue] {} 
				child[color=blue]{\nrp 
					child[color=blue]{\nrp}}} ; \& 
        \node[yshift=2mm][rond] {} child{node[rond][color=blue] {}} 
			   	child{node[rond][color=blue] {}}
				child{node[rond][color=blue] {}}
			   	child{node[rond][color=blue] {}} ; \& 
        \node[yshift=2mm][rond] {} child{node[rond][color=blue] {} 
		   			child[color=blue]{\nrp}}
				child{node[rond][color=blue] {}}
		   		child{node[rond][color=blue] {}} ; \& 
        \node[yshift=2mm][rond] {} child{node[rond][color=blue] {}}
     				child{node[rond][color=blue] {}
					  	child[color=blue]{\nrp}}
		      		child{node[rond][color=blue] {}} ; \&
        \node[yshift=2mm][rond] {} child{node[rond][color=blue] {}}
			   	child{node[rond][color=blue] {}}
			        child{node[rond][color=blue] {}
				   	child[color=blue]{\nrp}} ; \&
        \node[yshift=2mm][rond] {} child{node[color=blue][rond] {}  
					child[color=blue]{\nrp} 
					child[color=blue]{\nrp}} 
				child{ node[rond][color=blue]{}};\& 
        \node[yshift=2mm][rond] {} child{node[color=blue][rond] {}
						child[color=blue]{\nrp}}
				child{ node[rond][color=blue]{}  	
					child[color=blue]{\nrp}};\&
        \node[yshift=2mm][rond] {} child{node[color=blue][rond] {}} 
			        child{ node[rond][color=blue]{} 
			 		child[color=blue]{\nrp}      	
			  		child[color=blue]{\nrp}};\&  
        \node[yshift=2mm][rond] {} child{node[rond][color=blue] {} 
					child[color=blue]{\nrp 
						child[color=blue]{\nrp}}}
				child[color=blue]{\nrp} ;\&
        \node[yshift=2mm][rond] {} child[color=blue]{\nrp}
				child{node[rond][color=blue] {} 
					child[color=blue]{\nrp 
						child[color=blue]{\nrp}}} ;\&
        \node[yshift=2mm][rond] {} child{node[rond][color=blue] {}   
				child[color=blue]{\nrp}  
				child[color=blue]{\nrp} 	
				child[color=blue]{\nrp}} ; \&  
        \node[yshift=2mm][rond] {} child{node[rond][color=blue] {} 
				child[color=blue]{\nrp 
					child[color=blue]{\nrp}}
					child[color=blue]{\nrp}} ;\&  
        \node[yshift=2mm][rond] {} child{node[rond][color=blue] {}			
				child[color=blue]{\nrp} 
				child[color=blue]{\nrp 
					child[color=blue]{\nrp}}} ;\&   
        \node[yshift=2mm][rond] {} child{node[rond][color=blue] {} 
				child[color=blue]{\nrp
					child[color=blue]{\nrp} 
					child[color=blue]{\nrp}}} ; \&  
        \node[yshift=2mm][rond] {} child{node[rond][color=blue] {} 
				child[color=blue]{\nrp 
				    child[color=blue]{\nrp
					    child[color=blue]{\nrp}}}} ; \&
        \node[inner sep=0mm]{$\dots$}; 	 
\\	 };  
\node[rectangle,draw, draw opacity=1,fill opacity=0, minimum width=0.25cm, minimum height=0.3cm] at (1.42,0.5){};   
\node[rectangle,draw, draw opacity=1,fill opacity=0, minimum width=0.25cm, minimum height=0.6cm] at (1.75,0.35){};
\node[rectangle,draw, draw opacity=1,fill opacity=0, minimum width=0.7cm, minimum height=0.8cm] at (2.33,0.25){};  
\node[rectangle,draw, draw opacity=1,fill opacity=0, minimum width=2.5cm, minimum height=1.1cm] at (4.02,0.1){};  
\node[rectangle,draw, draw opacity=1,fill opacity=0, minimum width=8.35cm, minimum height=1.3cm] at (9.52,0){};
\end{tikzpicture}
\caption{ First iterations of ${\mathcal T}^{[n+1]}={\mathcal Z}\cdot \Seq({\color{blue}{\mathcal T}^{[n]}})$.\label{fig:trees-slow}}	                                 
\end{figure}

\section{Well-founded Systems at 0 }\label{subseq:wf0}
In this section, we are interested in systems of the form~$\bc Y=\bc H(\bc Z,\bc Y)$ such that $\bc H(\bs{0},\bs{0})=\bs{0}$.
We focus on the case when the convergent iteration defined by Equation~\eqref{eq:ite-def-wf0} has a solution with no zero (empty species) coordinates. 
This type of combinatorial system, which we call well-founded at~$\bs{0}$, is not only natural, but  also easy to characterize.
Section~\ref{subsubsec:empty_species} gathers useful properties of empty species and how to detect solutions of systems with empty coordinates;  Section~\ref{subsub:wellfoundedat0} defines and characterizes combinatorial systems that are
well-founded at~$\bs{0}$.

\subsection{Empty Species}\label{subsubsec:empty_species}
The empty species plays the role of a zero in the theory of species. We first state an obvious property.
\begin{lemma} For any species $\bc F$, the species~$\bs{0}$ and $\bc F$ are disjoint, 
        $\bc F\cdot \bs{0} = \bs{0} \cdot  \bc F =\bs{0}$, and $\bs{0}\subset \bc F$.
\end{lemma}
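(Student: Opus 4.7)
The plan is to unfold each assertion directly from the definition of~$\bs{0}$ (namely $\bs{0}[U]=\eset$ for every finite set~$U$) together with the set-theoretic definitions of the operators `$\cdot$' and `$\subset$' between species recalled in Section~\ref{parag:rel-species}.

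For disjointness, I would observe that for every finite set~$U$, $\bc F[U]\cap\bs{0}[U]=\bc F[U]\cap\eset=\eset$, which is exactly the definition of disjointness of two species given in Section~\ref{parag:rel-species}. For the annihilation property, I would apply the definition of the product of species: for any finite~$U$,
\[
(\bc F\cdot\bs{0})[U]=\sum_{U=U_1+U_2}\bc F[U_1]\times\bs{0}[U_2],
\]
and each factor $\bs{0}[U_2]$ is empty, so each Cartesian product vanishes and the whole disjoint union is~$\eset$; hence $\bc F\cdot\bs{0}=\bs{0}$. The symmetric identity $\bs{0}\cdot\bc F=\bs{0}$ follows the same way (or by the standard commutativity of `$\cdot$' up to isomorphism). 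Finally, for the inclusion $\bs{0}\subset\bc F$, the inclusion $\bs{0}[U]=\eset\subset\bc F[U]$ holds for every~$U$, and the condition $\bs{0}[\sigma]=\bc F[\sigma]|_{\bs{0}[U]}$ from the definition of subspecies is vacuous since $\bs{0}[U]$ is empty.

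There is no real obstacle here: each statement is immediate from the definition of the empty species and of the relevant operation. The only subtlety worth flagging is that these equalities are not literal set equalities but combinatorial equalities (i.e.\ up to species isomorphism), since the product of species is only associative and commutative up to canonical isomorphism; this is consistent with the convention adopted in Section~\ref{parag:rel-species}.
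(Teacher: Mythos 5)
Your proof is correct and takes the same approach as the paper, which simply states that these facts are ``direct consequences of the definitions''; you have merely unfolded what the paper leaves implicit. One small remark: your closing caveat about the equalities holding only up to isomorphism is unnecessarily cautious here, since $\bc F[U_1]\times\eset=\eset$ and a disjoint union of empty sets is literally $\eset$, so $\bc F\cdot\bs{0}$ and $\bs{0}\cdot\bc F$ are \emph{set-theoretically} equal to $\bs{0}$ — no appeal to commutativity of the product or to combinatorial equality is needed.
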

\begin{proof}
These are direct consequences of the definitions.
\end{proof}
\noindent The next property is more combinatorial, in the sense that it relies on positivity.

\begin{lemma}\label{lemma:zero}
         Let  $\bc G=({\mathcal G}_{1:m})$ be a vector of (possibly multisort) species, such that ${\mathcal G}_i\neq 0$ and ${\mathcal G}_i(0)=0$, for $i=1,\dots,m$. 
         For any vector of species~$\bc F$, if $\bc F({\mathcal G}_{1:m})=\bs{0}$, then $\bc F=\bs{0}$.   
\end{lemma}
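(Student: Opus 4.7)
The plan is to argue by contradiction, using the multisort composition formula~\eqref{eq:2sort-comp} (suitably generalized to $m$ sorts) to exhibit a non-empty composite structure from a non-empty $\bc F$-structure and atomic non-empty $\bc G$-structures. This is essentially a positivity argument: if the building blocks exist, so does the composite.

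Concretely, suppose for contradiction that $\bc F \neq \bs{0}$, so some component $\mathcal F_j$ is not the zero species. Then there exists a tuple $(U_1,\dots,U_m)$ of finite sets and an $\mathcal F_j$-structure $\varphi \in \mathcal F_j[U_1,\dots,U_m]$. Next, for each $i$, since $\mathcal G_i \neq 0$ and $\mathcal G_i(0)=0$, there is a nonempty finite set $V_i$ carrying at least one $\mathcal G_i$-structure $g_i \in \mathcal G_i[V_i]$.

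To produce a structure in $\mathcal F_j(\bc G)$, I form disjoint copies: for each $i$ and each $u \in U_i$, take a disjoint copy $V_i^{(u)}$ of $V_i$ with a corresponding transported structure $g_i^{(u)} \in \mathcal G_i[V_i^{(u)}]$. Let $W_i = \bigsqcup_{u \in U_i} V_i^{(u)}$, let $\pi_i = \{V_i^{(u)} : u \in U_i\}$, and let $W = W_1 \sqcup \dots \sqcup W_m$. The natural bijection $\pi_i \leftrightarrow U_i$ transports $\varphi$ to an $\mathcal F_j$-structure $\widetilde\varphi \in \mathcal F_j[\pi_1,\dots,\pi_m]$. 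The term $\widetilde\varphi \times \prod_{i,u} g_i^{(u)}$ then lies in the summand indexed by $(\pi_1,\dots,\pi_m)$ of the multisort composition $\mathcal F_j(\bc G)[W]$, contradicting the assumption that $\mathcal F_j(\bc G) = 0$.

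No real obstacle arises; the only care required is in writing the $m$-sort composition formula cleanly and being explicit about the disjoint-union construction so that the $\pi_i$ form a valid partition of $W_i$ with each part carrying a non-empty $\mathcal G_i$-fiber. The hypothesis $\mathcal G_i(0)=0$ is what guarantees $V_i \neq \varnothing$, so the $V_i^{(u)}$ really are nonempty and the composite structure makes sense.
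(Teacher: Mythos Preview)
Your proof is correct and follows essentially the same approach as the paper: assume $\bc F\neq\bs 0$, pick a witness $\mathcal F_j$-structure on some tuple of sets, take disjoint copies of nonempty $\mathcal G_i$-structures (one per element of the corresponding sort), and assemble them via the composition formula to produce a nonempty $\mathcal F_j(\bc G)$-structure. The paper carries this out in the two-sort case with slightly different notation for the disjoint copies, but the argument is identical in substance.
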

\begin{proof}
For simplicity of notation and without loss of generality, we consider the case when $\bc F$ is a single two-sort species and $\bc G$ is unisort.

We assume that ${\mathcal F}\neq 0$ and show how to build a nonempty ${\mathcal F}({\mathcal G}_1,{\mathcal G}_2)$-structure. The hypotheses imply that there exists a multi-set~$\bs V=(V_1,V_2)$ such that~${\mathcal F}[\bs V]\neq \eset$ and two sets~$U_1$ and $U_2$ such that~${\mathcal G}_i[U_i]\neq \eset$, for $i=1,2$. Let $\bs U=(\{1\}\times U_1)+\dots+(\{v_1\}\times U_1)+(\{1\}\times U_2)+\dots+(\{v_2\}\times U_2)$ where $v_1$ and $v_2$ are the cardinalities of~$V_1$ and~$V_2$. By construction, there exists a natural bijection between $\bs V$ and $\bs U$, so that ${\mathcal F}[\bs U]\neq \eset$. Similarly, each~$\{i\}\times U_1$ is in bijection with~$U_1$ so that~${\mathcal G}_1[\{i\}\times U_1]\neq\eset$ and the same holds for~$U_2$. Thus, there exists a nonempty structure in the set
\[
{\mathcal F}[U_1,U_2]\times\prod_{i=1}^{v_1} {\mathcal G}_1[\{i\}\times U_1]\times\prod_{j=1}^{v_2} {\mathcal G}_2[\{j\}\times U_2],
\]
which shows that the species~${\mathcal F}({\mathcal G}_1,{\mathcal G}_2)$ is not~0, in view of Equation~\eqref{eq:2sort-comp}.
\end{proof}
\begin{example} The product species is not zero, thus if~${\mathcal A}\cdot{\mathcal B}=0$, then one of~${\mathcal A}$ or~${\mathcal B}$ is~0.
\end{example}

\noindent As a corollary, the emergence of nonempty species in composition is restricted to the case when a component species turns from  empty to nonempty.
\begin{corollary}\label{coro:zero}
        Let  $\bc A$ and $\bc B$  be vectors of  (possibly multisort) species;  assume that $\bc B\subset\bc A$ and  ${\bc B(\bs{0})=\bc A(\bs{0})=\bs{0}}$. 
For any vector of $m$~species $\bc F$, if $\bc F(\bc B)=\bs{0}$ and  $\bc F(\bc A)\ne\bs{0}$, then
there exists $i \in \{1,\dots,m\}$, such that ${\mathcal B}_i=0$ and ${\mathcal A}_i\neq0$.     
\end{corollary}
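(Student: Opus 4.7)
The plan is to argue by contrapositive: assume that for every $i \in \{1,\dots,m\}$ one has ${\mathcal B}_i \neq 0$ or ${\mathcal A}_i = 0$, and show that $\bc F(\bc B) = \bs{0}$ forces $\bc F(\bc A) = \bs{0}$.

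First I would partition the index set into $I = \{i : {\mathcal A}_i = 0\}$ and $J = \{i : {\mathcal A}_i \neq 0\}$. Since $\bc B \subset \bc A$, the coordinates with $i \in I$ satisfy ${\mathcal B}_i = 0$ as well, and by the contrapositive hypothesis ${\mathcal B}_j \neq 0$ for every $j \in J$.

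Next I would use the fact that plugging the zero species into a sort of a multisort species kills every contribution coming from partitions that assign a nonempty block to that sort; by the composition formula extending \eqref{eq:2sort-comp}, both $\bc F(\bc A)$ and $\bc F(\bc B)$ depend only on the coordinates indexed by $J$. Formally, introducing the vector of species $\bc F^{*}(\bc Y_J) := \bc F(\bs{0}_I, \bc Y_J)$ obtained from $\bc F$ by zeroing the $I$-coordinates, one obtains $\bc F(\bc A) = \bc F^{*}(\bc A_J)$ and $\bc F(\bc B) = \bc F^{*}(\bc B_J)$.

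Finally, Lemma~\ref{lemma:zero} applied coordinatewise to $\bc F^{*}$ concludes the argument: the tuple $({\mathcal B}_j)_{j \in J}$ satisfies ${\mathcal B}_j \neq 0$ and ${\mathcal B}_j(\bs{0}) = 0$ (the latter inherited from $\bc B(\bs{0}) = \bs{0}$), so $\bc F^{*}(\bc B_J) = \bs{0}$ forces $\bc F^{*} = \bs{0}$, and hence $\bc F(\bc A) = \bc F^{*}(\bc A_J) = \bs{0}$, contradicting the hypothesis. The only mildly delicate point I expect is the bookkeeping involved in reducing $\bc F(\bc A)$ and $\bc F(\bc B)$ to a common species $\bc F^{*}$ on the sorts indexed by $J$, but this is immediate from the multisort composition formula once the $I$-sorts are seen to contribute only through the empty partition.
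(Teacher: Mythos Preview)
Your proof is correct and follows essentially the same approach as the paper's: argue by contrapositive, split the coordinates into those where both ${\mathcal A}_i$ and ${\mathcal B}_i$ vanish versus those where both are nonzero, restrict $\bc F$ by zeroing the former sorts, and apply Lemma~\ref{lemma:zero} to the restricted species evaluated at the nonzero ${\mathcal B}$-coordinates. The only cosmetic difference is that the paper partitions according to which ${\mathcal B}_i$ are nonzero while you partition according to which ${\mathcal A}_i$ are nonzero, but under the contrapositive hypothesis together with $\bc B\subset\bc A$ these two partitions coincide.
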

\begin{proof}
        Assume that the conclusion does not hold, that is, for all $i \in \{1,\dots,m\}$, ${\mathcal B}_i=0$ implies that ${\mathcal A}_i=0$.
 Assume without loss of generality that the nonzero coordinates of $\bc B$ are the first $k$ ones, while ${\mathcal B}_{k+1}=\dots={\mathcal B}_m={\mathcal A}_{k+1}=\dots={\mathcal A}_m={0}$. 
The species~$\bc G({\mathcal Y}_{1:k}):=\bc F({\mathcal Y}_{1:k},\bs{0})$ is such that  $\bc G({\mathcal B}_{1:k})=\bc F(\bc B)=\bs{0}$.
By Lemma~\ref{lemma:zero}, $\bc G=\bs{0}$ and thus  $\bs{0}=\bc G({\mathcal A}_{1:k})=\bc F(\bc A)$, a contradiction.
\end{proof}
Finally, we get an effective criterion for detecting the existence of zero coordinates in the solution of a combinatorial system.
\begin{lemma}\label{lem:0coord}
Let $\bc H=({\mathcal H}_{1:m})$ be a vector of species such that $\bc H(\bs{0},\bs{0})=\bs{0}$ and the Jacobian matrix~${\bd\partial \bc H}/{\bd\partial\bc Y}(\bs{0},\bs{0})$ is nilpotent.
The $i$th coordinate of the solution~$\bc S$ of the system~$\bc Y=\bc H(\bc Z,\bc Y)$ is $0$ if and only if ${\mathcal Y}_i^{[m]}=0$, where~${\mathcal Y}_i^{[m]}$ is the $i$th coordinate of the $m$th element of the sequence defined by~\eqref{joyal-iteration}.
\end{lemma}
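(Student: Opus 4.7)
The plan is to show two opposite implications, with the reverse direction being the substantive one.

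The forward direction (from the iterate to the limit) is immediate. Since the sequence $(\bc Y^{[n]})$ is increasing and converges to $\bc S$, each $\bc Y^{[n]}$ is a subspecies of $\bc S$, so any non-zero coordinate of $\bc Y^{[m]}$ yields a non-zero coordinate of~$\bc S$. Equivalently, if ${\mathcal S}_i = 0$ then ${\mathcal Y}_i^{[m]} = 0$.

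For the converse, I will track the set of zero coordinates $I^{[n]} := \{i : {\mathcal Y}_i^{[n]} = 0\}$. Because the iteration is increasing, $I^{[n+1]} \subseteq I^{[n]}$, and $I^{[0]} = \{1, \dots, m\}$. The key step is to prove the following stabilization property: if $I^{[n]} = I^{[n+1]}$ for some $n$, then $I^{[n']} = I^{[n]}$ for every $n' \geq n$. Given the stabilization property, a strict decrease $I^{[n]} \supsetneq I^{[n+1]}$ can happen at most $m$ times, so the chain has stabilized by step $m$; hence ${\mathcal Y}_i^{[m]} = 0$ forces ${\mathcal Y}_i^{[n]} = 0$ for all $n \geq m$, and passing to the limit (which only enlarges species) shows that ${\mathcal S}_i$ agrees with $0$ on every finite set, i.e., ${\mathcal S}_i = 0$.

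To establish the stabilization property, suppose $I^{[n]} = I^{[n+1]}$ and fix $i \in I^{[n+1]}$. I want to show ${\mathcal Y}_i^{[n+2]} = {\mathcal H}_i(\bc Z, \bc Y^{[n+1]}) = 0$. Assume for contradiction that ${\mathcal H}_i(\bc Z, \bc Y^{[n+1]}) \ne 0$. Since ${\mathcal H}_i(\bc Z, \bc Y^{[n]}) = {\mathcal Y}_i^{[n+1]} = 0$ and $\bc Y^{[n]} \subset \bc Y^{[n+1]}$, Corollary~\ref{coro:zero} (applied to the one-argument species $({\mathcal Z}, \bc Y) \mapsto {\mathcal H}_i(\bc Z, \bc Y)$, viewing $\bc Z$ as a spectator) guarantees the existence of a coordinate $j$ with ${\mathcal Y}_j^{[n]} = 0$ and ${\mathcal Y}_j^{[n+1]} \ne 0$, i.e.\ $j \in I^{[n]} \setminus I^{[n+1]}$; this contradicts $I^{[n]} = I^{[n+1]}$.

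The only subtle point is the application of Corollary~\ref{coro:zero}: the corollary is stated for a single vector argument, whereas ${\mathcal H}_i$ takes both $\bc Z$ and $\bc Y$. This is handled by augmenting the list of arguments with $\bc Z$ (whose value is fixed and unchanged), so the hypothesis $\bc A(\bs 0) = \bc B(\bs 0) = \bs 0$ of the corollary remains valid and produces a zero coordinate strictly within $\bc Y$. Once this is in place, the remainder is bookkeeping on the decreasing chain of subsets of $\{1, \dots, m\}$, which has length at most $m+1$.
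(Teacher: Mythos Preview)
Your proof is correct and follows essentially the same approach as the paper: both hinge on Corollary~\ref{coro:zero} together with a pigeonhole on the $m$ coordinates. The paper argues by contradiction, tracing backwards a chain of coordinates that ``turn on'' at successively earlier steps, while you phrase the same idea forwards via stabilization of the zero-set $I^{[n]}$; these are dual organizations of the same argument.
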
  
\begin{proof}
If ${\mathcal S}_i=0$, then ${\mathcal Y}_i^{[k]}=0$, for all $k\geq 0$.        
Conversely, let ${\mathcal Y}_i^{[m]}=0$ and assume that ${\mathcal S}_i\neq 0$.
Then, there exists $k>m$ such that ${\mathcal Y}_i^{[k]}={\mathcal H}_i(\bc Z,\bc Y^{[k-1]})\neq 0$ and ${\mathcal Y}_i^{[k-1]}={\mathcal H}_i(\bc Z,\bc Y^{[k-2]})= 0$. 
By Corollary~\ref{coro:zero}, this implies that there exists $j\neq i$ such that  ${\mathcal Y}_j^{[k-1]}\neq 0$ and ${\mathcal Y}_j^{[k-2]}= 0$. 
This reasoning cannot be iterated more than $m$ times, which implies a contradiction, that is $k\leq m$.        
\end{proof}

\begin{Algorithm}[t]{0.79\textwidth}   
\SetAlgoRefName{0-coord}
\caption{Detection of zero coordinates in the solution of a system\label{algo:0-coord}}
\DontPrintSemicolon
\Input{$\bc H=({\mathcal H}_{1:m})$: a vector of species such that $\bc H(\bs{0},\bs{0})=\bs{0}$ and the Jacobian matrix~${\bd\partial \bc H}/{\bd\partial\bc Y}(\bs{0},\bs{0})$ is nilpotent.}
\Output{Answer to ``Are there 0 coordinates in the solution of the system $\bc Y=\bc H(\bc Z,\bc Y)$?''}
\SetAlgoNoLine
\Begin{
\SetAlgoVlined
Compute $\,\bc U:= \bc H^m(\bc Z,\bs{0})$\;
\ForEach{coordinate ${\mathcal C}$ \KwOf $\,\bc U$}
        {\lIf{${\mathcal C} = 0$}{\Return YES} }
\Return NO
}
\end{Algorithm}

Lemma~\ref{lem:0coord} leads to Algorithm~\ref{algo:0-coord}. Note that in practice, it is not necessary to compute the whole species~$\bc H^m(\bc Z,\bs{0})$ (which may become very large). Indeed, according to the proof, the only property we use, for each coordinate of the vector, is whether the species is 0 or not. Thus, practically, we compute $\bc F^m(\bc Z,\bs{0})$ instead of $\bc H^m(\bc Z,\bs{0})$, with ${\mathcal F}_i(\bc Z,\bc Y)$ being 0 if ${\mathcal H}_i(\bc Z,\bc Y)=0$ and ${\mathcal Z}$ otherwise. This is essentially the same method as in Algorithm A of~\cite[p. 28]{Zimmermann91}, the zero coordinates being those with an infinite valuation.

\subsection{Well-foundedness at 0}\label{subsub:wellfoundedat0}
In Joyal's implicit species theorem, the nilpotence of the Jacobian matrix appears as a sufficient condition. In this section, we prove a converse of Joyal's implicit species theorem under the extra condition that the solution does not have any empty coordinate.

\begin{definition}\label{def:wf0}
	Let $\bc H(\bc Z,\bc Y)$ be a vector of species such that $\bc H(\bs{0},\bs{0})=\bs{0}$.
	The combinatorial system $\bc Y=\bc H(\bc Z,\bc Y)$ is said to be  \emph{well-founded at}~$\bs{0}$ when the sequence~$(\bc Y^{[n]})_{n\in\N}$  defined by
        \begin{equation}\label{eq:ite-def-wf0}\tag{\ref{phi-iteration}}
                \bc Y^{[0]}=\bs{0}\quad\text{and}\quad \bc Y^{[n+1]}=\bc H(\bc Z,\bc Y^{[n]}),\quad n\ge0
        \end{equation}
        is convergent and the limit~$\bc S$ of this sequence has no zero coordinate.
\end{definition} 
 
Requiring the solution of a recursively defined combinatorial system to have no zero coordinate is a natural combinatorial condition  from the point of view of specification designers. In any case, Lemma~\ref{lem:0coord} shows that it is easy to detect. It is also easy to fix, by removing from the system the corresponding unknowns.

\begin{example} Here are a few examples of systems that are excluded by our definition although the iteration is convergent:
        \begin{itemize}
                \item[--] ${\mathcal Y}={\mathcal Y}$. In this case, the Jacobian matrix at 0 is not nilpotent (and the equation has an infinite number of solutions);
                \item[--] ${\mathcal Y}={\mathcal Y}+{\mathcal Z}{\mathcal Y}$. Again, the Jacobian matrix at 0 is not nilpotent (still 0 is its unique solution);
                \item[--] ${\mathcal Y}={\mathcal Z}{\mathcal Y}$. Here, the Jacobian matrix at~0 is nilpotent (it is~0), but this equation is not well-founded at~0 with our definition. 
\end{itemize}
\end{example}

We now state a nice and effective characterization of systems well-founded at~$\bs{0}$ (in  our previous article~\cite{PiSaSo08} the characterization was wrong, omitting the pathological cases of solutions with zero coordinates). The associated effective procedure is Algorithm~\ref{algo:wf0}.

\begin{Algorithm}{0.81\textwidth}   
\SetKwFunction{zerocoord}{\ref{algo:0-coord}}
\SetAlgoRefName{isWellFoundedAt0}
\caption{Characterization of well-founded systems at $\bs{0}$\label{algo:wf0}}
\DontPrintSemicolon
\Input{$\bc H=({\mathcal H}_{1:m})$: a vector of species such that $\bc H(\bs{0},\bs{0})=\bs{0}$.}
\Output{Answer to ``Is the system $\bc Y=\bc H(\bc Z,\bc Y)$ well-founded at $\bs{0}$?''}
\SetAlgoNoLine
\Begin{
\SetAlgoVlined
        Compute $\bc J:= {\bd\partial \bc H}/{\bd\partial\bc Y}(\bs{0},\bs{0})$\;
        \lIf{$\bc J^m = 0$}{\Return \zerocoord{$\bc H$}}
	\lElse{\Return NO}
}
\end{Algorithm}

\begin{theorem}[Characterization of well-founded systems at~$\bs{0}$]\label{th:carac-wf0}
 Let $\bc H=({\mathcal H}_{1:m})$ be a vector of species such that $\bc H(\bs{0},\bs{0})=\bs{0}$. The combinatorial system $\bc Y=\bc H(\bc Z,\bc Y)$ is well-founded at~$\bs{0}$ \emph{if and only if} the Jacobian matrix ${\bd\partial \bc H}/{\bd\partial\bc Y}(\bs{0},\bs{0})$ is nilpotent and the vector of species~$\bc Y^{[m]}$ defined by Equation~\eqref{eq:ite-def-wf0} has no zero coordinate.
\end{theorem}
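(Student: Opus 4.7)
The plan is to prove both implications separately. The $(\Leftarrow)$ direction is essentially immediate: assuming $\bc J := \bd\partial\bc H/\bd\partial\bc Y(\bs{0},\bs{0})$ is nilpotent and $\bc Y^{[m]}$ has no zero coordinate, Proposition~\ref{prop:is-cvg} gives that the iteration converges to some $\bc S$, and Lemma~\ref{lem:0coord} transfers the absence of zero coordinates from $\bc Y^{[m]}$ to $\bc S$, so the system is well-founded at $\bs{0}$. The substantive work lies in the $(\Rightarrow)$ direction; once nilpotence is established there, the second half of the characterization follows for free from Lemma~\ref{lem:0coord}.

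For $(\Rightarrow)$, assume the iteration converges to $\bc S$ with no zero coordinate and argue by contradiction that $\bc J$ is nilpotent. The entries of $\bc J$ are constant (size-$0$-only) species, so $\bc J$ is essentially a non-negative integer matrix, and non-nilpotence yields a directed cycle $i_1 \to i_2 \to \dots \to i_k \to i_1$ in its dependency graph with edge weights $n_j \ge 1$. Accordingly, ${\mathcal H}_{i_j}$ decomposes disjointly as ${\mathcal H}_{i_j} = c_j \cdot {\mathcal Y}_{i_{j+1}} + R_j(\bc Z,\bc Y)$ with $c_j = n_j \cdot 1$ and $R_j$ containing no isolated constant-multiple of ${\mathcal Y}_{i_{j+1}}$. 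The fixed-point identity $\bc S = \bc H(\bc Z,\bc S)$ then yields ${\mathcal S}_{i_j} = c_j \cdot {\mathcal S}_{i_{j+1}} + R_j(\bc Z,\bc S)$.

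Now split into two cases. If some $n_j \ge 2$, iterating the inclusion ${\mathcal S}_{i_j} \supset c_j \cdot {\mathcal S}_{i_{j+1}}$ around the cycle gives $|{\mathcal S}_{i_1}[U]| \ge (n_1 \cdots n_k)\,|{\mathcal S}_{i_1}[U]| \ge 2|{\mathcal S}_{i_1}[U]|$ on every finite $U$; convergence of the iteration makes these cardinalities finite, forcing ${\mathcal S}_{i_1} = 0$ and contradicting the hypothesis. If instead every $n_j = 1$, the exact cycle equations force $\sum_j R_j(\bc Z,\bc S) = \bs{0}$, hence each $R_j(\bc Z,\bc S) = \bs{0}$; since the iteration starts at $\bs{0}$ and $\bc H(\bs{0},\bs{0}) = \bs{0}$ one has $\bc S(\bs{0}) = \bs{0}$, so together with the no-zero-coordinate hypothesis and ${\mathcal Z}(0) = 0$, Lemma~\ref{lemma:zero} applies to conclude $R_j = \bs{0}$ as a species. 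Then ${\mathcal H}_{i_j} = {\mathcal Y}_{i_{j+1}}$, and an induction from $\bc Y^{[0]} = \bs{0}$ around the closed cycle $i_1,\ldots,i_k$ yields ${\mathcal Y}_{i_j}^{[n]} = 0$ for every $n$, so ${\mathcal S}_{i_j} = 0$, contradiction.

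The principal obstacle is the unit-cycle case $n_j \equiv 1$: there the cardinality bound degenerates to the tautology $|{\mathcal S}_{i_1}[U]| \ge |{\mathcal S}_{i_1}[U]|$, so the contradiction must be of a \emph{structural} nature rather than a counting one. Lemma~\ref{lemma:zero} is the decisive tool: it forces the $R_j$ to vanish as species, thereby reducing the equations on the cycle to a pure permutation of coordinates from which the iteration started at $\bs{0}$ cannot escape.
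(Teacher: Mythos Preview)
Your proof is correct, but it takes a noticeably different route from the paper's argument for the nilpotence direction.

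The paper's proof is a two-line counting argument that bypasses your case split entirely. It observes that if $\bc J := \bs\partial\bc H/\bs\partial\bc Y(\bs{0},\bs{0})$ is not nilpotent, then for every $q\in\mathbb N$ the matrix species $\bc J^{q}$ has a nonzero structure $\bs\beta_q$, necessarily of size~$0$. Since $\bc S$ has no zero coordinate, one may pick a tuple $\bs\gamma$ of nonempty $\bc S$-structures of some fixed total size~$n$; then the products $\bs\beta_q\cdot\bs\gamma$ are $\bc S$-structures of size~$n$ for all~$q$, yielding infinitely many structures of a fixed size and contradicting convergence. No cycle decomposition, no distinction between multiplicities $\ge 2$ and $=1$, is needed.

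Your argument, by contrast, works out the graph-theoretic content explicitly: you locate a simple cycle in the dependency graph of $\bc J$, peel off the linear part $c_j\cdot{\mathcal Y}_{i_{j+1}}$ of each ${\mathcal H}_{i_j}$, and then handle the two regimes separately. What this buys you is a more structural conclusion in the unit-cycle case: you actually prove that the cycle equations degenerate to a pure permutation ${\mathcal H}_{i_j}={\mathcal Y}_{i_{j+1}}$, which is a sharper statement than what the paper extracts. The cost is the invocation of Lemma~\ref{lemma:zero} to pass from $R_j(\bc Z,\bc S)=0$ to $R_j=0$ (legitimate, since $\bc S(\bs 0)=\bs 0$ and $\bc S$ has no zero coordinate), and a somewhat longer argument overall. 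The paper's approach is the more economical one; yours gives more insight into \emph{why} unit cycles are pathological.
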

\begin{proof}
One direction was proved along with the implicit species theorem and is a consequence of  Proposition~\ref{prop:is-cvg} and Lemma~\ref{lem:0coord}.

Conversely, if the system is well-founded at~$\bs{0}$, then Lemma~\ref{lem:0coord} gives the condition on~$\bc Y^{[m]}$. We now show the nilpotence of the Jacobian matrix by contradiction.
Let $\bs{\gamma}$ be an $\bc S$-structure such that $\gamma_i\neq 0$ for $i=1,\dots,m$ and let $n$ be the size of $\bs{\gamma}$. 
        Assume that the matrix  ${\bd\partial \bc H}/{\bd\partial\bc Y}(\bs{0},\bs{0})$ is not nilpotent.
        Thus, for all $q\in \mathbb{N}$, there exists a nonzero structure $\bs{\beta}_q$ in the species~$({\bd\partial \bc H}/{\bd\partial\bc Y}(\bs{0},\bs{0}))^q$. By construction, the size of $\bs{\beta}_q$ is $0$. Since none of the $\gamma_i$ is zero, there are infinitely many $\bc S$-structures of the form $\bs{\beta}_q\cdot \bs{\gamma}$, all of size $n$, which prevents the sequence~$(\bc Y^{[n]})_{n\in\N}$ from converging and leads to a contradiction.
\end{proof}

\section{Polynomial Species}\label{subseq:polspecies}
We now extend the implicit species theorem to cases with structures of size~0. Control over the number
of such structures is provided by polynomial species that we first present. This section and the next one can be skipped on first reading.

\subsection{Polynomial Species}\label{subsubsec:finite_species}

\begin{definition}
        A (possibly multisort) species of structures ${\mathcal F}$ is \emph{polynomial} if there exists $n\geq 0$ such that  ${\mathcal F}_{\geq n}=0$. In other words, there are only a finite number of ${\mathcal F}$-structures.
\end{definition}

The following results provide two effective characterizations of those systems that admit a polynomial solution; the first one applies to a system whose solution may have zero coordinates and the second one gives another characterization when the system is well-founded at~$\bs{0}$. While the second characterization seems to be clearer, the first one is needed in the next section, when $\bc H(\bs{0},\bs{0})\neq 0$. The corresponding decision procedure is Algorithm~\ref{algo:poly-Joyal}.

\begin{proposition}[Implicit Polynomial Species]\label{prop:polynomial_implicit}          Let~$\bc H=({\mathcal H}_{1:m})$ be a vector of species such that $\bc H(\bs{0},\bs{0})=\bs{0}$ and the Jacobian matrix ${\bd\partial \bc H}/{\bd\partial\bc Y}(\bs{0},\bs{0})$ is nilpotent.
          Let $(\bc Y^{[n]})_{n\in\N}$ be the sequence of species defined by 
\begin{equation}\label{eq:phi_poly}\tag{\ref{joyal-iteration}}
                  \bc Y^{[0]}=\bd{0}\quad\text{and}\quad \bc Y^{[n+1]}=\bc H(\bc Z,\bc Y^{[n]}),\quad n\ge0
\end{equation}
          The solution~$\bc S$ of the system~$\bc Y=\bc H(\bc Z,\bc Y)$ such that $\bc S(\bs{0})=\bs{0}$ is polynomial if and only if $\bc Y^{[m]}$ is polynomial and  $\bc Y^{[m]}=\bc Y^{[m+1]}$. 
 \end{proposition}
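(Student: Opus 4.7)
The statement is an equivalence; the reverse implication will be essentially immediate, while the forward one requires a combinatorial analysis of $\bc H$-rooted trees.

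For $(\Leftarrow)$: if $\bc Y^{[m]} = \bc Y^{[m+1]} = \bc H(\bc Z, \bc Y^{[m]})$, then $\bc Y^{[m]}$ is a fixed point of the iteration, and an easy induction using $\bc H(\bs{0}, \bs{0}) = \bs{0}$ gives $\bc Y^{[m]}(\bs{0}) = \bs{0}$. The uniqueness in Theorem~\ref{th:IST} then forces $\bc S = \bc Y^{[m]}$, which is polynomial by hypothesis.

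For $(\Rightarrow)$, I will assume $\bc S$ is polynomial. Monotonicity of the iteration (established in the proof of Proposition~\ref{prop:is-cvg}) yields $\bc Y^{[m]} \subset \bc S$, so $\bc Y^{[m]}$ is polynomial. I plan to prove the stronger identity $\bc Y^{[m]} = \bc S$, from which $\bc Y^{[m+1]} = \bc H(\bc Z, \bc S) = \bc S = \bc Y^{[m]}$ follows. An easy induction (using $\bc H(\bs{0}, \bs{0}) = \bs{0}$) identifies $\bc Y^{[n]}$ with the species of $\bc H$-rooted trees of root-to-leaf depth at most $n-1$, so it suffices to show that every $\bc S$-structure has depth at most $m-1$.

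The depth bound will be proved by contradiction. Suppose some $\bc S$-structure $\gamma$ admits a root-to-leaf path of length $\ge m$, yielding $\ge m+1$ successive subtrees of sorts $i_0, i_1, \ldots, i_m \in \{1, \ldots, m\}$. By pigeonhole, $i_a = i_b$ for some $a < b$. Let $C$ be the ``context'' obtained from the subtree rooted at position $a$ by replacing the subtree at position $b$ with a hole; then $C$ is a ${\mathcal Y}_{i_a}$-structure with a distinguished ${\mathcal Y}_{i_a}$-hole. Either $C$ has positive non-hole size, in which case the iterates $C[\gamma_b], C[C[\gamma_b]], \ldots$ provide an infinite family of $\bc S$-structures of strictly growing sizes, contradicting polynomiality of $\bc S$; or $C$ has zero non-hole size, forcing each of the $b-a$ intermediate $\bc H$-structures to contribute no $\bc Z$-atom and only the path-following ${\mathcal Y}$-child (other $\bc S$-subtrees would have positive size, since $\bc S(\bs{0})=\bs{0}$), so that each is a structure of the appropriate entry of $\bd\partial \bc H/\bd\partial \bc Y(\bs{0}, \bs{0})$. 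Their composition, interpreted as a matrix product in $\bc J$, yields a nonzero element of the $(i_a, i_a)$-entry of $\bc J^{b-a}(\bs{0}, \bs{0})$, and concatenating $k$ such copies gives nonzero diagonal elements of $\bc J^{k(b-a)}(\bs{0}, \bs{0})$ for every $k \ge 1$, contradicting the nilpotence of $\bc J(\bs{0}, \bs{0})$.

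The main obstacle is the bookkeeping in the second case: making precise the interpretation of the nested $\bc H$-structures with empty $\bc Z$-sets as entries of $\bc J(\bs{0}, \bs{0})$, and checking that repeating a nonzero closed-walk structure yields nonzero higher powers of $\bc J(\bs{0}, \bs{0})$ — which reduces, via Lemma~\ref{lemma:zero}, to the fact that products of nonzero species remain nonzero.
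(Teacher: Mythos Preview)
Your argument is correct and genuinely different from the paper's.  The reverse implication is handled the same way (stabilisation of the iteration plus uniqueness in Theorem~\ref{th:IST}).  For the forward implication, you argue combinatorially on the $\bc H$-rooted-tree interpretation of~$\bc S$: a structure of depth~$\ge m$ forces, by pigeonhole on the sort sequence along a long branch, a repeated sort; the segment between the two repetitions either carries at least one atom (and then pumping it produces $\bc S$-structures of unbounded size, contradicting polynomiality) or is empty (and then it witnesses a nonzero diagonal entry of a positive power of $\bd\partial\bc H/\bd\partial\bc Y(\bs 0,\bs 0)$, contradicting nilpotence).  The bookkeeping you flag is routine: in the empty case each intermediate node is literally a structure of the $(i_j,i_{j+1})$-entry of $\bc J(\bs 0,\bs 0)$ on $\eset$, their concatenation sits in the $(i_a,i_a)$-entry of $\bc J(\bs 0,\bs 0)^{b-a}$, and self-concatenation keeps that entry nonzero in all powers (the structures have size~$0$, so products are trivially nonempty---no need for Lemma~\ref{lemma:zero}).

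The paper proceeds quite differently: it shows that when $\bc S$ is polynomial the system is \emph{triangular}.  Taking the last index~$k$ at which the iteration moves, it differentiates the identity $\bc H(\bc Z,\bc Y^{[k-1]})=\bc H(\bc Z,\bc S)$ and, via the inclusion chain and Lemma~\ref{lemma:zero}, concludes that $\bd\partial\bc H/\partial{\mathcal Y}_i=\bs 0$ for each coordinate $i$ that changes at step~$k$; peeling off those coordinates and recursing bounds $k\le m$.  Your pumping argument is more elementary and self-contained, and avoids derivatives entirely.  The paper's route, on the other hand, yields the extra structural fact that $\bc H$ is triangular in~$\bc Y$, which is reused immediately afterwards (Proposition~\ref{prop:polynomial_implicit2} invokes ``the proof of the previous proposition shows that~$\bc H$ has a triangular structure'').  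So both proofs are valid; yours is shorter for the proposition at hand, while the paper's does double duty.
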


Note that under these conditions, the solution is given by~$\bc S=\bc Y^{[m]}$, or even more precisely, $\bc S=\bc Y^{[p]}$ where $p$ is the order of nilpotence of the Jacobian matrix.

\begin{proof}
        
        First, if $\bc Y^{[m]}$ is polynomial and $\bc Y^{[m]}=\bc Y^{[m+1]}$ then $\bc Y^{[m+1]}$ is also polynomial. That the limit is polynomial follows by induction.

        Conversely, we show that if the solution is polynomial, then the system has a particular form: it is triangular, and the right-hand side of each equation does not depend on the variable it defines. This implies that the solution is reached in at most~$m$ iterations.

If $\bc S$ has coordinates that are zero, they can be removed from the system without affecting the other coordinates. Thus, from now on, we consider that $\bc S$ has no zero coordinate. 
Let $k$ be the largest integer such that $\bc Y^{[k]}\neq\bc Y^{[k-1]}$. In particular, this means that $\bc Y^{[k]}=\bc S$.
By definition of $k$, the vector of species $\bc Y^{[k]}-\bc Y^{[k-1]}$ has at least one coordinate that is not $0$, say the $i$th. We show that the species $\bc H$ does not depend on ${\mathcal Y}_i$.
Differentiating the identity $\bc H(\bc Z,\bc Y^{[k-1]})=\bc H(\bc Z,\bc S)$ gives
\[
\frac{\bd\partial\bc H}{\bd\partial\bc Y}(\bc Z,\bc Y^{[k-1]})\cdot\bc Y^{[k-1]}\bs{'}+\frac{\bd\partial\bc H}{\bd\partial\bc Z}(\bc Z,\bc Y^{[k-1]})
=
\frac{\bd\partial\bc H}{\bd\partial\bc Y}(\bc Z,\bc S)\cdot\bc S\bs{'}+\frac{\bd\partial\bc H}{\bd\partial\bc Z}(\bc Z,\bc S).
\]
Since $\bc Y^{[k-1]}\subset\bc Y^{[k]}=\bc S$, the above identity implies equality of the first terms (resp. of the second terms), 
so that the following inclusions are equalities
\[
\frac{\bd\partial\bc H}{\bd\partial\bc Y}(\bc Z,\bc Y^{[k-1]})\cdot\bc Y^{[k-1]}\bs{'}\subset
\frac{\bd\partial\bc H}{\bd\partial\bc Y}(\bc Z,\bc S)\cdot\bc Y^{[k-1]}\bs{'}\subset
\frac{\bd\partial\bc H}{\bd\partial\bc Y}(\bc Z,\bc S)\cdot\bc S\bs{'}.
\]
Thus, $\bd\partial\bc H/\bd\partial\bc Y(\bc Z,\bc S)\cdot (\bc S\bs{'}-\bc Y^{[k-1]}\bs{'})=\bs{0}$. Since the $i$th coordinate of $(\bc S-\bc Y^{[k-1]})$ is not~0 while its value at~0 is~0, its derivative is not~0 either, so that $\bd\partial\bc H/\partial{\mathcal Y}_i(\bc Z,\bc S)=\bs{0}$. By Lemma~\ref{lemma:zero}, we conclude that $\bd\partial\bc H/\partial{\mathcal Y}_i=\bs{0}$ which indicates that $\bc H$ does not depend on ${\mathcal Y}_i$.
The same reasoning applies to all the nonzero coordinates of~$\bc Y^{[k]}-\bc Y^{[k-1]}$. We may assume, without loss of generality, that these are the last $m-p$ coordinates of the vector; therefore, the system can be split into two distinct blocks: 
\begin{itemize}
        \item an implicit strict subsystem~$({\mathcal Y}_{1:p})=\bc H_{1:p}(\bc Z,{\mathcal Y}_{1:p},0,\dots,0)$;
        \item a nonrecursive block that defines~$({\mathcal Y}_{p+1:m})$ as functions of $(\bc Z,{\mathcal Y}_{1:p})$. 
\end{itemize}
If $p=0$, then all the structures of the solution are produced by $\bc H(\bc Z,\bs{0})$ and thus $k=1$.  Otherwise, since the vector~$({\mathcal Y}_{1}^{[k]}-{\mathcal Y}_{1}^{[k-1]},\dots,{\mathcal Y}_p^{[k]}-{\mathcal Y}_p^{[k-1]})$ is equal to $\bs{0}$ and $({\mathcal Y}_{p+1:m}^{[k]})\neq({\mathcal Y}_{p+1:m}^{[k-1]})$, then $({\mathcal Y}_{1:p}^{[k-1]})\neq({\mathcal Y}_{1:p}^{[k-2]})$ by construction and thus the same reasoning can be applied to the implicit subsystem with $k$ replaced by $k'=k-1$ which is the largest integer such that $\bc Y^{[k']}\neq\bc Y^{[k'-1]}$.

At each step, $k$ is either $0$ or decreased by $1$ and the size of the implicit system is decreased by at least one; thus $k\leq m$.
\end{proof}

\begin{Algorithm}[t]{0.8\textwidth}   
\SetAlgoRefName{isPolynomial}
\caption{Detection of implicit polynomial species\label{algo:poly-Joyal}}
\DontPrintSemicolon
\Input{$\bc H=({\mathcal H}_{1:m})$: a vector of species such that $\bc H(\bs{0},\bs{0})=0$ and the Jacobian matrix ${\bd\partial \bc H}/{\bd\partial\bc Y}(\bs{0},\bs{0})$ is nilpotent.}
\Output{Answer to ``Is the solution of $\bc Y=\bc H(\bc Z,\bc Y)$ polynomial?''}
\SetAlgoNoLine
\Begin{
\SetAlgoVlined
Compute $\,\bc U:= \bc H^m(\bc Z,\bs{0})$ and  $\bc V:= \bc H^{m+1}(\bc Z,\bs{0})$\;
\lIf{($\,\bc U=\bc V$  {\em and} $\,\bc U$ is polynomial)}{\Return YES}
\lElse{\Return NO}
}
\end{Algorithm}

\begin{proposition}[Characterization of Implicit Polynomial Species]\label{prop:polynomial_implicit2}
         Let~$\bc Y=\bc H(\bc Z,\bc Y)$ be well founded at $\bs{0}$. 
         The solution~$\bc S$ of the system $\bc Y=\bc H(\bc Z,\bc Y)$ such that~$\bc S(\bs{0})=\bs{0}$ is polynomial if and only if the Jacobian matrix ${\bd\partial \bc H}/{\bd\partial\bc Y}(\bc Z,\bc Y)$ is nilpotent and the species $\bc H$ is polynomial.
\end{proposition}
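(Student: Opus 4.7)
The two implications will be handled separately, both leaning on the triangular structure revealed in the proof of Proposition~\ref{prop:polynomial_implicit} and on a simple composition lemma for polynomial species.

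For the forward implication, suppose $\bc S$ is polynomial. Because the system is well-founded at $\bs{0}$, each coordinate ${\mathcal S}_j$ is nonzero and satisfies ${\mathcal S}_j(\bs{0})=\bs{0}$. Since $\bc H(\bc Z,\bc S)=\bc S$ is polynomial, I want to invoke the following composition lemma: if ${\mathcal F}(\bc G)$ is polynomial where each coordinate of $\bc G$ is a nonzero species vanishing at $\bs{0}$, then ${\mathcal F}$ is polynomial. The proof is by contraposition: if ${\mathcal F}$ admitted structures of unbounded size, one could pick, for each sort $j$, a minimal ${\mathcal G}_j$-structure $\sigma_j$ of size $n_j\ge 1$ and graft copies of it at every member of an ${\mathcal F}$-structure, obtaining ${\mathcal F}(\bc G)$-structures of arbitrary size. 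Applied coordinate-wise (including to the sort $\bc Z$, which is nonzero and vanishes at $0$), this lemma gives that $\bc H$ is polynomial. As for the nilpotence of $\bc J$ as a matrix of species, the proof of Proposition~\ref{prop:polynomial_implicit} yields, by induction, a permutation of the coordinates after which $\partial {\mathcal H}_i/\partial {\mathcal Y}_j=\bs{0}$ for every $j\ge i$; the Jacobian is then strictly lower triangular in this ordering, hence nilpotent.

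For the backward implication, suppose $\bc H$ is polynomial and $\bc J$ is nilpotent, say $\bc J^p=\bs{0}$ for some $p$. As explained at the end of Section~\ref{species}, the $p$-th power of $\bc J$ encodes the paths of length $p$ in the species-level dependency graph of the system, so that graph has no path of length $p$, and in particular no cycle (a cycle would produce paths of all lengths through repetition). Topologically sorting the vertices, I may assume ${\mathcal H}_i$ depends only on $\bc Z$ and ${\mathcal Y}_1,\dots,{\mathcal Y}_{i-1}$. With this ordering, the iteration~\eqref{eq:phi_poly} stabilizes by the $m$-th step: at step $i$ the $i$-th coordinate reaches its final value, and all later iterations leave it unchanged. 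Each iterate is polynomial since $\bc Z$ is polynomial and composition of polynomial multisort species with polynomial species is again polynomial. Therefore $\bc S=\bc Y^{[m]}$ is polynomial.

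\textbf{Main obstacle.} The delicate point is ensuring that nilpotence of the species-valued Jacobian (not merely of its value at $(\bs 0,\bs 0)$) really follows from the triangular structure extracted from the proof of Proposition~\ref{prop:polynomial_implicit}; this forces me to track the permutation carefully and to invoke Lemma~\ref{lemma:zero} to rule out any nonzero partial derivative $\partial{\mathcal H}_i/\partial {\mathcal Y}_j$ with $j\ge i$. The multisort composition lemma is the other place where some care is needed, since, when grafting the structures $\sigma_j$ of positive sizes $n_j$, one must check that a set $U$ can be built whose partition meets the block-size constraints forced by the $\sigma_j$'s; taking $U$ to be a disjoint union of blocks of sizes $n_j$ indexed by the chosen ${\mathcal F}$-structure's underlying multiset gives this at once.
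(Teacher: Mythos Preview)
Your proof is correct. The forward direction matches the paper's: both invoke the triangular structure from the proof of Proposition~\ref{prop:polynomial_implicit} for nilpotence, and a positivity argument (your composition lemma is the paper's one-line ``if $\bc H$ had structures of arbitrary size then so would $\bc H(\bc Z,\bc S)=\bc S$'') for the polynomiality of~$\bc H$. For the backward direction, however, you take a different route. The paper argues directly via the inclusion
\[
\bc Y^{[m+1]}-\bc Y^{[m]}\subset \left(\frac{\bs\partial\bc H}{\bs\partial\bc Y}(\bc Z,\bc Y^{[m]})\right)^{\!m}\cdot\bigl(\bc Y^{[1]}-\bc Y^{[0]}\bigr),
\]
which vanishes by nilpotence, so the iteration stabilizes at step~$m$ and $\bc S=\bc Y^{[m]}$ is a finite composition of polynomials. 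You instead deduce from nilpotence of the species-valued Jacobian that the dependency graph is acyclic (relying on positivity of species so that a path of nonzero entries contributes a nonzero summand to the matrix power), then topologically sort to triangularize and read off stabilization coordinate by coordinate. Your approach is more structural---it exhibits the triangular form explicitly and mirrors the reverse direction---while the paper's is a one-line inclusion that sidesteps any reordering of the system.
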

\begin{proof}
For any~$m>0$, the sequence defined by~\eqref{eq:phi_poly} satisfies
\[
\bc Y^{[m+1]}-\bc Y^{[m]}\subset{\bd\partial\bc H}/{\bd\partial\bc Y}(\bc Z,\bc Y^{[m]}) \cdot(\bc Y^{[m]}-\bc Y^{[m-1]}),
\] 
which expresses the fact that any $\bc Y^{[m+1]}$-structure is an $\bc H$-structure of~$\bc Y^{[m]}$-structures and at least one of them has to be in the difference~$\bc Y^{[m]}-\bc Y^{[m-1]}$. Iterating and using the inclusion~$\bc Y^{[k]}\subset\bc Y^{[k+1]}$ for all~$k\ge0$, we obtain that
$\bc Y^{[m+1]}-\bc Y^{[m]}\subset\left({\bd\partial \bc H}/{\bd\partial\bc Y}(\bc Z,\bc{Y}^{[m]})\right)^m\cdot(\bc Y^{[1]}-\bc Y^{[0]}).$
If ${\bd\partial \bc H}/{\bd\partial\bc Y}(\bc Z,\bc Y)$ is nilpotent of order~$p\leq m$, then the right-hand side is~$\bs{0}$ so that~$\bc Y^{[m+1]}=\bc Y^{[m]}=\bc S$ in this case. If moreover~$\bc H$ is polynomial, then as a finite iteration of polynomials, this is a polynomial.

Conversely, if $\bc S$ is polynomial without zero coordinates, then~$\bc H$ has to be polynomial: if for any~$n$ there exists an~$\bc H$-structure of size at least~$n$, then $\bc H(\bc Z,\bc S)$ contains such an~$\bc S$-structure. Also, when~$\bc S$ is polynomial, the proof of the previous proposition shows that~$\bc H$ has a triangular structure from which the nilpotence of its Jacobian matrix is apparent.
\end{proof}


\subsection{Partially Polynomial Species}\label{subsub:partpol}

The concept of polynomiality can be refined in the case of multi-sort species. We start with 
$\bc F(\bc{Z}_1, \bc{Z}_2)$ a $(m_1+m_2)$-sort species. For any $\bc F$-structure~$\bs s$, we denote by $\left| \bs s \right|_1$ (resp.~$\left|\bs s \right|_2$) the size of the first (resp. second) tuple of sets in the underlying sets of~$\bs s$. 
We also let $\bc F_{=(k,n)}$ denote the subspecies of $\bc F$ such that, for any $\bs s \in \bc F_{=(k,n)}$, $\left|\bs s \right|_1=k$ and $\left|\bs s \right|_2=n$.
Also natural is to define the species~$\bc F_{\leq(k,n)}$ such that, for any structure~$\bs s \in \bc F_{\leq(k,n)}$, $\left|\bs s \right|_1\leq k$ and $\left|\bs s \right|_2\leq n$.
\begin{definition}\label{def:multi-polynomial}
        The multisort species~${\mathcal F}(\bc{Z}_1, \bc{Z}_2)$ is polynomial in the sorts~$\bc Z_1$ when, 
        for all~$n\geq 0$, the species ${\mathcal F}_{=(.,n)}=\sum_{k\geq 0}{\mathcal F}_{=(k,n)}$ is polynomial. 
\end{definition}

\begin{example}
        The species~$\Seq({\mathcal Z}_1+{\mathcal Z}_2)$ is not polynomial in ${\mathcal Z}_1$ or ${\mathcal Z}_2$, while the species~$\Seq({\mathcal Z}_1\cdot{\mathcal Z}_2)$, though not polynomial (in $\bc Z$), is polynomial  in ${\mathcal Z}_1$ and ${\mathcal Z}_2$.
\end{example}

The next question is to detect the partial polynomiality of the solutions directly from the system. 
\begin{example}
Only the first two of the following three equations
\[{\mathcal Y}={\mathcal Z}_1{\mathcal Z}_2\Seq({\mathcal Y}),\quad {\mathcal Y}={\mathcal Z}_1\Seq({\mathcal Y}{\mathcal Z}_2),\quad
{\mathcal Y}={\mathcal Z}_2+{\mathcal Y}{\mathcal Z}_1\]
have solutions that are polynomial in~${\mathcal Z}_1$.
                  \end{example}

\noindent Again, we give an effective characterization of those systems having a partially polynomial solutions.
 
\begin{proposition}[Implicit Partially Polynomial Species]\label{prop:IPFS}
        Let $\bc H=({\mathcal H}_{1:m})$ be a vector of species such that the system $\bc Y=\bc H({\mathcal Z}_1, \bc Z_2, \bc Y)$ is well founded at $\bs{0}$ and let~$\bc S({\mathcal Z}_1, \bc Z_2)$ be its solution such that $\bc S(\bs{0})=\bs{0}$.
                        The species~$\bc S({\mathcal Z}_1, \bc Z_2)$ is polynomial in ${\mathcal Z}_1$ if and only if 
        \begin{enumerate}
                \item the species $\bc S_0({\mathcal Z}_1):=\bc S({\mathcal Z}_1,\bs{0})$ is polynomial;
                \item the Jacobian matrix~$\bd\partial\bc H/\bd\partial\bc Y({\mathcal Z}_1, \bs{0}, \bc S_0({\mathcal Z}_1))$ is nilpotent;
                \item $\bc H$ is polynomial in ${\mathcal Z}_1$ and for all~$\,i\in\{1,\dots,m\}$, either the $i$th coordinate of $\bc S_0$ is~0 or $\bc H({\mathcal Z}_1, \bc Z_2, \bc Y)$ is polynomial in ${\mathcal Y}_i$.
        \end{enumerate}
\end{proposition}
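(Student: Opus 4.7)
The proof splits into necessity and sufficiency. Set $I=\{i:{\mathcal S}_{0,i}\neq\bs{0}\}$ and $J=\{j:{\mathcal S}_{0,j}=\bs{0}\}$; by well-foundedness of the full system each ${\mathcal S}_i$ is nonzero, so ${\mathcal S}_j$ for $j\in J$ consists only of structures carrying at least one ${\mathcal Z}_2$-atom.

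\emph{Necessity.} Condition~1 is the $n=0$ instance of Definition~\ref{def:multi-polynomial}. For condition~2, $\bc S_0$ solves the subsystem $\bc Y=\bc H({\mathcal Z}_1,\bs{0},\bc Y)$; for $j\in J$ we have ${\mathcal H}_j({\mathcal Z}_1,\bs{0},\bc S_0)={\mathcal S}_{0,j}=\bs{0}$, and since $\bc S_{0,I}$ has no zero coordinate and vanishes at the origin, Lemma~\ref{lemma:zero} yields ${\mathcal H}_j({\mathcal Z}_1,\bs{0},(\bc Y_I,\bs{0}_J))=\bs{0}$. The subsystem restricted to indices~$I$ then has polynomial solution $\bc S_{0,I}$ with no zero coordinate, is well-founded at~$\bs{0}$ as a subsequence of the full iteration, and Proposition~\ref{prop:polynomial_implicit2} gives nilpotence of its Jacobian, which matches the $(I,I)$-block of $\bd\partial\bc H/\bd\partial\bc Y({\mathcal Z}_1,\bs{0},\bc S_0)$. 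Differentiating the identity above in directions ${\mathcal Y}_i$ for $i\in I$ kills the $(J,I)$-block, so the full Jacobian is block upper-triangular and it suffices to show the $(J,J)$-block is nilpotent. Arguing by contradiction, a non-nilpotent $(J,J)$-block contains a cycle $j_1\to\cdots\to j_k\to j_1$ in its dependency graph with all edges nonzero; either (i)~some edge $\partial{\mathcal H}_{j_i}/\partial{\mathcal Y}_{j_{i+1}}({\mathcal Z}_1,\bs{0},\bc S_0)$ has a structure containing at least one ${\mathcal Z}_1$-atom or at least one $\bc S_{0,I}$-substitution, and iterating the cycle then produces $\bc S$-structures with bounded ${\mathcal Z}_2$-size but unbounded ${\mathcal Z}_1$-size, contradicting polynomiality of $\bc S$ in~${\mathcal Z}_1$; or (ii)~every edge along the cycle reduces to the ``bud alone'' species (equivalently, ${\mathcal H}_{j_i}$ contains~${\mathcal Y}_{j_{i+1}}$ as a pure summand for each $i$), in which case the same cycle appears in $\bd\partial\bc H/\bd\partial\bc Y(\bs{0},\bs{0},\bs{0})$, contradicting its nilpotence from the original well-foundedness. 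Condition~3 follows from the embedding of $\bc H$-structures into $\bc S$-structures by substituting $\bc S$-structures into the ${\mathcal Y}$-slots: since $\bc S_{0,I}$ is polynomial and nonzero with $\bc S(\bs{0})=\bs{0}$, each coordinate $i\in I$ admits minimal ${\mathcal Z}_2$-free structures of bounded ${\mathcal Z}_1$-size, while finitely many ${\mathcal Y}_J$-slots can be filled with bounded-${\mathcal Z}_2$ pieces; unbounded ${\mathcal Z}_1$- or ${\mathcal Y}_i$-counts in $\bc H$ at fixed remaining sizes would therefore yield $\bc S$-structures violating polynomiality in~${\mathcal Z}_1$.

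\emph{Sufficiency.} We show by strong induction on $n\ge0$ that $\bc S_{=(.,n)}$ is polynomial. The case $n=0$ is condition~1. For $n\ge1$, set $\bc T=\bc S-\bc S_0$ and substitute $\bc Y=\bc S_0+\bc T$ into the defining system; the Taylor-style inclusion~\eqref{eq:taylor1} around $\bc S_0$, together with $\bc H({\mathcal Z}_1,\bs{0},\bc S_0)=\bc S_0$, yields a fixed-point inclusion
\[
\bc T \;\subset\; \bc R({\mathcal Z}_1,\bc Z_2) \;+\; \frac{\bd\partial\bc H}{\bd\partial\bc Y}({\mathcal Z}_1,\bs{0},\bc S_0)\cdot\bc T \;+\; (\text{higher-order multilinear terms in }\bc T),
\]
where $\bc R$ collects the ${\mathcal Z}_2$-carrying contributions of $\bc H$ evaluated at $\bc S_0$. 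Restricting to ${\mathcal Z}_2$-size exactly~$n$, the source (the ${\mathcal Z}_2$-direct terms together with the higher-order terms built from $\bc T$ at strictly smaller ${\mathcal Z}_2$-sizes) is polynomial by the inductive hypothesis combined with condition~3: polynomiality of $\bc H$ in~${\mathcal Z}_1$ bounds the outer ${\mathcal Z}_1$-atoms, polynomiality in~${\mathcal Y}_i$ for $i\in I$ bounds the number of $\bc S_{0,i}$-substitutions, and ${\mathcal Y}_j$-substitutions for $j\in J$ carry at least one ${\mathcal Z}_2$-atom and so draw from lower inductive levels. Nilpotence of the Jacobian (condition~2) then makes the linear operator in the fixed-point inclusion act only finitely many times before vanishing, so iterating it on the polynomial source produces a polynomial $\bc S_{=(.,n)}$.

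The main obstacle is the combinatorial argument for nilpotence of the $(J,J)$-block in the necessity direction: this block can be nonzero in general (for $\bc H=({\mathcal Z}_1,{\mathcal Z}_2+{\mathcal Y}_3,{\mathcal Y}_2^2)$, one has $J=\{2,3\}$ with a nonzero but nilpotent $(J,J)$-block), so the dichotomy above is required to rule out non-nilpotent cycles using simultaneously well-foundedness of the full system and polynomiality of $\bc S$ in~${\mathcal Z}_1$. On the sufficiency side, coupling the three hypotheses to bound the Taylor remainder at each ${\mathcal Z}_2$-level is the most delicate bookkeeping.
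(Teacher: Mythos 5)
Your proof is correct, but it diverges from the paper's most visibly on the necessity of condition~2. The paper's argument there is short and direct: from a non-nilpotent Jacobian at $({\mathcal Z}_1,\bs 0,\bc S_0)$ one extracts, for each $q$, a nonzero structure $\bs\delta_q$ in $(\bd\partial\bc H/\bd\partial\bc Y({\mathcal Z}_1,\bs 0,\bc S_0))^q$ — automatically of ${\mathcal Z}_2$-size~$0$ — and grafts it onto a single nonzero $\bc S$-structure $\bs\gamma$ supplied by well-foundedness at $\bs 0$, producing infinitely many $\bc S$-structures of fixed ${\mathcal Z}_2$-size. You instead split the index set by whether $\bc S_0$ vanishes, prove the $(J,I)$-block vanishes via Lemma~\ref{lemma:zero}, derive nilpotence of the $(I,I)$-block from Proposition~\ref{prop:polynomial_implicit2} on the restricted subsystem (whose well-foundedness at $\bs 0$ you do correctly check), and then argue the $(J,J)$-block by a dependency-cycle dichotomy. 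Both work; yours is heavier but more explicit about why the ${\mathcal Z}_1$-sizes of the offending structures actually grow — a point the paper leaves implicit in its ``infinitely many $\bs\delta_q\cdot\bs\gamma$'' step. On the sufficiency side the two arguments encode the same decomposition (an $\bc S$-structure of ${\mathcal Z}_2$-size~$n$ either scatters its ${\mathcal Z}_2$-mass across members or concentrates it in exactly one), with the paper reasoning by minimal counterexample and invoking Lemma~\ref{lem:HS0-polynomial1} twice, while you package it as a fixed-point inclusion iterated to termination by the nilpotent Jacobian. One remark on your presentation there: you attribute your fixed-point inclusion to the Taylor inclusion~\eqref{eq:taylor1}, but that inclusion gives $\bc H(\bc A)\supset\bc H(\bc B)+\bc H'(\bc B)(\bc A-\bc B)$ — the opposite direction of what you need. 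The upper bound you actually use is the direct combinatorial decomposition of $\bc H$-assemblies (or Labelle's full Taylor formula, which the paper cites but does not develop), so the reference is misleading even though the underlying argument is sound.
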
   
This Proposition is turned into Algorithm~\ref{algo:part-poly} to decide the partially polynomial character of an implicit species. The specialized system~$\bc Y=\bc H({\mathcal Z}_1, \bs{0}, \bc Y)$ can possibly define zero coordinates, thus we use Algorithm~\ref{algo:poly-Joyal} to check for the polynomial character of its solution. 
However, note that when this specialized system is well founded at~$\bs{0}$,  Proposition~\ref{prop:polynomial_implicit2} can be used instead; moreover, the second condition in Proposition~\ref{prop:IPFS} is a consequence of the first one by Proposition~\ref{prop:polynomial_implicit} and the test on $\bc{J\!}_0$ can be skipped.

\begin{example} Proposition~\ref{prop:IPFS} allows to conclude on the previous three equations:
  \begin{enumerate}
        \item when ${\mathcal H}={\mathcal Z}_1{\mathcal Z}_2\Seq({\mathcal Y})$, then ${\mathcal S}_0({\mathcal Z}_1)=0$, and the derivative  ${\mathcal Z}_1{\mathcal Z}_2\Seq({\mathcal Y})^2$ is $0$ at $({\mathcal Z}_1,0,0)$, thus the solution is polynomial in ${\mathcal Z}_1$;
        \item when ${\mathcal H}={\mathcal Z}_1\Seq({\mathcal Y}{\mathcal Z}_2)$, then: ${\mathcal S}_0({\mathcal Z}_1)=1+{\mathcal Z}_1\neq 0$; the specialized system~${\mathcal Y}={\mathcal Z}_1$ is well founded at~0; the species $\bc H$ is polynomial in $({\mathcal Z}_1,\bc Y)$, thus the solution is polynomial in ${\mathcal Z}_1$;
        \item when ${\mathcal H}={\mathcal Z}_2+{\mathcal Y}{\mathcal Z}_1$, the solution ${\mathcal S}={\mathcal Z}_2 \cdot \Seq({\mathcal Z}_1)$ is not polynomial in ${\mathcal Z}_1$. In that case, the derivative of ${\mathcal H}$ with respect to ${\mathcal Y}$ is ${\mathcal Z}_1$ which is not nilpotent at $({\mathcal Z}_1,0,0)$.
   \end{enumerate}     
\end{example}

\begin{Algorithm}[t]{0.79\textwidth}   
  \SetAlgoRefName{isPartiallyPolynomial}
\caption{Characterization of implicit partially polynomial species\label{algo:part-poly}}
\DontPrintSemicolon
\SetKwFunction{isPolynomial}{\ref{algo:poly-Joyal}}
\Input{$\bc H=({\mathcal H}_{1:m})$: a vector of species such that such that $\bc Y=\bc H({\mathcal Z}_1, \bc Z_2, \bc Y)$ is well founded at $\bs{0}$.}
\Output{Answer to ``Is the solution of $\bc Y=\bc H({\mathcal Z}_1, \bc Z_2, \bc Y)$ polynomial in ${\mathcal Z}_1$?''}
\SetAlgoNoLine
\Begin{
\SetAlgoVlined
\lIf{$\bc H$ is not polynomial in ${\mathcal Z}_1$}{\Return NO}\\
\Else{
\lIf{\isPolynomial{$\bc H({\mathcal Z}_1, \bs{0}, \bc Y)$}$=${\em NO}}{\Return NO}\\
\Else{Compute $\bc{J\!}_0:=\bd\partial\bc H/\bd\partial\bc Y({\mathcal Z}_1, \bs{0}, \bc S_0({\mathcal Z}_1))$\\
\lIf{$\bc{J\!}_0^m \neq 0$}{\Return NO}\\
\Else{
Compute $\bc S_0:=\bc H^m({\mathcal Z}_1, \bs{0}, \bc Y)$\;
\For{$i$ from 1 \KwTo $m$}{
        \lIf{$(\bc S_0)_i \neq 0$ {\em and }$\bc H$ is not  polynomial in ${\mathcal Y}_i$}{\Return NO}
}
\Return YES}}}
}
\end{Algorithm}

The proof relies repeatedly on the preservation of partial polynomiality by composition.
\begin{lemma}\label{lem:HS0-polynomial1} 
        Let $\bc F({\mathcal Z}_1, \bc Z_2,\bc Y)$ and $({\mathcal G}_{1:m})({\mathcal Z}_1, \bc Z_2)$ be multisort species such that $\bc G(0,\bs{0})=\bs{0}$.
        If 
        \begin{enumerate}
                \item $\bc F$ and $\bc G$ are polynomial in ${\mathcal Z}_1$ and
                \item for all~$\,i\in\{1,\dots,m\}$, either ${\mathcal G}_i({\mathcal Z}_1,\bs{0})=\bs{0}$ or $\bc F({\mathcal Z}_1, \bc Z_2, \bc Y)$ is polynomial in ${\mathcal Y}_i$,
        \end{enumerate}
        then the species $\bc F({\mathcal Z}_1, \bc Z_2, \bc G)$ is polynomial in ${\mathcal Z}_1$.
\end{lemma}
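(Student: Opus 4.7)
The plan is to fix $n_2\ge 0$ and show that the species $\bc F({\mathcal Z}_1,\bc Z_2,\bc G)_{=(\cdot,n_2)}$ has only finitely many structures. Any such structure decomposes as an $\bc F$-structure with multi-sort sizes $(k^F,n_2^F,a_1,\dots,a_m)$, where $k^F$ is its $\bc Z_1$-count, $n_2^F$ its $\bc Z_2$-count, and $a_i$ its number of ${\mathcal Y}_i$-slots, together with, for each $i$ and each of those $a_i$ slots, a $\bc G_i$-substructure of certain sizes $(k_{ij},n_{2,ij})$. The $\bc Z_2$-count constraint $n_2=n_2^F+\sum_{i,j}n_{2,ij}$ automatically bounds $n_2^F\le n_2$ and, for fixed $(a_i)$, leaves only finitely many size distributions $(n_{2,ij})$ to choose from.

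The first step is to bound each $a_i$. For any ``good'' index $i$ with ${\mathcal G}_i({\mathcal Z}_1,\bs{0})=\bs{0}$, the hypothesis $\bc G(0,\bs{0})=\bs{0}$ forces every $\bc G_i$-substructure to contain at least one $\bc Z_2$-atom, so $a_i\le n_2$. For each remaining ``bad'' index $i$, the hypothesis that $\bc F$ is polynomial in ${\mathcal Y}_i$ should be combined with $\bc F$ being polynomial in $\bc Z_1$ and the just-obtained bounds on $n_2^F$ and on the good $a_j$'s, to force the tuple $(k^F,\{a_i\}_{i\text{ bad}})$ into a finite set. The delicate point, and the main obstacle of the proof, is that separate polynomiality in several sorts does not a priori yield joint polynomiality; I would push this through by exploiting the extra information that a bad $\bc G_i$-substructure with no $\bc Z_2$-atom must lie in the \emph{polynomial} species ${\mathcal G}_i({\mathcal Z}_1,\bs{0})$, whose $\bc Z_1$-size is absolutely bounded, together with an iterative tightening of the bound in which the good indices are disposed of first and the remaining bad indices are handled one by one using $\bc F$ polynomial in ${\mathcal Y}_i$.

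Once the tuple $(k^F,n_2^F,a_1,\dots,a_m)$ is confined to a finite set, the polynomiality of $\bc F$ in $\bc Z_1$ (applied coordinatewise for each fixed tuple of non-$\bc Z_1$ sizes) guarantees finitely many $\bc F$-structures. For each of them, the $\bc G_i$-substructures filling its slots have $\bc Z_2$-sizes summing to $n_2-n_2^F$, so each is chosen from a finite set by virtue of $\bc G$ being polynomial in $\bc Z_1$. Summing the finitely many choices produces a finite total, establishing that $\bc F({\mathcal Z}_1,\bc Z_2,\bc G)$ is polynomial in $\bc Z_1$. The argument can likely be packaged more cleanly by splitting $\bc G_i={\mathcal G}_i({\mathcal Z}_1,\bs{0})+\bigl(\bc G_i-{\mathcal G}_i({\mathcal Z}_1,\bs{0})\bigr)$, so that ${\mathcal Y}_i$-slots filled by each summand can be analysed separately: the ``$+$''-summand obeys the good-index bound $a_i^+\le n_2$, while the polynomial summand is accommodated by the polynomiality of $\bc F$ in ${\mathcal Y}_i$.
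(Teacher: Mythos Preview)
Your plan is correct in spirit and shares the paper's core idea—fix the $\bc Z_2$-size $n$, observe that every ${\mathcal G}_i$-substructure has $\bc Z_2$-size at most $n$, and treat separately the ``good'' indices (where ${\mathcal G}_i({\mathcal Z}_1,\bs{0})=0$ forces each slot to carry a $\bc Z_2$-atom, so $a_i\le n$) and the ``bad'' ones. The paper, however, packages this much more cleanly. Rather than substituting all ${\mathcal G}_i$ simultaneously and then untangling the joint-versus-separate polynomiality issue you flag, it first proves the \emph{single-${\mathcal Y}$} case and then inducts on~$i$, substituting one ${\mathcal G}_i$ at a time into ${\mathcal F}_j({\mathcal Z}_1,\bc Z_2,\bc G_{1:i-1},\bc Y_{i:m})$. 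In the single-${\mathcal Y}$ case the whole argument fits in one inclusion,
\[
\bigl({\mathcal F}({\mathcal Z}_1,\bc Z_2,{\mathcal G})\bigr)_{=(\cdot,n)}
\ \subset\
{\mathcal F}_{\le(\cdot,n,\cdot)}\bigl({\mathcal Z}_1,\bc Z_2,{\mathcal G}_{\le(\cdot,n)}\bigr),
\]
after which the good case further restricts to ${\mathcal F}_{\le(\cdot,n,n)}$ (polynomial since ${\mathcal F}$ is polynomial in ${\mathcal Z}_1$ and the remaining sizes are bounded), while the bad case uses that ${\mathcal F}_{\le(\cdot,n,\cdot)}$ is already polynomial. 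This one-variable-at-a-time induction is exactly the formal realisation of your ``iterative tightening, one bad index at a time'', and it makes the ${\mathcal G}_i$-splitting you propose at the end unnecessary.

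One caution about your ``delicate point'': it is genuine, and your sketched workarounds do not in fact close it. With ${\mathcal F}=\Seq({\mathcal Z}_1\cdot{\mathcal Y})$ (no $\bc Z_2$) and ${\mathcal G}={\mathcal Z}_1$, both separate hypotheses hold, yet there are no good indices to dispose of first and splitting ${\mathcal G}$ yields nothing new; the pair $(k^F,a)$ remains unbounded. The paper's bad-case step works because it reads ``${\mathcal F}$ polynomial in ${\mathcal Z}_1$ and ${\mathcal Y}$'' in the \emph{joint} sense (so that ${\mathcal F}_{\le(\cdot,n,\cdot)}$ is polynomial outright). To make either argument rigorous you must adopt that joint reading of hypothesis~2, not the purely separate one.
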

\begin{proof}
        We prove the result when ${\mathcal F}$ and ${\mathcal G}$ are single species and then, an induction on~$i$ gives that ${\mathcal F}_j({\mathcal Z}_1, \bc Z_2, \bc G_{1:i},\bc Y_{i+1\dotsc m})$ is polynomial in ${\mathcal Z}_1$ for each coordinate~$j$ of~$\bc F$.
        
Consider the subspecies of ${\mathcal F}({\mathcal Z}_1, \bc Z_2, {\mathcal G})$ whose structures are of size $(.,n)$.
        Any structure in this species is an ${\mathcal F}$-assembly of ${\mathcal Z}_1$-structures, $\bc Z_2$-structures, and  ${\mathcal G}$-structures. By definition, within the members of this assembly, at most $n$ are $\bc Z_2$-structures and none of the ${\mathcal G}$-structures is of size larger than $(.,n)$; thus, the following inclusion holds
        \[
        \left({\mathcal F}({\mathcal Z}_1, \bc Z_2, {\mathcal G})\right)_{=(.,n)}\subset {\mathcal F}_{\leq (.,n,.)}({\mathcal Z}_1, \bc Z_2,{\mathcal G}_{\leq(.,n)}).
        \]        
        If ${\mathcal F}({\mathcal Z}_1, \bc Z_2, {\mathcal Y})$ is polynomial in ${\mathcal Z}_1$ and ${\mathcal Y}$, then ${\mathcal F}_{\leq (.,n,.)}$ is polynomial, as is~${\mathcal G}_{\leq(.,n)}$; since the polynomial character is preserved by composition, ${\mathcal F}_{\leq (.,n,.)}({\mathcal Z}_1, \bc Z_2,{\mathcal G}_{\leq(.,n)})$ is also polynomial.       
        Otherwise, if ${\mathcal G}({\mathcal Z}_1,\bs{0})={0}$, then all the ${\mathcal G}$-structures are of size at least~$(.,1)$; thus ${\mathcal F}_{\leq (.,n,.)}({\mathcal Z}_1, \bc Z_2,{\mathcal G}_{\leq(.,n)})\subset{\mathcal F}_{\leq (.,n,n)}({\mathcal Z}_1, \bc Z_2,{\mathcal G}_{\leq(.,n)})$. This last species is polynomial, as the composition of the two polynomial species~${\mathcal F}_{\leq (.,n,n)}$ and~${\mathcal G}_{\leq(.,n)}$.       
        In both cases, for any $n\geq 0$, $\left({\mathcal F}({\mathcal Z}_1, \bc Z_2, {\mathcal G})\right)_{=(.,n)}$ is polynomial and the result follows.      
                                                                                                                        \end{proof}        

\begin{proof}[Proof of Proposition~\ref{prop:IPFS}]
We first establish that conditions 1. to 3. imply that $\bc S$ is polynomial in ${\mathcal Z}_1$, and for simplicity, the proof is carried out with $\bc Z_2$ reduced to a single sort.
      
      Assume that $\bc S({\mathcal Z}_1, \bc{Z}_2)$ is not polynomial in ${\mathcal Z}_1$ and let $n$ be the smallest size for which the species $\bc S_{=(.,n)}$ is not polynomial.   
      By definition, any~$\bc S$-structure of size $(.,n)$ is such that its $i$th coordinate is an ${\mathcal H}_i$-assembly of $\bc S$-structures, say $t_1, \dots, t_\ell$, such that $\left|t_j\right|_2\leq n$ for $j=1,\dots,\ell$.    
      If none of the $t_j$ is of size~$(.,n)$, then all of them belong to the species $\bc S_{<(.,n)}$ which is polynomial. Since $\bc S_{<(.,n)}({\mathcal Z}_1,0)=\bc S_{=(.,0)}=\bc S_0$, applying Lemma~\ref{lem:HS0-polynomial1} with $\bc F:=\bc H$ and $\bc G:=\bc S_{<(.,n)}$ shows that there are only a finite number of such decompositions.
      Otherwise, only one of the $t_j$ is of size~$(.,n)$ while all the other ones are of size~$(.,0)$, which means that they are $\bc S_0$-structures and so are $s_1,\dots,s_{i-1},s_{i+1},\dots,s_m$.
      This implies that the $\bc S$-structure is of the form $\bs{\alpha}\cdot\bs{\beta}$, with $\bs{\alpha}\in \bd\partial\bc H/\bd\partial\bc Y({\mathcal Z}_1, 0, \bc S_0({\mathcal Z}_1))$ and $\bs{\beta}\in\bc S$, with~$\left|\bs{\beta}\right|_2=n$. 
            By the second hypothesis, applying Lemma~\ref{lem:HS0-polynomial1} again, $\bc H({\mathcal Z}_1, 0, \bc S_0({\mathcal Z}_1))$ is polynomial in~${\mathcal Z}_1$ and so is $\bd\partial\bc H/\bd\partial\bc Y({\mathcal Z}_1, 0, \bc S_0({\mathcal Z}_1))$. If~$p$ is the order of nilpotence of this matrix, the reasoning above cannot be iterated more than $p$~times. Thus, there are only a finite number of $\bc S_{=(.,n)}$-structures that decompose in that way. Then $\bc S_{=(.,n)}$ is polynomial and $\bc S$ is polynomial in ${\mathcal Z}_1$.
      
      Conversely, assume that $\bc S$ is polynomial in ${\mathcal Z}_1$. Then, by inclusion,  $\bc S_0({\mathcal Z}_1)$ is polynomial in ${\mathcal Z}_1$ too.
      Assume now that the matrix $\bd\partial\bc H/\bd\partial\bc Y({\mathcal Z}_1, 0, \bc S_0({\mathcal Z}_1))$ is not nilpotent. 
            Then, for all$q\in \mathbb{N}$, there exists a nonzero structure $\bs{\delta}_q$ in the species~$(\bd\partial\bc H/\bd\partial\bc Y({\mathcal Z}_1, 0, \bc S_0({\mathcal Z}_1)))^q$; by construction, $|\bs{\delta}_q|_2=0$.
             Since the system we consider is well founded at $\bs{0}$, one can always find an $\bc S$-structure, say~$\bs{\gamma}$, such that $\gamma_i\neq 0$ for $i=1,\dots,m$; let $(.,{n})$ be the size of~$\bs{\gamma}$. 
            Then, there are infinitely many $\bc S$-structures of the form $\bs{\delta}_q\cdot \bs{\gamma}$, all of size $(.,{n})$, which prevents $\bc S$ from being polynomial in  ${\mathcal Z}_1$; the contradiction implies that the Jacobian matrix is nilpotent. 

      Regarding the third point, if $\bc H$ is not polynomial in ${\mathcal Z}_1$, then there exist infinitely many $\bc H$-structures of size~$(.,\ell,k)$ for some $\ell$ and $k$; and since the system is well founded at~$\bs{0}$, one can always find an $\bc S$-structure~$\bs\omega$ without zero coordinates to build infinitely many $\bc S$-structures from $\bc H$ and $\bs\omega$, their size being~$(.,\ell',k')$, with $\ell'$ and $k'$ depending on $\ell$, $k$ and the size of $\bs\omega$, which prevents the species $\bc S$ from being polynomial in ${\mathcal Z}_1$.
      Finally, assume that there exists~$i\in\{1,\dots,m\}$ such that the $i$th coordinate of $\bc S_0$ is nonzero and assume that the species~$\bc H({\mathcal Z}_1, \bc{Z}_2, \bc Y)$ is not polynomial in ${\mathcal Y}_i$.
      It means, on the one hand, that there exists a structure $\bs\omega$ in $\bc S$ such that $|\bs\omega|_2=(t_1+\dots+t_{i-1}+0+t_{i-1}+\dots+t_m)$, and on the other hand, that there exist infinitely many $\bc H$-structures of size~$(\ell_1,\ell_2,k_1+\dots+k_{i-1}+\cdot+k_{i+1}+\dots+k_m)$. Then, from~$\bs\omega$ and these $\bc H$-structures, it is possible to build infinitely many $\bc S$-structures of size~$(\cdot,\ell_2+k_1 t_1+\dots+k_{i-1} t_{i-1}+k_{i+1} t_{i+1}+\dots+k_m t_m)$, which is, again, a contradiction.
\end{proof}

\section{General Implicit Species Theorem}\label{subsec:gen_IST}

It is often the case that one defines a species by an equation or a system that does not satisfy the implicit species theorem directly. For instance, sequences ($\Seq$) can be defined by the implicit equation
\[{\mathcal Y}={\mathcal H}_{\mathcal L}({\mathcal Z},{\mathcal Y}):=1+{\mathcal Z}{\mathcal Y},\]
for which ${\mathcal H}_{\mathcal L}(0,0)=1\neq0$. Moreover, defining $\Seq$ as the limit of the iteration~${\mathcal Y}^{[n+1]}={\mathcal H}_{\mathcal L}({\mathcal Z},{\mathcal Y}^{[n]})$ with~${\mathcal Y}^{[0]}=0$ is not even possible at this stage, since the definition of composition in Section~\ref{species} demands that~${\mathcal Y}^{[n]}(0)=0$.

In the case of sequences, an easy way out is to define nonempty sequences as ${\mathcal U}=\Seq-1$, which is possible since $1\subset \Seq$. Setting~${\mathcal Y}=1+{\mathcal U}$ in the equation above gives a new equation~${\mathcal U}={\mathcal Z}({\mathcal U}+1)$ to which the implicit species theorem can be applied. More work is needed to make this idea work in general. For instance, the system
\[{\mathcal Y}_1=1+{\mathcal Z}{\mathcal Y}_1,\quad{\mathcal Y}_2=1+{\mathcal Y}_1^2\]
can be subjected to the implicit species theorem only after the translation~${\mathcal Y}_1=1+{\mathcal U}_1$, ${\mathcal Y}_2=1+1+{\mathcal U}_2$. Thus a first stage of the derivation consists in isolating the value of the solution species at~${\mathcal Z}=0$. This solution is in turn given by an implicit system, which has to have a polynomial solution in order to define only a finite number of structures of size~$n$. It turns out that this question can be solved in a unified manner, provided we first extend the definition of composition to a polynomial species composed with the species 1. Then it is possible to define a notion of well-foundedness for combinatorial systems allowing for structures of size 0, and  we finally obtain an extension of the Implicit Species Theorem to those combinatorial systems.

\subsection{General Composition}
While the composition of species~${\mathcal F}\circ{\mathcal G}$ is defined for arbitrary~${\mathcal F}$ when~${\mathcal G}(0)=0$, the composition with~${\mathcal G}=1$ is only defined when~${\mathcal F}$ is polynomial, so that the result makes sense as a species. (See Joyal's~\cite{Joyal86}\footnote{bearing in mind that here we consider what Joyal calls \emph{espèces finitaires}.}; see also~\cite[p.~111-112]{BeLaLe98}).

\begin{definition} Let ${\mathcal F}$ be  a polynomial species. The composition of ${\mathcal F}$ with 1 is defined as follows. 
\[
{\mathcal F}\circ 1[U]=\begin{cases}
\eset,&\quad\text{if $U\neq\eset$,}\\
\sum_{k\geq 0} {\mathcal F}[\{1,2,\dots,k\}]/\!\sim ,&\quad\text{otherwise}.
\end{cases}
\]
The sum in the definition is polynomial since~${\mathcal F}$ is polynomial and the equivalence classes are defined with respect to isomorphism of ${\mathcal F}$-structures (see \S\ref{parag:rel-species}).
\end{definition}

This definition extends to multisort species. We only give the statement for 2-sort species so as to avoid heavy notation.
If the species ${\mathcal F}({\mathcal Z},{\mathcal Y})$ is partially polynomial in~${\mathcal Z}$, then its composition with~1 is defined by
\[
{\mathcal F}(1,{\mathcal Y})[U,V]=\begin{cases}
\eset,&\quad\text{if $V\neq\eset$,}\\
\sum_{k\geq 0} {\mathcal F}[U,\{1,2,\dots,k\}]/\!\sim ,&\quad\text{otherwise}.
\end{cases} 
\]       
Again, the sum is polynomial since ${\mathcal F}$ is partially polynomial and the equivalence relation is now isomorphism for the second set: $s$ and $t$ in ${\mathcal F}[U,V]$ are equivalent if there exists a permutation~$\sigma\in{\mathcal P}_k$ such that ${\mathcal F}[\operatorname{Id},\sigma](s)=t$.

Finally, since sums can be viewed as multisort species, the composition is more generally defined for ${\mathcal F}\circ{\mathcal G}$  for polynomial~${\mathcal F}$ and arbitrary ${\mathcal G}$. For instance, if~${\mathcal G}(0)=1$, the polynomial ${\mathcal F}({\mathcal X}+{\mathcal Y})$ can be composed with~${\mathcal X}=1$ and then with~${\mathcal G}(0)-1$.

Many properties satisfied by the classical composition of species hold, and in particular the following, with the same proof as Lemma~\ref{lemma:zero}.
\begin{lemma}\label{lemma:zero1}
         Let $\bc G=({\mathcal G}_{1:m})$ be a vector of species such that ${\mathcal G}_i\neq 0$, for $i=1,\dots,m$. For any vector of species~$\bc F({\mathcal Y}_{1:m})$, if $\bc F({\mathcal G}_{1:m})=\bs{0}$, then $\bc F=\bs{0}$.       
\end{lemma}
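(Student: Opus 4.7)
The plan is to proceed by contraposition, exactly as in the proof of Lemma~\ref{lemma:zero}. Assume $\bc F\neq\bs 0$; then some coordinate, say ${\mathcal F}_j$, is nonzero, so there exists a multiset $\bs V=(V_1,\dots,V_m)$ with ${\mathcal F}_j[\bs V]\neq\eset$. For each $i\in\{1,\dots,m\}$, since ${\mathcal G}_i\neq 0$, pick a set $U_i$ with ${\mathcal G}_i[U_i]\neq\eset$. The goal is to assemble these ingredients into a nonempty ${\mathcal F}_j(\bc G)$-structure, contradicting $\bc F(\bc G)=\bs 0$.

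For the construction, I would reuse the labeling trick of Lemma~\ref{lemma:zero}: form the disjoint union
\[
\bs U=\bigsqcup_{i=1}^m\bigsqcup_{v\in V_i}\{v\}\times U_i,
\]
which comes with a natural partition into blocks indexed by the elements of $V_1,\dots,V_m$ (this is the partition used in the multisort composition formula~\eqref{eq:2sort-comp} and its $m$-sort analogue). The bijection $\bs V\leftrightarrow(\{\{v\}\times U_i\}_{i,v})$ transports $s\in{\mathcal F}_j[\bs V]$ to an ${\mathcal F}_j$-structure on the partition, and each block $\{v\}\times U_i$ carries a ${\mathcal G}_i$-structure coming from the chosen element of ${\mathcal G}_i[U_i]$ via the canonical bijection $U_i\leftrightarrow\{v\}\times U_i$. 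These data together give a nonempty element of ${\mathcal F}_j(\bc G)[\bs U]$.

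The only new point compared to Lemma~\ref{lemma:zero} is that the hypothesis ${\mathcal G}_i(0)=0$ has been dropped, so some $U_i$ may be empty. In that case the corresponding blocks $\{v\}\times U_i$ are empty. The ordinary definition of composition forbids empty blocks, but the generalized composition introduced earlier in this section is precisely designed to handle this situation: when a component ${\mathcal G}_i$ has structures on $\eset$, those empty-set contributions are absorbed as equivalence classes under isomorphism. The construction above lands in the correct equivalence class, and since the ${\mathcal F}_j$-structure on $\bs V$ is nonempty, so is its image in $({\mathcal F}_j\circ\bc G)[\bs U]$. The main obstacle, then, is simply verifying that the assembly procedure is well-defined under the extended notion of composition; this is a routine check using the definitions, and once established, the contradiction with $\bc F(\bc G)=\bs 0$ finishes the proof.
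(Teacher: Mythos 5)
Your proposal follows the same route as the paper, which itself simply says ``with the same proof as Lemma~\ref{lemma:zero}.'' You correctly identify the one new wrinkle --- some blocks $\{v\}\times U_i$ may be empty once the hypothesis ${\mathcal G}_i(0)=0$ is dropped --- and correctly observe that the generalized composition with~$1$ is exactly what absorbs those empty blocks as equivalence classes, so the nonempty ${\mathcal F}_j$-structure on $\bs V$ still yields a nonempty element of $({\mathcal F}_j\circ\bc G)[\bs U]$ and the contraposition goes through.
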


\subsection{General Implicit Species}
This section  extends the definition of well-founded combinatorial systems to cases when~$\bc H(\bs{0},\bs{0})$ is  not necessarily~$\bs{0}$. It gives rise to Algorithm~\ref{algo:wf} to decide whether a system is well founded or not. This characterization then leads to our General Implicit Species Theorem.

\begin{definition}[Well-founded combinatorial system]\label{def:wf}
Let $\bc H$ be a vector of  species. The combinatorial system $\bc Y=\bc H(\bc Z,\bc Y)$ is said to be  \emph{well founded} when the iteration
\begin{equation}\label{eq:ite-def-wf}\tag{\ref{joyal-iteration}}
        \bc Y^{[0]}=\bd{0}\quad\text{and}\quad \bc Y^{[n+1]}=\bc H(\bc Z,\bc Y^{[n]}),\quad n\ge0
\end{equation}
is well defined, defines a convergent sequence and the limit~$\bc S$ of this sequence has no zero coordinate.
\end{definition}         
In this definition, \emph{`well defined'} means that the composition of species is actually defined, that is, for each sort~${\mathcal Y}_i$, either~$\bc H$ is polynomial in~${\mathcal Y}_i$ or ${\mathcal Y}^{[n]}_i(\bs{0})=0$ for all~$n$.

The restriction on zero coordinates is quite natural and already appears in the more specific framework of combinatorial specifications considered in~\cite{Zimmermann91}
\footnote{
Actually, the definition in~\cite{Zimmermann91} does not forbid zero coordinates. However, the corresponding procedure   to detect well-founded systems (Algo. B) rejects those with zero coordinates, which comes back to our definition.
}. 
This allows, in particular, to give a characterization of well-founded systems by necessary and sufficient conditions.

When  $\bc Y=\bc H(\bc Z,\bc Y)$ is a system such that $\bc H(\bs{0},\bs{0})\ne \bs{0}$, we define a companion system with a new sort ${\mathcal Z}_1$ marking the empty species, and show the relations between  iterations  on both systems;  finally the original system is well founded if and only if its companion system is well founded at 0, with a solution that is partially polynomial in ${\mathcal Z}_1$.
  
\def\Sun{\bc S_1}  
\begin{definition}If $\bc Y=\bc H(\bc Z,\bc Y)$ is a system such that $\bc H(\bs{0},\bs{0})\ne \bs{0}$, its \emph{companion system} is defined by 
  \begin{equation*}
	\bc Y=\bc K({\mathcal Z}_1, \bc Z,\bc Y), \quad\text{where}\quad  \bc K= \bc H(\bc Z,\bc Y)-\bc H(\bs{0},\bs{0})+{\mathcal Z}_1\bc H(\bs{0},\bs{0}).
\end{equation*}
\end{definition}

\begin{theorem}[Characterization of well-founded systems]\label{th:WF}
Let $\bc H=({\mathcal H}_{1:m})$ be a vector of species. The combinatorial system $\bc Y=\bc H(\bc Z,\bc Y)$ is well founded if and only if
\begin{enumerate}
	\item the companion system $\bc Y=\bc K({\mathcal Z}_1,\bc Z,\bc Y)$ is well founded at $\bs{0}$ and,
	\item  if the species~$\Sun({\mathcal Z}_1,\bc Z)$ is   the solution of $\bc Y=\bc K({\mathcal Z}_1,\bc Z,\bc Y)$ with~$\Sun(0,\bs{0})=\bs{0}$, then $\Sun({\mathcal Z}_1,\bc Z)$ is polynomial in ${\mathcal Z}_1$.
\end{enumerate}
In this case, the limit of~\eqref{eq:ite-def-wf} is~$\Sun(1,\bc Z)$. 
\end{theorem}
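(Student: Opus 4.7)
The plan has three parts: sufficiency, necessity, and identification of the limit. The argument rests on four structural properties of the companion: $\bc K(0, \bs{0}, \bs{0}) = \bs{0}$, $\bc K(1, \bc Z, \bc Y) = \bc H(\bc Z, \bc Y)$, $\bd\partial\bc K/\bd\partial\bc Y = \bd\partial\bc H/\bd\partial\bc Y$ (the correction term $({\mathcal Z}_1-1)\bc H(\bs{0}, \bs{0})$ is free of $\bc Y$), $\bc K$ has ${\mathcal Z}_1$-degree one, and $\bc K$ is polynomial in ${\mathcal Y}_i$ if and only if $\bc H$ is.

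For $(\Leftarrow)$: let $\bc U^{[n]}$ be the companion iteration, convergent to $\Sun$ by~(1); each $\bc U^{[n]} \subset \Sun$ is polynomial in ${\mathcal Z}_1$ by~(2), so $\bc U^{[n]}(1, \bc Z)$ is well defined. The forward direction of Proposition~\ref{prop:IPFS} applied to the companion ensures that for every coordinate~$i$, either $\Sun_i({\mathcal Z}_1, \bs{0}) = 0$ or $\bc H$ is polynomial in ${\mathcal Y}_i$, which is precisely the well-definedness condition for the original iteration applied to $\bc Y^{[n]} := \bc U^{[n]}(1, \bc Z)$. An induction on~$n$, using $\bc K(1, \cdot, \cdot) = \bc H$ and associativity of composition, yields $\bc Y^{[n]}(\bc Z) = \bc U^{[n]}(1, \bc Z)$. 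Convergence $\bc U^{[n]} \to \Sun$ in total size, combined with the polynomial bound on the ${\mathcal Z}_1$-degree at each $\bc Z$-size, transfers to $\bc Y^{[n]} \to \Sun(1, \bc Z)$; the limit has no zero coordinate since substituting ${\mathcal Z}_1 = 1$ in a nonzero polynomial-in-${\mathcal Z}_1$ species cannot yield the zero species.

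For $(\Rightarrow)$: assume the original system is well founded with limit~$\bc S$, and define $\bc U^{[n]}$ by the companion iteration. The core is a simultaneous induction on~$n$ establishing: (i)~the composition is well defined, with $\bc U^{[n]}_i(0, \bs{0}) = \bs{0}$ for every~$i$; (ii)~$\bc U^{[n]}$ is polynomial in~${\mathcal Z}_1$, and $\bc U^{[n]}_i({\mathcal Z}_1, \bs{0}) = 0$ whenever $\bc H$ is not polynomial in~${\mathcal Y}_i$; (iii)~$\bc U^{[n]}(1, \bc Z) = \bc Y^{[n]}$. Polynomiality preservation invokes Lemma~\ref{lem:HS0-polynomial1}; the vanishing clause of~(ii) is obtained as follows: if $\bc H$ is not polynomial in ${\mathcal Y}_i$, original well-definedness forces $\bc Y^{[n]}_i(\bs{0}) = 0 = \bc U^{[n]}_i(1, \bs{0})$, whence $\bc U^{[n]}_i({\mathcal Z}_1, \bs{0}) = 0$ since substitution at~$1$ cannot kill a nonzero polynomial species. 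Nilpotence of $\bd\partial\bc H/\bd\partial\bc Y(\bs{0}, \bs{0})$ then follows by an argument analogous to Theorem~\ref{th:carac-wf0}: otherwise size-$0$ structures in each Jacobian power, combined via the Taylor inclusion~\eqref{eq:taylor1} with a nonzero $\bc S$-structure through the unfolding $\bc S = \bc H^q(\bc Z, \bc S)$, would yield infinitely many $\bc S$-structures of a fixed $\bc Z$-size. Proposition~\ref{prop:is-cvg} then gives convergence of $\bc U^{[n]}$ to a limit~$\Sun$.

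The main obstacle is polynomiality of $\Sun$ in~${\mathcal Z}_1$, which I would obtain by verifying the three conditions of Proposition~\ref{prop:IPFS} for the companion. Condition~(3) is immediate from clause~(ii) of the induction. For condition~(1), one analyzes the degenerate subsystem $\bc V = \bc K({\mathcal Z}_1, \bs{0}, \bc V)$ whose solution is $\Sun({\mathcal Z}_1, \bs{0})$: after restricting to the indices $I$ where $\bc H$ is polynomial in~${\mathcal Y}_i$ (the other coordinates vanishing by~(ii)), the right-hand side becomes polynomial in~$({\mathcal Z}_1, \bc V_I)$, and the Jacobian is a principal submatrix of $\bd\partial\bc H/\bd\partial\bc Y(\bs{0}, \bs{0})$, hence nilpotent; Proposition~\ref{prop:polynomial_implicit2} then gives polynomiality of $\Sun_I({\mathcal Z}_1, \bs{0})$. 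Condition~(2) is obtained similarly via Proposition~\ref{prop:polynomial_implicit2}. Once polynomiality of~$\Sun$ is established, no-zero-coord of~$\Sun$ and the identification $\Sun(1, \bc Z) = \bc S$ both follow by passing to the limit in~(iii).
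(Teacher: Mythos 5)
Your proof of $(\Leftarrow)$ and the induction in $(\Rightarrow)$ establishing $\bc U^{[n]}(1,\bc Z)=\bc Y^{[n]}$ and polynomiality-in-${\mathcal Z}_1$ of each iterate follow the paper closely, and your separate nilpotence argument for $\bd\partial\bc H/\bd\partial\bc Y(\bs{0},\bs{0})$ (adapting Theorem~\ref{th:carac-wf0} to the case $\bc S(\bs{0})\neq\bs{0}$) is a legitimate alternative to the paper's convergence argument. However, the route you take to the polynomiality of $\Sun$ in~${\mathcal Z}_1$ has a genuine gap. You propose to check the three conditions of Proposition~\ref{prop:IPFS}. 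For condition~(1), you analyze the degenerate system $\bc V_I=\bc K_I({\mathcal Z}_1,\bs{0},\bc V_I)$ and claim ``the Jacobian is a principal submatrix of $\bd\partial\bc H/\bd\partial\bc Y(\bs{0},\bs{0})$, hence nilpotent.'' That identifies only $\bd\partial\bc K_I/\bd\partial\bc V_I$ evaluated at the \emph{origin} with a principal submatrix; Proposition~\ref{prop:polynomial_implicit2} requires the Jacobian $\bd\partial\bc K_I/\bd\partial\bc V_I({\mathcal Z}_1,\bs{0},\bc V_I)$ to be nilpotent \emph{as a species}, i.e.\ at a generic argument $\bc V_I$, and that is not a consequence of nilpotence at $\bs{0}$. (Moreover, Proposition~\ref{prop:polynomial_implicit2} also requires the restricted system to be well founded at~$\bs{0}$, and you have not ruled out zero coordinates of $\Sun_I({\mathcal Z}_1,\bs{0})$ for $i\in I$.) Establishing this nilpotence would require knowing $\Sun({\mathcal Z}_1,\bs{0})$ is polynomial, which is what you are trying to prove; the argument as written is circular. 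Condition~(2), which you say follows ``similarly,'' has the same problem.

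The missing ingredient is a sharper relationship between the convergence of the two iterations, namely the second item of the paper's Lemma~\ref{lem:contaK}: $\bc Y^{[n]}=_k\bc Y^{[n+1]}$ implies $\bc T^{[n]}_{\leq(\cdot,k)}=\bc T^{[n+1]}_{\leq(\cdot,k)}$, proved by observing that $\bc D_{n,k}:=\bc T^{[n+1]}_{\leq(\cdot,k)}-\bc T^{[n]}_{\leq(\cdot,k)}$ is polynomial in ${\mathcal Z}_1$ with $\bc D_{n,k}(1,\bc Z)=\bs{0}$, so $\bc D_{n,k}=\bs{0}$ by Lemma~\ref{lemma:zero1}. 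This lemma lets the paper avoid Proposition~\ref{prop:IPFS} altogether: convergence of the companion follows from convergence of $\bc Y^{[n]}$ (rather than needing the nilpotence argument), and polynomiality of $\Sun$ in ${\mathcal Z}_1$ follows because for each $k$ the $\bc Z$-size-$k$ slice of $\bc T^{[n]}$ stabilizes at some $n$ and is polynomial. Your proof of $(\Rightarrow)$ cannot be completed without some version of this contact-transfer statement, and once you have it, verifying Proposition~\ref{prop:IPFS} becomes unnecessary.
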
       
\begin{proof} 
                                        
        Assume that conditions 1. and 2. are satisfied. The existence of the solution~$\Sun$ follows from the implicit species theorem, since $\bc Y=\bc K({\mathcal Z}_1,\bc Z,\bc Y)$ is well founded at~$\bs{0}$. Then~$\Sun$ is the limit of the sequence~$(\bc T^{[n]})$ defined by
    \begin{equation}\label{eq:ite-K}
            \bc T^{[0]}=\bd{0}\quad\text{and}\quad \bc T^{[n+1]}=\bc K({\mathcal Z}_1,{\mathcal Z},\bc T^{[n]}),\quad n\ge0.
    \end{equation}
For all $n\geq 0$, since $\bc T^{[n]}\subset \Sun$, the species $\bc T^{[n]}$ is polynomial in ${\mathcal Z}_1$ and can be composed with~1.
By induction, we now show that $\bc Y^{[n]}(\bc Z)=\bc T^{[n]}(1,\bc Z)$ for all $n\geq 0$. From there it follows that the iteration in Equation~\eqref{eq:ite-def-wf} is well defined.
The property is clear for~$n=0$. Assume that it holds for~$n$.
        Proposition~\ref{prop:IPFS} implies that for all~$i\in\{1,\dots,m\}$, either $\bc K({\mathcal Z}_1, \bc Z, \bc Y)$ is polynomial in ${\mathcal Y}_i$, or the $i$th coordinate of $\Sun({\mathcal Z}_1,\bs{0})$ is~0; this means that the $i$th coordinate of $\bc T^{[n]}(1,\bs{0})\subset{\bc S_1}(1,\bs{0})$ is~0 (applying Lemma~\ref{lemma:zero1}). Thus, the composition of $\bc K$ with~1 is possible in the following equation that proves the induction
\[
\bc T^{[n+1]}(1,\bc Z)=\bc K(1,{\mathcal Z},\bc T^{[n]}(1,\bc Z))=\bc H(\bc Z,\bc Y^{[n]})=\bc Y^{[n+1]},
\]
the second identity being given by the induction hypothesis. 
        As a consequence, $(\bc Y^{[n]})$ converges to the limit of $(\bc T^{[n]}(1,\bc Z))$, that is~$\bc S(\bc Z):=\Sun(1,\bc Z)$. Finally, the system $\bc Y=\bc K({\mathcal Z}_1,\bc Z,\bc Y)$ being well founded at $\bs{0}$, the species~$\Sun$ has no zero coordinate and by Lemma~\ref{lemma:zero1}, neither does $\bc S$. 

Conversely, assume that $\bc Y=\bc H(\bc Z,\bc Y)$ is well founded. If $\bc H(\bs{0},\bs{0})=\bs{0}$, then $\bc K=\bc H$ and the two properties are trivially satisfied; therefore, we only consider the case when $\bc H(\bs{0},\bs{0})\neq\bs{0}$. First, in order to check that $\bc Y=\bc K({\mathcal Z}_1,\bc Z,\bc Y)$ is well founded at $\bs{0}$, it is sufficient to check for the convergence of the sequence~$(\bc T^{[n]})_{n\in\N}$, 
the other properties being inherited from $\bc H$. Applying the second item of Lemma~\ref{lem:contaK} below, the convergence of~$(\bc T^{[n]})$ follows from that of~$(\bc Y^{[n]})$.
Let us now turn to the polynomiality of the solution.
For each~$k$, the convergence of~$\bc Y^{[n]}$ and Lemma~\ref{lem:contaK} imply that there exists~$n$ such that $\bc T_{=(.,k)}^{[n]}=(\bc S_1)_{=(.,k)}$ and moreover~$\bc T^{[n]}$ is polynomial in~${\mathcal Z}_1$. Thus for each~$k$, $(\bc S_1)_{=(.,k)}$ is polynomial in~${\mathcal Z}_1$, which means that~$\bc S_1$ itself is polynomial in~${\mathcal Z}_1$.

\begin{lemma}\label{lem:contaK}
      Let $\bc Y=\bc H(\bc Z,\bc Y)$ be well founded and $\bc Y=\bc K({\mathcal Z}_1, \bc Z,\bc Y)$ be its companion system. 
        Let~$(\bc Y^{[n]})_{n\in\N}$ and $(\bc T^{[n]})_{n\in\N}$ be the sequences defined by Equations~\eqref{eq:ite-def-wf} and~\eqref{eq:ite-K}. For all $n,k>0$,  
        \begin{enumerate}
                \item $\bc T^{[n]}$ is polynomial in ${\mathcal Z}_1$ and $\bc Y^{[n]}(\bc Z)=\bc T^{[n]}(1,\bc Z)$;
                \item $\bc Y^{[n]}=_k \bc Y^{[n+1]} \implies \bc T^{[n]}_{\leq(.,k)} = \bc T^{[n+1]}_{\leq(.,k)}\implies \bc T^{[n]}=_k \bc T^{[n+1]}$.
        \end{enumerate}
\end{lemma}
\begin{proof}
        The first point is proved by induction on $n$. The case~$n=0$ follows from the definition.
        By definition, $\bc K$ is polynomial in ${\mathcal Z}_1$.
For each $i\in\{1,\dots,m\}$ such that~$\bc T^{[n]}_i({\mathcal Z}_1,\bs{0})\neq0$, Lemma~\ref{lemma:zero1} shows that~$\bc T^{[n]}_i(1,\bs{0})\neq0$ and by the induction hypothesis this is~$\bc Y_i^{[n]}(0)$. Iteration~\eqref{eq:ite-def-wf} being well defined shows that in this case $\bc H$ is polynomial in~$\bc Y_i$ and therefore so is~$\bc K$. Thus, by  Lemma~\ref{lem:HS0-polynomial1}, the species~$\bc T^{[n+1]}$ is polynomial in ${\mathcal Z}_1$. Moreover, the composition of $\bc K$ with~1 being well defined, one has
\[
\bc T^{[n+1]}(1,\bc Z)=\bc K(1,\bc Z, \bc Y^{[n]})=\bc H(\bc Z, \bc Y^{[n]})=\bc Y^{[n+1]}.
\]   
        
        Again by induction on $n$, the sequence~$(\bc T^{[n]})$ is increasing and in particular $\bc T^{[n]}_{\leq(.,k)} \subset \bc T^{[n+1]}_{\leq(.,k)}$. Then, since $\bc Y^{[n]}=_k \bc Y^{[n+1]}$ one has $\bc T^{[n]}_{\leq(.,k)}(1,\bc Z) = \bc T^{[n+1]}_{\leq(.,k)}(1,\bc Z)$. Let $\bc D_{n,k}$ be the species defined by $\bc D_{n,k}({\mathcal Z}_1,\bc Z)=\bc T^{[n+1]}_{\leq(.,k)} - \bc T^{[n]}_{\leq(.,k)}$. By hypothesis $\bc D_{n,k}(1,\bc Z)=\bc Y^{[n+1]}-\bc Y^{[n]}=\bs{0}$ and thus, applying Lemma~\ref{lemma:zero1}, $\bc D_{n,k}({\mathcal Z}_1,\bc Z)=\bs{0}$, which rewrites as $\bc T^{[n]}_{\leq(.,k)} = \bc T^{[n+1]}_{\leq(.,k)}$. This implies in particular that
        \[
        \bc T^{[n]}_{\leq k}=\sum_{i=0}^k\bc T^{[n]}_{\leq(k-i,i)} = \sum_{i=0}^k\bc T^{[n+1]}_{\leq(k-i,i)} =  \bc T^{[n+1]}_{\leq k}.\qedhere
        \]
        \end{proof}
\renewcommand{\qedsymbol}{}
\end{proof}

\begin{Algorithm}[t]{0.76\textwidth}   
  \SetAlgoRefName{isWellFounded}
\caption{Characterization of well-founded systems \label{algo:wf}}
\DontPrintSemicolon
\SetKwFunction{isWellFoundedAtZero}{\ref{algo:wf0}}
\SetKwFunction{isPartiallyPolynomial}{\ref{algo:part-poly}}
\Input{$\bc H=({\mathcal H}_{1:m})$: a vector of species.}
\Output{Answer to ``Is the system $\bc Y=\bc H(\bc Z,\bc Y)$ well founded?''}
\SetAlgoNoLine
\Begin{
\SetAlgoVlined
Compute $\bc K:=\bc H(\bc Z,\bc Y)-\bc H(\bs{0},\bs{0})+{\mathcal Z}_1\bc H(\bs{0},\bs{0})$\;
\lIf{\isWellFoundedAtZero{$\bc K$}$=${\em NO}}{\Return NO}\\
\Else{
\lIf{\isPartiallyPolynomial{$\bc K$}$=${\em NO}}{\Return NO}\\
\lElse{\Return YES}
}}
\end{Algorithm}

\begin{example}
Here is how the theorem proves that the system ${\mathcal Y}_1=1+{\mathcal Z}{\mathcal Y}_1,~{\mathcal Y}_2=1+{\mathcal Y}_1^2$ from above is well founded. 
Its companion system $\bc Y=\bc K({\mathcal Z}_1, \bc Z,\bc Y)$ is defined by
\[
{\mathcal Y}_1={\mathcal Z}{\mathcal Y}_1+{\mathcal Z}_1,\quad{\mathcal Y}_2={\mathcal Y}_1^2+{\mathcal Z}_1.
\]
Using Theorem~\ref{th:carac-wf0}, the value of the Jacobian matrix~$\bd\partial\bc K/\bd\partial\bc Y=\left(\begin{smallmatrix}{\mathcal Z}&0\\ 2{\mathcal Y}_1&0\end{smallmatrix}\right)$ implies that $\bc Y=\bc K({\mathcal Z}_1, {\mathcal Z},\bc Y)$ is well founded at~$\bs{0}$.
In order to check that the solution $\Sun$ of the companion system is polynomial in ${\mathcal Z}_1$, we use Proposition~\ref{prop:IPFS}. The second condition follows from the nilpotence of       $\bd\partial\bc K/\bd\partial\bc Y({\mathcal Z}_1, 0,\bc Y)=\left(\begin{smallmatrix}0&0\\ 2{\mathcal Y}_1&0\end{smallmatrix}\right)$. Next, since $\bc K$ is polynomial, Proposition~\ref{prop:polynomial_implicit} shows that $\Sun({\mathcal Z}_1,0)$ is polynomial. In addition, $\bc K$ is polynomial in ${\mathcal Z}_1$ and $\bc Y$, which is the third condition for the polynomiality of~$\Sun$.

On the contrary, the system ${\mathcal Y}_1=1+{\mathcal Y}_2{\mathcal Y}_1,~{\mathcal Y}_2=1$ is not well founded. 
Its companion system $\bc Y=\bc K({\mathcal Z}_1, \bc Z,\bc Y)$ is defined by
\[
{\mathcal Y}_1={\mathcal Y}_2{\mathcal Y}_1+{\mathcal Z}_1,\quad{\mathcal Y}_2={\mathcal Z}_1.
\]
Once again, the well-foundedness of this system is easily checked but its solution is not polynomial in ${\mathcal Z}_1$ since the Jacobian matrix~$\bd\partial\bc K/\bd\partial\bc Y({\mathcal Z}_1, 0,\bc Y)=\left(\begin{smallmatrix}{\mathcal Y}_2&{\mathcal Y}_1\\ 0&0\end{smallmatrix}\right)$ is not nilpotent.
\end{example}

We can now state a general Implicit Species Theorem, concerning well-founded systems allowing for structures of size 0.

\begin{theorem}[General Implicit Species Theorem]\label{th:GIST}
Let $\bc H=({\mathcal H}_{1:m})$ be a vector of species, such that the system  $\bc Y=\bc H(\bc Z,\bc Y)$ is well founded. Then, this system admits a solution~$\bc S$ such that~${\bc S(\bs{0})=\bc H^{m}(\bs{0},\bs{0})}$, which is unique up to isomorphism. 
\end{theorem}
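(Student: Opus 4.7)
The plan is to transfer existence and uniqueness from the companion system, via Theorem~\ref{th:WF} which reduces well-foundedness to a situation where the Implicit Species Theorem (Theorem~\ref{th:IST}) applies.

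For existence, by Theorem~\ref{th:WF} the companion system $\bc Y = \bc K({\mathcal Z}_1, \bc Z, \bc Y)$ is well founded at $\bs{0}$ with solution $\bc S_1({\mathcal Z}_1,\bc Z)$ polynomial in ${\mathcal Z}_1$. The hypotheses $\bc K(0,\bs{0},\bs{0})=\bs{0}$ (immediate from the definition of $\bc K$) and nilpotence of $\bd\partial\bc K/\bd\partial\bc Y(0,\bs{0},\bs{0})$ (inherited from well-foundedness at $\bs{0}$ via Theorem~\ref{th:carac-wf0}) let us invoke Theorem~\ref{th:IST}, so $\bc S_1$ is unique up to isomorphism among solutions vanishing at $(0,\bs{0})$. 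I would then set $\bc S(\bc Z) := \bc S_1(1,\bc Z)$: this composition is well defined by polynomiality in ${\mathcal Z}_1$, and substituting ${\mathcal Z}_1=1$ in $\bc S_1 = \bc K({\mathcal Z}_1, \bc Z, \bc S_1)$, combined with the identity $\bc K(1,\bc Z,\bc Y)=\bc H(\bc Z,\bc Y)$, yields $\bc S = \bc H(\bc Z, \bc S)$.

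To pin down the value at $\bs{0}$, consider the auxiliary system satisfied by $\bc U({\mathcal Z}_1):=\bc S_1({\mathcal Z}_1,\bs{0})$, namely $\bc U = \bc K({\mathcal Z}_1,\bs{0},\bc U)$ with $\bc U(0)=\bs{0}$. Because $\bc S_1$ is polynomial in ${\mathcal Z}_1$, so is $\bc U$; Proposition~\ref{prop:polynomial_implicit} then forces $\bc U = \bc K^m({\mathcal Z}_1, \bs{0}, \bs{0})$, the iteration stabilizing at step $m$. Specializing at ${\mathcal Z}_1=1$ gives $\bc S(\bs{0}) = \bc U(1) = \bc K^m(1,\bs{0},\bs{0}) = \bc H^m(\bs{0},\bs{0})$, as required.

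The hard part is uniqueness. Given a second solution $\bc S'$ with $\bc S'(\bs{0})=\bc H^m(\bs{0},\bs{0})$, the plan is to prove $\bc S' =_k \bc S$ for all $k\geq 0$ by induction, the base case $k=0$ being the pinning hypothesis. For the inductive step I would adapt Lemma~\ref{lem:increasing-contact} to the non-vanishing setting: iterating the fixed-point equation gives $\bc S = \bc H^{p}(\bc Z,\bc S)$ and $\bc S' = \bc H^{p}(\bc Z,\bc S')$ for every $p$, and any $\bc H$-assembly witnessing a discrepancy at size $\leq k+1$ decomposes into at most one substructure of positive size carrying the full contact-breaking information together with size-$0$ pieces. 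The main obstacle is that, unlike in Lemma~\ref{lem:increasing-contact}, size-$0$ pieces now exist; to close the argument I would track the decomposition in the companion system, where the polynomiality of $\bc S_1({\mathcal Z}_1,\bs{0})$ bounds the length of any chain of ${\mathcal Z}_1$-weighted size-$0$ substructures, and the nilpotence of $\bd\partial\bc K/\bd\partial\bc Y(0,\bs{0},\bs{0})$ bounds the depth of the positive-size differential chain, together yielding the contradiction that promotes contact from order $k$ to order $k+1$. Lemma~\ref{lemma1} then produces the isomorphism $\bc S' \simeq \bc S$.
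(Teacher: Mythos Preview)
Your existence argument matches the paper's: pass to the companion system via Theorem~\ref{th:WF}, set $\bc S=\Sun(1,\bc Z)$, and identify $\bc S(\bs{0})=\bc H^m(\bs{0},\bs{0})$ through Proposition~\ref{prop:polynomial_implicit}.

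For uniqueness the paper takes a much shorter route that avoids the difficulty you flag. It considers the \emph{shifted system}
\[
\bc Y \;=\; \bc H\bigl(\bc Z,\;\bc S(\bs{0})+\bc Y\bigr)-\bc S(\bs{0}),
\]
observes that the subtraction is legitimate because $\bc S(\bs{0})=\bc H(\bs{0},\bc S(\bs{0}))\subset\bc H(\bc Z,\bc S(\bs{0})+\bc Y)$, and checks that this new system satisfies the hypotheses of Joyal's Theorem~\ref{th:IST}: it vanishes at $(\bs{0},\bs{0})$, and its Jacobian there, namely $\bd\partial\bc H/\bd\partial\bc Y(\bs{0},\bc S(\bs{0}))$, is nilpotent (this is exactly condition~2 of Proposition~\ref{prop:IPFS}, which holds because $\Sun$ is polynomial in~${\mathcal Z}_1$). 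Any solution $\bc S'$ of the original system with $\bc S'(\bs{0})=\bc S(\bs{0})$ gives a solution $\bc S'-\bc S(\bs{0})$ of the shifted system vanishing at $\bs{0}$, and Theorem~\ref{th:IST} forces $\bc S'-\bc S(\bs{0})\simeq\bc S-\bc S(\bs{0})$.

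Your direct adaptation of Lemma~\ref{lem:increasing-contact} has a genuine gap. That lemma and its proof rely on the inclusion $\bc B\subset\bc A$ to make sense of $\bc H^i(\bc Z,\bc A)-\bc H^i(\bc Z,\bc B)$ as a species; two arbitrary solutions $\bc S,\bc S'$ carry no such inclusion, so ``a structure witnessing a discrepancy'' does not have a canonical decomposition to peel off. Your fallback is to ``track the decomposition in the companion system'', but $\bc S'$ is not handed to you with a ${\mathcal Z}_1$-grading: it is just an abstract fixed point of $\bc H$ with a prescribed value at $\bs{0}$, and nothing you have established lifts it to a solution of $\bc Y=\bc K({\mathcal Z}_1,\bc Z,\bc Y)$. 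The polynomiality and nilpotence bounds you invoke are facts about $\Sun$, not about $\bc S'$, so the termination argument for the size-$0$ chain is not justified. The shift trick in the paper sidesteps all of this by absorbing $\bc S(\bs{0})$ into the system, thereby returning to the setting where size-$0$ structures are absent and Theorem~\ref{th:IST} applies verbatim.
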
     
\begin{proof}

By Theorem~\ref{th:WF}, a solution is given by $\bc S(\bc Z)=\Sun(1,\bc Z)$, with $\Sun$ the solution of the companion system of~$\bc Y=\bc H(\bc Z,\bc Y)$. It follows that $\bc S(\bs{0})=\Sun(1,\bs{0})=\bc K^{m}(1,\bs{0},\bs{0})=\bc H^{m}(\bs{0},\bs{0})$,
where the second equality is a consequence of Proposition~\ref{prop:polynomial_implicit}. 

Let us consider the shifted system $\bc Y=\bc H(\bc Z,\bc S(\bs{0})+\bc Y)-\bc S(\bs{0})$; note that the subtraction is well defined since $\bc S(\bs{0})= \bc H(\bs{0},\bc S(\bs{0}))\subset \bc H(\bc Z,\bc S(\bs{0}))\subset \bc H(\bc Z,\bc S(\bs{0})+\bc Y)$.
This shifted system satisfies the conditions of Joyal's Implicit Species Theorem~\ref{th:IST} and for any species~$\,\bc U$, solution of~$\bc Y=\bc H(\bc Z,\bc Y)$ such that~$\,\bc U(\bs{0})=\bc S(\bs{0})$, the species~$\,\bc U-\bc U(\bs{0})$ is a solution of the shifted system and is~$\bs{0}$ at~$\bs{0}$. By Theorem~\ref{th:IST}, this solution is isomorphic to~$\bc S-\bc S(\bs{0})$, so that $\,\bc U$ is isomorphic to~$\bc S$. 
\end{proof}

In this generalized setting, it is also possible to characterize systems with polynomial solutions.

\begin{proposition}[General Implicit Polynomial Species]\label{prop:GPI}           Let~$\bc H=({\mathcal H}_{1:m})$ be a vector of species such that $\bc Y=\bc H(\bc Z,\bc Y)$ is well founded. Let $(\bc Y^{[n]})_{n\in\N}$ be the sequence of species defined by Eq.~\eqref{eq:phi_poly} and let~$\bc S$ be the solution of the system~$\bc Y=\bc H(\bc Z,\bc Y)$ such that $\bc S(\bs{0})=\bc H^{m}(\bs{0},\bs{0})$. The following three properties are equivalent:
          \begin{enumerate}[i)]
                 \item the species~$\bc S$ is polynomial;
                 \item the species~$\bc Y^{[m]}$ is polynomial and  $\bc Y^{[m]}=\bc Y^{[m+1]}$;
                 \item the Jacobian matrix ${\bd\partial \bc H}/{\bd\partial\bc Y}(\bc Z,\bc Y)$ is nilpotent and the species $\bc H$ is polynomial.
          \end{enumerate} 
\end{proposition}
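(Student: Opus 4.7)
The plan is to reduce the statement, via the companion system construction of Section~\ref{subsec:gen_IST}, to Propositions~\ref{prop:polynomial_implicit} and~\ref{prop:polynomial_implicit2}, which establish the analogous equivalences in the well-founded-at-$\bs{0}$ setting. If $\bc H(\bs{0},\bs{0})=\bs{0}$ then $\bc K=\bc H$ and the proposition reduces directly to these two statements, so I will treat the generic case $\bc H(\bs{0},\bs{0})\neq\bs{0}$.

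First I would invoke Theorem~\ref{th:WF} to obtain that $\bc Y=\bc K({\mathcal Z}_1,\bc Z,\bc Y)$ is well founded at~$\bs{0}$, with a unique solution $\bc S_1({\mathcal Z}_1,\bc Z)$ such that $\bc S_1(0,\bs{0})=\bs{0}$, that $\bc S_1$ is polynomial in~${\mathcal Z}_1$, and that $\bc S(\bc Z)=\bc S_1(1,\bc Z)$. Writing $(\bc T^{[n]})$ for the Joyal iteration of~$\bc K$, Lemma~\ref{lem:contaK} gives $\bc Y^{[n]}=\bc T^{[n]}(1,\bc Z)$ for all~$n$, and that $\bc T^{[n]}$ is polynomial in~${\mathcal Z}_1$. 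Note also that Proposition~\ref{prop:polynomial_implicit} applies to~$\bc K$ since $\bc K(\bs{0},\bs{0})=\bs{0}$ and the Jacobian ${\bd\partial\bc K}/{\bd\partial\bc Y}(\bs{0},\bs{0})={\bd\partial\bc H}/{\bd\partial\bc Y}(\bs{0},\bs{0})$ is nilpotent.

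The rest of the proof is then a small dictionary between $\bc H$ and $\bc K$. The Jacobians coincide, ${\bd\partial\bc K}/{\bd\partial\bc Y}({\mathcal Z}_1,\bc Z,\bc Y)={\bd\partial\bc H}/{\bd\partial\bc Y}(\bc Z,\bc Y)$, since $\bc K$ differs from $\bc H$ only by terms not depending on~$\bc Y$, so the nilpotence clause in (iii) transfers identically. Next, $\bc H$ is polynomial iff $\bc K$ is polynomial: from $\bc H$ polynomial, $\bc H(\bs{0},\bs{0})$ is polynomial as a subspecies and so is $\bc K$; conversely, if $\bc K$ is polynomial then $\bc H=\bc K(1,\bc Z,\bc Y)$ is obtained by identifying isomorphic structures in the ${\mathcal Z}_1$-sort of a finite set, hence polynomial. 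Finally, and this is the only slightly delicate point, I would show that a species $\bc R({\mathcal Z}_1,\bc Z)$ which is polynomial in~${\mathcal Z}_1$ is polynomial if and only if $\bc R(1,\bc Z)$ is polynomial: by definition $\bc R_{=(\cdot,n)}$ is polynomial for each~$n$, and Lemma~\ref{lemma:zero1} applied to $\bc R_{=(\cdot,n)}$ gives that $\bc R_{=(\cdot,n)}=\bs{0}$ iff $\bc R_{=(\cdot,n)}(1,\bc Z)=\bs{0}$, so the number of nonzero sizes agrees on both sides. Applied to $\bc S_1$ and to $\bc T^{[m]}$, this shows respectively that $\bc S$ is polynomial iff $\bc S_1$ is polynomial, and that $\bc Y^{[m]}$ is polynomial iff $\bc T^{[m]}$ is polynomial. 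The stability conditions match as well: item~2 of Lemma~\ref{lem:contaK} gives $\bc Y^{[m]}=\bc Y^{[m+1]}\Rightarrow\bc T^{[m]}=\bc T^{[m+1]}$, and the converse is immediate by substituting ${\mathcal Z}_1=1$.

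Putting the pieces together, clauses (i), (ii), (iii) of the proposition for $\bc H$ translate term by term into the clauses of Propositions~\ref{prop:polynomial_implicit} and~\ref{prop:polynomial_implicit2} for the well-founded-at-$\bs{0}$ system~$\bc K$, whose three-way equivalence I would invoke to conclude. The main technical obstacle is the handling of the substitution ${\mathcal Z}_1=1$, which must preserve polynomiality in both directions; this is controlled cleanly by the non-vanishing principle Lemma~\ref{lemma:zero1} together with the fact that the intermediate species involved are all polynomial in~${\mathcal Z}_1$ by construction.
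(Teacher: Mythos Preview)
Your proof is correct and, for the equivalence $(i)\Leftrightarrow(iii)$, matches the paper's argument almost verbatim: both pass to the companion system, observe that $\bs\partial\bc K/\bs\partial\bc Y=\bs\partial\bc H/\bs\partial\bc Y$ and that $\bc H$ is polynomial iff $\bc K$ is, and reduce to showing $\bc S$ polynomial iff $\bc S_1$ polynomial via Lemma~\ref{lemma:zero1}.

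Where you diverge is in handling $(i)\Leftrightarrow(ii)$. The paper treats this equivalence separately, through the \emph{shifted} system $\bc Y=\bc H(\bc Z,\bc S(\bs 0)+\bc Y)-\bc S(\bs 0)$ and a direct appeal to Proposition~\ref{prop:polynomial_implicit}; this is short but leaves implicit the step linking the iterates $\hat{\bc Y}^{[m]}$ of the shifted system to the original iterates $\bc Y^{[m]}$ (one has to transfer the triangular structure of $\hat{\bc H}$ back to $\bc H$, e.g.\ via Lemma~\ref{lemma:zero1}). You instead route $(ii)$ through the companion system as well, using Lemma~\ref{lem:contaK} to pass between $\bc Y^{[m]}=\bc Y^{[m+1]}$ and $\bc T^{[m]}=\bc T^{[m+1]}$, and your polynomiality-transfer principle to pass between polynomiality of $\bc Y^{[m]}$ and of $\bc T^{[m]}$. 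This is a genuine, if modest, difference in organisation: your version is more uniform (a single auxiliary system handles all three clauses) and makes every transfer step explicit, at the cost of invoking one extra lemma; the paper's version is more economical in citations but relies on the reader unpacking Proposition~\ref{prop:polynomial_implicit}'s proof.
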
 
\begin{proof} The first two properties are equivalent.
        Indeed, if~$\bc Y^{[m]}=\bc Y^{[m+1]}$, then by induction this species is~$\bc S$ and its polynomiality follows from that of~$\bc Y^{[m]}$. Conversely, if~$\bc S$ is polynomial, then~$\bc S-\bc S(\bs{0})$ is a polynomial solution of the shifted system~$\bc Y=\bc H(\bc Z,\bc S(\bs{0})+\bc Y)-\bc S(\bs{0})$, which is~$\bs{0}$ at~$\bs{0}$ and the conclusion is given by Proposition~\ref{prop:polynomial_implicit}. 
        
        We turn to the third property. Let us consider the companion system $\bc Y=\bc K({\mathcal Z}_1,\bc Z,\bc Y)$ and its solution~$\Sun$. 
        By definition, ${\bd\partial \bc H}/{\bd\partial\bc Y}(\bc Z,\bc Y)={\bd\partial \bc K}/{\bd\partial\bc Y}(\bc Z,\bc Y)$ and $\bc H$ is polynomial if and only if $\bc K$ is polynomial. 
        Thus, by Proposition~\ref{prop:polynomial_implicit}, in order to prove the equivalence of properties {\em i)} and {\em iii)}, it is sufficient to prove that $\bc S$ is polynomial if and only if $\Sun$ is polynomial. 
        Since $\bc S=\Sun(1,\bc Z)$, that the polynomiality of $\Sun({\mathcal Z}_1,\bc Z)$ implies the polynomiality of $\bc S$ follows by composition. 
        Conversely, if ${\Sun}(1,\bc Z)$ is polynomial, then ${\Sun}(1,\bc Z)_{\geq d}=0$ for some $d>0$. 
        Thus, by Lemma~\ref{lemma:zero1}, ${\Sun}({\mathcal Z}_1,\bc Z)_{\geq d}=0$ and $\Sun={\Sun}({\mathcal Z}_1,\bc Z)_{< d}$ is polynomial.
\end{proof}

\section{Newton's Iteration}\label{subsec:newton}
In this section we show how Newton's iteration can be lifted combinatorially for the construction of  the solutions of all well-founded systems $ \bc Y=\bc H(\bc Z, \bc Y)$ of combinatorial equations on species. 
Moreover, for an effective construction, the iteration can be computed on truncated species.
This construction has quadratic convergence in the following sense.

\begin{definition}
        The convergence of a sequence $(\bc F^{[n]})_{n\ge 0}$ to a (vector of) species $\bc F$ is  
        \emph{quadratic} if the contact doubles at each iteration; more precisely, if $\bc F^{[n]}$ has contact of order $k$
        with~$\bc F$, then $\bc F^{[n+1]}$ has contact of order~$2k+1$ with $\bc F$.        
\end{definition}
 
For the case of one equation, such a combinatorial Newton iteration has been introduced by Décoste, Labelle and Leroux~\cite{DeLaLe82}, who showed that the equation~${\mathcal Y}={\mathcal F}({\mathcal Z},{\mathcal Y})$ is solved by
\[
{\mathcal Y}^{[n+1]}={\mathcal Y}^{[n]}+\Seq\left(\frac{\partial{{\mathcal F}}}{\partial{{\mathcal Y}}}({\mathcal Z},{\mathcal Y}^{[n]})\right)\cdot({\mathcal F}({\mathcal Z},{\mathcal Y}^{[n]})-{\mathcal Y}^{[n]}),\qquad {\mathcal Y}^{[0]}=0.
\]

The main point here is that given the initial point~$0$, not only does the iteration converge, but the limit is the desired value. Our proof  relies on an extension of that combinatorial Newton iteration to the case of well-founded  systems. We  use the simple combinatorial interpretation of ``blooming'', due to Labelle~\cite{Labelle85b}, for the inverse of the Jacobian matrix, that  appears in Newton's iteration:
\[
\left(\Id-\frac{\bd\partial\bc H}{\bd\partial\bc Y}(\bc Z, \bc Y)\right)^{-1}=\sum_{k\ge 0}\left(\frac{\bd\partial\bc H}{\bd\partial\bc Y}(\bc Z, \bc Y)\right)^{k}.
\]

    Figure~\ref{fig:inv-jac} shows a graphical representation of a typical structure of the inverse of the Jacobian matrix, as a sequence of 
$\frac{\partial{{\mathcal H}}_i}{\partial{{\mathcal Y}}_j}$-structures, with consistent replacements of buds 
$\ds
\frac{\partial {{\mathcal H}}_i}{\partial{{\mathcal Y}}_{i_1}}\cdot
\frac{\partial{{\mathcal H}}_{i_1}}{\partial{{\mathcal Y}}_{i_2}}
\cdot\frac{\partial{{\mathcal H}}_{i_2}}{\partial{{\mathcal Y}}_{i_3}}\dotsm
$

\begin{figure} \centerline{\includegraphics[width=0.85\textwidth]{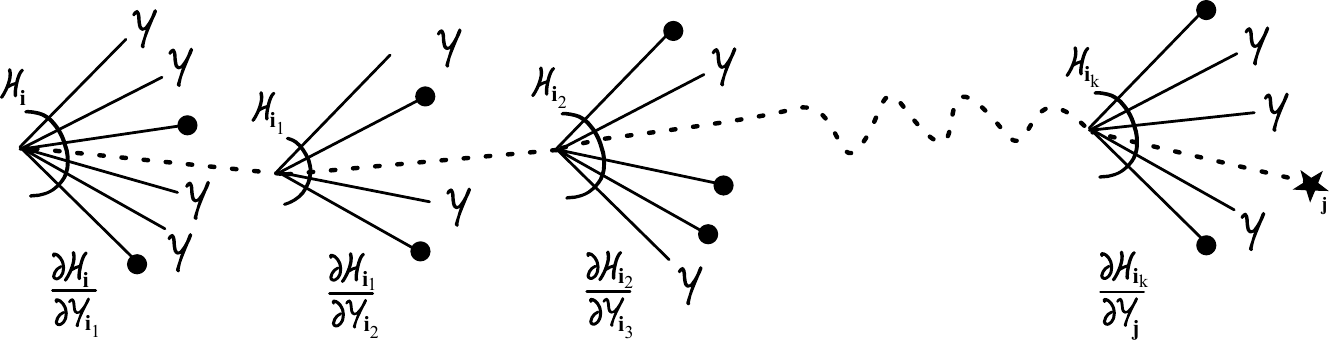}	}
\caption{A typical structure of the inverse of the Jacobian matrix of~$\bc H({\mathcal Z},\bc Y)$.}\label{fig:inv-jac}
\end{figure}

\begin{definition}
Let the system~$\bc Y=\bc H(\bc Z,\bc Y)$ be well founded; its \emph{combinatorial Newton operator} is defined by
\begin{equation}\label{eq:newton_operator}
\bc N_{\bc H}(\bc Z,\bc Y)
	=\bc Y+\left(\sum_{k\ge 0}\left(\frac{\bd\partial\bc H}{\bd\partial\bc Y}(\bc Z, \bc Y)\right)^{k}\right)\cdot\left(\bc H\left(\bc Z,\bc Y\right)-\bc Y\right).
\end{equation}
\end{definition}
Note that in general, $\bc N_{\bc H}$ is not a species but only a \emph{virtual} species (see~\cite[Ch.~2.5]{BeLaLe98}) because it is not necessarily the case that~$\bc Y\subset\bc H(\bc Z,\bc Y)$. However, we are only going to apply this operator to species for which this inclusion holds, so that only actual species will occur.

\subsection{Quadratic Convergence}
We now prove that Newton's iteration solves all systems to which the general implicit species theorem applies.
We thus rely on the existence of a unique solution (Thm.~\ref{th:GIST}), and show that  Newton's iteration provides quadratic convergence to it.
\begin{theorem}\label{th:newton}
Let $\bc Y=\bc H(\bc Z,\bc Y)$ be a well-founded system. The sequence \begin{equation}\label{ite:newt}\tag{${\mathcal N}$}
 \bc Y^{[0]}=\bs{0}\quad \mbox{and} \quad \bc Y^{[n+1]}=\bc N_{\bc H}(\bc Z,\bc Y^{[n]}),\quad n\ge0
\end{equation}
is well defined and converges quadratically to the solution~$\bc S$ of the system with~$\bc S(\bs{0})=\bc H^m(\bs{0},\bs{0})$.
\end{theorem}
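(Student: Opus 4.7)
The plan is to first reduce to the case $\bc H(\bs{0},\bs{0})=\bs{0}$ via the companion system, then run an induction based on a Taylor-type expansion for species.

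For the reduction, I use the companion system $\bc Y=\bc K(\bc Z_1,\bc Z,\bc Y)$ introduced in Section~\ref{subsec:gen_IST}. Since $\bc K(1,\bc Z,\bc Y)=\bc H(\bc Z,\bc Y)$ and $\bd\partial\bc K/\bd\partial\bc Y=\bd\partial\bc H/\bd\partial\bc Y$, the two Newton operators coincide under the substitution $\bc Z_1=1$, that is $\bc N_{\bc K}(1,\bc Z,\bc Y)=\bc N_{\bc H}(\bc Z,\bc Y)$. Mirroring the argument of Lemma~\ref{lem:contaK}, one checks inductively that the Newton iterates $\bc T^{[n]}$ for $\bc K$ are polynomial in $\bc Z_1$ (making the substitution legal) and satisfy $\bc Y^{[n]}(\bc Z)=\bc T^{[n]}(1,\bc Z)$. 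By Theorem~\ref{th:WF} the companion system is well-founded at $\bs{0}$, its solution $\bc S_1$ is polynomial in $\bc Z_1$, and $\bc S(\bc Z)=\bc S_1(1,\bc Z)$; quadratic convergence of $\bc T^{[n]}$ to $\bc S_1$ in the total size therefore transfers to quadratic convergence of $\bc Y^{[n]}$ to $\bc S$. It thus suffices to treat the case $\bc H(\bs{0},\bs{0})=\bs{0}$.

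In that case $\bc S(\bs{0})=\bs{0}$ by Theorem~\ref{th:GIST}, and the nilpotence of $\bd\partial\bc H/\bd\partial\bc Y(\bs{0},\bs{0})$ makes $\bc U(\bc Y):=\sum_{k\geq 0}(\bd\partial\bc H/\bd\partial\bc Y(\bc Z,\bc Y))^k$ a well-defined species for any $\bc Y$ with $\bc Y(\bs{0})=\bs{0}$. I argue by induction on $n$ that $\bc Y^{[n]}$ is a well-defined species, $\bc Y^{[n]}\subset\bc S$, and $\bc Y^{[n]}=_{u_n}\bc S$ with $u_n\geq 2^n-1$. The base case is immediate since $\bc Y^{[0]}=\bs{0}$ and $\bc S$ has no structure of size~$0$. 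For the inductive step, Labelle's Taylor formula for species~\cite{Labelle90} provides a species $\bc E^{[n]}$ whose structures each involve at least two substructures from $\bc S-\bc Y^{[n]}$ (and therefore have size $\geq 2(u_n+1)$) such that
\begin{equation*}
\bc S=\bc H(\bc Z,\bc S)=\bc H(\bc Z,\bc Y^{[n]})+\frac{\bd\partial\bc H}{\bd\partial\bc Y}(\bc Z,\bc Y^{[n]})\cdot(\bc S-\bc Y^{[n]})+\bc E^{[n]}.
\end{equation*}
Multiplying by $\bc U(\bc Y^{[n]})$ and simplifying via $\bc U\cdot\bd\partial\bc H/\bd\partial\bc Y=\bc U-\Id$ yields, in the ring of virtual species, the identity $\bc S=\bc Y^{[n+1]}+\bc U(\bc Y^{[n]})\cdot\bc E^{[n]}$. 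Because $\bc U(\bc Y^{[n]})\cdot\bc E^{[n]}$ is a genuine species all of whose structures have size $\geq 2(u_n+1)$, this forces $\bc Y^{[n+1]}$ to be an actual subspecies of~$\bc S$ with contact $u_{n+1}\geq 2u_n+1$, completing the induction and giving the claimed quadratic convergence.

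The delicate point is the passage through virtual species: one must certify that $\bc S-\bc U(\bc Y^{[n]})\cdot\bc E^{[n]}$ is a genuine species, equivalently that $\bc U(\bc Y^{[n]})\cdot\bc E^{[n]}\subset\bc S$. This can be verified combinatorially: a structure of $\bc U(\bc Y^{[n]})\cdot\bc E^{[n]}$ decomposes as a sequence of $\bd\partial\bc H/\bd\partial\bc Y$-levels terminating in an $\bc E^{[n]}$-structure, whose $\bc Y$-slots are filled with $\bc Y^{[n]}$- or $(\bc S-\bc Y^{[n]})$-structures, hence with $\bc S$-structures overall, so the whole object is an $\bc H(\bc Z,\bc S)=\bc S$-structure. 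This is the combinatorial content behind the Strahler-number reading of Newton iterates alluded to in the introduction, and it is where the proof requires the most care.
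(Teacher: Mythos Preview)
Your inductive step---the Taylor expansion giving $\bc S=\bc Y^{[n+1]}+\bc U(\bc Y^{[n]})\cdot\bc E^{[n]}$ in virtual species, followed by the combinatorial embedding of $\bc U\cdot\bc E^{[n]}$ into~$\bc S$---is correct and amounts to the paper's Lemmas~\ref{lem:nonambig} and~\ref{lem:quad-cvg} recast through virtual species. The paper stays with genuine species throughout by first proving $\bc Y^{[n]}\subset\bc H(\bc Z,\bc Y^{[n]})$; your detour through virtual species trades that step for the embedding argument in your last paragraph, which is the same combinatorics (the chain of $\bd\partial\bc H/\bd\partial\bc Y$-levels ending at a node with at least two non-$\bc Y^{[n]}$ children locates itself uniquely in the $\bc H$-rooted tree).

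The gap is in the reduction. You claim that quadratic convergence of $\bc T^{[n]}$ to~$\bc S_1$ \emph{in total $({\mathcal Z}_1,\bc Z)$-size} transfers to quadratic convergence of $\bc Y^{[n]}=\bc T^{[n]}(1,\cdot)$ to~$\bc S$ \emph{in $\bc Z$-size}. But substituting ${\mathcal Z}_1=1$ shrinks sizes by up to the ${\mathcal Z}_1$-degree~$d$ of~$\bc S_1$: total-size contact~$v$ yields only $\bc Z$-size contact $\geq v-d$, while $\bc Z$-size contact~$c$ only forces total-size contact $\geq c$. Chaining these through one Newton step gives $c'\geq 2c+1-d$, not $c'\geq 2c+1$, so the transfer as stated does not deliver the exact doubling the theorem asserts. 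The paper avoids this by proving the quadratic step (Lemma~\ref{lem:quad-cvg}) directly for~$\bc H$ in the general well-founded case and invoking the companion system only to secure initial contact (Lemma~\ref{lem:ini-contact}). Your own Taylor argument works verbatim on~$\bc H$ once $\bc Y^{[n]}\subset\bc S$ is known: the infinite sum defining $\bc U(\bc Y^{[n]})$ converges because $\bd\partial\bc H/\bd\partial\bc Y(\bs{0},\bc Y^{[n]}(\bs{0}))$ is a subspecies of the nilpotent $\bd\partial\bc H/\bd\partial\bc Y(\bs{0},\bc S(\bs{0}))$ (the latter by Proposition~\ref{prop:IPFS}), hence nilpotent itself. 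The clean repair is therefore to keep the companion system only for the base case and then run your induction directly on~$\bc H$.
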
 

Note that when $\bc H(\bs 0,\bs 0)\neq \bs{0}$, the first iterations may not have contact~0 with $\bc{S}$. However, the theorem states that this happens eventually (indeed for $n\leq m$, see Lemma~\ref{lem:ini-contact}), and that from this point on, contact is doubled at each step.

\begin{lemma}Newton's iteration is well defined, that is $\bc Y^{[n]}\subset\bc H(\bc Z,\bc Y^{[n]})$ for all $n\in{\mathbb N}$.
\end{lemma}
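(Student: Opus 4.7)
The plan is to proceed by induction on~$n$. The base case $n=0$ is immediate since $\bc Y^{[0]}=\bs{0}\subset\bc H(\bc Z,\bs{0})$ for any species~$\bc H$.

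For the inductive step, I would assume $\bc Y^{[n]}\subset\bc H(\bc Z,\bc Y^{[n]})$. This ensures that $\bc H(\bc Z,\bc Y^{[n]})-\bc Y^{[n]}$ is a genuine species, and hence that the Newton increment
$$\bc D^{[n]}:=\bc Y^{[n+1]}-\bc Y^{[n]}=\left(\sum_{k\ge 0}\bc J_n^{k}\right)\cdot\bigl(\bc H(\bc Z,\bc Y^{[n]})-\bc Y^{[n]}\bigr),\qquad \bc J_n:=\frac{\bd\partial\bc H}{\bd\partial\bc Y}(\bc Z,\bc Y^{[n]}),$$
is itself a species. In particular $\bc Y^{[n]}\subset\bc Y^{[n+1]}$, so by monotonicity of composition in $\bc Y$ one also has $\bc H(\bc Z,\bc Y^{[n]})\subset\bc H(\bc Z,\bc Y^{[n+1]})$.

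Next, I would exploit the geometric identity $\sum_{k}\bc J_n^k=\Id+\bc J_n\cdot\sum_{k}\bc J_n^k$, which rewrites as
$$\bc D^{[n]}=\bigl(\bc H(\bc Z,\bc Y^{[n]})-\bc Y^{[n]}\bigr)+\bc J_n\cdot\bc D^{[n]},\qquad\text{so that}\qquad \bc Y^{[n+1]}=\bc H(\bc Z,\bc Y^{[n]})+\bc J_n\cdot(\bc Y^{[n+1]}-\bc Y^{[n]}).$$
Applying the Taylor-type inclusion~\eqref{eq:taylor1} with $\bc B=\bc Y^{[n]}\subset\bc Y^{[n+1]}=\bc A$ then yields
$$\bc H(\bc Z,\bc Y^{[n+1]})\supset\bc H(\bc Z,\bc Y^{[n]})+\bc J_n\cdot(\bc Y^{[n+1]}-\bc Y^{[n]})=\bc Y^{[n+1]},$$
closing the induction.

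The main obstacle will be to legitimize the use of~\eqref{eq:taylor1} in the generalized framework: it was stated under the assumption $\bc A(\bs 0)=\bs{0}$, whereas in well-founded systems $\bc Y^{[n+1]}(\bs 0)$ need not vanish. I expect this to be handled by the general composition extension introduced in Section~\ref{subsec:gen_IST}, or alternatively by first reducing to the shifted system $\bc U=\bc H(\bc Z,\bc S(\bs{0})+\bc U)-\bc S(\bs{0})$ whose Newton iterates start at $\bs{0}$ and remain there at $\bc Z=\bs 0$, so that~\eqref{eq:taylor1} applies directly, and then transporting the conclusion back.
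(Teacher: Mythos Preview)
Your proof is correct and follows essentially the same route as the paper: induction on~$n$, with the inductive step combining the first-order Taylor inclusion~\eqref{eq:taylor1} (applied with $\bc B=\bc Y^{[n]}\subset\bc Y^{[n+1]}=\bc A$) and the geometric identity $\sum_k\bc J_n^k=\Id+\bc J_n\sum_k\bc J_n^k$ to conclude $\bc H(\bc Z,\bc Y^{[n+1]})\supset\bc Y^{[n+1]}$. Your observation about the hypothesis $\bc A(\bs{0})=\bs{0}$ in~\eqref{eq:taylor1} is well taken---the paper's own proof applies~\eqref{eq:taylor1} without commenting on this point, implicitly relying on the general-composition framework of Section~\ref{subsec:gen_IST}; your suggested reduction via the shifted system is a clean way to make this explicit.
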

\begin{proof}
The proof of this inclusion is by induction. For~$n=0$ this is a consequence of~$\bc Y^{[0]}$ being the empty species. If the property is satisfied for~$n$, then we use Taylor's formula truncated at the first order~\eqref{eq:taylor1} with~${\mathcal A}=\bc Y^{[n]}$ and~${\mathcal B}=\bc Y^{[n+1]}$. The required inclusion comes from the first summand of~$\bc N_{\bc H}(\bc Z, \bc Y^{[n]})$ being~$\bc Y^{[n]}$. Thus we get
\begin{align*}
	\bc H(\bc Z, \bc Y^{[n+1]})&\supset \bc H(\bc Z,\bc Y^{[n]})+\frac{\bd\partial\bc H}{\bd\partial\bc Y}(\bc Z,\bc Y^{[n]})\cdot (\bc Y^{[n+1]}-\bc Y^{[n]}),\\
	&\supset \bc Y^{[n]} + (\bc H(\bc Z,\bc Y^{[n]})-\bc Y^{[n]})+\frac{\bd\partial\bc H}{\bd\partial\bc Y}(\bc Z,\bc Y^{[n]})\cdot \sum_{k\ge 0}\left(\frac{\bd\partial\bc H}{\bd\partial\bc Y}(\bc Z, \bc{Y^{[n]}})\right)^{k}\!\!\cdot(\bc H(\bc Z,\bc Y^{[n]})-\bc Y^{[n]}),\\    
	&\supset \bc Y^{[n]} + \sum_{k\ge 0}\left(\frac{\bd\partial\bc H}{\bd\partial\bc Y}(\bc Z, \bc{Y^{[n]}})\right)^{k}\cdot(\bc H(\bc Z,\bc Y^{[n]})-\bc Y^{[n]})=\bc Y^{[n+1]},
\end{align*}
where in the second line we use the induction hypothesis to rewrite~$\bc H(\bc Z,\bc Y^{[n]})$ and the definition of the iteration to rewrite~$\bc Y^{[n+1]}$.
\end{proof}

We now observe that the species constructed by Newton's iteration do not overlap.
\begin{lemma}\label{lem:nonambig}
Let $\bc H(\bc Z,\bc Y)$ be a multisort species and $\,\bc U$ a subspecies of~$\bc H(\bc Z,\bc U)$. Then the species $\,\bc U$ and 
\begin{equation}\label{eq:nonambig}
\left(\frac{\bc\partial\bc H} {\bc\partial\bc Y}(\bc Z,\bc U)\right)^{k} \cdot\left(\bc H(\bc Z,\bc U)-\bc U\right),\quad k\in\N
\end{equation}
are pairwise disjoint.
\end{lemma}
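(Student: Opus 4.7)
My plan is to prove this by exhibiting a structural invariant on each species in the list and observing that this invariant is preserved under isomorphism and takes different values on the different species. The starting point is a careful combinatorial description of the structures involved.

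A structure in $\left(\frac{\bc\partial\bc H}{\bc\partial\bc Y}(\bc Z,\bc U)\right)^{k} \cdot (\bc H(\bc Z,\bc U)-\bc U)$ on a set $V$ unfolds (by definition of the product of species and of the derivative) as a tuple $(V_1,\dots,V_{k+1};\,s_1,\dots,s_k,t)$ where $V=V_1\sqcup\cdots\sqcup V_{k+1}$, each $s_i$ is an $\bc H$-structure on $V_i\cup\{\star_i\}$ with a single bud and $\bc U$-structures at the remaining $\bc Y$-positions, and $t\in(\bc H(\bc Z,\bc U)-\bc U)[V_{k+1}]$. Iteratively ``blooming'' by substituting each $s_{i+1}$ (and finally $t$) into the bud of $s_i$ produces a single $\bc H$-tree in which all $\bc Y$-positions are filled by $\bc U$-structures except along one distinguished path of length exactly $k$ from the root, whose final node $t$ is an $\bc H(\bc Z,\bc U)$-structure that is \emph{not} a $\bc U$-structure. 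This common ambient (the species $\bc H$ of $\bc U$-structures with a single marked ``non-$\bc U$ cap'' at some depth) is where disjointness is asserted, and since $\bc U\subset\bc H(\bc Z,\bc U)$, a $\bc U$-structure embeds here too by interpreting it as an $\bc H$-structure of $\bc U$-structures (no cap).

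The invariant I would use is the pair $(k,[t])$ recording the depth $k$ of the distinguished path and the isomorphism class of the non-$\bc U$ cap. Both pieces of data are transported canonically by the defining transport of the product, power, and derivative species, hence preserved by any species isomorphism. Two chain structures with different $k$ thus cannot be isomorphic, yielding disjointness among the $k$-th summands. Disjointness with $\bc U$ itself follows from the fact that a $\bc U$-structure has \emph{no} non-$\bc U$ cap at any level: unfolding it through the inclusion $\bc U\subset\bc H(\bc Z,\bc U)$ recursively produces only $\bc U$-substructures at every $\bc Y$-position. In particular, the $k=0$ case reduces to the tautology that $\bc U$ and $\bc H(\bc Z,\bc U)-\bc U$ are disjoint subspecies of $\bc H(\bc Z,\bc U)$ by construction of the subtraction.

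The main obstacle is to justify that the ``blooming'' construction is well-defined and injective, so that the chain species genuinely embed into a common ambient where intersections make sense (recall that in species theory disjointness requires $\bc F[V]\cap\bc G[V]=\eset$ as subsets of a common $\bc H[V]$). This is where the hypothesis $\bc U\subset\bc H(\bc Z,\bc U)$ is critical: it lets us interpret every $\bc U$-filling as a structure that can itself be unfolded, so all the chains and $\bc U$ itself fit coherently into the (virtual) iterated ambient $\bc H\circ\bc H\circ\cdots(\bc Z,\bc U)$. Once this setup is in place, no further computation is needed; the injectivity of bud substitution is immediate from the definition of the derivative (the bud carries a distinguishing label $\star_j$), and the disjointness statement reduces to the observation that distinct triples $(k,t,\text{chain data})$ give distinct ambient structures.
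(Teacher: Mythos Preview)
Your core idea is correct and is the same as the paper's, which simply unwinds it as an induction on $k$: for $k\ge 1$ write $\bs\alpha=\bs\beta\cdot\bs\delta$ with $\bs\beta\in\bc\partial\bc H/\bc\partial\bc Y(\bc Z,\bc U)$ and $\bs\delta$ in the $(k-1)$-summand; by induction $\bs\delta\notin\bc U$, so $\bs\alpha$ is an $\bc H$-assembly having a member outside $\bc U$, whence $\bs\alpha\notin\bc H(\bc Z,\bc U)\supset\bc U$. The paper's induction is precisely your depth invariant computed one layer at a time.

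Two points in your exposition need correction. First, disjointness of species is about set-equality of structures, not isomorphism; the phrase ``preserved by any species isomorphism'' is a distraction. What you need (and what your last paragraph rightly flags) is that $k$ can be recovered from the structure once it is viewed in the common ambient. Second, your justification for injectivity of blooming is wrong: the bud label $\star_j$ is \emph{consumed} by the substitution and cannot be read back afterwards. The actual reason the chain can be recovered --- and this is the heart of the paper's inductive step --- is that in a $\bc\partial\bc H/\bc\partial\bc Y(\bc Z,\bc U)$-structure every $\bc Y$-slot except the bud carries a $\bc U$-structure, so after blooming the bud's position is identified as the unique member of the resulting $\bc H$-assembly lying outside $\bc U$. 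That single observation gives both injectivity of blooming and pairwise disjointness at once, without ever naming a global ambient.
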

\begin{proof}
The proof is by induction on $k$. When $k=0$, this is obvious. Otherwise, a structure $\bs\alpha$ of the species~$(\bc\partial{\bc H}/\bc\partial{\bc Y}(\bc Z,\bc U))^k\cdot(\bc H(\bc Z,\bc U)-\bc U)$ can be decomposed  as the product of two structures~$\bs\beta$ and~$\bs\delta$, with~$\bs\beta \in \bc\partial{\bc H}/\bc\partial{\bc Y}(\bc Z,\bc U)$ and $\bs\delta \in (\bc\partial{\bc H}/\bc\partial{\bc Y}(\bc Z,\bc U))^{k-1}\cdot(\bc H(\bc Z,\bc U)-\bc U)$.  This decomposition is unique and implies that $\bs\delta$ is not a $\,\bc U$-structure (by induction). Since $\bs\alpha$ is an $\bc H$-assembly which has~$\bs\delta$ as a member, the structure~$\bs\alpha$ does not belong to $\bc H(\bc Z,\bc U)$. Since the species~$\bc H(\bc Z,\bc U)$ contains~$\,\bc U$, then $\bs\alpha$ cannot be a $\,\bc U$-structure.                    
\end{proof} 
Next, we show that the application of Newton's operator improves the contact quadratically.
\begin{lemma}\label{lem:quad-cvg}
Let $\,\bc U$ be a subspecies of $\bc S$, solution of the well-founded system $\bc Y=\bc H(\bc Z,\bc Y)$.
If $\,\bc U$ is a subspecies of~$\bc H(\bc Z,\bc U)$ and  $\,\bc U=_k{\bc S}$, then $\bc N_{\bc H}(\bc Z,\bc U)\subset{\bc S}$ and $\bc N_{\bc H}(\bc Z,\bc U)=_{2k+1}{\bc S}$.
\end{lemma}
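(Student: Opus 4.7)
The plan is to combine a sharpened "second-order" combinatorial Taylor expansion of $\bc H(\bc Z,\bc S)$ around $\bc Y = \bc U$ with the disjointness statement of Lemma~\ref{lem:nonambig}, reading the same calculation at two different levels of precision to extract both the inclusion $\bc N_{\bc H}(\bc Z,\bc U)\subset\bc S$ and the contact of order $2k+1$.

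For the inclusion, I would use the first-order Taylor formula~\eqref{eq:taylor1} (applicable since $(\bc S - \bc U)(\bs 0)=\bs 0$, because $\bc U=_k\bc S$ with $k\ge 0$ forces $\bc U(\bs 0)=\bc S(\bs 0)$) to get
\[
\bc S = \bc H(\bc Z,\bc S) \supset \bc H(\bc Z,\bc U) + \frac{\bd\partial\bc H}{\bd\partial\bc Y}(\bc Z,\bc U)\cdot (\bc S - \bc U).
\]
Subtracting $\bc U$, which is legal since $\bc U\subset\bc H(\bc Z,\bc U)\subset\bc S$, produces
\[
\bc S - \bc U \supset (\bc H(\bc Z,\bc U) - \bc U) + \frac{\bd\partial\bc H}{\bd\partial\bc Y}(\bc Z,\bc U)\cdot (\bc S - \bc U).
\]
Iterating $n$ times and using Lemma~\ref{lem:nonambig} for pairwise disjointness of the resulting summands, I would obtain
\[
\bc S - \bc U \supset \sum_{i=0}^{n}\left(\frac{\bd\partial\bc H}{\bd\partial\bc Y}(\bc Z,\bc U)\right)^{\!i}\cdot(\bc H(\bc Z,\bc U) - \bc U) + \left(\frac{\bd\partial\bc H}{\bd\partial\bc Y}(\bc Z,\bc U)\right)^{\!n+1}\cdot (\bc S - \bc U),
\]
and since the Jacobian evaluated at $(\bs 0,\bc S(\bs 0))$ is nilpotent (a consequence of well-foundedness via the shift used in the proof of Theorem~\ref{th:GIST}), the tail contributes nothing at any fixed size once $n$ is large enough. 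Passing to the limit yields $\bc S \supset \bc N_{\bc H}(\bc Z,\bc U)$.

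For the contact statement, I would upgrade the Taylor step to an equality \emph{modulo structures of size $>2k+1$}. The key observation is that any $\bc H(\bc Z,\bc S)$-structure not accounted for by $\bc H(\bc Z,\bc U) + (\bd\partial\bc H/\bd\partial\bc Y)(\bc Z,\bc U)\cdot(\bc S - \bc U)$ is an $\bc H$-assembly in which \emph{at least two} members lie in $\bc S - \bc U$; because $\bc U=_k\bc S$, each such member has size $\ge k+1$, so any such "higher-order" remainder has total size $\ge 2(k+1)=2k+2$. Consequently,
\[
\bc S =_{2k+1} \bc H(\bc Z,\bc U) + \frac{\bd\partial\bc H}{\bd\partial\bc Y}(\bc Z,\bc U)\cdot(\bc S - \bc U),
\]
and iterating this "equality up to contact $2k+1$" in the same way as above gives $\bc S - \bc U =_{2k+1} \bc N_{\bc H}(\bc Z,\bc U) - \bc U$, hence the claimed quadratic improvement of contact.

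The main obstacle will be making this second-order remainder argument precise in the multisort setting: one needs to partition an $\bc H(\bc Z,\bc S)$-structure according to the number and sorts of its members lying in $\bc S - \bc U$, identify the "linear part" with $\bc H(\bc Z,\bc U) + (\bd\partial\bc H/\bd\partial\bc Y)(\bc Z,\bc U)\cdot(\bc S - \bc U)$, and show the "quadratic-and-higher" part lives in sizes $\ge 2k+2$. This can be carried out either by invoking Labelle's complete combinatorial Taylor formula~\cite{Labelle90} or by a direct bookkeeping in the spirit of the interpretation given for~\eqref{eq:taylor1}. Once this is in place, nilpotence of the Jacobian at $(\bs 0,\bc S(\bs 0))$ ensures term-by-term convergence of the formal series defining $\bc N_{\bc H}$, and Lemma~\ref{lem:nonambig} guarantees that the "inclusions" and "equalities up to contact" above are genuine species-theoretic statements and not mere cardinality identities.
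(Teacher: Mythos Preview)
Your argument is correct and, for the contact statement, follows exactly the paper's route: the ``second-order remainder'' observation that any $\bc H(\bc Z,\bc S)$-structure of size $\le 2k+1$ has at most one member in $\bc S-\bc U$ is precisely what the paper does, only the paper phrases it structure-by-structure rather than as a Taylor identity modulo size $2k+1$.

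For the inclusion $\bc N_{\bc H}(\bc Z,\bc U)\subset\bc S$, however, the paper takes a shorter path than your iterated Taylor inequality with a vanishing tail. It simply observes directly that each summand $\left(\bd\partial\bc H/\bd\partial\bc Y(\bc Z,\bc U)\right)^{i}\cdot(\bc H(\bc Z,\bc U)-\bc U)$ is a subspecies of $\bc S$: any such structure is an $\bc H$-assembly whose members are either $\bc U$-structures or $(\bc H(\bc Z,\bc U)-\bc U)$-structures, all of which lie in $\bc S$, hence the assembly lies in $\bc H(\bc Z,\bc S)=\bc S$; an induction on $i$ and Lemma~\ref{lem:nonambig} then give $\bc N_{\bc H}(\bc Z,\bc U)\subset\bc S$. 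This avoids your tail term $\left(\bd\partial\bc H/\bd\partial\bc Y(\bc Z,\bc U)\right)^{n+1}\cdot(\bc S-\bc U)$ entirely, and in particular does not require the auxiliary argument that nilpotence of the Jacobian at $(\bs 0,\bc S(\bs 0))$ forces that tail to vanish at each fixed size (which is true, but needs the kind of ``bounded runs of size-zero factors'' bookkeeping you left implicit). Your approach works; the paper's is just more economical here.
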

\begin{proof}
        First, recall that in the multisort case, the contact is with respect to the sum of sizes of the underlying sets.
                
Applying~$\bc H$ on both sides of $\,\bc U\subset\bc S$ implies that $\bc H(\bc Z,\bc U)\subset\bc S$; then a structure of the species ${\bc\partial\bc H/\bc\partial\bc Y(\bc Z,\bc U)(\bc H(\bc Z,\bc U)-\bc U)}$ is an $\bc H$-assembly of $\,\bc U$-structures and a unique $(\bc H(\bc Z,\bc U)-\bc U)$-structure that are all $\bc S$-structures. Thus,
\[
\frac{\bc\partial\bc H} {\bc\partial\bc Y}(\bc Z,\bc U) \cdot(\bc H(\bc Z,\bc U)-\bc U) \subset \bc H(\bc Z,\bc S)\subset\bc S.  
\]        
Then, an induction on~$k$ shows that all species of~\eqref{eq:nonambig} are subspecies of~$\bc S$. By Lemma~\ref{lem:nonambig}, they are distinct, so that their sum, namely~$\bc N_{\bc H}(\bc Z,\bc U)$, is also a subspecies of~$\bc S$.
          
Let now~$\bs{\alpha}$ be an $\bc S$-structure of size at most~$2k+1$. 
Since $\,\bc U$ is a subspecies of $\bc N_{\bc H}(\bc Z,\bc U)$, we only consider the case when $\bs{\alpha}$ is not a $\,\bc U$-structure.
By definition, $\bs{\alpha}$ is an $\bc H$-assembly of $\bc S$-structures. 
If all the $\bc S$-structures composing $\bs{\alpha}$ are of size at most~$k$, then they are $\,\bc U$-structures and $\bs{\alpha}$ belongs to $\bc H(\bc Z,\bc U)-\bc U\subset\bc N_{\bc H}(\bc Z,\bc U)$.
Otherwise, at most one of the~$\bc S$-structures composing~$\bs\alpha$ is of size larger than~$k$: two or more would give~$\bs\alpha$ a size larger than~$2k+1$. Thus, $\bs\alpha$ rewrites as $\bs\beta\times\bs\delta$ with $\bs\beta\in\bs\partial{\bc H}/\bs\partial{\bc Y}(\bc Z,\bc U)$ and $\bs\delta$ an $\bc S$-structure of size larger than~$k$ and at most~$2k+1$. Applying recursively the same reasoning to $\bs\delta$ up to exhaustion, one has $\bs\delta\in \left(\bs\partial{\bc H}/\bs\partial{\bc Y}(\bc Z,\bc U)\right)^{\ell} \left(\bc H(\bc Z,\bc U)-\bc U\right)$, for some $\ell>0$ and thus $\bs{\alpha} \in \bc N_{\bc H}(\bc Z,\bc U)$, which proves that $\bc N_{\bc H}(\bc Z,\bc U)=_{2k+1}{\bc S}$.
\end{proof}

The proof of Theorem~\ref{th:newton} is now concluded by induction from Lemma~\ref{lem:quad-cvg} and an initial contact provided by the following lemma.
\begin{lemma}\label{lem:ini-contact}Let~$\bc H=({\mathcal H}_{1:m})$ be a species such that the system~$\bc Y=\bc H(\bc Z,\bc Y)$ is well founded and let~$\bc S$ be its solution. Then there exists~$p\le m$ such that $\bc Y^{[p]}=\bc S(\bs{0})=\bc H^m(\bs{0},\bs{0})$, where $(\bc Y^{[n]})$ is the sequence defined by Eq.~\eqref{ite:newt}.
\end{lemma}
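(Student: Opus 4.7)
The plan is to take $p = m$ and establish the sandwich $\bc H^m(\bs{0},\bs{0}) \subset \bc Y^{[m]}(\bs{0}) \subset \bc S(\bs{0})$, which collapses to the required equality of size-zero parts in view of the identity $\bc S(\bs{0}) = \bc H^m(\bs{0},\bs{0})$ coming from Theorem~\ref{th:GIST}.

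For the upper inclusion, I would prove the stronger statement $\bc Y^{[n]} \subset \bc S$ for every~$n$ by induction. The preceding lemma in the proof of Theorem~\ref{th:newton} already gives $\bc Y^{[n]} \subset \bc H(\bc Z,\bc Y^{[n]})$, and then the inclusion assertion of Lemma~\ref{lem:quad-cvg} yields $\bc Y^{[n+1]} = \bc N_{\bc H}(\bc Z,\bc Y^{[n]}) \subset \bc S$; one checks that this part of the proof of Lemma~\ref{lem:quad-cvg} relies only on $\bc U \subset \bc S$ and $\bc U \subset \bc H(\bc Z,\bc U)$, and not on the contact hypothesis. Restricting to structures of size zero yields $\bc Y^{[m]}(\bs{0}) \subset \bc S(\bs{0})$.

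For the lower inclusion, the key observation is that the $k=0$ summand of the series $\sum_{k\ge 0}(\bd\partial\bc H/\bd\partial\bc Y(\bc Z,\bc Y^{[n]}))^k$ appearing in the Newton operator is the identity, so it contributes exactly $\bc H(\bc Z,\bc Y^{[n]}) - \bc Y^{[n]}$ (a genuine species, since $\bc Y^{[n]} \subset \bc H(\bc Z,\bc Y^{[n]})$). Adding this to the leading $\bc Y^{[n]}$ gives the inclusion $\bc H(\bc Z,\bc Y^{[n]}) \subset \bc Y^{[n+1]}$. A short induction on $n$ then yields $\bc H^n(\bs{0},\bs{0}) \subset \bc Y^{[n]}(\bs{0})$: the base case is $\bs{0} \subset \bs{0}$, and the step reads
\[
\bc H^{n+1}(\bs{0},\bs{0}) = \bc H(\bs{0},\bc H^n(\bs{0},\bs{0})) \subset \bc H(\bs{0},\bc Y^{[n]}(\bs{0})) \subset \bc Y^{[n+1]}(\bs{0}),
\]
using monotonicity of composition together with the specialization of the previous inclusion at $\bc Z = \bs{0}$.

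The one technical point that requires care is the meaning of evaluation at $\bc Z = \bs{0}$ of composite species like $\bc H(\bc Z,\bc Y^{[n]})$: well-foundedness (Definition~\ref{def:wf}) guarantees that the iteration is well defined at each step, which means exactly that $\bc H$ is polynomial in each coordinate $\bc Y_i$ for which $\bc Y^{[n]}_i(\bs{0}) \neq \bs{0}$. This ensures that $\bc H(\bs{0},\bc Y^{[n]}(\bs{0}))$ is a legitimate species and coincides with the size-zero part of $\bc H(\bc Z,\bc Y^{[n]})$, which is what the induction requires.
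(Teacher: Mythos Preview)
Your proof is correct and takes a more direct route than the paper's. The paper works via the companion system: it introduces the Newton iteration $(\bc T^{[n]})$ for $\bc Y = \bc K({\mathcal Z}_1,\bc Z,\bc Y)$, verifies that $\bc T^{[n]}(1,\bc Z)=\bc Y^{[n]}(\bc Z)$, and then argues that since $\Sun({\mathcal Z}_1,\bs{0})$ is polynomial it is contained in $\bc T^{[p]}$ for some $p$; the bound $p\le m$ comes from the observation $\bc K(\bc Z,\bc T^{[n]})\subset\bc T^{[n+1]}$ together with Proposition~\ref{prop:polynomial_implicit}.

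Your sandwich argument $\bc H^m(\bs{0},\bs{0}) \subset \bc Y^{[m]}(\bs{0}) \subset \bc S(\bs{0})$ bypasses the companion machinery entirely. The key inclusion $\bc H(\bc Z,\bc Y^{[n]})\subset\bc Y^{[n+1]}$ coming from the $k=0$ summand is precisely the ingredient the paper also uses to bound $p$, but you apply it directly to the original system rather than to the companion. The upper inclusion via the first half of Lemma~\ref{lem:quad-cvg} is legitimate: that part of the proof indeed uses only $\,\bc U\subset\bc S$ and $\,\bc U\subset\bc H(\bc Z,\bc U)$, not the contact hypothesis. Your approach is more elementary; the paper's route has the side benefit of making the correspondence between the Newton iterates for $\bc H$ and for $\bc K$ explicit, consistent with the companion-system narrative of Section~\ref{subsec:gen_IST}, but that correspondence is not needed for the lemma itself.
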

\begin{proof}
	If the system is well founded at~0, then~$\bc S(\bs{0})=\bs{0}$ so that~$p=0$ has the required property.
Otherwise, as in the proof of Theorem~\ref{th:GIST}, we consider the companion system ~$\bc Y=\bc K({\mathcal Z}_1,\bc Z,\bc Y)$ with~$\bc K({\mathcal Z}_1,\bc Z,\bc Y)=(\bc H(\bc Z,\bc Y)-\bc H(\bs{0},\bs{0}))+{\mathcal Z}_1\bc H(\bs{0},\bs{0})$. The   Newton operator associated to~$\bc K$ is
\begin{align*}
\bc N_{\bc K}({\mathcal Z}_1,\bc Z,\bc Y)&=
	\bc Y+\left(\sum_{k\ge 0}\left(\frac{\bd\partial\bc K}{\bd\partial\bc Y}({\mathcal Z}_1,\bc Z, \bc Y)\right)^{k}\right) \left(\bc K\left({\mathcal Z}_1,\bc Z,\bc Y\right)-\bc Y\right)\\
&=	\bc Y+\left(\sum_{k\ge 0}\left(\frac{\bd\partial\bc H}{\bd\partial\bc Y}(\bc Z, \bc Y)\right)^{k}\right) \left(\bc H\left(\bc Z,\bc Y\right)-\bc Y+({\mathcal Z}_1-1)\bc H(\bs{0},\bs{0})\right).
\end{align*}
Since the system $\bc Y =\bc H(\bc Z,\bc Y)$ is well founded, its companion system is well founded at~$\bs{0}$ and has a solution~$\Sun({\mathcal Z}_1,\bc Z)$ that is polynomial in~${\mathcal Z}_1$. Then by induction, the sequence defined by~$\bc T^{[0]}=\bs{0}$ and $\bc T^{[n+1]}=\bc N_{\bc K}({\mathcal Z}_1,\bc Z,\bc T^{[n]})$ consists of species that are polynomial in~${\mathcal Z}_1$ and satisfies $\bc T^{[n]}(1,\bc Z)=\bc Y^{[n]}(\bc Z)$ for all $n\ge0$. 

The species~$\Sun({\mathcal Z}_1,\bs{0})$ being polynomial, $\Sun({\mathcal Z}_1,\bs{0})=\bc S(\bs{0})=\bc H^m(\bs{0},\bs{0})$ by Proposition~\ref{prop:polynomial_implicit}.
Moreover, since~$\Sun({\mathcal Z}_1,\bs{0})$ is a subspecies of~$\Sun({\mathcal Z}_1,\bc Z)$, it is a subspecies of~$\bc T^{[p]}$ as soon as its contact with~$\Sun$ is large enough. For such a value of~$p$, $\bc T^{[p]}(1,\bs{0})=\Sun(1,\bs{0})=\bc S(\bs{0})=\bc Y^{[p]}(\bs{0})$ and the conclusion of the lemma holds. In order to bound the value of~$p$, we observe that for any~$n$, $\bc K(\bc Z,\bc T^{[n]})\subset\bc T^{[n+1]}$ and conclude that $p\le m$ by Proposition~\ref{prop:polynomial_implicit}.
\end{proof}

\begin{example}\label{ex:catalan_spec}
For Catalan trees, Newton's iteration reads:
\[
{\mathcal Y}^{[n+1]}={\color{blue}{\mathcal Y}^{[n]}}+{\color{red}\Seq({\mathcal Z}\cdot\Seq({\mathcal Y}^{[n]})^2)}\cdot({\mathcal Z}\cdot\Seq({\mathcal Y}^{[n]})-{\mathcal Y}^{[n]}).
\]
The first iterates are as displayed by Figure~\ref{fig:first_catalan}.    
Rectangles enclose structures of a given size, when for this size, all the Catalan trees have been produced. This is to be compared with the successive species obtained by a simple fixed point iteration (see Example~\ref{ex:catalan}). For instance, Newton's iteration produces all the trees of size~5 at its second step, while the previous iteration does not generate all of them before the fifth step.
\end{example}

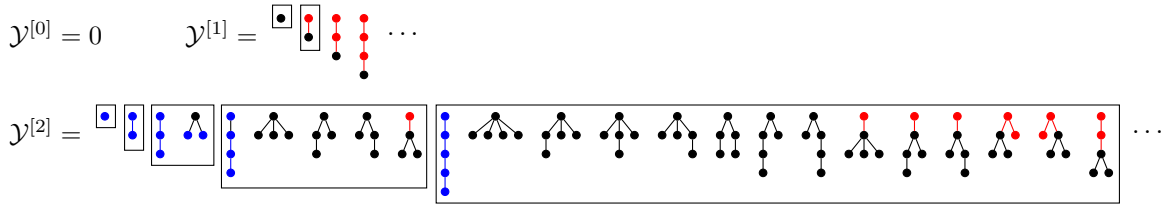
\begin{figure}[ht]
\noindent\begin{tikzpicture} 	
	\tikzstyle{level 1}=[sibling distance=2mm]        
	\node[rectangle,minimum width=2cm,minimum height=4mm,anchor=west]{};
	\matrix[mat, inner sep=2mm, row sep=0mm, column sep=2.5mm]{    
	\node[rectangle]{${\mathcal Y}^{[0]}=0$}; \&   
	\node[yshift=2mm][rond,color=white] {} 	child[color=white]{\nrp child{\nrp child{node[rond]{}}}} ;\&
	\\};           
\end{tikzpicture}  
\begin{tikzpicture} 	
	\tikzstyle{level 1}=[sibling distance=2mm]         
	\node[rectangle,minimum width=16cm,minimum height=2mm,anchor=west]{};
	\matrix[mat, inner sep=2mm, row sep=0mm, column sep=2.5mm]{
	\node[rectangle]{${\mathcal Y}^{[1]}=$}; \& 
	\node[yshift=2mm][rond]{}; \&  
	\node[yshift=2mm][rond,color=red] {} 	child[color=red]{node[rond,color=black]{}} ;\&  
	\node[yshift=2mm][rond,color=red] {} 	child[color=red]{\nrp child{node[rond,color=black]{}}} ;\& 
	\node[yshift=2mm][rond,color=red] {} 	child[color=red]{\nrp child{\nrp child{node[rond,color=black]{}}}} ;\&   
	\node{$\dots$};
	\\}; 
\node[rectangle, draw, draw opacity=1,fill opacity=0, minimum width=0.25cm, minimum height=0.3cm] at (1.48,0.4){};  
\node[rectangle,draw, draw opacity=1,fill opacity=0, minimum width=0.25cm, minimum height=0.6cm] at (1.85,0.25){};  
\end{tikzpicture}\\    
\begin{tikzpicture}  
	\tikzstyle{level 1}=[sibling distance=2mm]
	\tikzstyle{level 2}=[sibling distance=2mm] 
	\tikzstyle{level 3}=[sibling distance=2mm]
	\tikzstyle{level 4}=[sibling distance=2mm]      
	\node[rectangle,minimum width=16cm,minimum height=2mm,anchor=west]{};
	\matrix[mat,inner sep=2mm,row sep=3mm, column sep=2.5mm]{ 
	\node[rectangle]{${\mathcal Y}^{[2]}=$}; \& 
	\node[yshift=2mm][rond,color=blue]{}; \&  
	\node[yshift=2mm][rond,color=blue] {} 	child[color=blue]{\nrp} ;\&  
	\node[yshift=2mm][rond,color=blue] {} 	child[color=blue]{\nrp child{\nrp}} ;\&
	\node[yshift=2mm][rond] {}  child{node[color=blue][rond]{}} child{node[color=blue][rond]{}} ;\& 
	\node[yshift=2mm][rond,color=blue] {} 	child{node[rond][color=blue] {} 
						child[color=blue]{\nrp 
						child[color=blue]{\nrp}}} ; \&
	\node[yshift=2mm][rond] {} child{\nrp} 
				   	child{\nrp}
				   	child{\nrp} ; \&
        \node[yshift=2mm][rond] {} child{ \nrp  
        				child{\nrp}} 
				child{\nrp};\&
        \node[yshift=2mm][rond] {} child{ \nrp} 
				child{\nrp
					child{\nrp}};\& 
        \node[yshift=2mm][rond,color=red] {} child[color=red]{node[rond][color=black]{} 
		   		child[color=black]{\nrp}
		   		child[color=black]{\nrp}} ; \&  
        \node[yshift=2mm][rond,color=blue] {} 	
				child[color=blue]{\nrp 
					child{\nrp 
						child{\nrp
							child{\nrp}}}} ; \&

        \node[yshift=2mm][rond] {} child{\nrp} 
			   	child{\nrp}
				child{\nrp}
			   	child{\nrp} ; \& 
        \node[yshift=2mm][rond] {} child{\nrp 
		   			child{\nrp}}
				child{\nrp}
		   		child{\nrp} ; \& 
        \node[yshift=2mm][rond] {} child{\nrp}
     				child{\nrp
					  	child{\nrp}}
		      		child{\nrp} ; \&
        \node[yshift=2mm][rond] {} child{\nrp}
			   	child{\nrp}
			        child{\nrp
				   	child{\nrp}} ; \&  
        \node[yshift=2mm][rond] {} child{\nrp
					child{\nrp}}
				child{\nrp  	
					child{\nrp}};\&
        \node[yshift=2mm][rond] {} child{\nrp 
				child{\nrp 
					child{\nrp}}}
			        child{\nrp} ;\&
        \node[yshift=2mm][rond] {} child{\nrp}
        			child{\nrp 
        				child{\nrp 
        					child{\nrp}}} ;\&  
        \node[yshift=2mm][rond,color=red] {} 	child[color=red]{node[rond][color=black] {}  
						child[color=black]{\nrp}  
						child[color=black]{\nrp} 	
						child[color=black]{\nrp}} ; \&  
        \node[yshift=2mm][rond,color=red] {} 	child[color=red]{node[rond][color=black] {} 
						child[color=black]{\nrp 
							child[color=black]{\nrp}}
							child[color=black]{\nrp}} ;\&  
        \node[yshift=2mm][rond,color=red] {} 	child[color=red]{node[rond][color=black] {}					
						child[color=black]{\nrp} 
						child[color=black]{\nrp 
							child[color=black]{\nrp}}} ;\& 
        \node[yshift=2mm][rond,color=red] {} 	child[color=red]{node[color=black][rond] {}  
							child[color=black]{\nrp} 
							child[color=black]{\nrp}} 
						child[color=red]{\nrp};\& 
        \node[yshift=2mm][rond,color=red] {} child[color=red]{\nrp} 
					        child[color=red]{ node[rond][color=black]{} 
					 		child[color=black]{\nrp}      	
					  		child[color=black]{\nrp}};\& 
        \node[yshift=2mm][rond,color=red] {} child[color=red]{\nrp 
					        child[color=red]{ node[rond][color=black]{}
						child[color=black]{\nrp} 
						child[color=black]{\nrp}}} ; \&  
        \node[inner sep=0mm]{$\dots$};  
	\\};    
\node[rectangle,draw, draw opacity=1,fill opacity=0, minimum width=0.25cm, minimum height=0.3cm] at (1.48,0.5){};   
\node[rectangle,draw, draw opacity=1,fill opacity=0, minimum width=0.25cm, minimum height=0.6cm] at (1.85,0.35){};
\node[rectangle,draw, draw opacity=1,fill opacity=0, minimum width=0.82cm, minimum height=0.8cm] at (2.49,0.25){};  
\node[rectangle,draw, draw opacity=1,fill opacity=0, minimum width=2.7cm, minimum height=1.1cm] at (4.35,0.1){};  
\node[rectangle,draw, draw opacity=1,fill opacity=0, minimum width=8.99cm, minimum height=1.3cm] at (10.32,0){};
\end{tikzpicture}  
\caption{First iterates of Newton's iteration on Catalan trees\label{fig:first_catalan}}
\end{figure}

\begin{example} For series-parallel graphs (see Example~\ref{ex:spg}), we give Newton's iteration only for the part concerning ${\mathcal S}:={\cal Y}_1$ and~${\mathcal P}$, the graphs themselves are obtained by adding~${\mathcal Z}:={\cal Y}_2$ to those iterates.
\[
\begin{pmatrix}{\mathcal S}^{[n+1]}\\ {\mathcal P}^{[n+1]}\end{pmatrix}
=
\begin{pmatrix}{\color{blue}{\mathcal S}^{[n]}}\\{\color{blue} {\mathcal P}^{[n]}}\end{pmatrix}	
	+  \left({\color{red}\sum_{k\ge0}{\begin{pmatrix}0&\Seq^2({\mathcal Z}+{\mathcal P}^{[n]})-1\\ \Set_{\ge 1}({\mathcal Z}+{\mathcal S}^{[n]})& 0\end{pmatrix}^k}}\right)
	\begin{pmatrix}\Seq_{\ge 2}({\mathcal Z}+{\mathcal P}^{[n]})-{\mathcal S}^{[n]}\\ \Set_{\ge 2}({\mathcal Z}+{\mathcal S}^{[n]})-{\mathcal P}^{[n]}\end{pmatrix}.
\]
\begin{figure}[ht]
\centerline{\includegraphics[width=\textwidth]{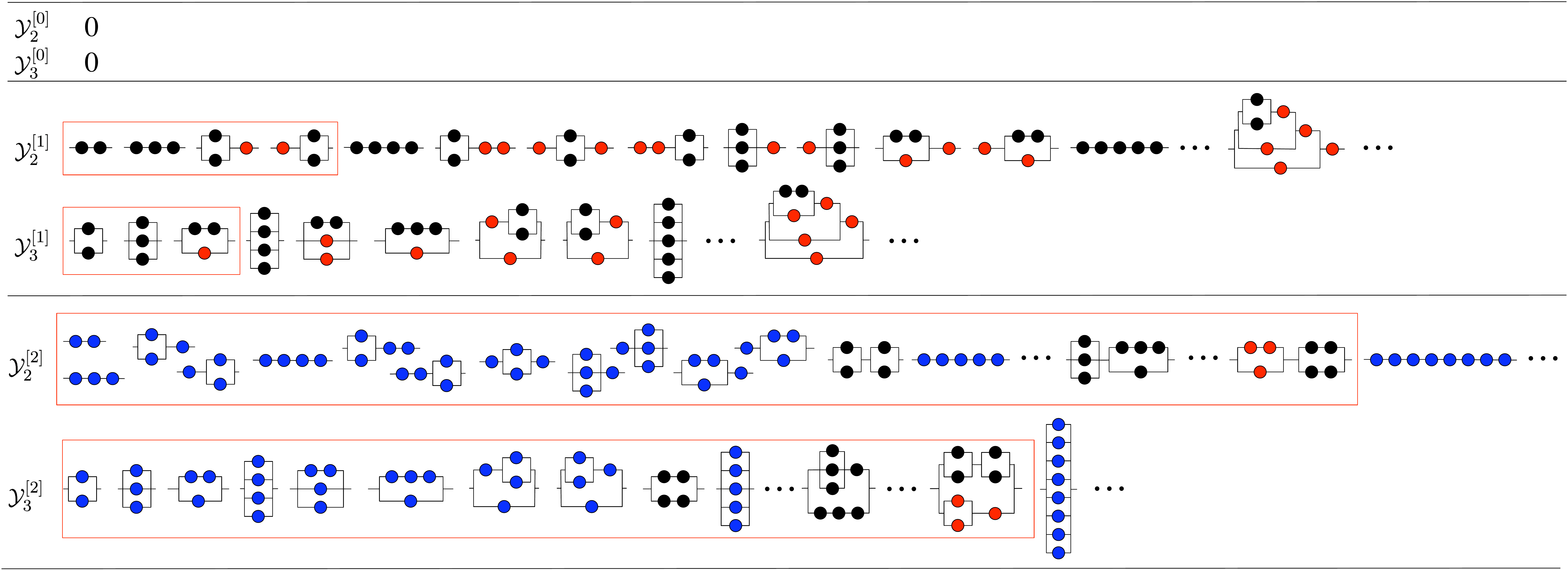}}
\caption{First iterates of Newton's iteration on series-parallel graphs\label{fig:first_sp}}
\end{figure}
The first few iterates are given in Figure~\ref{fig:first_sp}.
\end{example}

\subsection{Effective Computation}\label{sec:newton_operator}

Newton's iteration, as stated in~Theorem~\ref{th:newton} gives an infinite sequence that converges to the solution of the system. Moreover, Newton's operator also  involves an infinite sum.  
In order to  effectively compute structures by  Newton's iteration, we give an alternative to~Theorem~\ref{th:newton} that involves truncated species. 

These truncated iterations are lifted to iterations over power series in Section~\ref{sec:series}, so as to get efficient algorithms. 

\begin{theorem}[Truncated Newton iteration]\label{coro:newton_trunc}
Let $\bc Y=\bc H(\bc Z,\bc Y)$, be a well-founded system and $\bc S$ its solution from Theorem~\ref{th:GIST}.
Let  $(\bc Y^{[n]})_{n\ge 0}$ be the sequence defined by
\[
 \bc Y^{[0]}=\bc S(\bs{0})\quad \mbox{and} \quad  \bc Y^{[n+1]}=\left(\bc N_{\bc H}(\bc Z,\bc Y^{[n]})\right)_{\leq 2^{n+1}-1}.
\]
Then, for all $n\ge 0$, $\bc Y^{[n]}=_{2^{n}-1} \bc S$.
\end{theorem}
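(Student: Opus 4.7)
The plan is to prove by induction on~$n$ the stronger statement
\[
\bc Y^{[n]}\subset\bc S \qquad\text{and}\qquad \bc Y^{[n]}=_{2^{n}-1}\bc S,
\]
from which the statement of the theorem is immediate. The base case $n=0$ holds because~$\bc Y^{[0]}=\bc S(\bs{0})$ is by definition the restriction of~$\bc S$ to structures over the empty set, so that~$\bc Y^{[0]}\subset\bc S$ and~$\bc Y^{[0]}=_{0}\bc S$.

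For the induction step, the key remark is that, in view of the truncation, the species~$\bc Y^{[n]}$ has no structures of size larger than~$2^{n}-1$; together with the inductive hypothesis, this promotes the contact of order~$2^{n}-1$ with~$\bc S$ to the sharper equality $\bc Y^{[n]}=\bc S_{\le 2^{n}-1}$. Using the identity $\bc S=\bc H(\bc Z,\bc S)$ I would then argue that any structure in $\bc Y^{[n]}$ is an $\bc H$-assembly of $\bc S$-structures, each of size at most~$2^{n}-1$, so that each member already lies in $\bc Y^{[n]}$. This yields the inclusion
\[
\bc Y^{[n]}\subset\bc H(\bc Z,\bc Y^{[n]}),
\]
which is precisely the hypothesis required to invoke Lemma~\ref{lem:quad-cvg}. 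Applying that lemma with~$\bc U=\bc Y^{[n]}$ and~$k=2^{n}-1$ gives both $\bc N_{\bc H}(\bc Z,\bc Y^{[n]})\subset\bc S$ and $\bc N_{\bc H}(\bc Z,\bc Y^{[n]})=_{2^{n+1}-1}\bc S$. Truncating at size~$2^{n+1}-1$ preserves inclusion in~$\bc S$ and does not affect the contact of order~$2^{n+1}-1$, so that $\bc Y^{[n+1]}\subset\bc S$ and $\bc Y^{[n+1]}=_{2^{n+1}-1}\bc S$, closing the induction.

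The main obstacle is precisely the verification that $\bc Y^{[n]}\subset\bc H(\bc Z,\bc Y^{[n]})$: unlike in the untruncated iteration (Theorem~\ref{th:newton}), one cannot derive it from Taylor's formula in a single recursion, because truncation destroys the equality~$\bc Y^{[n+1]}=\bc N_{\bc H}(\bc Z,\bc Y^{[n]})$. The solution sketched above circumvents this by using the inductive inclusion~$\bc Y^{[n]}\subset\bc S$ together with the size restriction imposed by the truncation to reduce the question to the fixed-point identity for~$\bc S$ itself. With that lemma in hand, the quadratic doubling of the contact and the preservation of inclusion in~$\bc S$ by truncation finish the argument.
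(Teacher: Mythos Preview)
Your proof is correct and follows essentially the same approach as the paper's own argument. Both proofs hinge on the identification $\bc Y^{[n]}=\bc S_{\le 2^{n}-1}$ (obtained by combining the inductive contact with the size bound imposed by truncation), the inclusion $\bc S_{\le k}\subset\bc H(\bc Z,\bc S_{\le k})$ justified by the decomposition of an $\bc S$-structure as an $\bc H$-assembly of smaller $\bc S$-structures, and finally the invocation of Lemma~\ref{lem:quad-cvg}; your write-up is simply a more detailed unpacking of the paper's terse one-line induction.
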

Recall that~$\bc S(\bs{0})=\bc S_{=0}$ itself is~$\bs{0}$ when the system is well founded at~0 and can be computed by iterating~$\bc H(\bs{0},\bc Y)$ from~$\bs{0}$ at most $m$ times otherwise; in particular $\bc S(\bs{0})=\bc{H}^m(\bs{0},\bs{0})$ (see Lemma~\ref{lem:ini-contact}).
\begin{proof}
  For any~$k\geq0$, one has the inclusion $\bc S_{\leq k}\subseteq \bc H(\bc Z,\bc S_{\leq k})$, since any $\bc S$-structure of size at most~$k$ is an $\bc H$-assembly of $\bc S$-structures of size at most~$k$.
  Then, Lemma~\ref{lem:quad-cvg} applies with  $k=2^n-1$ and by induction~$\,\bc U=\bc Y^{[n]}=\bc S_{\leq k}$.
  \end{proof}

\paragraph{Computation of Newton's operator} Newton's operator involves an infinite sum that we also compute using  Newton's iteration.
The inverse of a matrix~$A$ can be computed by Newton's iteration:
${B^{[n+1]}=B^{[n]}-B^{[n]}(AB^{[n]}-\operatorname{Id})}$ converges quadratically to the inverse of~$A$.
This idea goes back at least to Schulz~\cite{Schulz33}. 
This iteration also applies to matrices of species since all the operations involved have combinatorial interpretations. Thus, the inverse of the matrix $\Id-\bjac{n}$ occurring in Newton's operator is obtained by:
\begin{equation}\label{eq:U}
\bc U^{[i+1]}=\bc U^{[i]}+\bc U^{[i]}\cdot\left(\bjac{n}\cdot\bc U^{[i]}-(\bc U^{[i]}-\Id)\right).
\end{equation}
The proof of quadratic convergence is a simpler variant of the previous one.

\subsection{Optimized Newton Iteration}\label{subsec:optimized_Newton}
We go one step further and improve the efficiency of the algorithm by lifting to the combinatorial setting a technique that saves a constant factor in the speed of Newton's iteration. The idea is to avoid spending too much time in the computation of the inverse of the Jacobian matrix. The optimization consists in computing only one iteration of the inverse by Eq.~\eqref{eq:U} at each step of the main Newton iteration. This optimized version of Newton's iteration is briefly  explained below, and can also  be found in~\cite{Pivoteau08}.

\begin{proposition}\label{prop:newt_opt}
	Let $\bc Y=\bc H(\bc{Z},\bc Y)$ be a well-founded system. The sequence~$(\bc Y^{[n]})_{n\ge 0}$ defined by 
\begin{align}
    \bc U^{[n+1]}&=\bc U^{[n]}+\bc U^{[n]}\cdot\left(\jac{n}\cdot\bc U^{[n]}+\Id-\bc U^{[n]}\right)_{\le 2^n}, \label{eq:newt_U*}\\
        \bc Y^{[n+1]}&= \bc Y^{[n]}+\bc U^{[n+1]}\cdot\left(\bc H(\bc{Z}, \bc Y^{[n]})-\bc Y^{[n]}\right)_{\le 2^{n+1}-1}, \label{eq:newt_Y*}
\end{align}
with $\bc Y^{[0]}=\bd{0}$ and $\,\bc U^{[0]}=\Id$, converges quadratically to the species~$\bc S$, solution of~$\bc Y=\bc H(\bc{Z},\bc Y)$. 
\end{proposition}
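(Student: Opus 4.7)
The plan is to prove quadratic convergence by a coupled induction on two quantities: the contact $\alpha_n$ of $\bc Y^{[n]}$ with $\bc S$ (the solution of Theorem~\ref{th:GIST}), and the contact $\beta_n$ of $\bc U^{[n]}$ with $\bc V^{[n]}:=\sum_{k\ge 0}(\jac{n})^{k}$, the combinatorial inverse of $\Id-\jac{n}$ at the current approximation. The target invariants are $\alpha_n\ge 2^n-1$ and $\beta_n\ge 2^{n-1}-1$ (with the convention $\beta_0\ge 0$, $\bc V^{[-1]}:=\bc V^{[0]}$). Initialization is handled as in Theorem~\ref{th:newton}: when the system is well founded at~$\bs 0$, both invariants hold at $n=0$ (nilpotence of $\bd\partial\bc H/\bd\partial\bc Y(\bs 0,\bs 0)$ forces $\bc V^{[0]}$ to coincide with $\Id=\bc U^{[0]}$ on structures of size $0$); the general case reduces to this via the companion system as in Lemma~\ref{lem:ini-contact}, shifting indices by at most~$m$.

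For the inductive step assume the invariants at level~$n$. \emph{Schulz step.} The update~\eqref{eq:newt_U*} is one truncated step of the Schulz iteration for inverting $\Id-\jac{n}$, exactly as in~\eqref{eq:U}. Combinatorially lifting the identity
\[
\Id-(\Id-\jac{n})\bc U^{[n+1]}=\bigl(\Id-(\Id-\jac{n})\bc U^{[n]}\bigr)^{2}
\]
and using $\beta_n\ge 2^{n-1}-1$ shows that the defect of $\bc U^{[n+1]}$ with respect to $\bc V^{[n]}$ has valuation at least $2\cdot 2^{n-1}=2^n$; combined with the truncation at order~$2^n$ this gives $\bc U^{[n+1]}=_{2^n-1}\bc V^{[n]}$. \emph{Newton step.} Let $\bc N^{[n+1]}:=\bc Y^{[n]}+\bc V^{[n]}\cdot(\bc H(\bc Z,\bc Y^{[n]})-\bc Y^{[n]})$ be the \emph{exact} combinatorial Newton step; by Lemma~\ref{lem:quad-cvg}, $\bc N^{[n+1]}\subset\bc S$ and $\bc N^{[n+1]}=_{2^{n+1}-1}\bc S$. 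The discrepancy with $\bc Y^{[n+1]}$ is controlled by the product of (the symmetric difference of) $\bc U^{[n+1]}$ and $\bc V^{[n]}$ by $\bc H(\bc Z,\bc Y^{[n]})-\bc Y^{[n]}$. The first factor has valuation $\ge 2^n$ by the Schulz step; the second has valuation $\ge 2^n$ since $\bc Y^{[n]}\subset\bc H(\bc Z,\bc Y^{[n]})$ (well-definedness, shown as in Theorem~\ref{th:newton}) and both species agree with $\bc S$ below size~$2^n$. The product therefore has valuation $\ge 2^{n+1}$, exceeding the truncation threshold, so $\bc Y^{[n+1]}\subset\bc S$ and $\alpha_{n+1}\ge 2^{n+1}-1$.

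The $\beta$-invariant propagates by a triangle argument: $\bc Y^{[n]}=_{2^n-1}\bc Y^{[n+1]}$ (from $\alpha_n\ge 2^n-1$) implies $\bc V^{[n]}=_{2^n-1}\bc V^{[n+1]}$ by preservation of contact under composition, and combining with $\bc U^{[n+1]}=_{2^n-1}\bc V^{[n]}$ yields $\beta_{n+1}\ge 2^n-1=2^{(n+1)-1}-1$, closing the induction. The main subtlety, dictating the asymmetry of the invariants, is that the Schulz step aims at a \emph{moving} target $\bc V^{[n]}$ which itself shifts at each outer iteration. The amount by which it shifts is controlled by $\alpha_n$, and the precision lost in that shift is exactly the precision gained by a single truncated Schulz step; it is this matching which permits replacing the inner inversion loop of the non-optimized iteration by a single Schulz update without breaking quadratic convergence.
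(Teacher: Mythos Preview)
Your overall architecture---a coupled induction tracking the contact $\alpha_n$ of $\bc Y^{[n]}$ with $\bc S$ and the contact $\beta_n$ of $\bc U^{[n]}$ with the exact inverse $\bc V^{[n]}$, with the Schulz update lagging one precision level behind---is correct and is essentially the mechanism underlying the paper's sketch. The paper, however, organizes the induction differently: it compares \emph{consecutive iterates} ($\bc Y^{[n]}=_k\bc Y^{[n+1]}$ and $\bc U^{[n]}=_{\lfloor k/2\rfloor}\bc U^{[n+1]}$) rather than comparing each to a moving target $\bc V^{[n]}$, and it devotes explicit steps to the species-level inclusions $\bc U^{[n]}\subset\Id+\jac{n}\cdot\bc U^{[n]}$, $\bc Y^{[n]}\subset\bc H(\bc Z,\bc Y^{[n]})$ and to the non-ambiguity of $\bc U^{[n+1]}$-structures. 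Your use of the Schulz identity $\Id-(\Id-\jac{n})\bc U^{[n+1]}=(\Id-(\Id-\jac{n})\bc U^{[n]})^2$ is a clean shortcut, but ``combinatorially lifting'' it is really a virtual-species (or generating-series) computation; translating the resulting valuation bound back to species \emph{contact} still requires the inclusion and non-ambiguity facts the paper isolates.

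There is one concrete error. You write that nilpotence of $\bd\partial\bc H/\bd\partial\bc Y(\bs 0,\bs 0)$ forces $\bc V^{[0]}$ to coincide with $\Id=\bc U^{[0]}$ on structures of size~$0$. This is false: at size~$0$ one has $\bc V^{[0]}_{=0}=\Id+J_0+\dots+J_0^{p-1}$ with $J_0=\bd\partial\bc H/\bd\partial\bc Y(\bs 0,\bs 0)$, which equals $\Id$ only when $J_0=0$. As a consequence your invariants $\alpha_n\ge 2^n-1$, $\beta_n\ge 2^{n-1}-1$ can fail for small~$n$. A three-equation example with $J_0$ nilpotent of order~$3$ (say ${\mathcal Y}_1={\mathcal Z}$, ${\mathcal Y}_2={\mathcal Y}_1$, ${\mathcal Y}_3={\mathcal Y}_2$) gives $\bc U^{[1]}_{=0}=\Id+J_0\neq\bc V^{[0]}_{=0}$ and then $\bc Y^{[1]}=({\mathcal Z},{\mathcal Z},0)$, so $\alpha_1=0<1$. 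The iteration still converges (and is quadratic once the size-$0$ layer stabilizes, after roughly $\lceil\log_2 p\rceil$ steps), but your base case does not establish this. The fix is the same transient-phase argument that Lemma~\ref{lem:ini-contact} provides for the non-optimized iteration: allow a bounded number of initial steps (depending on~$m$) before the doubling invariant engages, in the well-founded-at-$\bs 0$ case as well as the general one.
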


\begin{proof}[Main steps of the proof]
A complete proof is given in~\cite{Pivoteau08}. We only give here a sketch of it. 
The first step is to show that this iteration is well defined, i.e., all subtraction signs correspond to inclusions. This is done by an induction on~$n$ showing that $\,\bc U^{[n]}\subset \Id + \jac{n}\cdot\bc U^{[n]}$ and~$\bc Y^{[n]}\subset \bc H(\bc{Z},\bc Y^{[n]})$. 
The second step is to ensure that there is no ambiguity on the $\,\bc U^{[n+1]}$-structures. This comes from an induction showing that any sequence of structures of the species $\jac{n}-\jac{n-1}$ is uniquely derived from Equation~\eqref{eq:newt_U*}. 
Finally, quadratic convergence is given by an induction proving that  $\bc Y^{[n]} =_k \bc Y^{[n+1]}$ and $\,\bc U^{[n]} =_{\lfloor k/2\rfloor} \bc U^{[n+1]}$ are sufficient to ensure that $\bc Y^{[n+1]} =_{2k+1} \bc Y^{[n+2]}$ and $\,\bc U^{[n+1]} =_{k}\bc U^{[n+2]}$.\qedhere
\end{proof}

\subsection{Characterization of the Iterates}\label{sec:Strahler}   
In the case of the simple iteration of Section~\ref{seqspecies}, the ${\mathcal H}$-rooted trees are produced by increasing height. We now give a simple combinatorial interpretation of Newton's iteration using a generalization for ${\mathcal H}$-rooted trees of the classical Strahler number (see, e.g.,~\cite{Viennot1990}). 
This is not used in the sequel.

\begin{definition}The \emph{Strahler number} of a nonempty ${\mathcal H}$-rooted tree $\gamma$, denoted by $\Sh(\gamma)$, is defined recursively. The structure $\gamma$ decomposes as an ${\mathcal H}$-assembly with members  $\gamma_{1:q}$. If all the members of $\gamma$ are singletons, i.e., $\gamma_1=\dots=\gamma_q={\mathcal Z}$, then $\Sh(\gamma)=1$. Otherwise, let $M=\max_{1\le i \le q}(\Sh(\gamma_i))$ be the maximum Strahler number of one of $\gamma$'s members, then:
\[
\Sh(\gamma)=
\begin{cases}
  	M+1&\mbox{if $\exists i,j$, such that $i\neq j$ and  $\Sh(\gamma_i)=\Sh(\gamma_j)=M$,}\\	
  	M&\mbox{otherwise.}\\
\end{cases}
\]  
\end{definition}
\begin{figure}\begin{center}
 \includegraphics[width=10cm]{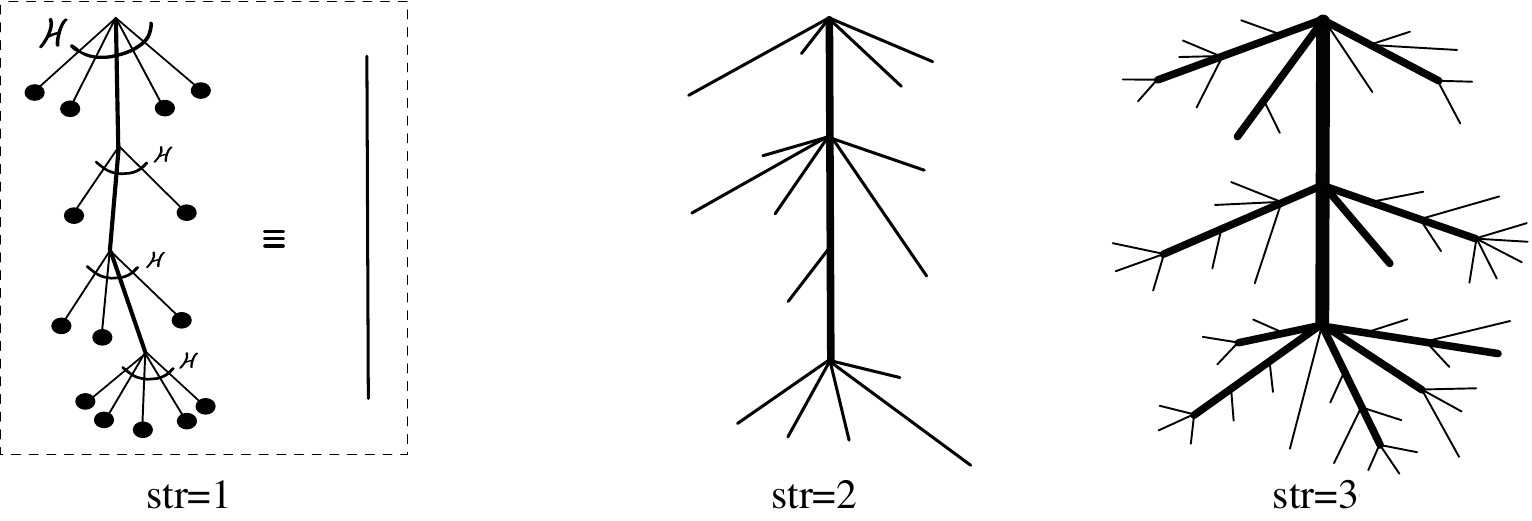}	
\end{center}  
\caption{Typical ${\mathcal H}$-rooted trees with Strahler number 1,2 or 3. In the second and third pictures, trees having Strahler number equal to 1 are depicted by simple lines, as suggested in the first picture.} 
\end{figure}

\begin{proposition}Let~${\mathcal H}$ be a multisort species such that ${\mathcal Y} = {\mathcal H} ({\mathcal Z}, {\mathcal Y})$ is well founded.
Let $({\mathcal Y}^{[{k}]})_{{k}\in\N}$ be the sequence of species produced by Newton's iteration starting from~${\mathcal Y}^{[0]}=0$. Then, for any ${\mathcal H}$-rooted tree $\gamma$:
\[
\gamma \in {\mathcal Y}_i^{[{k}]}-{\mathcal Y}_i^{[{k}-1]}\Longleftrightarrow\Sh(\gamma)={k}.
\]	
\end{proposition}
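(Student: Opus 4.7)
The plan is to prove by induction on $k \ge 0$ the slightly stronger statement that, for every component $i$, the species $\bc Y^{[k]}_i$ consists exactly of the ${\mathcal H}_i$-rooted trees with Strahler number at most $k$; the equivalence in the proposition then follows from the Newton iterates forming an increasing sequence. The base case $k=0$ is immediate since $\bc Y^{[0]}=\bs{0}$ and every ${\mathcal H}$-rooted tree has Strahler number at least $1$.

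For the inductive step I would expand Newton's operator as
\[
\bc Y^{[k+1]}=\bc Y^{[k]} + \Seq\!\left(\bjac{k}\right)\cdot\left(\bc H(\bc Z,\bc Y^{[k]})-\bc Y^{[k]}\right),
\]
and first establish the key preliminary observation: any $\delta\in\bc H(\bc Z,\bc Y^{[k]})-\bc Y^{[k]}$ is an ${\mathcal H}$-assembly whose ${\mathcal Y}$-members all lie in $\bc Y^{[k]}$; by the induction hypothesis these members have Strahler $\le k$, so $\Sh(\delta)\le k+1$, and since $\delta\notin\bc Y^{[k]}$ the induction hypothesis also forces $\Sh(\delta)>k$, hence $\Sh(\delta)=k+1$.

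For the forward direction, a structure $\gamma\in\bc Y^{[k+1]}-\bc Y^{[k]}$ decomposes uniquely, by Lemma~\ref{lem:nonambig}, as a spine $\beta_1\cdots\beta_\ell\cdot\delta$ where each $\beta_j$ is a $\bjac{k}$-structure (an ${\mathcal H}$-assembly carrying one bud onto which the next spine piece is grafted, and whose remaining ${\mathcal Y}$-members lie in $\bc Y^{[k]}$) and $\delta$ is a terminal $\bc H(\bc Z,\bc Y^{[k]})-\bc Y^{[k]}$-structure. The preliminary observation gives $\Sh(\delta)=k+1$, and propagating upward along the spine, at each $\beta_j$ exactly one ${\mathcal Y}$-member has Strahler $k+1$ (the spine continuation) while every other ${\mathcal Y}$-member has Strahler at most $k$; the maximum $M=k+1$ is thus attained by a unique member and the second clause of the Strahler definition yields $\Sh(\beta_j)=k+1$, hence $\Sh(\gamma)=k+1$.

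For the converse, given $\gamma$ with $\Sh(\gamma)=k+1$, I would decompose $\gamma$ as an ${\mathcal H}$-assembly; the Strahler definition forbids two ${\mathcal Y}$-members of Strahler $k+1$ (which would push $\Sh(\gamma)$ to at least $k+2$), while the induction hypothesis puts every ${\mathcal Y}$-member into $\bc Y^{[k+1]}$. If all ${\mathcal Y}$-members have Strahler at most $k$, they lie in $\bc Y^{[k]}$ and so $\gamma\in\bc H(\bc Z,\bc Y^{[k]})-\bc Y^{[k]}$ (the subtraction is legal since $\Sh(\gamma)>k$), realizing $\gamma$ as the empty-spine contribution to $\bc Y^{[k+1]}$. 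Otherwise a unique ${\mathcal Y}$-member $\gamma'$ has Strahler $k+1$ and $\gamma$ factors as $\beta_1\cdot\gamma'$ with $\beta_1\in\bjac{k}$; recursing on $\gamma'$ (which has strictly smaller underlying set) peels off the remaining spine. The main delicate point is this back-and-forth between the algebraic spine decomposition produced by Newton's operator and the chain of unique heavy ${\mathcal Y}$-children dictated by the Strahler recursion; Lemma~\ref{lem:nonambig} is precisely what makes this correspondence bijective and prevents double counting across different spine lengths.
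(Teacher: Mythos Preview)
Your proof is correct and follows essentially the same route as the paper's: induction on $k$, spine decomposition $\beta_1\cdots\beta_\ell\cdot\delta$ for the forward direction with the key observation that $\Sh(\delta)=k+1$, and recursive peeling of the unique heavy child for the converse. Your reformulation ``$\bc Y^{[k]}$ is exactly the trees of Strahler number $\le k$'' and your explicit appeal to Lemma~\ref{lem:nonambig} for uniqueness of the spine are nice touches that make the argument cleaner than the paper's.

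One sentence to tighten: in the converse you write ``the induction hypothesis puts every ${\mathcal Y}$-member into $\bc Y^{[k+1]}$'', but the induction hypothesis concerns $\bc Y^{[k]}$, not $\bc Y^{[k+1]}$; membership of the unique Strahler-$(k+1)$ child in $\bc Y^{[k+1]}$ is exactly what the inner recursion on size is establishing, so just drop that clause and let the case split (all children $\le k$ vs.\ one child $=k+1$) carry the argument, as you in fact do in the next two sentences.
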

\begin{proof}
By induction on ${k}$. The only ${\mathcal H}$-rooted trees with Strahler number equal to~1 are the~${\mathcal H}$-assemblies whose members are atoms, that is the ${\mathcal Y}^{[1]}$-structures. Thus $\gamma \in {\mathcal Y}^{[1]}\Leftrightarrow \Sh(\gamma)= 1$.

        If $\gamma\in {\mathcal Y}^{[{k}+1]}-{\mathcal Y}^{[{k}]}$, then~$\gamma$ rewrites as a sequence~ $\beta_1\dotsm\beta_\ell\cdot\delta$, with $\beta_j\in  \partial {\mathcal H}/\partial {\mathcal Y}({\mathcal Z}, {\mathcal Y}^{[{k}]})$, $j=1,\dots,\ell$ and $\delta \in {\mathcal H}({\mathcal Z}, {\mathcal Y}^{[{k}]})-{\mathcal Y}^{[{k}]}$. The structure $\delta$ is exclusively composed of ${\mathcal Y}^{[{k}]}$-structures; by the induction hypothesis, their Strahler number is at most~${k}$;  thus, $\Sh(\delta)\leq {k}+1$. Then $\beta_\ell\cdot\delta$ is an  ${\mathcal H}$-assembly with at most one member (the structure~$\delta$) whose Strahler number is~$\le {k}+1$ and the other ones (that are ${\mathcal Y}^{[{k}]}$-structures) whose Strahler number is~$\le {k}$. Therefore,~$\Sh((\beta_\ell\cdot\delta))\leq {k}+1$. Iterating this reasoning on the $\beta_j$'s until $j=1$, we get $\Sh(\gamma_i)\leq{k}+1$. Since $\gamma\notin {\mathcal Y}^{[{k}]}$, the induction hypothesis implies that $\Sh(\gamma_i)> {k}$, for $i=1,\dots,m$, which gives $\Sh(\gamma)={k}+1$.
 
        If $\Sh(\gamma)= {k}+1$, then $\gamma$ is an ${\mathcal H}$-assembly whose members have Strahler number at most~${k}$ and possibly one equal to~${k}+1$. If none of them has Strahler number equal to~${k}+1$, then $\gamma$ is an ${\mathcal H}({\mathcal Z}, {\mathcal Y}^{[{k}]})$-structure. Otherwise, $\gamma$ is a structure of the form $\beta\cdot\delta$ with $\beta \in \partial {\mathcal H}/ \partial {\mathcal Y}({\mathcal Z}, {\mathcal Y}^{[{k}]})$ and $\Sh(\delta)={k}+1$. Iterating this until $\gamma$ is exhausted, we obtain that $\gamma\in {\mathcal Y}^{[{k}+1]}$. Since $\Sh(\gamma)>{k}$, the structure~$\gamma$ does not belong to ${\mathcal Y}^{[{k}]}$,  and thus $\gamma\in {\mathcal Y}^{[{k}+1]}-{\mathcal Y}^{[{k}]}$. 
\end{proof}         

This result can be extended to any well-founded system~$\bc Y = \bc H (\bc Z, \bc Y)$, bearing in mind that  $\bc H$-rooted trees can be viewed as derivation trees of the combinatorial structures described by such a system.

\section{Special Classes of Species}\label{sec:special}
\subsection{Constructible Species}\label{subsec:constructible}

Even if the combinatorial properties stated in this part are applicable to all species, we introduce a restriction on species in order to give complexity results in Section~\ref{sec:series}. The restriction we impose is guided by the framework of constructible combinatorial classes~\cite{FlSe09}, which we cast here into the species language.

\begin{definition}\label{def:constructible} 
        A \emph{constructible species} is inductively defined as either:
        \begin{enumerate}                \item one of the \emph{basic species} (see~Table~\ref{tab:sum_esp_sg}) in $\{ 1, {\mathcal Z}, +, \cdot, \Seq, \Cyc, \Set, {\mathcal Y}_1,{\mathcal Y}_2,\dotsc \}$;
                \item any basic species with a cardinality constraint that is a finite union of intervals;
                \item a composition of constructible species;
                \item the solution of a well-founded system~$\bc Y=\bc H({\mathcal Z},\bc Y)$, such that each coordinate of $\bc H$ is constructible.
        \end{enumerate}
		We  call \emph{iterative constructible}, the species defined by the first three rules only (i.e., without recursivity).      
\end{definition}
\begin{example}\label{ex:constructible} All the examples in this article are constructible.
\end{example}

\subsection{Flat Species}\label{subsec:flat}
Flat species will prove useful in the next part, thanks to the nice properties of their generating series.

An ${\mathcal F}$-structure $s$ on $U$ is \emph{asymmetric} if it does not have any internal symmetry. More precisely, for any permutation $\sigma$ of $U$ different from the identity, ${\mathcal F}[\sigma](s)\neq s$.
\begin{definition} Let ${\mathcal F}$ be a species of structures. The \emph{flat part} of ${\mathcal F}$ is the subspecies $\overline{{\mathcal F}}$ of ${\mathcal F}$ defined, for any finite set $U$, by $\overline{{\mathcal F}}=\{s \in {\mathcal F}[U] \mid s \mbox{ is asymmetric} \}$.        
\end{definition}
This extends to multisort species, asymmetry being with respect to each sort.

\begin{example} The most basic cases are: $\overline{\Seq}=\Seq$, $\overline{\Cyc}={\mathcal Z}$, $\overline{\Set}=1+{\mathcal Z}$.
\end{example}

A \emph{flat} species is a species that is isomorphic to its flat part. For instance, the species of sequences is flat, whereas cycles and sets are not.

\begin{lemma}\label{lemma:composition_flat}   Let ${\bc F}$ be a $k$-sort flat species and ${\bc G}={\cal G}_{1:k}$ a flat species, then the species ${\bc F} \circ {\bc G}$ is flat.
\end{lemma}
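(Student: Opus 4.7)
The plan is to show that any permutation of the underlying set that fixes an $(\bc F\circ \bc G)$-structure must be the identity, so that $\bc F\circ\bc G$ coincides with its flat part. I will work directly from the definition of multisort composition recalled in~\eqref{eq:2sort-comp} (extended to $k$ sorts in the obvious way): an $(\bc F\circ\bc G)$-structure on $U$ is a partition $\pi=\pi_1+\cdots+\pi_k$ of $U$ (with each part assigned a sort $i\in\{1,\dots,k\}$), an $\bc F$-structure on the multisort set of parts $(\pi_1,\dots,\pi_k)$, and for each part $s\in\pi_i$ a $\bc G_i$-structure on $s$.

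Next, I fix such a structure $\gamma$ on $U$ and a permutation $\sigma$ of $U$ with $(\bc F\circ\bc G)[\sigma](\gamma)=\gamma$. By the very construction of the transport of structures under composition, $\sigma$ must permute the parts of $\pi$ preserving their sorts, inducing a multisort bijection $\bar{\sigma}$ on $(\pi_1,\dots,\pi_k)$ that fixes the underlying $\bc F$-structure; moreover, for each $s\in\pi_i$, the restriction $\sigma|_s:s\to \bar\sigma(s)$ transports the $\bc G_i$-structure on $s$ to the one on $\bar{\sigma}(s)$. Since $\bc F$ is flat, $\bar{\sigma}$ must be the identity on the set of parts, so each part is mapped to itself, and $\sigma|_s$ then fixes the $\bc G_i$-structure on $s$. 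Applying flatness of each $\bc G_i$, we conclude that $\sigma|_s=\operatorname{Id}_s$ for every part $s$, hence $\sigma=\operatorname{Id}_U$.

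This shows that every $(\bc F\circ\bc G)$-structure is asymmetric, i.e., $\bc F\circ\bc G=\overline{\bc F\circ\bc G}$, which is the definition of flatness. The only delicate point is to be careful when ``$\bc F$ is flat'' is invoked in the multisort setting: one needs to observe that a multisort bijection fixing an $\bc F$-structure is forced to be the identity on each of the $k$ component sets, because asymmetry in Definition of flatness was extended to multisort species ``with respect to each sort''. With this mild verification, the proof is straightforward.
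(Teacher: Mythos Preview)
Your proof is correct. It differs from the paper's argument: the paper does not argue directly from the definition of transport under composition, but instead cites the general inclusion $\overline{\bc F}\circ\overline{\bc G}\subset\overline{\bc F\circ\bc G}$ from \cite[Prop.~4, p.~323]{BeLaLe98} and \cite{Labelle92}, and then closes with the sandwich
\[
\bc F\circ\bc G=\overline{\bc F}\circ\overline{\bc G}\subset\overline{\bc F\circ\bc G}\subset\bc F\circ\bc G.
\]
Your argument is effectively a self-contained proof of that cited inclusion in the special case where $\bc F$ and $\bc G$ are already flat: analyzing how an automorphism $\sigma$ of an $(\bc F\circ\bc G)$-structure acts first on the partition (forcing $\bar\sigma=\operatorname{Id}$ by flatness of $\bc F$), then on each block (forcing $\sigma|_s=\operatorname{Id}$ by flatness of $\bc G_i$). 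The paper's route is shorter but relies on an external reference; yours is more elementary and makes the mechanism explicit, including the careful check that multisort flatness gives $\bar\sigma_i=\operatorname{Id}$ on each sort separately, which is exactly what is needed to conclude that every block is sent to itself.
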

\begin{proof}         According to~\cite[Prop.~4, p.~323]{BeLaLe98} and~\cite{Labelle92}, for any species ${\bc F}$ and ${\bc G}$ (not necessarily flat), ${\overline{{\bc F}}\circ \overline{{\bc G}} \subset \overline{{\bc F}\circ {\bc G}}}$. Thus, the flatness of~${\bc F}$ and ${\bc G}$ gives the following relations 
        \[
        {\bc F} \circ {\bc G}=\overline{{\bc F}}\circ \overline{{\bc G}} \subset \overline{{\bc F}\circ {\bc G}} \subset {\bc F}\circ {\bc G}
        \]
        that lead to ${\bc F} \circ {\bc G}=\overline{{\bc F}\circ {\bc G}}$.
\end{proof}

\begin{proposition}[Implicit Flat Species]\label{prop:flat}         
	Let~$\bc F({\bc Z},\bc Y)$ be a flat species,  if $\bc Y=\bc F({\bc Z},\bc Y)$ is well founded, then its solution~$\bc S$ given by Theorem~\ref{th:GIST} is flat.
\end{proposition}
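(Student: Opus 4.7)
The plan is to use Joyal's iteration directly, showing by induction that every iterate is flat and then transferring flatness to the limit. Since $\bc Y=\bc F(\bc Z,\bc Y)$ is well founded, Definition~\ref{def:wf} guarantees that the sequence
\[
\bc Y^{[0]}=\bs{0},\qquad \bc Y^{[n+1]}=\bc F(\bc Z,\bc Y^{[n]}),\quad n\ge 0,
\]
is well defined (in particular, each composition makes sense, possibly via the general composition of Section~\ref{subsec:gen_IST}) and converges to $\bc S$.

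First, I would show by induction on $n$ that each species $\bc Y^{[n]}$ is flat. The base case is immediate: $\bs{0}$ has no structures, hence it vacuously equals its flat part. For the inductive step, observe that $\bc Z$ is flat, since any $\bc Z$-structure lives on a one-element set whose only permutation is the identity, so all its structures are asymmetric. Given that $\bc F$ is flat by hypothesis and $\bc Y^{[n]}$ is flat by induction, Lemma~\ref{lemma:composition_flat} gives that $\bc F(\bc Z,\bc Y^{[n]})=\bc Y^{[n+1]}$ is flat. (The argument used in that lemma, namely $\overline{\bc F}\circ\overline{\bc G}\subset\overline{\bc F\circ\bc G}$, carries over to the extended composition whenever the latter is defined, which is guaranteed by well-foundedness.)

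Next, I would transfer flatness to the limit. Let $\bs s$ be an $\bc S$-structure on some tuple $\bs U$ of finite sets, of total size $p$. By the convergence of the iteration, there exists $N$ such that $\bc S=_{p}\bc Y^{[N]}$; let $\alpha$ be a witnessing species isomorphism between $\bc S_{\le p}$ and $\bc Y^{[N]}_{\le p}$, and set $\tilde{\bs s}=\alpha_{\bs U}(\bs s)$. Since $\bc Y^{[N]}$ is flat, $\tilde{\bs s}$ is asymmetric. Suppose for contradiction that some non-identity permutation $\bs\sigma$ of $\bs U$ satisfies $\bc S[\bs\sigma](\bs s)=\bs s$; then the naturality of $\alpha$ (namely $\bc Y^{[N]}[\bs\sigma]\circ\alpha_{\bs U}=\alpha_{\bs U}\circ\bc S[\bs\sigma]$) would give $\bc Y^{[N]}[\bs\sigma](\tilde{\bs s})=\tilde{\bs s}$, contradicting the asymmetry of $\tilde{\bs s}$. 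Hence $\bs s$ itself is asymmetric, and since $\bs s$ was arbitrary, $\bc S=\overline{\bc S}$ is flat.

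The only potentially delicate step is the inductive one when $\bc F(\bs{0},\bs{0})\ne\bs{0}$, because then some $\bc Y^{[n]}$ have structures of size $0$ and the composition used is the generalized one of Section~\ref{subsec:gen_IST}. I expect this to cause no real obstacle: well-foundedness precisely ensures the composition is always defined, and structures of size $0$ are always asymmetric (the empty permutation is the identity), so the flatness proof of Lemma~\ref{lemma:composition_flat} applies verbatim in this extended setting.
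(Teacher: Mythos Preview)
Your proposal is correct and follows essentially the same approach as the paper: induction on the iterates $\bc Y^{[n]}$ using Lemma~\ref{lemma:composition_flat}, then passage to the limit. The paper's proof is a one-line sketch that elides the details you have spelled out (the base case, the limit argument via naturality, and the caveat about generalized composition when $\bc F(\bs 0,\bs 0)\neq\bs 0$), but the underlying strategy is identical.
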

\begin{proof}
     By induction on the sequence $(\bc Y{}^{[n]})_{n\ge 0}$ defined by $\bc Y{}^{[n+1]}=\bc F({\bc Z},\bc Y{}^{[n]})$, using Lemma~\ref{lemma:composition_flat}.
\end{proof}

\part{Computation}\label{part:computation}
We now turn to the computational part of our work and show that the combinatorial iterations transfer  to  both  levels of generating series and numerical evaluation.
Newton's  combinatorial iteration directly transfers to the level of formal power series, the quadratic convergence being in terms of valuation. The strength of Newton’s iteration in this context is that it makes it possible to compute the first $n$ terms of the generating series in $O(n\log n)$ arithmetic operations. Taking into account the combinatorial origin of the coefficients, we also show that the bit complexity is likewise quasi-optimal. 
	When interpreted numerically, for a value of the variable inside the disk of convergence of the generating series, we show that the same iteration computes the values of the power series,  in both cases of ordinary and exponential generating series.

\section{Formal Power Series}\label{sec:series}      
In this section, we transfer Newton's iteration on a combinatorial system to iterations over generating series. This extends earlier work of Labelle's~\cite{Labelle86} to systems and to equations with~1. We also discuss several possible truncation orders.
The quadratic rate of convergence of Newton's iteration leads to efficient enumeration algorithms, whose complexity we show to be quasi-optimal.

\subsection{Generating Series}\label{sec:genseries}
There are several formal power series associated to a species~${\mathcal F}$. The exponential generating series~$F(z)$ encodes the numbers of ${\mathcal F}$-structures on the sets~$\{1,\dots,n\}$. This is called \emph{labeled} enumeration. The ordinary generating series, denoted by~$\tilde{F}(z)$, is used for \emph{unlabeled} enumeration; it encodes the numbers of isomorphism classes of ${\mathcal F}$-structures. Finally, a third kind of series, the cycle index series~$Z_{{\mathcal F}}$ is a more general tool that gathers the information of both exponential and ordinary generating series.

\subsubsection{Definitions}

\begin{definition}
        The \emph{exponential generating series} of a species of structures ${\mathcal F}$ is the formal power series
        \[
        F(z)=\sum_{n=0}^\infty f_n \frac{z^n}{n!}
        \]
where $f_n = |{\mathcal F}[\{1,\dots,n\}]|$ is the number of ${\mathcal F}$-structures on a set of size~$n$ (also called \emph{labeled} ${\mathcal F}$-structures of size~$n$).
\end{definition}

\begin{definition}\label{def:ogf}
        The \emph{ordinary generating series} of a species of structures ${\mathcal F}$ is the formal power series
        \[
        \tilde{F}(z)=\sum_{n=0}^\infty \tilde{f}_n z^n
        \]
where $\tilde{f}_n$ is the number of unlabeled ${\mathcal F}$-structures of size $n$.
\end{definition}           
In the case of asymmetric structures, the number of labeled structures of size $n$ coincides with the number of unlabeled structures of size $n$ multiplied by $n!$, so that for flat species (see Section~\ref{subsec:flat}), exponential and ordinary generating series coincide.

\begin{definition}
        The \emph{cycle index series} of a species of structures ${\mathcal F}$ is a formal power series in an infinite number of variables, defined by:
\begin{equation}\label{eq:cycle-index}
Z_{{\mathcal F}}(z_1,z_2,z_3,\dotsc) = \sum_{n\ge0}{\frac{1}{n!}\left(\sum_{\sigma\in{\mathcal P}_n}{\operatorname{fix}{\mathcal F}[\sigma]z_1^{\sigma_1}z_2^{\sigma_2}\dotsm}\right)},
\end{equation}
where $\sigma_i$ is the number of cycles of length $i$ in the cycle decomposition of the permutation $\sigma$  and $\operatorname{fix}{\mathcal F}[\sigma]$ is the number of ${\mathcal F}$-structures on $\{1,\dots,n\}$ fixed by ${\mathcal F}[\sigma]$.
\end{definition}

\begin{table}
\begin{small}
\begin{center}
$\begin{array}{|@{}l@{}|l|l|l@{}|l|l|l|l|l|l|l|l|}
\hline                                           
&0&1&{\mathcal Z}& {\mathcal F}+{\mathcal G}&  {\mathcal F}\cdot{\mathcal G}& \Seq&\Set\phantom{\bigl|}&\Cyc\\  
\hline
\begin{array}{l} \mbox{cycle}\\ \mbox{index}\\ \mbox{series}\end{array} &0&1&z_1\phantom{\Biggl|}&Z_{{\mathcal F}}+Z_{{\mathcal G}}&Z_{{\mathcal F}}\cdot Z_{{\mathcal G}} &\ds\frac{1}{1-z_1} &\ds\exp\left(\sum_{k>0}\frac{z_k}k\right)\!\!&\ds\sum_{k>0}\frac{\varphi(k)}k\log\frac{1}{1-z_k}\\
\hline
\end{array}$  
\end{center}    
\end{small}
\caption{Cycle index series.\label{tab:cycle_index} ($\varphi$ is Euler's totient function)}
\end{table}

Table~\ref{tab:cycle_index} presents the cycle index series of the basic constructible species.
The following fundamental result relates these three generating series.
\begin{property}{\cite[Th.~8 p.~18 and (33) p.~112]{BeLaLe98}}\label{prop:ogfegf}
A species of structures ${\mathcal F}$ has exponential generating series $Z_{{\mathcal F}}(z,0,0,\dotsc)$ and ordinary generating series  $Z_{{\mathcal F}}(z,z^2,z^3,\dotsc)$.
\end{property}

Table \ref{tab:sum_esp_sg} presents ordinary and exponential generating series associated with basic constructible species (see~\cite{FlSe09} for more details.)

\begin{table}[ht]
\begin{small} 
\begin{center}
$ \begin{array}{|l|l|@{}c@{}|@{}c@{}|l@{}|l|} 
\hline
   	\mbox{species} & \mbox{operator} & ~{\mathcal G} =0~ &  ~{\mathcal G} = 1~ &  \mbox{exponential g.s.}  & \mbox{ordinary generating series}\\[2pt]  
\hline  
\multicolumn{6}{|c@{}|}{\phantom{\Bigl|}\mbox{basic constructible species}}\\
\hline
   	\mbox{Disjoint union} & {\mathcal F}+ {\mathcal G} & {\mathcal F} & {\mathcal F}+ 1 & F(z)+G(z) & F(z)+G(z)\\[2pt]
	\mbox{Cartesian product} & {\mathcal F}\cdot {\mathcal G} & 0 & {\mathcal F} & F(z)\cdot G(z) & F(z)\cdot G(z)\\[2pt]    
	\mbox{Sequence} &  \Seq({\mathcal G}) & 1 & - & (1-G(z))^{-1} & (1-G(z))^{-1} \\[2pt]
\hline       
	\mbox{Cycle} & \Cyc({\mathcal G}) & 0 & - & \ds\log\frac1{1-G(z)} & \ds\sum_{k>0}\frac{\varphi(k)}k\log\frac{1}{1-G(z^k)}\\[2pt]         	        
	\mbox{Set}  & \Set({\mathcal G}) & 1 & -  & \exp(G(z)) & \ds\exp\left(\sum_{k>0}\frac{G(z^k)}k\right)\\[2pt]
\hline
\multicolumn{6}{|c@{}|}{\phantom{\Bigl|}\mbox{basic constructible species with a fixed cardinality}}\\
\hline
\mbox{$\ell$-tuple, $\ell>0$} &  \Seq_\ell({\mathcal G}) & 0 & 1 & G^\ell(z) & G^\ell(z)\\[2pt]
	\mbox{Cycle, card $\ell>0$}\!\!& \Cyc_\ell({\mathcal G}) & 0 & 1 &  \frac{1}{\ell} G^\ell(z) & \ds [u^\ell] \sum_{k= 1}^{\infty}\frac{\varphi(k)}{k}\log\frac{1}{1-u^kG(z^k)}\\[2pt] 
	\mbox{Set of card $\ell>0$}  & \Set_\ell({\mathcal G}) & 0 & 1 & \frac{1}{\ell!}G^\ell(z) & [u^\ell]\exp\left(uG(z)+\frac{u^2}{2}G(z^2)+\frac{u^3}{3}G(z^3)+\dotsb\right)\\[2pt]
\hline
\end{array}
$ 
\end{center}
\end{small}
\caption{Generating series associated with basic constructible species. {\footnotesize (The character ``$-$'' stands for ``undefined''; other cardinality constraints are obtained by subtraction ($\ge \ell$) or finite unions ($\le \ell$).)} \label{tab:sum_esp_sg}	
}
\end{table}

\subsubsection{Derivative} The cycle index series of the derivative of a species ${{\mathcal F}}$ is given by
\[Z_{{\mathcal F}'}(z_1,z_2,z_3,\dotsc)= \frac{\partial }{\partial z_1} Z_{{\mathcal F}}(z_1,z_2,z_3,\dotsc).
\]
This leads to a simple formula for exponential generating  series: $F'(z) = dF(z)/dz$. But there is no such relation  for  ordinary generating series: the computation of the ordinary generating series  for a derivative species goes by the cycle index series of the derivative: 
\begin{equation}\label{Fprime}
\widetilde{F^{\prime}}(z)=  Z_{{\mathcal F'}}(z,z^2,z^3,\dotsc).
\end{equation}
In a way, this is why Newton's iteration for unlabeled enumeration is complicated: \emph{the sole knowledge of the system of equations over ordinary generating series is not sufficient to derive the Newton operator, which requires the derivative.} (See the example of unlabelled Cayley trees in the introduction.)
\subsubsection{Composition}\label{subsub:composition}
The next fundamental result in the theory of species allows to compute cycle index series by composition.
\begin{property}{\cite[Th. 2, p. 43]{BeLaLe98}}\label{prop:plethysm}
Let ${\mathcal F}$ and ${\mathcal G}$ be two species of structures and assume that ${\mathcal G}(0)=0$. The cycle index series of the species ${\mathcal F}\circ{\mathcal G}$ is 
\[
Z_{{\mathcal F}\circ{\mathcal G}}(z_1,z_2,z_3,\dotsc)=
Z_{{\mathcal F}}(Z_{{\mathcal G}}(z_1,z_2,z_3,\dotsc),
Z_{{\mathcal G}}(z_2,z_4,z_6,\dotsc), Z_{{\mathcal G}}(z_3,z_6,z_9,\dotsc),\dotsc).
\] 
This formula also holds when ${\mathcal F}$ is a polynomial species and~${\mathcal G}=1$.
This operation is sometimes called the \emph{plethystic substitution} of $Z_{\mathcal G}$ in $Z_{\mathcal F}$. \end{property}

In view of Property~\ref{prop:ogfegf}, we deduce that for exponential generating series, the composition of species translates into the composition of series.

In the case of ordinary generating series, the composition is more intricate:  the ordinary generating series of the composition of species is  defined using operators that were studied by Pólya~\cite{PoRe87}.
Properties~\ref{prop:ogfegf} and \ref{prop:plethysm} thus lead us to the following.
\begin{definition}\label{coro:ogf_comp} The \emph{Pólya operator} ${\Phi}_{{\mathcal F}}$ of a species ${\mathcal F}$ is defined by
\begin{equation*}
	{\Phi}_{{\mathcal F}}:\tilde{{G}}(z)\mapsto Z_{{{\mathcal F}}}(\tilde{{{G}}}(z),\tilde{{{G}}}(z^2),\tilde{{{G}}}(z^3),\dotsc) = Z_{{\mathcal F}\circ{\mathcal G}}(z,z^2,z^3,\dotsc)
\end{equation*}
\end{definition}
It associates the ordinary generating series of~${\mathcal F}\circ{\mathcal G}$ to that of~${\mathcal G}$.
As a special case, the ordinary generating series of a species ${\mathcal F}$ is given by $\tilde{{F}}(z)={\Phi}_{{\mathcal F}}(z)$. Moreover, these operators compose by
${\Phi}_{{\mathcal F}\circ{\mathcal G}}={\Phi}_{{\mathcal F}}\circ{\Phi}_{{\mathcal G}}$.

As implicit species are built by composition, the previous results ensure that systems defining implicit species translate into systems of functional equations on ordinary or exponential generating series.

\begin{example}\label{ex:series_system}  
The ordinary and exponential generating series~$T(z)$ of Catalan trees (see Example~\ref{ex:catalan})
are both defined by the functional equation~$T(z)=z/(1-T(z))$ since the isomorphism type of this species is the species itself. 
The species of Cayley trees is defined by  ${\mathcal G} ={\mathcal Z} \cdot \Set({\mathcal G})$.
Its exponential generating series is defined by $G(z)=z\exp(G(z))$ whereas its ordinary generating series is defined by  $\tilde{G}(z)=z\exp\left(\tilde{G}(z)+\frac12\tilde{G}(z^2)+\dotsb\right)$, using the $\Set$ operator given in Table~\ref{tab:sum_esp_sg}.
\end{example}

\subsubsection{Generating Series for Multisort Species}
The definitions of ordinary, exponential and cycle index series extend to multisort species. One variable is used per sort, or one infinity of variables in the case of cycle index series. There is no technical difficulty but the notation becomes messier. The ordinary and exponential series are still deduced by simple specializations of the cycle index series. The plethystic substitution extends to the multisort context, which leads to the definition of Pólya operators.
We refer to~\cite[p.~106-107]{BeLaLe98} for details. From now on, we use exponential and ordinary generating series, cycle index series and Pólya operators both in the unisort or multisort context.

\subsection{Convergence of Iterations on Power Series}\label{section:Newton-series} 
The iterations on species of the previous part translate into iterations on generating series; the resulting iterates then converge to the expected generating series since they are the generating series of the successive species produced by the combinatorial iteration.

Recall that the \emph{valuation} of a power series~$S(z)$, denoted by $\operatorname{val}(S(z))$, is the exponent of the first nonzero coefficient of the series. A metric is classically deduced by defining 
the \emph{distance} between two power series by $d(F(z),G(z))=2^{-\operatorname{val}(F(z)-G(z))}$; the notion of convergence follows.

\begin{lemma}[Transfer, series part]\label{lem:trans1}  
	Let $\bc Y=\bc H({\mathcal Z},\bc Y)$ be well founded and~$\bc F$ be a vector of species with exponential generating series (resp. Pólya operator)~$\bd{F}$.
	If 
	\[ 
	\bc Y^{[n+1]} = \bc F({\mathcal Z},\bc Y^{[n]}), \quad \mbox{ with } \bc Y^{[0]}=\bd{0},
	\] 
	is an increasing sequence of species converging to the solution~$\bc S$ of the system, then the sequence
	\[
	\bd{Y}^{[n+1]}(z)= \bd{F}(z, \bd{Y}^{[n]}(z)), \quad \mbox{ with } \bd{Y}^{[0]}(z)=\bd{0},
	\]
    converges to the vector of exponential (resp. ordinary) generating series~$\bd{S}(z)$ (resp.~${\bd{\tilde S}}(z)$) of the species~$\bc S$.		
\end{lemma}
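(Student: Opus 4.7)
The proof plan is to reduce the convergence of the numerical iteration to two facts: first, that at each step $\bd{Y}^{[n]}(z)$ really is the generating series of the species $\bc Y^{[n]}$; second, that convergence of an increasing sequence of species transfers to formal convergence of their generating series.

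I would proceed by induction on $n$. For $n=0$, both $\bc Y^{[0]}$ and $\bd{Y}^{[0]}(z)$ are zero, so there is nothing to do. For the inductive step, assume that $\bd{Y}^{[n]}(z)$ is the exponential generating series (resp.\ ordinary generating series) of $\bc Y^{[n]}$. Since $\bc Y^{[n+1]} = \bc F(\mathcal{Z}, \bc Y^{[n]})$, it suffices to show that the generating series of $\bc F(\mathcal{Z}, \bc Y^{[n]})$ is $\bd{F}(z, \bd{Y}^{[n]}(z))$. In the exponential case this follows directly from Property~\ref{prop:plethysm} specialized through $z_1 = z$, $z_k = 0$ for $k>1$, because plethystic substitution restricts to ordinary composition on exponential generating series. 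In the ordinary case this is exactly the definition of the Pólya operator $\Phi_{\bc F}$ (Definition~\ref{coro:ogf_comp}) combined with the multisort extension recalled in §\ref{subsub:composition}. The only subtlety is that composition of species requires either $\bc Y^{[n]}(\bs 0)=\bs 0$ or that $\bc F$ be (partially) polynomial in the sorts where $\bc Y^{[n]}(\bs 0)\neq\bs 0$; but since the system $\bc Y=\bc H(\mathcal Z,\bc Y)$ is well founded, Definition~\ref{def:wf} guarantees exactly this, and the companion species formalism of Theorem~\ref{th:WF} licenses the plethystic substitution in all the cases that arise.

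For the convergence assertion, I would use that the iteration $(\bc Y^{[n]})$ is assumed increasing and converges to $\bc S$. By the definition of convergence of species, for every $p\ge 0$ there exists $N$ such that $\bc Y^{[n]} =_p \bc S$ for all $n\ge N$; contact of order $p$ is an isomorphism of species truncated at size $p$, which implies equality of the numbers of (labeled, resp.\ unlabeled) structures up to size $p$, hence equality of the first $p+1$ coefficients of the respective generating series. Combined with the first part of the proof, this yields $\bd Y^{[n]}(z) \equiv \bd S(z) \pmod{z^{p+1}}$ for all $n\ge N$, which is exactly formal convergence in the $z$-adic topology (equivalently, with respect to the valuation distance $d$ defined above).

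The main point requiring attention is really the composition step: one must check that every composition that occurs is a legitimate operation both at the species level and at the series level. This has been ensured throughout Section~\ref{subsec:gen_IST}, and in particular by the polynomial-in-$\mathcal Z_1$ property of the companion system, which is what makes the plethystic substitution meaningful when structures of size~$0$ are present. Once this is granted, everything else is a one-line induction plus a one-line transfer of contact into coincidence of coefficients; no analysis or bound on the rate of convergence is needed here, since the statement is purely qualitative.
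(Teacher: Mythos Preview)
Your proof is correct and follows essentially the same approach as the paper, which dispatches the lemma in one sentence: convergence of species means the contact order goes to infinity, hence $\operatorname{val}(\bd Y^{[n]}(z)-\bd S(z))\to\infty$. You spell out explicitly the inductive identification of $\bd Y^{[n]}(z)$ with the generating series of $\bc Y^{[n]}$ (via Property~\ref{prop:plethysm} and Definition~\ref{coro:ogf_comp}) and the well-definedness of the compositions, points the paper leaves implicit; but the underlying argument is the same.
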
     
\begin{proof}
Convergence of sequences of species translates in terms of valuation at the level of generating series into  $\operatorname{val}\left(\bs Y^{[n]}(z)-\bs S(z)\right)\rightarrow \infty$, which gives the convergence of generating series. 	
\end{proof}

This lemma gives a simple algorithm to compute the first coefficients of generating series.   
Indeed, the fixed point iteration induced by any well-founded combinatorial system (choosing $\bc F=\bc H$ in the previous lemma) is an automatic process to derive its associated counting series. 
But, as we have seen earlier, this iteration may be slow, meaning that in the worst case, several steps are needed to get one more correct coefficient. We now consider the faster convergence provided by Newton's iteration.
\def\NH{\bd{N}\!_{\bc H}}  
\def\TNH{{\bd{\Phi_{{\mathcal N}_{\mathcal H}}}}}  

\begin{definition}
Let $\bc Y=\bc H({\mathcal Z},\bc Y)$ be a well-founded system.
The \emph{Newton operator for exponential generating series} is defined by 
\begin{equation} 
\NH(z,\bd{Y}(z))={\bd Y}(z)+\left(\Id-{\bd\partial\bd{H}}/{\bd\partial\bd{Y}}(z,\bd{Y}(z))\right)^{-1}\cdot \left(\bd{H}(z,\bd{Y}(z))-\bd{Y}(z)\right),
\end{equation} 
where $\bd{H}$ is the vector of exponential generating series of~$\bc H$.

The \emph{Newton operator for ordinary generating series} is defined by
\begin{equation} 
{\TNH}(z,\bd{Y}(z))={\bd Y}(z)+\left(\Id-\bs{\Phi}_{\bd\partial\bc H/\bd\partial\bc Y}(z,\bd{Y}(z))\right)^{-1}\cdot \left(\bs{\Phi}_{\bc H}(z,\bd{Y}(z))-\bd{Y}(z)\right).
\end{equation} 
\end{definition}
This last operator is one of the fundamental reasons why we resort to species theory. In the case of exponential generating series, Newton's iteration can be deduced from the equation~$\bs Y(z)=\bs H(z,\bs Y(z))$ over power series. In the case of \emph{ordinary} generating series, the analogous equation is~$\bs Y(z)=\bs{\Phi}_{\bc H}(\bs Y(z))$, but Newton's iteration also uses~$\bs{\Phi}_{\bd\partial\bc H/\bd\partial\bc Y}$ that we recover from the combinatorial origin of the system. 
         
We now show that the iterations defined using these operators converge as expected.
\begin{definition}
        The convergence of a sequence $(\bs F^{[n]}(z))_{n\ge 0}$ to a vector of series $\bs F(z)$ is  
        \emph{quadratic} when the distance is at least squared at each step. In other words, the number of matching coefficients doubles.
        \end{definition}

\begin{theorem}\label{th:newt_sg}
    Let $\bc Y=\bc H({\mathcal Z},\bc Y)$ be well founded and let~$\bc S$ denote its solution.
	Newton's iterations defined~by
	\begin{equation*} 
	\bd Y^{[n+1]}(z)=\NH\left(z,\bd{Y}^{[n]}(z)\right),\qquad \bd Y^{[n+1]}(z)=\TNH\left(z,\bd{Y}^{[n]}(z)\right)
	\end{equation*}  
with $\bd Y^{[0]}(z)=\bd{0}$, converge quadratically, respectively to~$\bd{S}(z)$ and~${\bd{\tilde S}}(z)$.
\end{theorem}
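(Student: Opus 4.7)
The plan is to transfer the combinatorial Newton iteration of Theorem~\ref{th:newton} to both types of generating series, using the fact that quadratic convergence of species contact translates into quadratic convergence in the $z$-adic metric on formal power series.

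First I would verify that the species-level iterates $(\bc Y^{[n]})_{n\ge 0}$ produced by the combinatorial Newton operator $\bc N_{\bc H}$ (with $\bc Y^{[0]}=\bd 0$) have, as exponential (respectively ordinary) generating series, exactly the iterates $\bd Y^{[n]}(z)$ defined in the statement. For exponential generating series this is routine: the map $\bc F \mapsto F(z)$ is a ring homomorphism respecting sum, Cartesian product, derivation, and composition of species (when the inner species satisfies $\bc G(\bd 0)=\bd 0$), so applying it to $\bc N_{\bc H}(\bc Z, \bc Y^{[n]})$ produces $\NH(z, \bd Y^{[n]}(z))$. The geometric sum $\sum_{k\ge 0}(\bd\partial\bc H/\bd\partial\bc Y)^k$ occurring inside $\bc N_{\bc H}$ corresponds, at the series level, to $(\Id - \bd\partial\bd H/\bd\partial\bd Y(z,\bd Y^{[n]}(z)))^{-1}$ because, by well-foundedness (Theorem~\ref{th:WF} and Proposition~\ref{prop:IPFS}), the Jacobian evaluated at $(0,\bc S(\bd 0))$ is nilpotent, so both formal expressions define the same matrix of power series. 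For the ordinary case the identification goes through the Pólya operator: Property~\ref{prop:plethysm} and Definition~\ref{coro:ogf_comp} give $\bs{\Phi}_{\bc F\circ\bc G} = \bs{\Phi}_{\bc F}\circ\bs{\Phi}_{\bc G}$, together with $\widetilde{\bc F + \bc G}=\tilde F + \tilde G$ and $\widetilde{\bc F\cdot\bc G}=\tilde F\tilde G$, so applying the ordinary generating series functor to $\bc N_{\bc H}$ yields $\TNH$ in the form given by the theorem.

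Next I would translate contact of order~$k$ at the species level into equality modulo~$z^{k+1}$ at the series level. Two species with the same structures on underlying sets of cardinality $\le k$ have identical coefficients at $z^0,z^1,\dots,z^k$ in both their exponential and ordinary generating series. Theorem~\ref{th:newton}, and more precisely Lemma~\ref{lem:quad-cvg}, yields $\bc Y^{[n]} =_{2^n-1} \bc S$ for $n$ large enough, from which $\bd Y^{[n]}(z)\equiv \bd S(z) \pmod{z^{2^n}}$ follows immediately (and similarly for the ordinary series $\bd{\tilde S}(z)$). This is exactly quadratic convergence in the valuation metric, since the number of matching coefficients doubles.

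The main obstacle is the initial phase when $\bc H(\bd 0, \bd 0)\ne \bd 0$: then $\bd Y^{[0]}(z)=\bd 0$ does not match $\bd S(z)$ even at $z=0$, so Lemma~\ref{lem:quad-cvg} does not apply immediately. This is precisely the situation already handled at the species level by Lemma~\ref{lem:ini-contact}: there exists $p\le m$ such that $\bc Y^{[p]}(\bd 0)=\bc S(\bd 0)$, after which quadratic doubling of the contact takes over. Transferring to the series level, at most $m$ preparatory steps are needed to align constant terms with those of~$\bd S$, and from then on the precision doubles at each iteration. A final check is that $\Id - \bd\partial\bd H/\bd\partial\bd Y(z,\bd Y^{[n]}(z))$ is actually invertible over formal power series at every stage; this follows from the nilpotence of its constant term once matching at~$z=0$ is achieved, mirroring the species-level existence of the geometric sum.
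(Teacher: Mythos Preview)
Your proposal is correct and follows essentially the same route as the paper: transfer the combinatorial Newton iteration of Theorem~\ref{th:newton} to generating series via the correspondence between species contact and power-series valuation. The paper's own proof is a one-line invocation of Lemma~\ref{lem:trans1} applied to Theorem~\ref{th:newton}; you have unpacked the details (identification of the operators at the series level, the geometric sum versus the matrix inverse, the initial phase handled by Lemma~\ref{lem:ini-contact}), but the underlying argument is the same.
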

\begin{proof}
        This is an application of Lemma~\ref{lem:trans1} to the combinatorial Newton iteration of Theorem~\ref{th:newton}.
\end{proof}    
\begin{example}\label{ex:catalan_series} The ordinary and exponential generating series of Catalan trees satisfy $T(z)=z/(1-T(z))$.
	 	The corresponding Newton iteration is
	\[T^{[n+1]}=T^{[n]}+\frac{zV^{[n]}-T^{[n]}}{1-z(V^{[n]})^2},\qquad V^{[n]}=\frac{1}{1-T^{[n]}},\]
with~$T^{[0]}=0$. The first steps are:
\begin{align*}
T^{[1]}&=\mathbf{z+z^2}+z^3+z^4+z^5+z^6+z^7+z^8+z^9+z^{10}+\dotsb,\\
T^{[2]}&=\mathbf{z+z^2+2z^3+5z^4+14z^5+42z^6}+131z^7+417z^8+1341z^9+4334z^{10}+\dotsb,\\
T^{[3]}&=\mathbf{z+z^2+2z^3+5z^4+14z^5+42z^6+132z^7+429z^8+1430z^9+4862z^{10}}+\dotsb.
\end{align*}
Boldfaced terms are those matching with the solution.	
\end{example}  

\begin{example}\label{ex:unorderedrooted_series} The ordinary generating series~$\tilde{G}(z)$ of Cayley trees 	satisfies
\[\tilde{G}(z)=z\exp\!\left(\tilde{G}(z)+\frac12\tilde{G}(z^2)+\dotsb\right).\]
The species of Cayley trees is defined by~${\mathcal G}={\mathcal Z}\cdot\Set({\mathcal G})$. The combinatorial Newton operator corresponding to ${\mathcal H}({\mathcal G})= {\mathcal Z}\cdot\Set({\mathcal G})$ is therefore 
\[{\mathcal N}_{\mathcal H}({\mathcal Y})={\mathcal Y}+\Seq({\mathcal B})\cdot({\mathcal B}-{\mathcal Y})\quad\text{with}\quad{\mathcal B}={\mathcal Z}\cdot\Set({\mathcal Y}).\]
The associated Pólya operator is given by
\[\Phi_{{\mathcal N}_{\mathcal H}}:Y(z)\mapsto Y(z)+\frac{B(z)-Y(z)}{1-B(z)}\quad\text{with}\quad B(z)=z\exp\!\left(Y(z)+\frac12Y(z^2)+\dotsb\right).\]
The first few steps give:
\begin{align*}
\tilde{G}^{[1]}&=\mathbf{z+z^2}+z^3+z^4+z^5+z^6+z^7+z^8+z^9+z^{10}+\dotsb,\\ 
\tilde{G}^{[2]}&=\mathbf{z+z^2+2z^3+4z^4+9z^5+20z^6}+47z^7+110z^8+261z^9+620z^{10}+\dotsb,\\
\tilde{G}^{[3]}&=\mathbf{z+z^2+2z^3+4z^4+9z^5+20z^6+48z^7+115z^8+286z^9+719z^{10}}+\dotsb.
\end{align*}
\end{example}

Computing all those series up to the desired order is actually unnecessary. It is possible to truncate the series at each step of the iteration.
We use the notation $f(z)  \bmod z^N$ to represent the series $f(z)$ truncated at its $N$th coefficient.
\begin{proposition}\label{th:newt_sg_trunc}
    Let $\bc Y=\bc H({\mathcal Z},\bc Y)$ be a well-founded system.
	Newton's iterations defined~by
	\begin{equation}\label{eq:newt_sg_trunc}
	\bd Y^{[n+1]}(z)=\NH\left(z,\bd{Y}^{[n]}(z)\right)\bmod z^N,\qquad \bd Y^{[n+1]}(z)=\TNH\left(z,\bd{Y}^{[n]}(z)\right)\bmod z^N
	\end{equation}  
with $\bd Y^{[0]}(z)=\bc{S}(\bd{0})$ and $N=2^{n+1}$, converge quadratically, respectively to~$\bd{S}(z)$ and~${\bd{\tilde S}}(z)$.
\end{proposition}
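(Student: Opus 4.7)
The plan is to transfer the truncated combinatorial Newton iteration of Theorem~\ref{coro:newton_trunc} to the level of generating series, paralleling the way Theorem~\ref{th:newt_sg} was derived from Theorem~\ref{th:newton} through Lemma~\ref{lem:trans1}.

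The first step is the basic compatibility between truncation of species and truncation of series. For any species $\bc F$, the exponential (resp. ordinary) generating series of $\bc F_{\le k}$ coincides with the series of $\bc F$ reduced modulo $z^{k+1}$, because $\bc F_{\ge k+1}$ contributes only coefficients of index $\ge k+1$. Combined with the fact that each constructor (sum, product, composition, derivative, cardinality restriction) commutes with passage to the generating series through Property~\ref{prop:ogfegf} and Property~\ref{prop:plethysm} together with the tables of Section~\ref{sec:genseries}, this yields the equality, modulo $z^N$, between the generating series of $(\bc N_{\bc H}(\bc Z,\bc U))_{\le N-1}$ and $\bd N_{\bc H}(z,\bd U(z))\bmod z^N$ (resp. $\bd\Phi_{{\mathcal N}_{\bc H}}(z,\bd U(z))\bmod z^N$) whenever $\bd U(z)$ is the generating series of $\bc U$.

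The second step is an induction. Letting $\bd W^{[n]}(z)$ be the generating series of the combinatorial iterate $\bc Y^{[n]}$ from Theorem~\ref{coro:newton_trunc}, the preceding compatibility shows by induction on $n$ that $\bd W^{[n]}(z)$ obeys exactly the recurrence defining $\bd Y^{[n]}(z)$ in the proposition for $N=2^{n+1}$; the base case holds because both are the constant generating series of $\bc S(\bs{0})$. Once the two sequences are identified, quadratic convergence is immediate: Theorem~\ref{coro:newton_trunc} gives $\bc Y^{[n]}=_{2^n-1}\bc S$, hence $\bd Y^{[n]}(z)=\bd W^{[n]}(z)\equiv\bd S(z)\pmod{z^{2^n}}$ (and likewise for $\bd{\tilde S}(z)$), so the number of correct coefficients doubles at each step.

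The delicate point will be the ordinary-series case: the Pólya operator $\bd\Phi_{\bc H}$ substitutes $\tilde Y(z^k)$ for every $k\ge 1$, so one must check that applying it to a series known modulo $z^N$ is meaningful and actually produces the series of $(\bc N_{\bc H}(\bc Z,\bc U))_{\le N-1}$ modulo $z^N$. This is true because $\tilde Y(z^k)\bmod z^N$ depends only on the coefficients of $\tilde Y(z)$ of index at most $\lfloor (N-1)/k\rfloor$, and collapses to a constant once $k\ge N$, so the infinite Pólya sum reduces to a finite computation at each order below $z^N$. A related subtlety is that the formal matrix inverse $(\Id-\bd\partial\bd H/\bd\partial\bd Y)^{-1}$ in both Newton operators must be a well-defined matrix of formal power series; this holds because evaluating at $z=0$ with $\bd Y(0)=\bd S(\bs{0})$ gives a nilpotent Jacobian (the well-foundedness assumption, via the companion system as in the proof of Theorem~\ref{th:newton}), so the geometric series $\sum_{k\ge 0} M^k$ converges in the valuation topology and furnishes the required inverse.
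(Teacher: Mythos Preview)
Your proof is correct and takes essentially the same approach as the paper: the paper's proof is the single sentence ``This is an application of Lemma~\ref{lem:trans1} to the combinatorial Newton iteration of Proposition~\ref{coro:newton_trunc},'' and your argument is a careful unpacking of precisely that transfer. Your explicit handling of the compatibility between species truncation $\bc F_{\le k}$ and series truncation $\bmod z^{k+1}$, the induction identifying the two sequences, and the remarks on the Pólya operator and the invertibility of $\Id-\bd\partial\bd H/\bd\partial\bd Y$ at the initial point are all legitimate details that the paper leaves implicit (indeed Lemma~\ref{lem:trans1} as stated has initial value $\bs 0$ and a fixed operator, so some adaptation is genuinely needed).
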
    
\begin{proof}
        This is an application of Lemma~\ref{lem:trans1} to the combinatorial Newton iteration of Proposition~\ref{coro:newton_trunc}.
\end{proof}
\begin{example}\label{ex:truncated_catalan_series} The truncated iteration for Catalan trees becomes
	\[T^{[n+1]}=T^{[n]}+\frac{zV^{[n]}-T^{[n]}}{1-z(V^{[n]})^2}\bmod z^{2^{n+1}},\qquad V^{[n]}=\frac{1}{1-T^{[n]}}\bmod z^{2^{n+1}},\]
with~$t^{[0]}=0$. The first few terms are:
\begin{align*}
T^{[1]}&=z,\\
T^{[2]}&=z+z^2+2z^3,\\
T^{[3]}&=z+z^2+2z^3+5z^4+14z^5+42z^6+132z^7,\\
T^{[4]}&=z+z^2+2z^3+5z^4+14z^5+42z^6+132z^7+429z^8+1430z^9+4862z^{10}+16796z^{11}\\
&\quad+58786z^{12}+208012z^{13}+742900z^{14}+2674440z^{15}.
\end{align*}	
Even if it seems slower than the iteration of Example~\ref{ex:catalan_series}, only correct coefficients are computed and the convergence is still quadratic. The main improvement is that the computations can be done using no more precision than what is needed, which leads to a faster algorithm.
\end{example}

\paragraph{Optimized Newton iteration}
The optimization of Newton's iteration described in section \ref{subsec:optimized_Newton} for the computation of species can also be adapted to  series: it   computes in parallel the inverse of the Jacobian matrix and the iterates of the solution, so that it only needs  one iteration for the matrix inversion at each iteration of the solution.

Algorithm \ref{algo:newtonSeries} describes the computation of exponential (or ordinary) generating series at precision $N$ by the optimized Newton iteration. Given a vector of species $\bc H=({\mathcal H}_{1:m})$, such that the system ${\bc Y=\bc H({\mathcal Z},\bc Y)}$ is well founded, together  with  an integer $N$, the algorithm computes $\bs S(z) \bmod z^N$, where $\bs S(z)$ is the series solution of the system such that $\bs S(0) =\bs H^m(0,\bs{0})$. For this purpose, it also computes $\left(\Id-{\bs\partial\bs H}/{\bs\partial\bs Y}(z,\bs S(z))\right)^{-1}\bmod z^{\lceil N/2\rceil}$,  the inverse  of the Jacobian matrix, at precision $\lceil N/2\rceil$. 
This is done recursively, starting from $N$ the desired number of terms. By Theorem~\ref{th:newt_sg}, one Newton step from $\lceil N/2\rceil$ terms computes the result and, by the reasoning in the proof of Proposition~\ref{prop:newt_opt}, $\lceil N/2\rceil$ terms of the inverse of the Jacobian matrix are sufficient. The end of the recursion, when $N=1$, is obtained by the initial $\bc{S}(\bs{0})=\bc{H}^m(\bs{0},\bs{0})$ and $\,\bc{U}^{-1}(\bs{0})=\Id$.

\SetKwBlock{Function}{Function}{end}
\SetKwFunction{recSeries}{recSeries}
\SetKwFunction{sH}{\large sH}
\SetKwFunction{sJ}{\large sJ}
\SetKwComment{Comment}{// }{}

\begin{Algorithm}[tb]{0.85\textwidth}   
\SetAlgoRefName{newtonSeries}
\caption{Computation of generating series with a given precision\label{algo:newtonSeries}}
\DontPrintSemicolon
\Input{A type: EGS (exponential g.s.) or OGS (ordinary g.s.)}
\Input{A vector of species $\bc H=({\mathcal H}_{1:m})$, such that $\bc Y=\bc H({\mathcal Z},\bc Y)$ is well founded} 
\Input{An integer $N$}
\Output{The first $N$ terms of the exponential (or ordinary) generating series $\bs S(z)$ of the solution of  $\bc Y=\bc H({\mathcal Z},\bc Y)$}
\smallskip

\SetAlgoNoLine
\Begin{
\SetAlgoVlined
        Compute the Jacobian matrix $\bc J({\mathcal Z}, \bc Y):={\bd\partial\bc H}/{\bd\partial\bc Y}({\mathcal Z}, \bc Y)$\;
        
        \uIf{the type is {\em EGS}}{
                Set up a procedure \sH: $(\bs Y(z), N)\mapsto\bs H(z,\bs Y(z))\bmod z^N$\;
                Set up a procedure \sJ: $(\bs Y(z), N)\mapsto\bs J(z,\bs Y(z))\bmod z^N$\;
                \Comment*[r]{ $\bs H$ and $\bs J$ are the e.g.s. of $\bc H$ and $\bc J$}
        }
        \Else(\textit{(the type is {\em OGS})}){       
                Set up a procedure \sH: $(\bs Y(z), N)\mapsto\bs{\Phi}_{\bc H}(z,\bs Y(z))\bmod z^N$\;     
                Set up a procedure \sJ: $(\bs Y(z), N)\mapsto\bs{\Phi}_{\bc J}(z,\bs Y(z))\bmod z^N$\;
                \Comment*[r]{ $\bs{\Phi}_{\bc H}$ and $\bs{\Phi}_{\bc J}$ are the Pólya operators of $\bc H$ and $\bc J$}
        }
        
        $\bs U$, $\bs Y$:= \recSeries($N$)\;
        \Return $\bs Y$
        }
 
\medskip

\Function(\recSeries){
\Input{An integer $N$}
\Output{$\bs U$: $\left(\Id-\bs J(z,\bs S(z))\right)^{-1}\bmod z^{\lceil N/2\rceil}$}
\Output{$\bs Y$: $\bs S(z) \bmod z^N$}
\SetAlgoNoLine
\Begin{
\SetAlgoVlined
       \lIf{$N = 1$}{\Return  $\bs Y:= \bs H^m(0,\bs{0})$ and  $\bs U:=\Id$}\\
        \Else{
                $\bs U$, $\bs Y:=$ \recSeries($\lceil N/2\rceil$)\;
                $\bs U:= \bs U + \bs U\cdot(\sJ(\bs Y,{\lceil N/2\rceil})\cdot \bs U +\Id-\bs U) \bmod z^{\lceil N/2\rceil}$\;
                $\bs Y:= \bs Y+ \bs U\cdot(\sH(\bs Y,N) -\bs Y) \bmod z^N$ \;
                \Return $\bs U$, $\bs Y$ 
                }
        }
}
  
\end{Algorithm}

The difference between the two types of generating series lies on the way to compute the series for $\bc H$ and $\bd\partial\bc H/\bd\partial\bc Y$. The computation of their exponential generating series is straightforward using rules such as those given in Table~\ref{tab:sum_esp_sg}, while for ordinary generating series, the Pólya operators are needed. Since the precision is given in input, these operators can be truncated in order to make the computation tractable. In both cases, the inner recursive procedure is the same.

\begin{example}\label{ex:newton_cayley}Here are the iterates computed by Algorithm~\ref{algo:newtonSeries} to get the exponential and ordinary generating series of Cayley trees (${\mathcal Y}={\mathcal Z}\cdot\Set({\mathcal Y})$) with precision~$N=10$. 

\medskip

\noindent\begin{tabular}{|p{1.1cm}@{}p{0.475\textwidth}|p{1.1cm}@{}p{0.33\textwidth}|}
\hline
&Exponential generating series&&Ordinary generating series\\
\hline
        $U^{[1]}=$&$1\phantom{\Bigl(}$ & $U^{[1]}=$&$1$ \\
        $Y^{[1]}=$&$z$ & $Y^{[1]}=$&$z$\\
        \hline
        $U^{[2]}=$&$1+z\phantom{\Bigl(}$
        &$U^{[2]}=$&$1+z$\\
        $Y^{[2]}=$&$z+\frac{2}{2}{z}^{2}$
        &$Y^{[2]}=$&$z+{z}^{2}$\\[4pt]
        \hline
        $U^{[3]}=$&$1+z+\frac{4}{2}{z}^{2}\phantom{\Bigl(}$
        &$U^{[3]}=$&$1+z+2{z}^{2}$\\
        $Y^{[3]}=$&$ z+\frac{2}{2}{z}^{2} +\frac{9}{6}{z}^{3} +     \frac{64}{24}{z}^{4} +   \frac{625}{120}{z}^{5}$
        &$Y^{[3]}=$&$1+z+2{z}^{2}$\\[4pt]
        \hline
        $U^{[4]}=$&$1+z+\frac{4}{2}{z}^{2}+\frac{27}{6}{z}^{3} +     \frac{256}{24}{z}^{4} +   \frac{3125}{120}{z}^{5}\phantom{xx}$
        &$U^{[4]}=$&$1+z+2{z}^{2}+5{z}^{3}+13{z}^{4}+35{z}^{5}\phantom{\Bigl(}$\\[8pt]  
        $Y^{[4]}=$&$z+\frac{2}{2}{z}^{2} +\frac{9}{6}{z}^{3} +     \frac{64}{24}{z}^{4} +   \frac{625}{120}{z}^{5}+  \frac{7776}{720}{z}^{6}+\frac{117649}{5040}{z}^{7} +   \frac{2097152}{40320}{z}^{8} +  \frac{43046721}{362880}{z}^{9} +\frac{1000000000}{3628800}{z}^{10}\phantom{\Bigl(}$
        &
$Y^{[4]}=$&$z+{z}^{2}+2{z}^{3}+4{z}^{4}+9{z}^{5}+20{z}^{6}+48{z}^{7}+115{z}^{8}+286{z}^{9}+719{z}^{10}\phantom{\Bigl(}$  \\          
        \hline
\end{tabular}

\end{example}

\subsection{Arithmetic Complexity of Enumeration}\label{sec:arith_compl}
We now turn to the complexity analysis of the computation of the generating series up to precision $N$, or equivalently of the computation of enumeration sequences for structures up to size $N$. This means that we are computing with 
truncated power series of the form
\[
f_N(z)  = \sum_{n=0}^N f_nz^n = f(z) \bmod z^{N+1}.
\]             
We first deal with the arithmetic complexity (number of arithmetic operations), while the next section deals with the number of bit operations.

We show that the arithmetic complexity of the computation of the generating series of the species solution of~$\bc Y= \bc H ({\mathcal Z}, \bc Y)$ with precision~$N$ can be expressed in terms of a
\emph{arithmetic complexity of the species}~$\bc H$. In  the framework of constructible species, this complexity turns out to be quasi-optimal in~$N$.

\begin{definition}\label{def:speciesComplexity}  A multisort species $\bc H({\mathcal Z},\bc Y)$ is of \emph{exponential arithmetic complexity $C_e$} (resp. \emph{ordinary arithmetic complexity $C_o$)} when, for any species ${\bc U}({\mathcal Z})$, it is possible to compute the exponential (resp. ordinary) generating series of $\bc H({\mathcal Z},\bc U({\mathcal Z}))$ with precision~$N$ from the exponential (resp. ordinary) generating series of $\,\bc U({\mathcal Z})$ in $C_e(N)$ (resp. $C_o(N)$) arithmetic operations.
\end{definition}

When the type of series has no influence on the complexity result, we omit the subscript, and denote the arithmetic complexity by $C$ instead of $C$ or $C_o$.

\begin{lemma}\label{lemma:complexitycomposition}
Let  $\bc F$ and  $\bc G$ be species with respective arithmetic complexity
$C_1$ and $C_2$, then the composition $\bc F \circ \bc G$ has  arithmetic complexity $C_1+C_2$.	
\end{lemma}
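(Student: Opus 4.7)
The plan is to apply Definition~\ref{def:speciesComplexity} twice, in sequence. Given a species~$\bc U({\mathcal Z})$ and its generating series known to precision~$N$, I would first treat the inner composition: by hypothesis on~$\bc G$, the generating series of $\bc G(\bc U)$ at precision~$N$ can be computed from the series of~$\bc U$ in $C_2(N)$ arithmetic operations. I would then feed the resulting truncated series, viewed as the generating series of some species $\,\bc V := \bc G(\bc U)$, into the procedure associated with~$\bc F$. By hypothesis on~$\bc F$, the series of $\bc F(\,\bc V) = \bc F(\bc G(\bc U)) = (\bc F\circ \bc G)(\bc U)$ can then be obtained at precision~$N$ in $C_1(N)$ further arithmetic operations. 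Summing the two costs yields the claimed bound $C_1(N)+C_2(N)$.

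The only point that requires a word of justification is that the ``generating series of $\bc G(\bc U)$ at precision~$N$'' is exactly the data that the procedure for~$\bc F$ expects. In the exponential case this is immediate, since Property~\ref{prop:plethysm} (specialized at $(z,0,0,\dotsc)$) shows that the exponential generating series of a composition of species is obtained by substitution of the exponential generating series, so the intermediate object $\bc V$ can consistently be represented by a truncated exponential generating series. In the ordinary case, the same conclusion follows from the identity $\Phi_{\bc F\circ\bc G}=\Phi_{\bc F}\circ\Phi_{\bc G}$ recalled in~\S\ref{subsub:composition}: the Pólya operator of~$\bc F$ applied to $\Phi_{\bc G}(\tilde U)$ yields $\Phi_{\bc F\circ\bc G}(\tilde U)$, i.e.\ the ordinary generating series of $(\bc F\circ \bc G)(\bc U)$.

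In both cases nothing more than two successive applications of the definition is required; there is no genuine obstacle, since the definition of arithmetic complexity of a species is expressed precisely as a black box that consumes a generating series and produces a generating series, and composition of black boxes adds their costs. The only mild subtlety, handled above, is to verify that the intermediate representation (ordinary versus exponential generating series, possibly multisort) produced by the procedure for~$\bc G$ is the correct input for the procedure for~$\bc F$, which is guaranteed by the compositional behaviour of exponential generating series and of Pólya operators recalled in Section~\ref{sec:genseries}.
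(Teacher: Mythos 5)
Your proof is correct and takes essentially the same route as the paper: chain the two black-box procedures, applying the definition of arithmetic complexity to $\bc G$ and then to $\bc F$, and sum the costs. The additional paragraph verifying that the intermediate truncated series is the right input (via composition of exponential generating series, resp.\ the identity $\Phi_{\bc F\circ\bc G}=\Phi_{\bc F}\circ\Phi_{\bc G}$) is a sound clarification that the paper leaves implicit.
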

\begin{proof}
For any species ${\bc U}$,	the computation of the generating series of $\bc G(\bc U)$ with precision $N$ has arithmetic complexity $C_2(N)$, and  then then  
computation of the generating series of $\bc F(\bc G(\bc U))$ with precision $N$ requires 
$C_1(N)$ more arithmetic operations.
\end{proof}

\noindent An important illustration comes from constructible species.
\begin{proposition}\label{prop:constructible} Iterative unisort constructible species have arithmetic complexity in $O(\M)$, both in the ordinary and exponential frameworks. So have their derivatives.	
\end{proposition}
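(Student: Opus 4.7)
The plan is to proceed by structural induction on the inductive definition of iterative unisort constructible species. Write $\M(N)$ for the cost of multiplying two truncated power series modulo $z^N$, and recall that all the standard unary operations we need on power series reduce to a bounded number of multiplications at precision $N$ via Newton iteration for series (inversion, $\log$, $\exp$), so each of them costs $O(\M(N))$; see~\cite{BrKu78,GaGe99}.

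\emph{Base cases.} For $1$, ${\mathcal Z}$, and ${\mathcal Y}_i$ the generating series (exponential or ordinary) of $\bc H({\mathcal Z},\bc U({\mathcal Z}))$ is either a constant or $U(z)$ itself, obtained in $O(N)$ operations, hence in $O(\M(N))$.

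\emph{Inductive step on the constructors.} Writing $U(z)$ (resp.~$\tilde U(z)$) for the exponential (resp.~ordinary) g.s.\ of $\,\bc U$, the rules of Table~\ref{tab:sum_esp_sg} give: for sums $U+V$ a cost of $O(N)$; for products $U\cdot V$ a cost of $O(\M(N))$; for $\Seq({\mathcal U})$ the series $(1-U)^{-1}$ (identical in both frameworks) computed in $O(\M(N))$; for the exponential side, $\Set({\mathcal U})=\exp(U)$ and $\Cyc({\mathcal U})=\log(1-U)^{-1}$ in $O(\M(N))$. The only genuinely different case is the ordinary side for $\Set$ and $\Cyc$: there one must form the Pólya sums $\sum_{k\ge 1} \tilde U(z^k)/k$ (resp.~with $\varphi(k)/k$) modulo $z^N$. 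Only the terms with $k\le N$ contribute, and the $k$-th term is a truncation of a series of length $\lfloor N/k\rfloor$ into $z\mapsto z^k$, hence costs $O(N/k)$ additions. Summing gives $O(N\log N)=O(\M(N))$; one final $\exp$ or $\log$ adds another $O(\M(N))$. Cardinality restrictions that are finite unions of intervals $[a,b]$ are handled by the corresponding truncated versions of these formulas (keeping only indices in the allowed range), which does not change the asymptotic count. For composition, Lemma~\ref{lemma:complexitycomposition} asserts that the cost is additive along the construction tree, and as this tree is of bounded size (fixed species), the overall complexity remains $O(\M(N))$.

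\emph{Derivatives.} Table~\ref{tab:deriv} shows that differentiation maps every basic constructor to another iterative constructible species: $\Seq'=\Seq\cdot\Seq$, $\Set'=\Set$, $\Cyc'=\Seq$, and sums/products differentiate by the Leibniz rule. Hence the derivative of any iterative unisort constructible species is again iterative unisort constructible, and the preceding bound applies verbatim.

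\emph{Main obstacle.} The only nontrivial point is the ordinary case, where the Pólya operators break the naive expectation that ``derivative and substitution of series coincide with derivative and substitution of g.s.'' (cf.~Eq.~\eqref{Fprime}). The care required is just the harmonic-sum estimate $\sum_{k\le N} N/k = O(N\log N)$ for the truncation of $\sum_k \tilde U(z^k)/k$, which by assumption on the multiplication model ($\M(N)\ge cN\log N$) is absorbed in $O(\M(N))$; everything else is a direct structural induction using the table of g.s.\ for basic constructors.
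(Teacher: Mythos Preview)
Your proof is correct and follows essentially the same approach as the paper: structural induction on the basic constructors, using fast Newton-based $O(\M(N))$ algorithms for reciprocal, $\log$ and $\exp$ on the exponential side, the harmonic-sum estimate $\sum_{k\le N} N/k = O(N\log N)$ for the P\'olya operators on the ordinary side, Lemma~\ref{lemma:complexitycomposition} for composition, and Table~\ref{tab:deriv} for derivatives. The only minor difference is in the treatment of cardinality constraints, where the paper notes more explicitly that the fixed-cardinality ordinary generating series are polynomials in $G(z),G(z^2),\dots,G(z^\ell)$ (cf.\ the last rows of Table~\ref{tab:sum_esp_sg}); your ``truncated versions'' remark is a bit informal but amounts to the same thing.
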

Here, we use the classical notation $\M$ to denote a \emph{multiplication function}: $\M(N)$ is an upper bound on the number of arithmetic operations needed in the product of two polynomials of degree at most $N$. In particular~$\M(N)=O(N\log N)$ with algorithms based on the fast Fourier transform. However it is more convenient to state the results in terms of~$\M(N)$; replacing this function by the arithmetic complexity of the underlying multiplication in an actual implementation gives a more accurate estimate.
We use the usual conventions in this context: $\M(N_1+N_2)\le \M(N_1)+\M(N_2)$, and $N\log N = O(\M(N))$, see~\cite{GaGe99} for more information.

\begin{proof}
We are interested in constructible species  obtained by composition of the basic operators presented in Table~\ref{tab:sum_esp_sg}.
We first state the arithmetic complexity for these basic operators. 

For exponential  generating series, the Newton iteration method allows for computing rapidly on power series: calculating the \emph{reciprocal} of a power series, or its  \emph{logarithm}, \emph{power} and \emph{exponential} up to~$z^N$ can be achieved  with arithmetic complexity  $O(\M(N))$. These results are classical~\cite{GaGe99}; we present explicit algorithms in Section~\ref{sec:extend}.
By the previous lemma, the arithmetic complexity of  the composition of two such operators  is also in $O(\M(N))$.

Ordinary generating series are expressed using P\'olya operators. Those in Table~\ref{tab:sum_esp_sg} can be expressed in terms of a power series of the form                                                     
\[H(z)=\sum_{k>0}{c_kG(z^k)}\]
(taking $\log(1/(1-G(z)))$ for $G(z)$ in the case of cycles). The computation of~$G(z^k)$ with precision~$N$ only uses $G(z)$ with precision $\lfloor N/k\rfloor$ and no arithmetic operation, so that the whole sum requires at most $\sum_k N/k\sim N \log N$ arithmetic operations, which is again in~$O(\M(N))$. The final exponential for sets has arithmetic complexity~$O(\M(N))$ too.

The constructions with a cardinality constraint depending on an integer~$\ell$ lead to polynomials in $G(z),G(z^2),\dots,G(z^\ell)$, whose expansion at precision~$N$ are thus also of arithmetic complexity~$O(\M(N))$, the implied constant in the $O()$ term depending on the actual polynomial.            

By Table~\ref{tab:deriv}, all derivatives of constructible species are constructible, whence the last part of the result.
\end{proof}

\noindent We now compute the arithmetic complexity of implicitly defined species. 
	\begin{theorem}\label{prop:complexityFixedPoint} 
Let ${\bc Y}= {\bc H} ({{\mathcal Z}}, {\bc Y})$ be well-founded and assume that~$\bc H$ and $\bc\partial\bc H/\bc\partial\bc Y$ have arithmetic complexity $C$. Then the species ${\bc S}$ solution of the system has arithmetic complexity $O(C + \M)$. 
	\end{theorem}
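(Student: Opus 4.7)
The plan is to analyze the cost of Algorithm~\ref{algo:newtonSeries} (the optimized Newton iteration for power series). By Proposition~\ref{prop:newt_opt} combined with Proposition~\ref{th:newt_sg_trunc}, this algorithm produces the first $N$ coefficients of $\bd S(z)$ (resp.\ $\bd{\tilde S}(z)$) after $\lceil\log_2 N\rceil$ recursive calls to \texttt{recSeries}, with \texttt{recSeries}$(N)$ returning simultaneously the truncation of the solution at precision $N$ and the truncation of the inverse Jacobian $\bd U(z) = (\Id - \bd\partial\bd H/\bd\partial\bd Y(z,\bd S(z)))^{-1}$ at precision $\lceil N/2\rceil$.

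First I would bound the cost of one call to \texttt{recSeries}$(N)$, excluding the recursive call. Inside the body, the steps to be paid for are: (i)~one call to \texttt{sJ} at precision $\lceil N/2\rceil$, producing $\bs J(z,\bs Y(z))\bmod z^{\lceil N/2\rceil}$ in $C(\lceil N/2\rceil)$ arithmetic operations by the hypothesis on $\bc\partial\bc H/\bc\partial\bc Y$; (ii)~one call to \texttt{sH} at precision $N$, producing $\bs H(z,\bs Y(z))\bmod z^N$ in $C(N)$ operations by the hypothesis on $\bc H$; (iii)~a constant number of $m\times m$ matrix products and matrix–vector products between truncated series at precision at most $N$, each costing $O(\M(N))$ arithmetic operations. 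Adding these contributions, one iteration at precision $N$ costs $O(C(N) + \M(N))$, where the implicit constant depends on $m$ but not on $N$.

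Next I would sum the costs along the recursion. Writing $T(N)$ for the total cost of \texttt{recSeries}$(N)$, one gets
\[
T(N) \le T(\lceil N/2\rceil) + O(C(N) + \M(N)),\qquad T(1) = O(1),
\]
and unrolling yields $T(N) = O\!\left(\sum_{k\ge0}(C(N/2^k) + \M(N/2^k))\right)$. The usual super-additivity conventions $\M(N_1+N_2)\le \M(N_1)+\M(N_2)$ and the analogous one for $C$ (which follows from Definition~\ref{def:speciesComplexity} applied to suitable truncations, since any algorithm witnessing complexity $C(N)$ at the larger precision also witnesses it at smaller precisions) turn this geometric sum into $O(C(N) + \M(N))$, giving the announced complexity.

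The only mildly delicate step is justifying item~(iii): one must check that in the optimized scheme of Proposition~\ref{prop:newt_opt}, the matrix $\bs U$ is needed only at precision $\lceil N/2\rceil$ when updating $\bs Y$ to precision $N$, so that all matrix arithmetic is done on series of the correct truncation and genuinely costs $O(\M(N))$ rather than $O(N \M(N))$. This is exactly what is built into Equations~\eqref{eq:newt_U*}--\eqref{eq:newt_Y*}, which have already been shown to converge quadratically; no further work is required beyond citing Proposition~\ref{prop:newt_opt}. The combination with Proposition~\ref{prop:constructible} will then yield the quasi-optimal bound $O(\M(N))$ for all constructible classes as announced in Corollary~\ref{prop:combSystem}.
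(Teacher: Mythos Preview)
Your recurrence analysis of Algorithm~\ref{algo:newtonSeries} is correct and essentially matches the paper's argument (the paper works with the non-optimized Newton iteration and bounds the matrix-inversion cost $R(N)$ separately, then remarks that the optimized version saves a constant; you go straight to the optimized version, which is fine).

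However, you have only proved half of the theorem. You show that the first $N$ coefficients of the generating series of~$\bc S$ can be obtained in $O(C(N)+\M(N))$ operations. But the statement asserts that~$\bc S$ has \emph{arithmetic complexity} $O(C+\M)$ in the sense of Definition~\ref{def:speciesComplexity}: for an arbitrary species~$\,\mathcal U$, given the truncated generating series of~$\,\mathcal U$, one must be able to compute that of $\bc S(\,\mathcal U)$ within the same bound. This is what makes the notion compose (Lemma~\ref{lemma:complexitycomposition}) and what is actually needed for the induction behind Corollary~\ref{prop:combSystem}, since constructible species may nest implicitly defined ones. Your argument never leaves the case $\,\mathcal U={\mathcal Z}$.

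The missing step is short but not automatic: observe that $\bc S(\,\mathcal U({\mathcal Z}))$ is the solution of the well-founded system $\bc Y=\hat{\bc H}({\mathcal Z},\bc Y)$ with $\hat{\bc H}({\mathcal Z},\bc Y):=\bc H(\,\mathcal U({\mathcal Z}),\bc Y)$, and that $\hat{\bc H}$ and $\bc\partial\hat{\bc H}/\bc\partial\bc Y=\bc\partial\bc H/\bc\partial\bc Y(\,\mathcal U({\mathcal Z}),\bc Y)$ again have arithmetic complexity bounded by~$C$ once the series of~$\,\mathcal U$ is given. Your first-part analysis then applies verbatim to this new system. Without this reduction, you have not established the claimed arithmetic complexity of~$\bc S$, only the cost of one particular evaluation.

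A minor point: your justification that $C$ is ``super-additive'' because an algorithm at precision~$N$ also works at smaller precision only gives monotonicity $C(N/2)\le C(N)$, not $C(N/2)+C(N/2)\le C(N)$; the paper is equally casual here, but be aware of the distinction.
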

The following consequence is now immediate from Proposition~\ref{prop:constructible}.
\begin{corollary}\label{prop:combSystem} 
	Constructible species  have \emph{arithmetic complexity} $O(\M)$,  both in the ordinary and exponential frameworks.	
\end{corollary}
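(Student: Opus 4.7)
The plan is to prove the corollary by structural induction on Definition~\ref{def:constructible}, with the inductive statement being that every constructible species has arithmetic complexity $O(\M)$ in both the ordinary and exponential frameworks.

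For the base layer of the induction---species built by clauses~1--3 only (basic species, basic species with a cardinality constraint, and compositions thereof)---the species is iterative unisort constructible, so Proposition~\ref{prop:constructible} directly yields the $O(\M)$ bound. Closure under composition is maintained inductively by Lemma~\ref{lemma:complexitycomposition}, since $O(\M) + O(\M) = O(\M)$.

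The inductive step concerns clause~4: let $\bc S$ be the solution of a well-founded system $\bc Y = \bc H(\bc Z, \bc Y)$ with each ${\mathcal H}_i$ constructible. By the inductive hypothesis applied coordinate by coordinate, $\bc H$ has arithmetic complexity $O(\M)$. To apply Theorem~\ref{prop:complexityFixedPoint}, we also need $\bc\partial\bc H/\bc\partial\bc Y$ to have arithmetic complexity $O(\M)$, and for this it suffices to show that each partial derivative $\partial {\mathcal H}_i/\partial {\mathcal Y}_j$ is itself a constructible species, to which the inductive hypothesis then applies. This stability of the constructible class under partial differentiation is a routine verification that mirrors the inductive build-up of ${\mathcal H}_i$: derivatives of the basic species are constructible combinations of basic species (Table~\ref{tab:deriv}); derivatives of cardinality-restricted basic species remain in the constructible class, as illustrated in Example~\ref{ex:spg}; linearity and the Leibniz rule handle sums and products; the chain rule for compositions is given by Equation~\eqref{eq:diffcomp} and preserves constructibility; and for an inner recursively defined sub-species of ${\mathcal H}_i$, implicit differentiation expresses its derivative as $(\Id - \bc J)^{-1}$ applied to a constructible quantity---the inverse being a sequence construction (blooming) of constructible derivatives, hence constructible. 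Theorem~\ref{prop:complexityFixedPoint} then gives $\bc S$ an arithmetic complexity of $O(C + \M)$ with $C = O(\M)$, i.e., $O(\M)$, closing the induction.

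The only mildly delicate point in this argument is the stability of the constructible class under differentiation; the derivative table for the basic operators (Table~\ref{tab:deriv}) is precisely what guarantees that no new building block is ever needed, so that the inductive hypothesis always applies to the Jacobian entries. Once this is granted, the $O(\M)$ bound propagates mechanically through all four inductive clauses, yielding the corollary in both the ordinary and exponential settings.
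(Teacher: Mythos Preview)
Your proof is correct and follows the same approach as the paper, which simply declares the corollary ``immediate from Proposition~\ref{prop:constructible}'' after stating Theorem~\ref{prop:complexityFixedPoint}; you have spelled out the structural induction and the stability of the constructible class under differentiation that the paper leaves implicit. The only remark is that your discussion of implicit differentiation for recursively defined sub-species, while correct, is more than strictly needed: since each $\mathcal H_i$ is structurally simpler than $\bc S$, the inductive hypothesis already applies to it and to its partial derivatives once you know that differentiation does not leave the constructible class (which is exactly the content of Table~\ref{tab:deriv} and the closure remark at the end of Proposition~\ref{prop:constructible}).
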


\paragraph{Proof of theorem~\ref{prop:complexityFixedPoint}}
We first deal with the computation of the truncated generating series of the species~$\bc S$ itself.
	Newton's iteration on combinatorial systems is a ``divide and conquer'' algorithm that computes generating series up to order~$N$. 	Denoting by $T(N)$ the arithmetic complexity of Newton's iterations from Eq.~\eqref{eq:newt_sg_trunc}
				for computing the  first $N=2^n$ terms of the solution series of $\bc Y=\bc H({\mathcal Z}, \bc Y)$, we have
\begin{equation} \label{eq:newt_cplx}
T(N) = T({N}/{2}) + 2C(N) +R(N) + K\M(N)+ O(N),
\end{equation}
where $R(N)$ is the  arithmetic complexity of computing the inverse of the matrix at precision $N$
and $K$ counts the number of products of power series involved in the product of this inverse by the last vector.
Finally, series vector addition and subtraction at precision $N$ require $O(N)$ arithmetic operations.
The term~$R(N)$ can itself be replaced by~$O(\M(N))$, the inverse of a matrix of power series being  itself computed by Newton's iteration (see~\S\ref{sec:newton_operator}). With the hypothesis that $N\log N=O(\M(N))$, the arithmetic complexity of expanding the generating series of~$\bc S$ to the order~$N$ is thus~$O(C(N)+\M(N))$.

\def\hatH{\hat{\bc H\;}\!\!}
For a species~$\,{\mathcal U}$ of arithmetic complexity~$\hat{C}$, the species~$\bc S(\,{\mathcal U}({\mathcal Z}))$ is solution to the system $\bc Y=\hatH({{\mathcal Z}},\bc Y)$, with $\hatH({{\mathcal Z}},\bc Y)=\bc H({\,{\mathcal U}}({{\mathcal Z}}),\bc Y))$. The hypothesis on~$\bc H$ implies that~$\hatH$ and $\bc\partial\hatH/\bc\partial\bc Y({{\mathcal Z}},\bc Y)=\bc\partial\bc H/\bc\partial\bc Y({\,{\mathcal U}}({\mathcal Z}),\bc Y)$ both have arithmetic complexity~$C+\hat{C}$. Thus by the previous argument, the solution to this system can be computed within the desired complexity, which concludes the proof of the theorem.
\qed

\paragraph{Optimized Newton iteration} All the steps of the reasoning apply to the optimized iteration used in Algorithm~\ref{algo:newtonSeries}:
\begin{enumerate}
	\item Truncation of species (the analogue of Corollary~\ref{coro:newton_trunc}), by truncating~$\,\bc U^{[n+1]}$ at size~$2^n$ and $\bc Y^{[n+1]}$ at size~$2^{n+1}$;
	\item Translation into exponential and ordinary generating series (analogues of Theorem~\ref{th:newt_sg} and its truncated variant Proposition~\ref{th:newt_sg_trunc});
	\item Arithmetic complexity estimate for implicit species: in the proof of Theorem~\ref{prop:complexityFixedPoint}, the cost~$R(N)$ of computing the inverse of the matrix in Equation~\eqref{eq:newt_cplx} is replaced by the smaller cost of one matrix product, which has an impact on the constant hidden in the $O()$ estimate of that theorem.
\end{enumerate}

\subsection{Bit Complexity of Enumeration}\label{sec:bit_compl}
In the preceding section, we only computed the number of arithmetic operations. While this leads to complexity estimates that measure correctly the time needed by the computations when the coefficients are e.g., floating point numbers, this measure is less pertinent when dealing with exact integer or rational coefficients, whose size grows with the precision of the series.
We now estimate the bit complexity of Newton's iteration on power series taking into account the cost of the operations on the integer or rational coefficients. This complexity is
expressed using a function  $\M_{\mathbb{Z}}(N)$, which represents an upper bound on the number of binary operations for multiplying two integers of size at most $N$ bits. Using the fast Fourier transform one has $\M_{\mathbb{Z}}(N)=O(N\log N\log\log N)$ but expressing the complexity in terms of $\M_{\mathbb{Z}}(N)$ allows for a better understanding of the actual time needed by an implementation based on a given arithmetic library. As we did before with the function~$\M$, we use the usual assumption~$\M_{\mathbb{Z}}(N_1+N_2)\le \M_{\mathbb{Z}}(N_1)+\M_{\mathbb{Z}}(N_2)$ and refer to~\cite{GaGe99} for more information.

\subsubsection{Ordinary Generating Series} 
\begin{lemma}\label{lemma:coeff-ord}
The product of \emph{analytic} ordinary generating series at precision~$N$  has bit complexity  $O(\M_{\mathbb{Z}}(N)\times \M(N))$.
\end{lemma}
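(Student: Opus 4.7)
The plan is to combine two ingredients: a bound on the bit-size of the integer coefficients, derived from analyticity, and a fast polynomial multiplication accounted at the bit level.

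First, I would exploit analyticity to bound coefficient sizes. If $f(z) = \sum_{n\ge 0} f_n z^n$ and $g(z) = \sum_{n\ge 0} g_n z^n$ are ordinary generating series analytic in some neighborhood of the origin, pick $\rho > 1$ smaller than the reciprocal of their common radius of convergence: there is then a constant $C$ with $|f_n|, |g_n| \le C \rho^n$ for all $n$. Consequently each of the $2N$ input coefficients is an integer of bit-size $O(N)$. By Cauchy's product formula, $|h_n| \le (n+1)C^2\rho^n$ for the coefficients of $h := fg \bmod z^N$, so the $N$ output coefficients also have bit-size $O(N)$.

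Second, I would invoke a fast polynomial multiplication algorithm that computes $h$ using $O(\M(N))$ arithmetic operations in the coefficient ring, performed on integers of bit-size $O(N)$. Each such operation then costs $O(\M_{\mathbb{Z}}(N))$ bit operations, giving the announced total $O(\M(N)\cdot \M_{\mathbb{Z}}(N))$.

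The main technical point --- and the only real obstacle --- is that a generic FFT algorithm run directly over $\mathbb{Z}$ or $\mathbb{C}$ would have intermediate values whose bit-size exceeds $O(N)$. I would handle this either by a multimodular approach, running an FFT modulo each of sufficiently many small primes and recombining via Chinese remaindering so that every modular operation is on $O(\log N)$-bit integers, or by Kronecker substitution, which reduces the polynomial product to a single integer multiplication of total size $O(N\cdot N)$ bits, of cost $\M_{\mathbb{Z}}(N^2) \le N\cdot \M_{\mathbb{Z}}(N)$ using the subadditivity of $\M_{\mathbb{Z}}$, itself dominated by $\M(N)\cdot \M_{\mathbb{Z}}(N)$ since $\M(N) \ge N$. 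Either implementation yields the stated complexity.
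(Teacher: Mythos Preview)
Your proof is correct and follows essentially the same approach as the paper: bound the bit-size of the integer coefficients by $O(N)$ via analyticity (Cauchy--Hadamard), then charge each of the $O(\M(N))$ arithmetic operations at $O(\M_{\mathbb{Z}}(N))$ bit operations. One minor slip: you want $\rho$ \emph{larger} than the reciprocal of the radius of convergence (so that $|f_n|\le C\rho^n$ holds), not smaller; and your additional discussion of intermediate-value growth in FFT is a useful refinement that the paper simply leaves implicit.
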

\begin{proof}
This is a consequence of the exponential growth formula for coefficients of a power series
$F(z)$ with radius of convergence $ \rho>0$: $ \limsup([z^{N}] F(z))^{1/N}=1/\rho$.
Since the coefficients are integers, their numbers of bits grow at most like~$N\log_2 1/\rho$, so that all the coefficients up to the $N$th one have size bounded by $KN$ for some~$K>0$. The computation of the product then requires $O(\M(N))$ operations, each of bit complexity bounded by~$O(\M_{\mathbb{Z}}(KN))=O(\M_{\mathbb{Z}}(N))$, whence the result.
\end{proof}
Analytic series with integer coefficients have radius of convergence at most~1. When this radius is smaller than~1, the bit size of the truncated power series is quadratic in~$N$, while the above complexity estimate is~$O(N^2\log^2N\log\log N)$ if fast Fourier transform is used. Thus the bit complexity is linear in the size of the input and output, up to logarithmic factors.

\begin{proposition}\label{prop:bit-ordinary}
Let ${\bc Y}= {\bc H} ({{\mathcal Z}}, {\bc Y})$ be  well-founded and assume that~$\bc H$ and $\bc\partial\bc H/\bc\partial\bc Y$ have ordinary arithmetic complexity $C_o$ and that the ordinary generating series ${\bd{\tilde S}}$ of the solution of the system is analytic at~0. Then the computation of~${\bd{\tilde S}}$ at precision~$N$ has bit complexity $O (\M_{\mathbb{Z}}(N) \times (C(N) + \M(N)))$. 
\end{proposition}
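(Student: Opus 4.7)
The plan is to upgrade the arithmetic complexity bound from Theorem~\ref{prop:complexityFixedPoint} to a bit complexity bound by showing that every integer arising in the course of the optimized Newton iteration fits into $O(N)$ bits. Once this is established, each of the $O(C(N)+\M(N))$ arithmetic operations becomes a bit-level operation on integers of size $O(N)$, costing at most $O(\M_{\mathbb{Z}}(N))$ bit operations, and the product bound follows.

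The key observation is the same as in Lemma~\ref{lemma:coeff-ord}: since $\bd{\tilde S}$ is analytic at $0$ with non-negative integer coefficients, its radius of convergence $\rho>0$ gives $[z^n]\tilde{S}_i(z)=O((1/\rho+\varepsilon)^n)$, so the first $N$ coefficients of each component have bit size $O(N)$. By Theorem~\ref{th:newton} (and its truncated variant Proposition~\ref{th:newt_sg_trunc}), every iterate $\bd Y^{[n]}$ computed by the iteration is the generating series of a subspecies of $\bc S$; its coefficients are therefore dominated coefficient-wise by those of $\bd{\tilde S}$ and a fortiori fit into $O(N)$ bits.

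The more delicate point — and the main obstacle — is to control the matrix iterates $\bs U^{[n]}$, which approximate the ``blooming'' matrix
\[
\bs V(z):=\left(\Id-\bs{\Phi}_{\bc\partial\bc H/\bc\partial\bc Y}(z,\bd{\tilde S}(z))\right)^{-1}.
\]
Combinatorially, each entry of $\bs V$ is the ordinary generating series of a sequence species built out of $\bc\partial\bc H/\bc\partial\bc Y(\bc Z,\bc S)$, so it has non-negative integer coefficients; what we need is that these coefficients are in turn bounded by those of an analytic series. For this I would argue analyticity of $\bs V$ in a neighborhood of $0$: the entries of $\bs{\Phi}_{\bc\partial\bc H/\bc\partial\bc Y}(z,\bd{\tilde S}(z))$ are Pólya substitutions of analytic ordinary generating series into an analytic function, hence analytic at $0$; and at $z=0$ the matrix inside the inverse specializes to $\Id-\bs{\Phi}_{\bc\partial\bc H/\bc\partial\bc Y}(0,\bd{\tilde S}(\bs{0}))$, which is invertible because the (shifted) Jacobian of the companion system at the origin is nilpotent by Theorem~\ref{th:WF} and Proposition~\ref{prop:polynomial_implicit}. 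Analyticity then follows from the implicit function theorem applied to the linear system $\bs V\,(\Id-\bs{\Phi}_{\bc\partial\bc H/\bc\partial\bc Y}(z,\bd{\tilde S}(z)))=\Id$ near $z=0$, and exponential growth of its coefficients gives $O(N)$ bit size for the first $N$ of them. The iterates $\bs U^{[n]}$, being truncations of this analytic inverse obtained by a quadratically convergent combinatorial process with non-negative integer coefficients, inherit the same $O(N)$ bit bound.

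With every integer involved bounded in size by $O(N)$ bits, I would finally revisit the cost analysis behind Theorem~\ref{prop:complexityFixedPoint} — namely Equation~\eqref{eq:newt_cplx} together with the optimized variant of Section~\ref{subsec:optimized_Newton} used in Algorithm~\ref{algo:newtonSeries} — and replace each of the $O(C(N)+\M(N))$ coefficient-level arithmetic operations by its bit cost $O(\M_{\mathbb{Z}}(N))$. Summing yields the announced bit complexity $O(\M_{\mathbb{Z}}(N)\,(C(N)+\M(N)))$, which is quasi-optimal in the size of the output whenever $\bd{\tilde S}$ has radius of convergence strictly less than $1$.
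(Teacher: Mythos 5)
Your proposal follows the same route as the paper: bound every intermediate integer coefficient by $O(N)$ bits via the analyticity argument of Lemma~\ref{lemma:coeff-ord}, then multiply the arithmetic cost recurrence~\eqref{eq:newt_cplx} by $\M_{\mathbb{Z}}(N)$ and unroll. The paper's own proof is terser — it simply asserts that ``all operations at precision $N$ add an extra factor in $O(\M_{\mathbb{Z}}(N))$'' and leaves the bit-size control of the matrix iterates $\bs U^{[n]}$ implicit — whereas you make that the explicit crux, arguing analyticity of the inverse-Jacobian series so that Lemma~\ref{lemma:coeff-ord} applies to all working quantities, not only to $\bd{\tilde S}$; this is a helpful and correct elaboration of what the paper takes for granted, not a different method.
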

\begin{proof}
In Newton's iteration,  all operations at precision $N$ add an extra factor in  $O (\M_{\mathbb{Z}}(N))$, so that	
the recurrence relation (\ref{eq:newt_cplx}) for bit complexity now turns into 
\[B(N) = B({N}/{2}) + \M_{\mathbb{Z}}(N)\left(2C_o(N) +R(N) + K\M(N)+ O(N)\right),\]
whose solution is bounded by~$O(\M_{\mathbb{Z}}(N)(C_o(N)+\M(N)))$.
\end{proof}
\begin{corollary}\label{coro:sgo-constructible}
The computation of the ordinary generating series of constructible species with precision~$N$ 
has bit complexity $O(\M(N)\times \M_{\mathbb{Z}}(N))$.
\end{corollary}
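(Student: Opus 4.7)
The plan is to derive the corollary by composing two results already established (or announced) in the paper: Proposition~\ref{prop:bit-ordinary}, which gives the bit complexity in terms of the ordinary arithmetic complexity $C_o$ of the right-hand side, and Proposition~\ref{prop:constructible}, which bounds $C_o$ for constructible species.

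First I would observe that any constructible species ${\mathcal S}$ is, by Definition~\ref{def:constructible}, either itself iterative (built from basic species, cardinality-restricted basic species and compositions thereof) or the solution of a well-founded system $\bc Y=\bc H({\mathcal Z},\bc Y)$ each of whose coordinates ${\mathcal H}_i$ is itself constructible. In the iterative case, Proposition~\ref{prop:constructible} directly gives an arithmetic complexity bound of $O(\M(N))$. For a single coefficient bit complexity estimate, invoke Lemma~\ref{lemma:coeff-ord} (which relies on analyticity), so that each arithmetic operation costs $O(\M_{\mathbb{Z}}(N))$ bit operations at precision $N$. Summed over the $O(\M(N))$ arithmetic operations, this gives the claimed bound $O(\M(N)\cdot\M_{\mathbb{Z}}(N))$ in the iterative case.

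In the implicit case, I would apply Proposition~\ref{prop:bit-ordinary} to the system $\bc Y=\bc H({\mathcal Z},\bc Y)$. Two hypotheses must be checked: (i) both $\bc H$ and its Jacobian $\bc\partial\bc H/\bc\partial\bc Y$ have ordinary arithmetic complexity $C_o(N)=O(\M(N))$, and (ii) the ordinary generating series ${\bd{\tilde S}}$ of the solution is analytic at $0$. Point (i) follows from Proposition~\ref{prop:constructible} applied componentwise (using that the derivative of a constructible species is constructible, as recorded at the end of that proposition, and using Lemma~\ref{lemma:complexitycomposition} to handle composition with the incoming species $\,\bc U$). Point (ii) is precisely Theorem~\ref{cor:ogf}, which the introduction announces for Section~\ref{sec:num}. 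Substituting $C_o(N)=O(\M(N))$ into the conclusion of Proposition~\ref{prop:bit-ordinary} yields
\[
O\bigl(\M_{\mathbb{Z}}(N)\cdot(C_o(N)+\M(N))\bigr)=O\bigl(\M(N)\cdot\M_{\mathbb{Z}}(N)\bigr),
\]
which is the desired bound.

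The only mildly delicate step is the appeal to analyticity for point (ii); everything else is a straightforward bookkeeping of the two cited propositions. Since the paper treats analyticity of constructible ordinary generating series separately (Theorem~\ref{cor:ogf}, contributed in the numerical part), the corollary here is genuinely a composition of previously established results and the argument reduces to invoking them and plugging in $C_o(N)=O(\M(N))$.
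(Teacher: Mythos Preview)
Your proposal is correct and follows essentially the same approach as the paper: invoke Proposition~\ref{prop:bit-ordinary}, supply the bound $C_o(N)=O(\M(N))$ from Proposition~\ref{prop:constructible}, and supply analyticity from Theorem~\ref{cor:ogf}. Your explicit split into iterative and implicit cases is harmless but unnecessary, since an iterative constructible species is a degenerate instance of the implicit case (with $\bc H$ not depending on~$\bc Y$), so the paper simply dispatches both at once.
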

\begin{proof} The bound~$C_o(N)=O(\M(N))$ is given by Proposition~\ref{prop:constructible}. The analyticity of the solution is proved in Theorem~\ref{cor:ogf} below.
\end{proof}
As above, when the radius of convergence is not equal to~1, this complexity is quasi-optimal: it is linear in the size of the output, up to logarithmic factors.
		
\subsubsection{Exponential Generating Series}
Exponential generating series have rational coefficients and a bit of care is needed in order to multiply them efficiently. Still, we obtain a quasi-optimal method.
\begin{lemma}\label{lemma:coeff-exp}
	The product of \emph{analytic} exponential generating series at precision~$N$  has bit complexity  $O(\M(N)\times \M_{\mathbb{Z}}(N \log N))$.
	\end{lemma}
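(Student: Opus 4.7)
The plan is to reduce the product of exponential generating series with rational coefficients to a single integer polynomial multiplication, so that the bit complexity factorises as (number of arithmetic operations) $\times$ (cost of one operation on an integer of the expected size). Let $F(z)=\sum f_n z^n/n!$ and $G(z)=\sum g_n z^n/n!$ be analytic at~$0$ with some common radius of convergence $\rho>0$. Cauchy's estimates then give $|f_n|,|g_n|\le M\, n!/\rho^n$ for some constant~$M$, so that each integer~$f_n$ or $g_n$ has bit size $O(n\log n)=O(N\log N)$ for $n\le N$. The same holds for the coefficients of the product $FG$, which is again analytic at~$0$.

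Next, I would clear denominators: introduce the integer polynomials
\[
P(z)=\sum_{n=0}^{N-1}\frac{N!}{n!}f_n\,z^n,\qquad Q(z)=\sum_{n=0}^{N-1}\frac{N!}{n!}g_n\,z^n,
\]
so that $F(z)=P(z)/N!$ and $G(z)=Q(z)/N!$ modulo $z^{N}$. Since $N!/n!\le N^{N-n}$ and $|f_n|\le Mn!/\rho^n$, the coefficients of $P$ and $Q$ are integers of bit size $O(N\log N)$. Then $R(z):=P(z)Q(z)\bmod z^{N}$ equals $(N!)^2 F(z)G(z)\bmod z^{N}$, and its $n$-th coefficient is $(N!)^2 h_n/n!$ where $h_n$ is the integer numerator of the EGS~$FG$; by the analyticity argument above, this is also an integer of size $O(N\log N)$. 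Recovering the rational coefficients of $F(z)G(z)\bmod z^{N}$ from~$R$ then only costs $O(N)$ divisions of integers of size $O(N\log N)$, which is absorbed by the bound we aim for.

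Finally, for the cost of forming~$R$, I would invoke FFT-based polynomial multiplication: two polynomials of degree less than~$N$ can be multiplied in $O(\M(N))$ arithmetic operations, and here these operations act on integers of bit size $O(N\log N)$ (intermediate butterfly sums growing by only a $\log N$ factor, and modular techniques ensuring the FFT can be carried out without further blow-up). Each such operation costs $O(\M_{\mathbb{Z}}(N\log N))$ bit operations, giving a total of $O(\M(N)\cdot\M_{\mathbb{Z}}(N\log N))$. The only mildly delicate point is the last one, namely making sure that the intermediate integers manipulated during the FFT really stay of size $O(N\log N)$; this is the standard obstacle of adapting fast polynomial multiplication to polynomials with large integer coefficients, and it is resolved by the usual device of performing the FFT modulo a prime (or product of primes) large enough to recover the final coefficients but of size still $O(N\log N)$.
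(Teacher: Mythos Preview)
Your argument is correct and follows essentially the same route as the paper: bound the integer numerators $f_n$ by $O(N\log N)$ bits via analyticity, clear denominators by scaling by $N!$ to obtain integer polynomials whose coefficients are still $O(N\log N)$ bits, multiply these, and unscale. The paper writes the scaled series as $F_N(z)=N!f_N(z)$ and recovers the product in the same representation via $(FG)_N=\tfrac{1}{N!}F_NG_N$, whereas you recover the unscaled product directly by dividing by $(N!)^2$; this is the same computation. Your discussion of why the integer polynomial product stays within $O(\M(N)\cdot\M_{\mathbb Z}(N\log N))$ (controlling intermediate sizes via modular or Kronecker techniques) is actually more explicit than the paper, which simply asserts that cost.
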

\begin{proof}
When the radius of convergence of an exponential generating series $f(z)=\sum_{n\ge0} f_n z^n /n!$ is finite and nonzero, the coefficient $f_n$ has bit size bounded by $O(n \log n)$, thus the bit size of the coefficients in the truncated  series      $f_N(z)=\sum_0^N f_n z^n /n!$ is in $O(N\log N)$. When the radius of convergence is infinite, $f_n/n!$ tends to 0 faster than $n\log \rho$ for any $\rho$, thus the coefficients in $f_N(z)$ are still in $O(N\log N)$.

In order to exploit the integrality of the coefficients~$f_n$ in $f_N(z)$, we change the representation, and
consider the truncated series
 \[F_N(z) = \sum_{n=0}^N N! f_n z^n /n! = N! f_N(z),\] 
in which the bit size of the coefficients is now bounded by $O(N \log N)$. 
The product of series is computed by 
 \[(FG)_N(z) = \frac1{N!} F_N(z)G_N(z).\] 
This transformation adds an extra term in $O(N \M_{\mathbb{Z}}(N\log N))$ in the bit complexity of the product, since each of the $N$ terms must be divided by $N!$; this extra term is absorbed by the cost of the product of the series $F_N(z)$ and $G_N(z)$, which is in $O(\M(N)\M_{\mathbb{Z}}(N\log N))$.
\end{proof}

\begin{proposition}\label{prop:bit-exponential}
Let ${\bc Y}= {\bc H} ({{\mathcal Z}}, {\bc Y})$ be a well-founded system and assume that~$\bc H$ and $\bc\partial\bc H/\bc\partial\bc Y$ have exponential arithmetic complexity $C_e(N)$ and that the exponential generating series ${\bs S}$ of the solution of the system is analytic at~0. Then the computation of the series~${\bs S}$ at precision~$N$ has bit complexity $O (\M_{\mathbb{Z}}(N\log N) \times (C_e(N) + \M(N)))$.
\end{proposition}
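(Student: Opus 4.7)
The plan is to mirror the argument used for Proposition~\ref{prop:bit-ordinary}, adapted to the way coefficients of exponential generating series grow and to the rescaling trick introduced in Lemma~\ref{lemma:coeff-exp}. First, I would observe that since $\bs S$ is analytic at~$0$ and all iterates of Algorithm~\ref{algo:newtonSeries} are truncations of $\bs S$ for~$\bs Y^{[n]}$ (and are similarly controlled for $\bs U^{[n]}$, which converges to the analytic series $(\Id - \bs\partial\bs H/\bs\partial\bs Y(z,\bs S(z)))^{-1}$), the bound $|s_n/n!| \le K\rho^{-n}$ holds for some $\rho > 0$ and some $K>0$. Consequently, every truncated series encountered during the iteration, written in the rescaled form $F_N(z) = \sum_{n\le N} N!\,(f_n/n!)\, z^n$, has integer coefficients of bit size $O(N\log N)$.

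Second, I would carry through Newton's iteration in the rescaled representation. Each step at precision $N$ consists of a bounded number of series products, an evaluation of $\bs H$ and of $\bs J=\bs\partial\bs H/\bs\partial\bs Y$ at precision~$N$ (each of arithmetic complexity $C_e(N)$ by assumption), and the Schulz subiteration~\eqref{eq:U} whose cost is absorbed into $O(\M(N))$ arithmetic operations. By Lemma~\ref{lemma:coeff-exp}, each such arithmetic operation on rescaled series translates into $O(\M(N)\cdot\M_{\mathbb{Z}}(N\log N))$ bit operations; additions and subtractions cost $O(N\cdot\M_{\mathbb{Z}}(N\log N))$ and are absorbed. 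Writing $B(N)$ for the total bit cost, one gets, as in the proof of the arithmetic estimate,
\[
B(N) = B(N/2) + O\bigl(\M_{\mathbb{Z}}(N\log N)\cdot(C_e(N) + \M(N))\bigr),
\]
which telescopes to the claimed bound because $C_e(N) + \M(N)$ is at least linear in $N$ and so the cost at precision $N$ dominates the geometric sum over smaller precisions.

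The hard part will be verifying that the rescaled representation is preserved by every operation occurring inside $\bs H$ and its Jacobian, and that the associated bit sizes remain $O(N\log N)$ throughout. Species constructors such as $\Set$, $\Cyc$ and $\Seq$ translate on exponential generating series into exponentials, logarithms and rational functions, which involve divisions; in the rescaled representation these divisions turn into divisions by $N!$ or by partial factorials. An equivalent and cleaner route is to perform the Newton iteration over $\mathbb{Q}$, but at each step normalize common denominators against the ambient $N!$, bounding the residual denominator growth; alternatively one can compute in the ring $\mathbb{Z}[z]/(z^{N+1})$ throughout using the scaled iterates $N!\,\bs Y(z)\bmod z^{N+1}$, inserting compensating factors $N!/(N/2)!$ of bit size $O(N\log N)$ at each precision doubling. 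Showing that this bookkeeping fits within the claimed budget, uniformly over all operations hidden inside $\bs H$, is the core technical point; once established, Proposition~\ref{prop:bit-exponential} follows by the recurrence above.
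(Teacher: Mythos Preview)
Your proposal is correct and follows exactly the paper's approach: the paper's proof is the one-liner ``Similar to that of Proposition~\ref{prop:bit-ordinary}'', and what you have written is precisely that argument spelled out, with the factor $\M_{\mathbb{Z}}(N)$ replaced by $\M_{\mathbb{Z}}(N\log N)$ via Lemma~\ref{lemma:coeff-exp} and the same recurrence $B(N)=B(N/2)+O(\M_{\mathbb{Z}}(N\log N)(C_e(N)+\M(N)))$. Your final paragraph of worries about rescaling inside $\bs H$ goes beyond what the paper justifies; the paper simply takes the arithmetic-complexity hypothesis $C_e(N)$ at face value and charges each arithmetic operation the $\M_{\mathbb{Z}}(N\log N)$ bit factor, without further bookkeeping.
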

\begin{proof}
Similar to that of Theorem~\ref{prop:bit-ordinary}.
\end{proof}

\begin{corollary}\label{coro:sge-constructible}
The computation of the exponential generating series of constructible species with precision~$N$ 
has bit complexity $O(\M(N)\times \M_{\mathbb{Z}}(N\log N))$.
\end{corollary}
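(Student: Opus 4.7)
The plan is to derive this corollary as a direct specialization of Proposition~\ref{prop:bit-exponential} to the constructible case. Recall that this proposition gives a bit complexity bound of $O(\M_{\mathbb{Z}}(N\log N) \cdot (C_e(N) + \M(N)))$ for any well-founded system whose right-hand side~$\bc H$ and associated Jacobian matrix~$\bc\partial\bc H/\bc\partial\bc Y$ have exponential arithmetic complexity~$C_e(N)$, provided that the exponential generating series of the solution is analytic at~$0$. So two ingredients need to be checked for constructible species.

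First, I would invoke Proposition~\ref{prop:constructible}, which asserts that every iterative unisort constructible species, as well as its derivative, has exponential arithmetic complexity $C_e(N) = O(\M(N))$. By Definition~\ref{def:constructible}, a recursive constructible species is the solution of a well-founded system $\bc Y = \bc H(\bc Z, \bc Y)$ where each coordinate of~$\bc H$ is itself built from the basic operators; both~$\bc H$ and its Jacobian (whose entries are derivatives of coordinates of $\bc H$, as per Table~\ref{tab:deriv}) are therefore iteratively constructible, and Proposition~\ref{prop:constructible} (possibly after extending it trivially to the multisort case by Lemma~\ref{lemma:complexitycomposition}) delivers $C_e(N) = O(\M(N))$ for both.

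Second, I would need the exponential generating series of the solution to be analytic at~$0$. This is exactly the kind of result that will be established in Part~\ref{part:computation}, Section~\ref{sec:num}, where constructible species are shown to be analytic (per the definition announced in~\S\ref{subsec:newton}); the analyticity of the exponential generating series follows from the analyticity of the species. Noting this as a forward reference is the only delicate point of the proof: the key structural fact needed, beyond the combinatorial complexity estimate, is that constructible species are analytic, which in the exponential case amounts to the existence of a positive radius of convergence for the EGS of each basic constructor and its preservation under composition and implicit resolution (proved in the second part of the article).

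Once both ingredients are in place, a direct substitution into the bound of Proposition~\ref{prop:bit-exponential} yields
\[
O\bigl(\M_{\mathbb{Z}}(N\log N)\cdot(C_e(N)+\M(N))\bigr)=O\bigl(\M_{\mathbb{Z}}(N\log N)\cdot(\M(N)+\M(N))\bigr)=O(\M(N)\cdot\M_{\mathbb{Z}}(N\log N)),
\]
which is the claimed bit complexity. The ``hard part'', as flagged, is not the complexity bookkeeping itself (which is a one-line substitution) but the unstated prerequisite that the EGS of a constructible species is analytic at~$0$ -- a property whose proof is deferred to Section~\ref{sec:num}.
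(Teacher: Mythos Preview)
Your proposal is correct and follows essentially the same route as the paper: invoke Proposition~\ref{prop:constructible} for the arithmetic complexity bound $C_e(N)=O(\M(N))$, appeal to the analyticity of constructible species (Theorem~\ref{cor:ogf} in Section~\ref{sec:num}), and substitute into Proposition~\ref{prop:bit-exponential}. The paper's proof is simply ``Similar to that of Corollary~\ref{coro:sgo-constructible}'', which unfolds to exactly the two ingredients you identified.
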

\begin{proof}
Similar to that of Corollary~\ref{coro:sgo-constructible}.
\end{proof}

For the optimized Newton iteration, the bit complexity estimate follows the same lines,  and, as in the case of arithmetic complexity, the constant in the $O()$ estimate is smaller.

\section{Analytic Species}\label{sec:num}   
Random generation by Boltzmann sampling~\cite{DuFlLoSc04,FlFuPi07} depends on so-called \emph{oracles} giving numerical values of generating series inside their disk of convergence.
In this section, we transfer the results of the previous one to provide numerical Newton iterations that converge to these values. Classically, sufficient conditions for the convergence of Newton's iteration include a starting point close enough to the root. In this combinatorial context however, our iteration manages to capture the combinatorial origin of the equations and converges unconditionally when started at the origin.

\subsection{Basic Properties}
We depart slightly from the general framework of species theory to concentrate on cases when the series converge in a neighborhood of~0. This is motivated by Theorem~\ref{cor:ogf} below showing that all generating series coming from implicit constructible species have a nonzero radius of convergence.

\begin{definition}\label{def:analyticSpecies}A species $\bc H(\bc Z)$ is called \emph{analytic} if its exponential generating series~$\bs H(\bs z)$ is analytic in the neighborhood of $\bs{0}$.
\end{definition}
\par \noindent A subspecies of an analytic species is itself analytic, by absolute convergence.
Also, if a species $\bc H(\bc Z,\bc Y)$ is analytic, its Jacobian matrix is  analytic too.
\begin{lemma}\label{lemma:expo_gf}Iterative constructible species are analytic.
\end{lemma}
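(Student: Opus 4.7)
The plan is to proceed by structural induction on the three-clause inductive definition of iterative constructible species (clauses 1--3 of Definition~\ref{def:constructible}, omitting the recursive clause 4).

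For the base case, I will go through each basic species listed and read off its EGS from Table~\ref{tab:sum_esp_sg}: the species $1$, ${\mathcal Z}$, the variables ${\mathcal Y}_i$, and the sort species $+$ and $\cdot$ all have polynomial EGS; $\Set$ has the entire EGS $\exp(y)$; and $\Seq$, $\Cyc$ have respective EGS $1/(1-y)$ and $\log(1/(1-y))$, both analytic on the open unit disc. In every case the EGS is analytic at $\bs{0}$, so these species are analytic by Definition~\ref{def:analyticSpecies}.

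Next I will handle the cardinality-constrained basic species. A restriction to structures whose size lies in a finite union of intervals gives a subspecies of a basic species, whose EGS is obtained from the unrestricted one by zeroing out some coefficients. This restricted EGS is coefficient-wise dominated by the unrestricted one, so its radius of convergence is at least as large and analyticity at $\bs{0}$ transfers (as noted right after Definition~\ref{def:analyticSpecies}).

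For the inductive composition step, assume that $\bc F(\bc Z, \bc W)$ and $\bc G_1, \dots, \bc G_k$ are iterative constructible species whose EGS $\bs F$ and $\bs G_i$ are analytic at $\bs{0}$. By Properties~\ref{prop:ogfegf} and~\ref{prop:plethysm}, the EGS of $\bc F(\bc Z, \bc G_1, \dots, \bc G_k)$ is the ordinary functional composition $\bs F(\bs z, \bs G_1(\bs z), \dots, \bs G_k(\bs z))$. In the standard case where $\bc G_i(\bs{0}) = \bs{0}$ for every $i$, each $\bs G_i$ vanishes at $\bs{0}$ and the classical theorem on composition of analytic functions yields analyticity of the composite at $\bs{0}$.

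The main obstacle I expect is the generalized composition allowed in Section~\ref{subsec:gen_IST}, in which a polynomial outer species may be composed with inner species having nonzero constant term. In that case one must check that the algebraic finiteness imposed by polynomiality in a sort translates into a bona fide finite sum in the EGS. I plan to dispose of this by observing that if $\bs F$ is polynomial in the $i$-th argument, its expansion $\bs F(\bs z, \bs w) = \sum_{|\alpha|\le d} c_\alpha(\bs z)\, \bs w^\alpha$ is a finite sum in the variables $\bs w$ (with $c_\alpha(\bs z)$ analytic at $\bs{0}$ by the induction hypothesis); substituting the analytic $\bs G_j(\bs z)$ produces a finite sum of products of functions analytic at $\bs{0}$, which is itself analytic. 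This closes the induction and establishes the lemma.
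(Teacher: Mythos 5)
Your proof is correct and follows essentially the same route as the paper's, which cites the analyticity of the exponential generating series in Column 5 of Table~\ref{tab:sum_esp_sg}, closure of analyticity under composition of series vanishing at the origin, and closure under substitution into polynomials (for the case of inner species with nonzero constant term). You simply spell out the same three ingredients in more detail.
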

\begin{proof}This follows from the analyticity of the generating series in Column~5 of Table~\ref{tab:sum_esp_sg}, the analyticity of the composition of analytic series at~0 and that of polynomials with analytic series.
\end{proof}
\begin{proposition}[Implicit analytic species]\label{prop:analytic} Let $\bc Y=\bc H(\bc Z,\bc Y)$ be a well-founded system, and~$\bc S$ its solution. If the species $\hatH(\bc Z,\bc U):=\bc H(\bc Z,\bc S(\bs{0})+\bc U)$ is analytic, then $\bc S$ is an \emph{analytic} species.
\end{proposition}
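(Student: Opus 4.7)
The plan is to reduce to the classical analytic implicit function theorem of calculus, using the shift trick already employed in the proof of Theorem~\ref{th:GIST}. Set $\bc V := \bc S - \bc S(\bs{0})$, so that $\bc V$ is the unique solution with $\bc V(\bs{0}) = \bs{0}$ of the shifted system
\[
\bc V = \hatH(\bc Z, \bc V) - \bc S(\bs{0}),
\]
which is well-founded at $\bs{0}$. By Theorem~\ref{th:carac-wf0}, the Jacobian matrix $\bd\partial\hatH/\bd\partial\bc U(\bs{0}, \bs{0})$ is nilpotent, so the numerical matrix $\Id - \bd\partial\hat{\bs H}/\bd\partial\bs u(\bs{0}, \bs{0})$ is invertible, its inverse being the corresponding finite geometric sum of powers.

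Passing to exponential generating series, the hypothesis on $\hatH$ says that $\hat{\bs H}(\bs z, \bs u)$ is an analytic function of $(\bs z, \bs u)$ in a neighborhood of the origin. Evaluating the combinatorial identity $\bc S = \bc H(\bc Z, \bc S)$ at $\bc Z = \bs 0$ yields $\bs S(\bs 0) = \hat{\bs H}(\bs 0, \bs 0)$, so the map
\[
\Psi(\bs z, \bs v) := \hat{\bs H}(\bs z, \bs v) - \bs S(\bs 0) - \bs v
\]
is analytic near the origin, vanishes there, and its partial Jacobian with respect to $\bs v$ at $(\bs 0, \bs 0)$ is $\bd\partial\hat{\bs H}/\bd\partial\bs u(\bs 0, \bs 0) - \Id$, which is invertible by the nilpotence observed above. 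The classical analytic implicit function theorem then provides an analytic map $\tilde{\bs V}(\bs z)$, defined in a neighborhood of~$\bs 0$, with $\tilde{\bs V}(\bs 0) = \bs 0$ and $\Psi(\bs z, \tilde{\bs V}(\bs z)) \equiv \bs 0$.

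It remains to identify $\tilde{\bs V}$ with the generating series of~$\bc V$. Both the Taylor expansion of $\tilde{\bs V}$ at~$\bs 0$ and the series~$\bs V$ of $\bc V$ are formal power series solutions of the shifted equation with value~$\bs 0$ at~$\bs 0$. By the uniqueness part of the Implicit Species Theorem (Theorem~\ref{th:IST}), which translates directly to formal series via the convergent iteration of Proposition~\ref{prop:is-cvg} applied in the formal topology, any such solution is unique; hence $\bs V = \tilde{\bs V}$ as formal series. Consequently $\bs V$ is analytic in a neighborhood of~$\bs 0$, and so is $\bs S = \bs S(\bs 0) + \bs V$, proving that $\bc S$ is an analytic species.

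The only real obstacle is properly setting up the equivalence between the two notions of solution at play, combinatorial versus analytic: once the shift reduces matters to a system well-founded at~$\bs 0$, the nilpotence of the Jacobian at the origin simultaneously supplies the invertibility needed for the analytic implicit function theorem and the uniqueness needed to match its solution with the formal series coming from species theory.
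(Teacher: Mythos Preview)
Your approach is essentially the paper's: shift by $\bc S(\bs 0)$, check the Jacobian hypothesis, and invoke the analytic implicit function theorem, then match the analytic solution with the formal one by uniqueness.

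One small point of care: you claim the shifted system is \emph{well-founded at}~$\bs 0$ and deduce nilpotence of $\bd\partial\hatH/\bd\partial\bc U(\bs 0,\bs 0)$ from Theorem~\ref{th:carac-wf0}. Well-foundedness at~$\bs 0$ (Definition~\ref{def:wf0}) also requires the limit $\bc V=\bc S-\bc S(\bs 0)$ to have no zero coordinate, which can fail (e.g.\ if some ${\mathcal S}_i$ has only size-$0$ structures). What the proof of Theorem~\ref{th:GIST} actually asserts is the weaker statement that the shifted system satisfies the hypotheses of Joyal's Theorem~\ref{th:IST}; the nilpotence of $\bs\partial\bc H/\bs\partial\bc Y(\bs 0,\bc S(\bs 0))$ itself is obtained in the paper via Proposition~\ref{prop:IPFS} applied to the companion system (whose solution is polynomial in~${\mathcal Z}_1$, so condition~2 of that proposition gives the nilpotence). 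Replacing your appeal to Theorem~\ref{th:carac-wf0} by this route fixes the justification without changing anything else in your argument.
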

Note that when the system is well founded at~0, the condition simplifies to~${\bc H}(\bc Z,\bc Y)$ being analytic.
\begin{proof}
First we observe that the species~$\hatH(\bc Z,\bc U)$ is well defined.
Let $\Sun({\mathcal Z}_1,\bc Z)$ be defined as in Theorem~\ref{th:GIST}.
This species is polynomial in~${\mathcal Z}_1$, so that Prop.~\ref{prop:IPFS} implies that for all $i=1,\dots,m$ ($m$ the number of coordinates of~$\bc H$), either the $i$th coordinate of~$\Sun({\mathcal Z}_1,\bs{0})=\bc S(0)$ is 0 or $\bc H(\bc Z,\bc Y)$ is polynomial in~${\mathcal Y}_i$. By Lemma~\ref{lem:HS0-polynomial1}, this in turns implies that $\bc H(\bc Z,\bc S(\bs{0})+\bc U)$ is polynomial in~${\mathcal Z}_1$, so that its composition with~${\mathcal Z}_1=1$ is defined.

Next, the proof is a simple consequence of the implicit function theorem for analytic functions (see \textit{e.g.}, \cite[Ch.~IV]{Cartan95}). The necessary conditions are fulfilled: by hypothesis $\hat{\bs H\,}\!(\bs z,\bs u)$ is analytic and so is its Jacobian, moreover the matrix~$(\Id-\bs\partial\bs H/\bs\partial\bs u)(0,0)$ is invertible, by the nilpotence of $\bs\partial\bc H/\bs\partial\bc Y(\bs{0},\Sun(1,\bs{0}))$, itself a consequence of Proposition~\ref{prop:IPFS}.
\end{proof}

\subsection{Dominant Species and their Generating Series} 

The algorithms introduced in later sections use bounds on series associated to species. In order to define relevant  bounds for implicit subspecies, we introduce the notion of dominant species that is consistent with well-founded systems.
The domination of species translates
 into \emph{majorant series}, that play an important role in the design of our numerical oracle in Section~\ref{section:numerical-evaluation}.
\begin{definition}
For any formal power series~$F$ and~$G$ with nonnegative coefficients, we say that $G$ is a \emph{majorant series} for $F$ and write $F\lhd G$ if for all $\bs n\ge\bs{0}$, their coefficients satisfy $[\bs z^\bs n]F(\bs z)\le[\bs z^\bs n]G(\bs z)$. The notation $\bs F\lhd\bs G$ for vectors or matrices means that the property holds entry by entry.
\end{definition}

\begin{definition}
  A multisort species $\bc F(\bc Z,\bc Y)$, is \emph{dominated} by the species $\bc G(\bc Z,\bc Y)$ if
\begin{enumerate}	\item $\bc G$ is flat (see~\S\ref{subsec:flat});
	\item the ordinary generating series obey ${\bd{\tilde F}}\lhd{\bd{\tilde G}}$;
	\item for any $(\bs n,\bs k)$ and any coordinate~$i$, $[\bs z^{\bs n}\bs y^{\bs k}]{\bd{\tilde F}}_i=0\Rightarrow[\bs z^{\bs n}\bs y^{\bs k}]{\bd{\tilde G}}_i=0.$
		\end{enumerate} 
This is denoted by $\bc F(\bc Z,\bc Y) \lhd \bc G(\bc Z,\bc Y)$.
\end{definition} 
The last condition ensures that dominant species retain some of the  characteristics of those they dominate.

\begin{example} \label{ex:domSet}
Sets are dominated by sequences: sequences are flat and both ordinary generating series are equal (see Table~\ref{tab:sum_esp_sg}).
\end{example}

\begin{example}\label{ex:domCyc} 
Cycles are dominated by nonempty sequences:      
Table~\ref{tab:sum_esp_sg} gives the ordinary generating series of cycles as $\sum{\phi(k)/k\log(1/(1-z^k))}$, but this rewrites as $z/(1-z)$ so that again the ordinary generating series are identical.
\end{example} 

While the definition of dominance is in terms of ordinary generating series, the exponential generating series also follow the same inequality:
\begin{lemma}\label{lemma:egf}
Let $\bc F$ and $\bc G$ be two multisort species such that $\bc G$ is a dominant species for $\bc F$, then $\bs G(\bs{z})$ is a majorant series for $\bs F(\bs{z})$.       
\end{lemma}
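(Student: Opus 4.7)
The plan is to exploit the flatness of $\bc G$ to convert the hypothesis on ordinary generating series into a bound on exponential generating series. The key observation is the classical comparison $f_{\bs n} \le \bs n!\,\tilde{f}_{\bs n}$ between the coefficients of the exponential and ordinary generating series of any species, with equality when the species is flat.

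Concretely, for any multisort species $\bc H$ and any multi-index $\bs n$, the labeled structures on a set of total size $\bs n$ form a union of orbits under the symmetric group action of order $\bs n!$; the number of orbits is $\tilde{h}_{\bs n}$, and each orbit has size dividing $\bs n!$, so $h_{\bs n} \le \bs n!\,\tilde{h}_{\bs n}$. If $\bc H$ is flat (every structure is asymmetric), then every orbit has exactly $\bs n!$ elements, hence $h_{\bs n} = \bs n!\,\tilde{h}_{\bs n}$. Applying this first to $\bc F$ and then to $\bc G$ (which is flat by hypothesis), and using the assumption $\tilde{\bs F} \lhd \tilde{\bs G}$, one obtains componentwise
\[
\frac{f_{\bs n}}{\bs n!} \le \tilde{f}_{\bs n} \le \tilde{g}_{\bs n} = \frac{g_{\bs n}}{\bs n!},
\]
which is exactly $\bs F(\bs z) \lhd \bs G(\bs z)$.

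The argument is essentially a one-line calculation once the orbit-counting identity for flat species is in place; I do not foresee a real obstacle. The only subtlety is the multisort bookkeeping: one must verify that the orbit-size equality for flat multisort species respects each sort separately (asymmetry holds with respect to permutations of each underlying set), but this is exactly the multisort notion of flatness recalled in Section~\ref{subsec:flat} and therefore causes no difficulty. Note also that the third condition in the definition of dominance (preservation of zero coefficients) is not needed for this statement; it is a structural requirement used elsewhere, not in the comparison of generating series.
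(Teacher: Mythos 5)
Your proof is correct and follows essentially the same route as the paper's: the chain $f_{\bs n} \le \bs n!\,\tilde f_{\bs n} \le \bs n!\,\tilde g_{\bs n} = g_{\bs n}$ using the inequality between labeled and $\bs n!$-scaled unlabeled counts, with equality for the flat species $\bc G$. The paper states this more tersely (and only spells out the unisort case, noting the multisort extension), whereas you supply the orbit-counting justification and the per-sort remark; these are welcome clarifications but not a different argument. Your observation that the third clause of the dominance definition is unused here is accurate.
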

\begin{proof} 
This is the unisort case, which extends to multisort species.

Let~$f_n$ be the number of labeled ${\mathcal F}$-structures on~$\{1,\dots,n\}$ and~$\tilde{f}_n$ the number of unlabeled such structures, and define similarly~$g_n$ and~$\tilde{g}_n$ for~${\mathcal G}$. By dominance, these are related by~$\tilde{f}_n\le\tilde{g}_n$. The number of labeled structures~$f_n$ is bounded by $n!\tilde{f}_n$, while flatness of~${\mathcal G}$ implies~$g_n=n!\tilde{g}_n$. Thus the proof is summarized by
\[f_n\le n!\tilde{f}_n\le n!\tilde{g}_n=g_n.\]
\end{proof}

Dominance passes through systems of equations.
\begin{proposition}[Implicit Dominant Species]\label{prop:dom}
   If the species~$\bc F(\bc Z,\bc Y)$ is dominated by $\bc G(\bc Z,\bc Y)$ and the system $\bc Y=\bc F(\bc Z,\bc Y)$ is well founded, then $\bc V=\bc G(\bc Z,\bc V)$ is well founded and the solution of~$\bc Y=\bc F(\bc Z,\bc Y)$ is dominated by the solution of~${\bc V}=\bc G(\bc Z,\bc V)$.
\end{proposition}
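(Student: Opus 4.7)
My plan is to run Joyal's iteration
$\bc V^{[n+1]} = \bc G(\bc Z, \bc V^{[n]})$, $\bc V^{[0]} = \bs{0}$, in parallel with the $\bc F$-iteration, and show that it is well defined, converges, and has a limit with no zero coordinate that dominates $\bc S$. The entire argument rests on the following observation, which I would establish first: the three conditions defining dominance $\bc F \lhd \bc G$ together force $\bd{\tilde F}$ and $\bd{\tilde G}$ to share the \emph{same} zero pattern as formal power series (condition~3 gives that zero coefficients of $\bd{\tilde F}$ force the same for $\bd{\tilde G}$, and the pointwise inequality $\bd{\tilde F}\lhd\bd{\tilde G}$ on nonnegative coefficients gives the converse); by Lemma~\ref{lemma:egf}, the same equivalence lifts to the exponential generating series.

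By induction on $n$, I would show that (a) the step $\bc V^{[n+1]} = \bc G(\bc Z, \bc V^{[n]})$ is well defined in the sense of Definition~\ref{def:wf} and (b) $\bd Y^{[n]}(\bs z) \lhd \bd V^{[n]}(\bs z)$ as exponential generating series. The zero-pattern equivalence transfers the well-definedness hypothesis: $\bc G$ is polynomial in ${\mathcal Y}_i$ iff $\bc F$ is, and $\bd V^{[n]}_i(\bs{0}) = 0$ iff $\bd Y^{[n]}_i(\bs{0}) = 0$ through the induction hypothesis. Inequality (b) then follows from the chain
\[
\bd Y^{[n+1]}(\bs z) = \bd F(\bs z, \bd Y^{[n]}(\bs z)) \lhd \bd G(\bs z, \bd Y^{[n]}(\bs z)) \lhd \bd G(\bs z, \bd V^{[n]}(\bs z)) = \bd V^{[n+1]}(\bs z),
\]
using Lemma~\ref{lemma:egf} for the first inequality and nonnegativity of the coefficients of $\bd G$ together with the induction hypothesis for the second.

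For convergence of the $\bc G$-iteration, I would treat first the well-founded-at-$\bs{0}$ case. Zero-pattern equivalence yields nilpotence of $\bs\partial \bc G/\bs\partial \bc Y(\bs{0}, \bs{0})$ from that of $\bs\partial \bc F/\bs\partial \bc Y(\bs{0}, \bs{0})$ (nilpotence of a matrix of size-0 species is a pure zero-pattern property), so Proposition~\ref{prop:is-cvg} applied to $\bc G$ produces a limit $\bc T$. The general case is reduced to this one through the companion systems of Theorem~\ref{th:WF}: the companion $\bc L$ of $\bc G$ remains flat, dominates the companion $\bc K$ of $\bc F$, and is well founded at $\bs{0}$ by the first case; partial polynomiality of its solution in ${\mathcal Z}_1$ is then obtained from Proposition~\ref{prop:IPFS}, using that $\bc L$ has degree~$1$ in ${\mathcal Z}_1$ by construction and that every nilpotence hypothesis in that proposition is a zero-pattern condition inherited from $\bc K$.

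Passing to the limit in (b) gives $\bd S(\bs z) \lhd \bd T(\bs z)$; since $\bd S$ has no zero coordinate by well-foundedness of the $\bc F$-system, neither does $\bd T$, so the $\bc G$-system is well founded. Flatness of $\bc T$ (Proposition~\ref{prop:flat} applied to $\bc G$) makes its exponential and ordinary generating series coincide, yielding condition~2 of dominance for $\bc S \lhd \bc T$, while condition~3 follows from the zero-pattern equivalence propagated through the iteration. The hard part will be the general (non-well-founded-at-$\bs{0}$) case: one must verify carefully that every Jacobian-nilpotence and polynomiality hypothesis of Proposition~\ref{prop:IPFS} applied to $\bc L$ can be reduced to a zero-pattern condition already known for $\bc K$, so that the ${\mathcal Z}_1$-polynomiality of the solution of the $\bc F$-companion genuinely transfers to the $\bc G$-companion.
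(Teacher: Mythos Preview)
There is a genuine gap. Condition~2 of dominance is an inequality on \emph{ordinary} generating series, but your induction~(b) only establishes $\bd Y^{[n]}\lhd\bd V^{[n]}$ on \emph{exponential} generating series. At the end you argue: $\bc T$ is flat, so $\bd T=\bd{\tilde T}$, hence $\bd S\lhd\bd T$ gives $\bd{\tilde S}\lhd\bd{\tilde T}$. That last implication fails. For any species one has $[z^n]\bd S=s_n/n!\le\tilde s_n=[z^n]\bd{\tilde S}$, so the two inequalities $\bd S\lhd\bd{\tilde S}$ and $\bd S\lhd\bd{\tilde T}$ point the wrong way to conclude $\bd{\tilde S}\lhd\bd{\tilde T}$. (Concretely: if $\bc S$ involves $\Set$ or $\Cyc$, its OGS coefficients can exceed its EGS coefficients, and nothing in your chain bounds them.)

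The reason your EGS chain works so smoothly is precisely that exponential generating series compose as functions: $\bd Y^{[n+1]}(z)=\bd F(z,\bd Y^{[n]}(z))$. Ordinary generating series do not: the OGS of $\bc F({\mathcal Z},\bc Y^{[n]})$ is the plethystic substitution $Z_{\bc F}\bigl(\tilde Y^{[n]}(z),\tilde Y^{[n]}(z^2),\dotsc\bigr)$, not $\tilde F(\tilde Y^{[n]}(z))$. The paper handles this with a separate composition lemma (Lemma~\ref{lem:dom}) whose proof is a cycle-index-series chain: it uses that OGS of species have nonnegative \emph{integer} coefficients, so $\tilde B(z^k)\lhd\tilde B(z)^k$, to pass from the Pólya operator to the naive composition $\tilde F(\tilde B(z))$; then flatness of $\bc G$ and $\bc B$ collapses the Pólya operator on the dominating side back to $\tilde G(\tilde B(z))$. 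That is the missing ingredient---once you have it, the induction propagates dominance (not just EGS majorance) of the iterates, and the rest of your outline (zero-pattern transfer for nilpotence, reduction to companion systems) goes through essentially as you wrote it and as the paper does.
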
  
\begin{example}\label{ex:dom2} By example~\ref{ex:domSet}, $\Set$ is dominated by $\Seq$, thus the species ${\mathcal G}$ of Cayley trees defined by ${\mathcal G} ={\mathcal Z} \cdot \Set({\mathcal G})$ is dominated by the species of Catalan trees defined by ${\mathcal T} ={\mathcal Z} \cdot \Seq({\mathcal T})$. 
This transfers to both their exponential and ordinary generating series: 	\[
	T(z)=z+2\frac{z^2}{2!}+12\frac{z^3}{3!}+120\frac{z^4}{4!}+\dotsb \mbox{ is a majorant series for } G(z)=z+2\frac{z^2}{2!}+9\frac{z^3}{3!}+64\frac{z^4}{4!}+\dotsb \mbox{ and }
	\] 
	\[
	\tilde{T}(z)=z+{z}^{2}+2{z}^{3}+5{z}^{4}+14{z}^{5}+\dotsb\mbox{ is a majorant series for } \tilde{G}(z)=z+{z}^{2}+2{z}^{3}+4{z}^{4}+9{z}^{5}+\dotsb.
	\]
\end{example}
The proof relies on the preservation of dominance by composition.
\begin{lemma}\label{lem:dom}
   Let $\bc F$, $\bc G$, $\bc A$ and $\bc B$ be multisort species such that $\bc G$ and $\bc B$ are flat. If $\bc F(\bc Z,\bc Y)$ is dominated by $\bc G(\bc Z,\bc Y)$, $\bc A$ is dominated by $\bc B$ and the composition $\bc F(\bc Z,\bc A)$ is defined, then the composition $\bc G(\bc Z,\bc B)$ is defined and $\bc F(\bc Z,\bc A)$ is dominated by $\bc G(\bc Z,\bc B)$. 
\end{lemma}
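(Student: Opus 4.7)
The plan is to verify the three defining conditions of dominance for the pair $(\bc F(\bc Z,\bc A),\bc G(\bc Z,\bc B))$, after first checking that the composition $\bc G(\bc Z,\bc B)$ is itself well defined. For the latter, $\bc F(\bc Z,\bc A)$ being defined means that for each coordinate $\mathcal F_j$ and each sort $i$ with $\tilde A_i(\bs{0})\neq 0$, the species $\mathcal F_j$ is polynomial in $\bc Y_i$. Conditions~2 and~3 of $\bc A\lhd\bc B$ together force $\tilde A$ and $\tilde B$ to have the same support, so $\tilde B_i(\bs{0})\neq 0\iff\tilde A_i(\bs{0})\neq 0$. If $\mathcal F_j$ has degree at most $N$ in $\bc Y_i$, every coefficient of $\tilde F_j$ with $k_i>N$ vanishes, and by condition~3 of $\bc F\lhd\bc G$ the same holds for $\tilde G_j$, so $\mathcal G_j$ is polynomial in $\bc Y_i$. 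Thus $\bc G(\bc Z,\bc B)$ is defined, and flatness follows from Lemma~\ref{lemma:composition_flat} applied to the flat species $\bc G$ and $\bc B$.

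For condition~2, plethystic substitution (Property~\ref{prop:plethysm}) gives $\widetilde{\bc F(\bc Z,\bc A)}(z)=Z_{\bc F}(z,z^2,\dotsc;\tilde A(z),\tilde A(z^2),\dotsc)$, while flatness of $\bc G$ reduces its cycle index to $Z_{\bc G}(z_1,z_2,\dotsc;y_1,y_2,\dotsc)=\tilde G(z_1,y_1)$ and flatness of $\bc B$ reduces $Z_{\bc B}(z_1,z_2,\dotsc)$ to $\tilde B(z_1)$, giving $\widetilde{\bc G(\bc Z,\bc B)}(z)=\tilde G(z,\tilde B(z))$. The key auxiliary fact is that for any power series $U(z)$ with nonnegative integer coefficients and any $i\ge 1$, one has $U(z^i)\lhd U(z)^i$: $[z^{ki}]U(z^i)=u_k$, while $U(z)^i$ already contains the diagonal contribution $u_k^i\ge u_k$ thanks to $u_k\in\N$. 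Applying this coordinate-wise to the arguments $\tilde A(z^i)$ and using nonnegativity of $Z_{\bc F}$'s coefficients yields $\widetilde{\bc F(\bc Z,\bc A)}\lhd\tilde F(z,\tilde A(z))$, after which the coefficient inequalities of $\bc F\lhd\bc G$ and $\bc A\lhd\bc B$ chain to $\tilde F(z,\tilde A(z))\lhd\tilde G(z,\tilde B(z))=\widetilde{\bc G(\bc Z,\bc B)}$.

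For condition~3, assume $[\bs z^{\bs n}]\widetilde{\bc G(\bc Z,\bc B)}>0$. Expanding $\tilde G(z,\tilde B(z))=\sum_{\bs k,p}\tilde g_{\bs k,p}\,\bs z^{\bs k}\tilde B(z)^p$, there exist $\bs k,p$ and sizes $\bs n_1,\dotsc,\bs n_p$ with $\sum_j\bs n_j=\bs n-\bs k$, $\tilde g_{\bs k,p}>0$ and $\prod_j\tilde b_{\bs n_j}>0$. By condition~3 of $\bc F\lhd\bc G$ and of $\bc A\lhd\bc B$, the corresponding $\tilde f_{\bs k,p}$ and $\tilde a_{\bs n_j}$ are also positive; grafting an unlabelled $\bc A$-structure of size $\bs n_j$ into the $j$th slot of any unlabelled $\bc F$-structure with profile $(\bs k,p)$ produces an unlabelled $\bc F(\bc Z,\bc A)$-structure of size $\bs n$, so $[\bs z^{\bs n}]\widetilde{\bc F(\bc Z,\bc A)}>0$, which is condition~3 for the composed species.

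The only step that goes beyond bookkeeping is the coefficient inequality $U(z^i)\lhd U(z)^i$; this crucially uses integrality of the coefficients of species generating series, and is the main technical subtlety. Everything else is a routine application of plethysm, flatness, and the combinatorial meaning of supports.
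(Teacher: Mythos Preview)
Your proof is correct and follows essentially the same approach as the paper: well-definedness via the shared support of $\tilde{\bs A}$ and $\tilde{\bs B}$, flatness via Lemma~\ref{lemma:composition_flat}, the majorant-series condition via the chain built on the key inequality $U(z^i)\lhd U(z)^i$ for nonnegative-integer power series, and the support condition (condition~3) via an explicit grafting argument. The only cosmetic differences are that the paper orders the chain slightly differently (applying $U(z^i)\lhd U(z)^i$ to $\tilde B$ rather than to $\tilde A$) and handles condition~3 by invoking Corollary~\ref{coro:zero} rather than writing out the construction; your explicit version is arguably clearer.
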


\begin{proof}
Recall that the composition $\bc F(\bc Z,\bc A)$ is defined when $\bc F$ is polynomial with respect to the coordinates for which $\bc A(\bs{0})$ is not~0. By the last part of the definition of dominant species, those are exactly the coordinates for which $\bc B(\bs{0})$ is not~0 and then $\bc G$ is polynomial with respect to them too.
	
Next, we observe that as a composition of flat species, $\bc G(\bc Z,\bc B)$ is flat too (by Lemma~\ref{lemma:composition_flat}). 
The condition on ordinary generating series is given by the following chain of equalities and inequalities in the unisort case:
\begin{align*}
	Z_{{\mathcal F}}(\tilde{ A}(z),\tilde{ A}(z^2),\dotsc)&\lhd Z_{{\mathcal F}}(\tilde{ B}(z),\tilde{ B}(z^2),\dotsc)\\
	&\lhd Z_{{\mathcal F}}(\tilde{ B}(z),\tilde{ B}(z)^2,\dotsc)=\tilde{ F}(\tilde{ B}(z))\\
	&\lhd\tilde{ G}(\tilde{ B}(z))=Z_{{\mathcal G}}(\tilde{ B}(z),\tilde{ B}(z^2),\dotsc).
\end{align*}
The first inequality comes from the domination of~${\mathcal A}$ by~${\mathcal B}$ and the positivity of the coefficients of~$Z_{\bc F}$; the second one comes from  ordinary generating series having nonnegative integer coefficients; the third one is a consequence of the domination of~${\mathcal F}$ by~${\mathcal G}$. The same reasoning applies to the multisort case.

The last property follows from Corollary~\ref{coro:zero}.
\end{proof}

\begin{proof}[Proof of Proposition~\ref{prop:dom}]
We first deal with the case when~$\bc F(\bs{0},\bs{0})=\bs{0}$, which is also the value of~$\bc G(\bs{0},\bs{0})$ by domination. 
Since $\bs\partial\bc F/\bs\partial\bc Y$ is dominated by~$\bs\partial\bc G/\bs\partial\bc Y$, so are their values at~$\bs{0}$, and the nilpotence of~$\bs\partial\bc G/\bs\partial\bc Y$ at~$\bs{0}$ follows from Lemma~\ref{lem:dom} with~$\bc F=\bc G=\bc Y^m$, where~$m$ is the dimension of the system~$\bc F$. 
Thus both sequences $(\bc Y{}^{[n]})_{n\ge 0}$ and $(\bc V{}^{[n]})_{n\ge 0}$ defined by $\bc Y{}^{[n+1]}=\bc F(\bc Z,\bc Y{}^{[n]})$ and $\bc V{}^{[n+1]}=\bc G(\bc Z,\bc V{}^{[n]})$ converge. By induction using Lemma~\ref{lem:dom} again, for all $n\ge 0$, $\bc Y{}^{[n]}$ is dominated by $\bc V{}^{[n]}$. The property on limits follows from considering any fixed size.

If $\bc F(\bs{0},\bs{0})\neq\bs{0}$ then we first consider the companion system for~$\bc F$, dominated by the companion system for~$\bc G$. These systems are well founded at~$\bs{0}$ and thus by the previous argument, the solution~$\bc S_0({{\mathcal Z}}_1)$ of the first one is dominated by the solution~$\bc T_0({{\mathcal Z}}_1)$ of the second one. In particular, $\bc S_0$ being polynomial implies that $\bc T_0$ is polynomial too, and the nilpotence of~$\bs\partial\bc G/\bs\partial\bc Y(0,\bc T_0({{\mathcal Z}}_1))$ follows again from Lemma~\ref{lem:dom}. The last condition showing that~$\bc V=\bc G(\bc Z,\bc V)$ is well founded is immediate as well. The domination of the solutions follows from the same argument as above.
\end{proof}

\subsection{Constructible Species are Analytic}
\begin{theorem}\label{cor:ogf}If $\bc Y=\bc H({\mathcal Z},\bc Y)$ is  well founded, where ${\bc H}$ is a constructible species, then it defines a species whose exponential \emph{and} ordinary generating series are analytic in the neighborhood of the origin.
\end{theorem}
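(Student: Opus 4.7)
The plan is to treat the two generating series separately, using the implicit analytic species theorem (Proposition~\ref{prop:analytic}) for the exponential one, and a dominance argument (Proposition~\ref{prop:dom}) for the ordinary one. Both parts proceed by induction on the depth of nesting of rule~4 in Definition~\ref{def:constructible}, with the base case handled by Lemma~\ref{lemma:expo_gf} for iterative constructible species.

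For the exponential series, the inductive step is as follows. Let $\bc S$ be the solution of $\bc Y=\bc H({\mathcal Z},\bc Y)$, where each component of $\bc H$ is a constructible species of strictly smaller nesting depth. By the induction hypothesis, the exponential generating series of these components are analytic at the origin, and the operations allowed by Definition~\ref{def:constructible} (sum, product, cardinality-restricted $\Seq$, $\Set$, $\Cyc$, and composition with a species vanishing at~$\bs 0$, or with~$1$ in the polynomial case) preserve analyticity of the exponential generating series at the origin (cf.~Table~\ref{tab:sum_esp_sg} and standard closure of analytic functions under composition). Hence $\bc H$, viewed as a multisort species, is analytic. Since the polynomial shift by $\bc S(\bs 0)=\bc H^m(\bs 0,\bs 0)$ preserves analyticity, the species $\hat{\bc H}({\mathcal Z},\bc U):=\bc H({\mathcal Z},\bc S(\bs 0)+\bc U)$ is analytic, and Proposition~\ref{prop:analytic} yields the analyticity of the exponential generating series of $\bc S$.

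For the ordinary series, I would build inductively a flat constructible species $\bc H^\flat$ dominating $\bc H$: replace each $\Set$ by $\Seq$ and each $\Cyc$ by $\Seq_{\ge 1}$ (Examples~\ref{ex:domSet} and~\ref{ex:domCyc}), propagate the substitution through sums, products, cardinality restrictions, and compositions (Lemma~\ref{lem:dom}), and treat nested implicit subsystems recursively using the induction hypothesis. Proposition~\ref{prop:dom} ensures that $\bc V=\bc H^\flat({\mathcal Z},\bc V)$ is well founded, with solution $\bc S^\flat$ dominating~$\bc S$; by Proposition~\ref{prop:flat}, $\bc S^\flat$ is flat, so its ordinary and exponential generating series coincide. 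The first part of the proof, applied to this new system (which is again a well-founded system with constructible right-hand side), yields analyticity of the exponential series of $\bc S^\flat$, hence of its ordinary series; by dominance, the ordinary series of $\bc S$ is majorized by an analytic series and is therefore analytic in a neighborhood of the origin.

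The main obstacle is the inductive construction of $\bc H^\flat$ in the presence of nested implicit species: one must check that the recursive substitutions yield a \emph{constructible} flat species that dominates $\bc H$ componentwise (including the zero-coordinate condition in the definition of dominance), and that the induced system inherits well-foundedness via Proposition~\ref{prop:dom} so that Propositions~\ref{prop:flat} and~\ref{prop:analytic} apply to it. Once this bookkeeping is in place, the remainder of the argument reduces to standard facts about analyticity of compositions of analytic multisort series at the origin and the preservation of analyticity under polynomial shifts.
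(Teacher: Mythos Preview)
Your plan for the ordinary series matches the paper's exactly: dominate by a flat constructible species (Lemma~\ref{lemma:flat-constructible}), observe that for the flat dominant the ordinary and exponential series coincide, and carry analyticity over by majoration. The paper's treatment of the exponential series, however, differs from yours: rather than applying Proposition~\ref{prop:analytic} directly to $\bc H$, it reuses the \emph{same} flat dominant $\bc G$ and invokes Lemma~\ref{lemma:egf} (dominance also majorizes exponential generating series). To make Proposition~\ref{prop:analytic} applicable to the flat system, the paper rewrites every $\Seq(\,\mathcal U)$ as a new unknown $\mathcal Y=1+\mathcal Y\,\mathcal U$, so that the resulting $\bc G$ is \emph{polynomial}; then $\hat{\bs G}$ is polynomial as well and its analyticity is trivial.

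Your direct route for the exponential case has a gap at the sentence ``the polynomial shift by $\bc S(\bs 0)$ preserves analyticity.'' A translation $y\mapsto c+u$ preserves analyticity at the origin only if $H(z,y)$ is analytic at $(0,c)$, not merely at $(0,0)$. This is \emph{not} automatic: for instance $\Seq(\mathcal Z\mathcal Y_1)$ is partially polynomial in~$\mathcal Y_1$ yet has e.g.s.\ $1/(1-zy_1)$, which is not entire in~$y_1$. What saves you in the constructible case is the well-foundedness of the iteration: for every inner $\Seq(\mathcal G)$ or $\Cyc(\mathcal G)$ occurring in the construction of $\bc H$, the substitution $\bc Y\mapsto\bc S$ must yield a well-defined species, forcing $\mathcal G(0,\bc S(\bs 0))=0$, so that $1/(1-G)$ and $\log(1/(1-G))$ remain analytic after the shift; $\Set$ causes no trouble since $\exp$ is entire. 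This argument works but needs to be spelled out by induction on the construction of $\bc H$; your phrasing suggests it is a general property of analytic functions, which it is not. The paper's detour through the polynomial rewriting of the flat dominant is precisely what makes this step automatic.
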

The first step of the proof is to find good dominant species.
\begin{lemma}\label{lemma:flat-constructible} 
        Any constructible species is dominated by a flat constructible species.
\end{lemma}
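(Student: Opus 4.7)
The plan is to proceed by structural induction following the inductive definition of constructible species (Definition~\ref{def:constructible}). At each step we exhibit a flat constructible species dominating the given one, using Lemma~\ref{lem:dom} to propagate domination through composition and Proposition~\ref{prop:dom} to propagate it through recursion.

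\textbf{Base cases.} The species $0$, $1$, ${\mathcal Z}$ and the variables ${\mathcal Y}_i$ are already flat, and each is dominated by itself. The same holds for $\Seq$, since $\overline{\Seq}=\Seq$. For $\Set$ and $\Cyc$, Examples~\ref{ex:domSet} and~\ref{ex:domCyc} show that $\Set\lhd\Seq$ and $\Cyc\lhd\Seq_{\ge1}$; the positivity condition in the definition of dominance is immediate since the ordinary generating series actually coincide. The addition and product operators are themselves flat. For basic species carrying a cardinality constraint that is a finite union of intervals, it suffices to treat $\Seq_{\ge k}$, $\Set_{\ge k}$ and $\Cyc_{\ge k}$ (other intervals are obtained by subtraction and finite union, preserving flatness and dominance). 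Now $\Seq_{\ge k}$ is a subspecies of the flat species $\Seq$, hence flat. Moreover $\Set_\ell$ and $\Cyc_\ell$ are both dominated by $\Seq_\ell$, since an unlabeled multiset (resp.\ cycle) of length $\ell$ is a quotient of an unlabeled sequence of length $\ell$; summing over $\ell\ge k$ yields $\Set_{\ge k}\lhd\Seq_{\ge k}$ and $\Cyc_{\ge k}\lhd\Seq_{\ge k}$, with the support (zero/nonzero) condition clearly preserved.

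\textbf{Compositions.} Assume inductively that $\bc F$ and $\bc G$ are constructible species with flat constructible dominants $\bc F'$ and $\bc G'$. Whenever $\bc F\circ\bc G$ is defined, the hypotheses of Lemma~\ref{lem:dom} are fulfilled (flatness of $\bc F'$ and $\bc G'$, domination of $\bc F$ by $\bc F'$ and of $\bc G$ by $\bc G'$), so that $\bc F\circ\bc G\lhd \bc F'\circ\bc G'$. Lemma~\ref{lemma:composition_flat} ensures that $\bc F'\circ\bc G'$ is flat, and it is constructible by construction. Sums and products are particular cases of composition.

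\textbf{Recursion.} Let $\bc Y=\bc H({\mathcal Z},\bc Y)$ be well founded with each coordinate of $\bc H$ constructible. By the induction hypothesis, each ${\mathcal H}_i$ has a flat constructible dominant ${\mathcal H}'_i$; set $\bc H'=({\mathcal H}'_{1:m})$. Then $\bc H\lhd\bc H'$. By Proposition~\ref{prop:dom}, the system $\bc Y=\bc H'({\mathcal Z},\bc Y)$ is well founded and its solution $\bc T$ dominates the solution $\bc S$ of $\bc Y=\bc H({\mathcal Z},\bc Y)$. Proposition~\ref{prop:flat} yields flatness of $\bc T$, and $\bc T$ is constructible by clause~4 of Definition~\ref{def:constructible}, which finishes the induction.

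\textbf{Main obstacle.} The only delicate verifications are the base cases with cardinality constraints on $\Set$ and $\Cyc$: one must check that the ordinary-generating-series inequality still holds termwise after restricting the cardinality and that the flat dominant is still constructible. Once these are in place, the inductive propagation through composition and recursion is immediate from Lemmas~\ref{lem:dom} and~\ref{lemma:composition_flat} together with Propositions~\ref{prop:dom} and~\ref{prop:flat}.
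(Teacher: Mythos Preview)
Your proof is correct and follows essentially the same route as the paper's: structural induction on Definition~\ref{def:constructible}, using Examples~\ref{ex:domSet} and~\ref{ex:domCyc} for the base cases, Lemma~\ref{lem:dom} and Lemma~\ref{lemma:composition_flat} for composition, and Proposition~\ref{prop:dom} (together with Proposition~\ref{prop:flat}) for the recursive case. Your treatment of cardinality constraints is actually more explicit than the paper's, which simply remarks that a cardinality-restricted basic species is a subspecies and that flatness is preserved by inclusion; your argument that $\Set_\ell\lhd\Seq_\ell$ and $\Cyc_\ell\lhd\Seq_\ell$ coefficient by coefficient makes this step clearer.
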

\begin{proof}
The proof is by induction on the definition of the constructible species.
First, all the basic species are either flat ($1, {\mathcal Z}, +, \cdot $, $\Seq$ and the ${{\mathcal Y}}_i$), or dominated by flat constructible ones: $\Set$ is dominated by~$\Seq$ (Example~\ref{ex:domSet}) and $\Cyc$ is dominated by~$\Seq_{>0}$ (Example~\ref{ex:domCyc}). Any basic species ${\mathcal F}$ with a cardinality constraint is a subspecies of ${\mathcal F}$ and flatness is preserved by inclusion. Composition preserves flatness by Lemma~\ref{lemma:composition_flat}. Finally, implicit species are obtained by Proposition~\ref{prop:dom}.
\end{proof}

\begin{lemma}Flat constructible species are analytic.
\end{lemma}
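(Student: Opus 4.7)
The plan is to proceed by structural induction on the inductive definition of constructible species given in Definition~\ref{def:constructible}, carrying both flatness and analyticity jointly (since flatness allows us to use many simplifications, notably that $F(z)=\tilde F(z)$ for flat species, and since flatness has to be preserved at every step in order to keep the induction hypothesis in play).

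For the base cases, the basic flat species $1, {\mathcal Z}, +, \cdot, \Seq, {\mathcal Y}_1, {\mathcal Y}_2,\ldots$ have exponential generating series that are polynomials or $(1-z)^{-1}$, which are obviously analytic at the origin. For basic flat species with a cardinality constraint (only $\Seq_{\ge k}$, $\Seq_{\le k}$, $\Seq_\ell$, etc., are flat), the EGS is a subseries of that of $\Seq$ and remains analytic, since a finite union of intervals produces either a rational or polynomial fragment of $z/(1-z)$.

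For composition, suppose $\bc F(\bc Z,\bc Y)$ and $\bc G$ are flat constructible species satisfying the induction hypothesis. By Lemma~\ref{lemma:composition_flat}, $\bc F\circ\bc G$ is flat. For analyticity of the EGS, if $\bc G(\bs{0})=\bs{0}$ then the EGS of $\bc F\circ\bc G$ is the ordinary composition $\bs F(\bs z,\bs G(\bs z))$, which is analytic as the composition of analytic functions at $\bs{0}$. If $\bc G(\bs{0})\neq\bs{0}$ then, for the composition to be defined, $\bc F$ must be polynomial in the corresponding coordinates, so the EGS of $\bc F\circ\bc G$ is a polynomial in $\bc G$'s coordinates with analytic coefficients in the other variables, hence analytic.

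For the implicit case, let $\bc Y=\bc H(\bc Z,\bc Y)$ be a well-founded system where every coordinate of $\bc H$ is a flat constructible species, and let $\bc S$ be its solution given by Theorem~\ref{th:GIST}. By the induction hypothesis, $\bc H$ itself is a flat analytic species. By Proposition~\ref{prop:flat}, the species $\bc S$ is flat. It remains to check analyticity. For this, set $\hatH(\bc Z,\bc U)=\bc H(\bc Z,\bc S(\bs{0})+\bc U)$; since $\bc S(\bs{0})=\bc H^m(\bs{0},\bs{0})$ is a finite collection of structures, translating $\bc H$ by this constant species preserves analyticity of the generating series, so $\hatH$ is analytic. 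Proposition~\ref{prop:analytic} then yields that $\bc S$ is analytic.

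The main obstacle is the implicit case, and specifically the translation argument needed when $\bc H(\bs{0},\bs{0})\neq\bs{0}$: one must verify that the shift by $\bc S(\bs{0})$ keeps the right-hand side analytic and that the resulting system satisfies the hypotheses of Proposition~\ref{prop:analytic}. Both follow from the polynomial character of $\bc S(\bs{0})$ (guaranteed by Theorem~\ref{th:WF}) and from the fact that the conditions for the composition of $\bc H$ with $\bc S(\bs{0})+\bc U$ to be well-defined are exactly the conditions for the iteration to be well-defined in Definition~\ref{def:wf}, already supplied by well-foundedness.
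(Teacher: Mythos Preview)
Your proof is correct and reaches the same conclusion via Proposition~\ref{prop:analytic}, but your route to verifying its hypothesis differs from the paper's. The paper does not do a full structural induction carrying analyticity; instead it observes that once $\Set$ and $\Cyc$ are removed, every occurrence of $\Seq(\mathcal U)$ can be replaced by a fresh unknown $\mathcal Y$ and the equation $\mathcal Y = 1 + \mathcal Y\,\mathcal U$, so that any well-founded flat constructible system becomes a well-founded system with \emph{polynomial} $\bc H$. Since polynomials are entire, the shifted species $\hatH(\bc Z,\bc U)=\bc H(\bc Z,\bc S(\bs 0)+\bc U)$ is trivially analytic and Proposition~\ref{prop:analytic} applies immediately.

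Your approach keeps $\bc H$ as it stands and argues that it is analytic by induction, then handles the shift by $\bc S(\bs 0)$ directly. This is valid, but the delicate point---which you do address in your final paragraph---is that analyticity of $\bs H(\bs z,\bs y)$ at the origin does not by itself give analyticity at $(\bs 0,\bs S(\bs 0))$; you need the well-foundedness condition that $\bc H$ is polynomial in every coordinate $\mathcal Y_i$ for which $\mathcal S_i(\bs 0)\neq 0$, so that the shift is a polynomial substitution in those variables and the identity in the others. The paper's rewriting trick sidesteps this issue entirely (polynomial $\bc H$ is analytic everywhere), which is what it buys: a shorter argument with no shift subtlety. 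Your approach buys directness---no auxiliary unknowns, no system rewriting---at the cost of having to invoke the partial-polynomiality structure explicitly. Both are sound; the paper's is a cleaner reduction, yours is a cleaner induction.

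Two minor remarks: the EGS of $\Seq$ is $1/(1-z)$, not $z/(1-z)$; and strictly speaking $\Set_{\le 1}$ and $\Cyc_1$ are also flat, though they coincide with $1+\mathcal Z$ and $\mathcal Z$ so nothing is lost.
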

These species correspond to context-free languages, so that this lemma is the classical fact that algebraic generating series are analytic.
\begin{proof}
Flat constructible species are obtained by removing~$\Set$ and~$\Cyc$ from the basic species used in Definition~\ref{def:constructible}. 
The iterative constructible species are thus clearly analytic. 
Adding a new variable~${\mathcal Y}$ to the system for each~$\Seq(\,{\mathcal U})$, and the corresponding equation~${\mathcal Y}=1+{\mathcal Y}{\,\mathcal U}$, shows that a well-founded system of flat constructible species can be rewritten as a well-founded system with \emph{polynomial}~$\bc H$. Thus the corresponding~$\hat{\bs{H}\,}\!$ in Prop.~\ref{prop:analytic} is analytic, so that the conclusion of the Proposition holds and the recursive constructible flat species are analytic as well.
\end{proof}

Theorem~\ref{cor:ogf} is now a consequence of these two lemmas, the definition of dominant species for the ordinary case, and Lemma~\ref{lemma:egf} for the exponential case.

\subsection{Numerical Evaluation}\label{section:numerical-evaluation}
A simple way to compute numerical values of the generating series inside their disk of convergence is to first compute sufficiently many terms of the power series (e.g., by Newton's iteration, using $2^n$ terms at the $n$th iteration, following Theorem~\ref{th:newt_sg}) and then evaluate the series numerically. While this method is quite efficient close to the origin, it might require a large number of coefficients for values closer to the circle of convergence of the series. We now consider faster ways, first for exponential generating series, then in the case of ordinary generating series. The latter is more involved, except of course for flat species.

\subsubsection{Exponential Generating Series}
A consequence of the nice behavior of exponential generating series under composition is that Newton's iteration can be used numerically in a straightforward way.
\begin{lemma}[Transfer, numerical part, exponential generating series]\label{lem:trans2}  
Let $\bc Y=\bc H({\mathcal Z},\bc Y)$ be a well-founded system. Let also $\bc F$ be an analytic species such that
\begin{equation}\label{iter_exp_esp}
	\bc Y^{[n+1]} = \bc F({\mathcal Z},\bc Y^{[n]}), \quad \mbox{ with } \bc Y^{[0]}=\bd{0},
\end{equation}
defines an increasing sequence of species converging to the solution~$\bc S$ of the system. Then the exponential generating series $\bd S({z})$ of~$\bc S$ has positive radius of convergence $\rho$ and for all $\alpha$ such that $|\alpha|<\rho$, the sequence
\begin{equation}\label{iter_exp_gf}
	\bs y^{[n+1]}= \bd{F}(\alpha, \bs y^{[n]}), \quad \mbox{ with } \bs y^{[0]}=\bd{0},
\end{equation}
    converges to~$\bd{S}(\alpha)$.		
\end{lemma}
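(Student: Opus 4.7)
The plan is to reduce everything to the non-negativity of the coefficients of species generating series, handling analyticity via majorant series and convergence via the identification of $\bs y^{[n]}$ with $\bs Y^{[n]}(\alpha)$, where $\bs Y^{[n]}(z)$ is the exponential generating series of the species $\bc Y^{[n]}$.

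First I would establish that $\bs{S}$ has positive radius of convergence by a majorant argument. Since $\bc F$ is analytic, Cauchy bounds supply constants $R, M > 0$ such that, coefficient-wise,
\[
\bs{F}(z, \bs{y}) \lhd \frac{M}{1 - z/R - (y_1 + \dotsb + y_m)/R},
\]
where $m$ is the number of coordinates of $\bc Y$. The scalar equation $T = M/(1 - z/R - mT/R)$ admits an analytic solution $T(z)$ vanishing at $z=0$ with positive radius of convergence $\rho_0$. By induction on the iteration~\eqref{iter_exp_esp}, using coefficient-wise non-negativity and monotonicity of the majorant in each argument, every coordinate of $\bs{Y}^{[n]}(z)$ is majorized by $T(z)$. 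Since $(\bc Y^{[n]})$ is increasing with limit $\bc S$, the coefficients of $\bs{Y}^{[n]}(z)$ increase pointwise to those of $\bs{S}(z)$, and passing to the limit yields $\bs{S}_i(z) \lhd T(z)$ for each~$i$, so $\bs{S}$ is analytic on the disk of radius $\rho \ge \rho_0 > 0$.

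Next I would prove by induction that $\bs{y}^{[n]} = \bs{Y}^{[n]}(\alpha)$ for every $n \ge 0$, whenever $|\alpha| < \rho$. The base case is immediate. For the induction step, I need to know that substituting the numerical value $\bs{Y}^{[n]}(\alpha)$ into $\bs{F}(\alpha, \cdot)$ produces the same value as formally evaluating the composition $\bs{F}(z, \bs{Y}^{[n]}(z))$ at $z = \alpha$. This exchange of summation orders is licit because all coefficients are non-negative: from the formal identity $\bs{S}(z) = \bs{F}(z, \bs{S}(z))$ evaluated at $z = |\alpha| < \rho$ one obtains $\bs{F}(|\alpha|, \bs{S}(|\alpha|)) = \bs{S}(|\alpha|) < \infty$, so $\bs{F}(\alpha, \cdot)$ converges absolutely on the closed polydisk $\{\bs{y} : |y_i| \le \bs{S}_i(|\alpha|)\}$, and the bound $|\bs{Y}^{[n]}(\alpha)| \le \bs{Y}^{[n]}(|\alpha|) \le \bs{S}(|\alpha|)$ keeps the iterates safely inside it. Hence $\bs{y}^{[n+1]} = \bs{F}(\alpha, \bs{Y}^{[n]}(\alpha)) = \bs{Y}^{[n+1]}(\alpha)$.

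Finally, the convergence $\bs{Y}^{[n]}(\alpha) \to \bs{S}(\alpha)$ follows from the coefficient-wise inclusion $\bs{Y}^{[n]}(z) \lhd \bs{S}(z)$ (inherited from $\bc Y^{[n]} \subset \bc S$), which gives the pointwise bound
\[
\bigl|\bs{S}(\alpha) - \bs{Y}^{[n]}(\alpha)\bigr| \le \bs{S}(|\alpha|) - \bs{Y}^{[n]}(|\alpha|).
\]
The real sequence $\bs{Y}^{[n]}(|\alpha|)$ is non-decreasing and bounded above by $\bs{S}(|\alpha|)$, hence has a limit $\bar{\bs{y}} \le \bs{S}(|\alpha|)$. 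For each truncation order $N$, the partial sum $\sum_{k<N}[z^k]\bs{Y}^{[n]}\,|\alpha|^k$ tends, as $n \to \infty$, to $\sum_{k<N}[z^k]\bs{S}\,|\alpha|^k$ (from the formal convergence $\bs Y^{[n]} \to \bs S$), and letting $N \to \infty$ these partial sums exhaust $\bs{S}(|\alpha|)$, forcing $\bar{\bs{y}} = \bs{S}(|\alpha|)$ and hence $\bs{Y}^{[n]}(\alpha) \to \bs{S}(\alpha)$. The main technical hurdle is reconciling formal and analytic notions of convergence, which I overcome by consistently exploiting the non-negativity of all coefficients.
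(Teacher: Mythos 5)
The induction showing $\bs y^{[n]} = \bs Y^{[n]}(\alpha)$ and the final convergence argument are both essentially the paper's own, and they are correct: the absolute convergence of $\bs F(\alpha,\cdot)$ on the relevant polydisk comes from the fixed-point identity and the non-negativity of all coefficients, and the convergence $\bs Y^{[n]}(\alpha)\to\bs S(\alpha)$ follows from the coefficient-wise inclusion together with a tail estimate (in the paper this is packaged as Lemma~\ref{tail-dominant}).

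The gap is in the opening majorant argument. Two problems. First, the scalar equation $T = M/(1 - z/R - mT/R)$ evaluated at $z=0$ gives $T(0)\bigl(1 - mT(0)/R\bigr) = M$, which has no solution with $T(0)=0$ unless $M=0$; so the claim that $T$ vanishes at the origin is false. More seriously, this quadratic has a real (hence analytic, hence coefficient-wise non-negative) branch only when the discriminant $1 - 4mM/R$ is non-negative, and the raw Cauchy bounds give you no control over $4mM/R$: if $\bc F(\bs 0,\bs 0)\neq\bs 0$, the constant $M$ is bounded below by $\|\bs F(\bs 0,\bs 0)\|$ while $R$ cannot be taken large, so $4mM/R$ can easily exceed $1$. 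In that case your $T$ simply does not exist and the induction $\bs Y^{[n]}\lhd T$ never starts. What actually makes the implicit equation analytically solvable is the nilpotence of $\bs\partial\bc F/\bs\partial\bc Y(\bs 0,\bc S(\bs 0))$ coming from well-foundedness, which is structural information the majorant $M/(1-z/R-(y_1+\dotsb+y_m)/R)$ erases. The paper instead invokes Proposition~\ref{prop:analytic} (analytic implicit function theorem), precisely because the invertibility of $\Id - \bs\partial\bc F/\bs\partial\bc Y$ at the base point is what secures a positive radius of convergence. Your majorant route can be rescued, but only after shifting by $\bc S(\bs 0)$ so that the shifted species $\hat{\bc F}(\bc Z,\bc U)=\bc F(\bc Z,\bc S(\bs 0)+\bc U)-\bc S(\bs 0)$ vanishes at the origin (and has non-negative coefficients), and then shrinking $R$ to make the Cauchy constant $M$ small enough that $4mM/R<1$; as written, none of this is addressed, and the asserted $T(0)=0$ betrays a conflation of the general well-founded case with the well-founded-at-$\bs 0$ case.
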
   
\begin{proof}  
The species $\bc S$ is analytic by proposition~\ref{prop:analytic}.	The point is to show that for all $\alpha$ such that $0\le |\alpha|<\rho$,  $\bs Y^{[n]}(\alpha)$ converges to $\bs S(\alpha)$, $\bs y^{[n]}$ is well defined and $\bs y^{[n]}=\bs Y^{[n]}(\alpha)$ (the evaluation of $\bs Y^{[n]}$ at $\alpha$ is equal to the value obtained by numerical iteration).

	By Lemma~\ref{tail-dominant} below, the monotonicity and convergence of the combinatorial sequence~${\bc Y^{[n]}}$ imply that the~$\bs Y^{[n]}$'s are analytic for~$|z|<\rho$, and that $\bs Y^{[n]}(\alpha)$ converges to~$\bs S(\alpha)$. 
	Let~$r$ be such that $|\alpha|\le r<\rho$. Assuming $\bs F(z,\bs S)$ to be analytic in a polydisk $|(z,\bs S)|\le(r,\bs S(r))$ with component-wise inequality, the vector~$\bs F(\alpha,\bs Y^{[n]}(\alpha))$ is well defined, and thus by induction 
\[
\bs y^{[n+1]}=\bs F(\alpha,\bs y^{[n]})=\bs F(\alpha,\bs Y^{[n]}(\alpha))=\bs Y^{[n+1]}(\alpha).
\]
We now prove the required analyticity of~$\bs F(z,\bs S)$.
Let $\bs F(z,\bs S)=\sum{\bs f_{i,\bs j}z^iS_1^{j_1}\dotsm S_m^{j_m}}$ and $\bs S(z)=\sum{\bs c_kz^k}$.
For each coordinate $h\in\{1,\dots,m\}$, extracting the coefficient of~$z^k$ ($k=0,\dots,N$) in the identity $\bs F(z,\bs S(z))=\bs S(z)$  leads to an inequality of the form
$$\sum_{i+j_1\ell_1+\dots+j_m\ell_m\le N}\!\!\!\!{{f}_{h,i,\bs j}r^i
(\sum_{k_1=0}^{\ell_1}{{c}_{1,k}r^{k_1}})^{j_1}
\dotsm
(\sum_{k_m=0}^{\ell_m}{{c}_{m,k}r^{k_m}})^{j_m}}
=\sum_{k\le N}{{c}_{h,k}r^k}\le {S}_h(r),\qquad N\in{\mathbb{N}},$$
where first indices denote coordinates.
The coefficients being positive, the first sum converges to ${S}_h(r)$ as $N\rightarrow\infty$. This proves the convergence of $F_h(z,\bs S)$ for $|(z,\bs S)|\le (r,\bs S(r))$ and therefore that of~$\bs F(z,\bs S)$ which 
concludes the proof. 	
\end{proof}

A first consequence of this lemma is that when~$\bs H$ is analytic, the iteration~\eqref{iter_exp_gf} with~$\bs F=\bs H$ computes values of~$\bs S$. More interesting is its use with the Newton operator for~$\bc F$.
\begin{theorem}[Newton oracle for values of exponential generating series]\label{th:newtonnum}
Let~$\bc Y=\bc H({\mathcal Z},\bc Y)$
be a well-founded system with $\bc H$ an analytic species. Let $\alpha$ be inside the disk of convergence of the exponential generating series~$\bs S(z)$ of the solution species~$\bc S$. Then the following iteration converges to the solution~$\bs S(\alpha)$ of the system:
\[\bs y^{[n+1]}=\bs y^{[n]}+\left(\Id-\frac{\bs\partial\bs H}{\bs\partial\bs Y}(\alpha,\bs y^{[n]})\right)^{-1}\cdot(\bs H(\alpha,\bs y^{[n]})-\bs y^{[n]}),\qquad\bs y^{[0]}=\bs{0}.\]
\end{theorem}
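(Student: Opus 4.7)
The plan is to transfer the combinatorial Newton iteration of Theorem~\ref{th:newton} to numerical values, in the spirit of Lemma~\ref{lem:trans2}. Consider the species sequence $\bc Y^{[n+1]}=\bc N_{\bc H}(\bc Z,\bc Y^{[n]})$, $\bc Y^{[0]}=\bs 0$. Because $\bc N_{\bc H}(\bc Z,\bc Y)\supseteq\bc Y$, this sequence is increasing, and by Theorem~\ref{th:newton} its limit is $\bc S$. By Proposition~\ref{prop:analytic}, $\bs S(z)$ is analytic in a neighbourhood of the origin with some radius of convergence $\rho$, and since each $\bc Y^{[n]}$ is a subspecies of $\bc S$ its e.g.s.\ $\bs Y^{[n]}(z)$ has nonnegative coefficients bounded termwise by those of $\bs S$; in particular every $\bs Y^{[n]}$ is analytic on the disk $|z|<\rho$, and a standard head-and-tail estimate (as used in the proof of Lemma~\ref{lem:trans2}) shows $\bs Y^{[n]}(\alpha)\to\bs S(\alpha)$.

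The core step is an induction identifying $\bs y^{[n]}=\bs Y^{[n]}(\alpha)$. For the step, expand the combinatorial Newton operator as
\[
\bc N_{\bc H}(\bc Z,\bc Y^{[n]})-\bc Y^{[n]}
=\sum_{k\ge 0}\left(\frac{\bs\partial\bc H}{\bs\partial\bc Y}(\bc Z,\bc Y^{[n]})\right)^{\!k}\!\cdot\bigl(\bc H(\bc Z,\bc Y^{[n]})-\bc Y^{[n]}\bigr),
\]
which, by Lemma~\ref{lem:nonambig}, is a disjoint sum of actual subspecies of $\bc S$. Property~\ref{prop:plethysm} then turns this into a genuine identity between exponential generating series. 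Evaluating at $z=\alpha$ is legitimate by absolute convergence: every individual summand is nonnegative at $|z|=|\alpha|$ and the total is bounded by $\bs S(|\alpha|)-\bs Y^{[n]}(|\alpha|)<\infty$. To land on the formula in the statement I must recognise this summed vector as $(\Id-\bs J(\alpha,\bs y^{[n]}))^{-1}(\bs H(\alpha,\bs y^{[n]})-\bs y^{[n]})$, where $\bs J=\bs\partial\bs H/\bs\partial\bs Y$.

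The main obstacle is exactly this numerical invertibility of $\Id-\bs J(\alpha,\bs y^{[n]})$. Since the entries of $\bs J(z,\bs Y)$ and of $\bs Y^{[n]}$ are power series with nonnegative coefficients, one has the entry-wise bound $|\bs J(\alpha,\bs Y^{[n]}(\alpha))|\le\bs J(|\alpha|,\bs S(|\alpha|))$. The partial Neumann sums $\sum_{k=0}^{K}\bs J(|\alpha|,\bs S(|\alpha|))^{k}\cdot(\bs H(|\alpha|,\bs S(|\alpha|))-\bs S(|\alpha|)+(\bs S(|\alpha|)-\bs Y^{[n]}(|\alpha|)))$ are bounded by~$\bs S(|\alpha|)$, so by a Collatz–Wielandt argument the nonnegative matrix $\bs J(|\alpha|,\bs S(|\alpha|))$ has spectral radius strictly less than~$1$; entry-wise monotonicity of the spectral radius transfers this bound to $\bs J(\alpha,\bs Y^{[n]}(\alpha))$, so $\Id-\bs J(\alpha,\bs y^{[n]})$ is invertible and its inverse is the numerical Neumann series. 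Plugging this into the identity of the previous paragraph gives $\bs y^{[n+1]}=\bs Y^{[n+1]}(\alpha)$, completing the induction; combined with the convergence $\bs Y^{[n]}(\alpha)\to\bs S(\alpha)$ of the first paragraph, this concludes the proof.
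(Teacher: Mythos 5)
Your overall architecture is the same as the paper's: transfer the combinatorial Newton iteration to values via the exponential generating series of the iterates, and then identify $\bs y^{[n]}=\bs Y^{[n]}(\alpha)$ inductively. But you also flag — correctly, and more explicitly than the paper — a real subtlety that the one-line proof (``consequence of the transfer lemma, since the Newton operator is an analytic species'') glosses over: $\bc N_{\bc H}$ is only a \emph{virtual} species, and its Taylor expansion in $(z,\bs y)$ can have negative coefficients (for $\bc H=\bc Z+\bc Z\bc Y^2$ the coefficient of $y^2$ in $\bs N_{\bc H}(z,y)$ is $-z$). The positivity argument in the proof of Lemma~\ref{lem:trans2} is therefore not directly applicable to $\bc F=\bc N_{\bc H}$, and your decomposition of $\bc N_{\bc H}(\bc Z,\bc Y^{[n]})$ via Lemma~\ref{lem:nonambig} into a disjoint sum of actual species, each a subspecies of $\bc S$, is exactly the right repair and recovers the positivity needed to evaluate the e.g.s.\ at $\alpha$.

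The invertibility paragraph, however, does not hold as written, and it is the crux of the argument. The combinatorial decomposition gives boundedness (by $\bs S(|\alpha|)$) of the sum $\sum_k\bs J(|\alpha|,\bs Y^{[n]}(|\alpha|))^k\bigl(\bs H(|\alpha|,\bs Y^{[n]}(|\alpha|))-\bs Y^{[n]}(|\alpha|)\bigr)$, i.e.\ with the Jacobian at the \emph{iterate}; you assert boundedness with the Jacobian at the \emph{limit}, $\sum_{k\le K}\bs J(|\alpha|,\bs S(|\alpha|))^k\bigl(\bs S(|\alpha|)-\bs Y^{[n]}(|\alpha|)\bigr)\le\bs S(|\alpha|)$, and that is not a consequence of any species identity invoked — indeed the identity behind $\bc N_{\bc H}(\bc Z,\bc Y^{[n]})\subset\bc S$ involves $\bs\partial\bc H/\bs\partial\bc Y(\bc Z,\bc Y^{[n]})$, not $\bs\partial\bc H/\bs\partial\bc Y(\bc Z,\bc S)$. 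Even granting it, ``bounded Neumann sums $\Rightarrow$ spectral radius $<1$'' fails without the test vector being strictly positive (take $A=\bigl(\begin{smallmatrix}1&0\\0&0\end{smallmatrix}\bigr)$, $b=(0,1)^\top$: the sum converges yet $\rho(A)=1$) and, for systems that decompose, some irreducibility; neither is checked, and $\bs S(|\alpha|)-\bs Y^{[n]}(|\alpha|)$ can have vanishing coordinates when a coordinate of $\bc S$ is polynomial. A cleaner starting point is the species identity $\bc S'=\sum_{k\ge0}\bigl(\bs\partial\bc H/\bs\partial\bc Y(\bc Z,\bc S)\bigr)^k\cdot\bs\partial\bc H/\bs\partial\bc Z(\bc Z,\bc S)$, which furnishes a convergent Neumann sum against $\bs J(r,\bs S(r))$ directly; combined with $(\Id-\bs J(r,\bs S(r)))\bs S'(r)=\bs\partial\bs H/\bs\partial z(r,\bs S(r))\ge\bs 0$ and $\bs S'(r)>\bs 0$ on the nonpolynomial coordinates, the Perron--Frobenius/Collatz--Wielandt inequality can be made to give the bound after isolating the polynomial part. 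As it stands, the step that makes $(\Id-\bs J(\alpha,\bs y^{[n]}))^{-1}$ well-defined is not established.
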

\begin{proof}
This is a consequence of the transfer lemma, since the Newton operator is an analytic species.   
\end{proof}

\begin{example}\label{ex:num-cayley} The exponential generating series of Cayley trees satisfies~$G(z)=z\exp(G(z))$ (see Ex.~\ref{ex:series_system}). Its value at $\alpha$ is given by the limit of the simple Newton iteration:
	\[g^{[n+1]}=g^{[n]}+\frac{\alpha e^{g^{[n]}}-g^{[n]}}{1-\alpha e^{g^{[n]}}}\]
with initial value~$g^{[n]}=0$. For $\alpha=1/10$, the first few values of the series of Cayley trees are:
\begin{align*}
&\mathbf{0.111}11111111111111111,\\
&\mathbf{0.1118325}2640942066782,\\
&\mathbf{0.111832559158962}89731,\\
&\mathbf{0.11183255915896296483}. 
&\end{align*}
Boldfaced digits indicate those that match the digits of the actual value.
\end{example}

\subsubsection{Ordinary Generating Series}
By Theorem~\ref{cor:ogf}, the ordinary generating series~${\bd{\tilde S}}(z)$ of a species~$\bc S$, solution of the well-founded system~$\bc Y=\bc H({\mathcal Z},\bc Y)$, with a constructible~$\bc H$, is analytic in the neighborhood of the origin. As above, while evaluating~${\bd{\tilde S}}(\alpha)$ for $\alpha$ inside the disk of convergence of~${\bd{\tilde S}}(z)$ can be achieved through numerical evaluation of power series obtained by Newton's iteration, this requires the computation of a large number of coefficients and a more direct numerical algorithm is preferable. If $\bc H$ is flat, then the technique of the previous section applies. 

Otherwise, 
we use a semi-numerical algorithm combining two techniques: computation of generating series truncated at a small order that can be evaluated at small enough values and iterative evaluation at powers of the point of interest. 

First, we observe that Pólya operators act naturally on sequences: if $\alpha$ lies inside the disk of convergence of the ordinary generating series~${\bd{\tilde H}}(z):=\bs{\Phi}_{\bc F}({\bd{\tilde G}}(z))$, then by Prop.~\ref{prop:ogfegf} and~\ref{prop:plethysm}, the sequence $({\bd{\tilde H}}(\alpha^k))_{k\in\N^\star}$ can be computed from the sequence~$({\bd{\tilde G}}(\alpha^k))_{k\in\N^\star}$. Associated to the Pólya operator $\bs{\Phi}_{\bc F}: {\bd{\tilde G}}(z) \mapsto {\bd{\tilde H}}(z)$,  we use the sequence transformer~$\bs{\Psi}_{\bc F}$
\[
\bs{\Psi}_{\bc F}: \left(\alpha , ({\bd{\tilde G}}(\alpha^k))_{k\in\N^\star}\right) \mapsto  ({\bd{\tilde H}}(\alpha^k))_{k\in\N^\star},
\]
which computes the sequence~${\bd{\tilde H}}(\alpha)$, ${\bd{\tilde H}}(\alpha^2)$, ${\bd{\tilde H}}(\alpha^3)$, \dots from $\alpha$ and~${\bd{\tilde G}}(\alpha)$, ${\bd{\tilde G}}(\alpha^2)$, ${\bd{\tilde G}}(\alpha^3)$, \dots

We now give an analogue of Lemma~\ref{lem:trans2} transferring combinatorial convergence into numerical convergence of sequences. \begin{lemma}[Transfer, numerical part, ordinary generating series]\label{lem:trans3}  
Let $\bc Y=\bc H({\mathcal Z},\bc Y)$ be a well-founded system. Let also $\bc F$ be an analytic species such that
\[
	\bc Y^{[n+1]} = \bc F({\mathcal Z},\bc Y^{[n]}), \quad \mbox{ with } \bc Y^{[0]}=\bd{0}
\]
defines an increasing sequence of species converging to the solution~$\bc S$ of the system. Assume that the ordinary generating series ${\bd{\tilde S}}$ of the species~$\bc S$ has positive radius of convergence $\rho$. 
For all $\alpha$ such that $|\alpha|<\rho$, the sequence of sequences 
\[
(\bs y^{[n+1]}_k)_{k\in\N^\star}= \bs{\Psi}_{\bc F}\left(\alpha, (\bs y^{[n]}_k)_{k\in\N^\star}\right), \quad \mbox{ with } (\bs y^{[0]}_k)_{k\in\N^\star}=(\bd{0}, \bd{0}, \dotsc),
\]
converges to the sequence~$({\bd{\tilde S}}(\alpha^k))_{k\in\N^\star}$, in the sense that $\max_k\|{\bd{\tilde S}}(\alpha^k)-\bs y^{[n]}_k\|\rightarrow0$ as $n\rightarrow\infty$.
\end{lemma}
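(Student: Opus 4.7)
The plan is to reduce the numerical statement to the combinatorial convergence established earlier, through the identity
\[
\bs y^{[n]}_k \;=\; {\bd{\tilde Y}^{[n]}}(\alpha^k), \qquad n\ge 0,\ k\ge 1,
\]
where ${\bd{\tilde Y}^{[n]}}(z)$ denotes the vector of ordinary generating series of~$\bc Y^{[n]}$. First, I would establish this identity by induction on $n$. The base case is immediate since the zero species has zero generating series. For the inductive step, by the definition of the Pólya operator and the cycle index formula (Properties~\ref{prop:ogfegf} and~\ref{prop:plethysm}), the ordinary generating series of $\bc F({\mathcal Z},\bc Y^{[n]})$ is $\bs{\Phi}_{\bc F}({\bd{\tilde Y}^{[n]}})$, and evaluating plethystic substitution at $z=\alpha^k$ yields expressions of the form ${\bd{\tilde Y}^{[n]}}(\alpha^{jk})$ for $j\ge1$, which are precisely the entries of the sequence on which $\bs{\Psi}_{\bc F}$ acts. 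Thus the sequence $\bs\Psi_{\bc F}(\alpha,({\bd{\tilde Y}^{[n]}}(\alpha^k))_{k})$ equals $({\bd{\tilde Y}^{[n+1]}}(\alpha^k))_{k}$, closing the induction.

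Next, I would turn to the uniform convergence. Since ordinary generating series of combinatorial species have nonnegative integer coefficients, their radius of convergence satisfies $\rho\le 1$. Hence $|\alpha|<\rho\le 1$, so $|\alpha|^k\le|\alpha|$ for all $k\ge 1$. The sequence of species $(\bc Y^{[n]})$ being increasing with limit $\bc S$, each coordinate of $\bd{\tilde S}-{\bd{\tilde Y}^{[n]}}$ has nonnegative coefficients, so
\[
\bigl\|{\bd{\tilde S}}(\alpha^k)-\bs y^{[n]}_k\bigr\|
\;=\;\bigl\|{\bd{\tilde S}}(\alpha^k)-{\bd{\tilde Y}^{[n]}}(\alpha^k)\bigr\|
\;\le\;\bigl\|{\bd{\tilde S}}(|\alpha|^k)-{\bd{\tilde Y}^{[n]}}(|\alpha|^k)\bigr\|
\;\le\;\bigl\|{\bd{\tilde S}}(|\alpha|)-{\bd{\tilde Y}^{[n]}}(|\alpha|)\bigr\|,
\]
using coefficient-wise positivity both to replace $\alpha^k$ by $|\alpha|^k$ and then to replace $|\alpha|^k$ by $|\alpha|$. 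The right-hand side no longer depends on $k$. By monotone convergence of the tails of ${\bd{\tilde S}}$ at $|\alpha|$ (which is inside the disk of convergence), this upper bound tends to $\bs 0$ as $n\to\infty$, yielding $\max_k\|{\bd{\tilde S}}(\alpha^k)-\bs y^{[n]}_k\|\to 0$.

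The principal obstacle I anticipate is the rigorous verification of the identity $\bs y^{[n]}_k={\bd{\tilde Y}^{[n]}}(\alpha^k)$, because the sequence transformer $\bs\Psi_{\bc F}$ is defined implicitly through the Pólya operator, and one must unwind the plethystic substitution carefully to see that evaluating ${\bd{\tilde Y}^{[n+1]}}(z)=\bs\Phi_{\bc F}({\bd{\tilde Y}^{[n]}})(z)$ at $z=\alpha^k$ only requires the finitely (or countably) many values ${\bd{\tilde Y}^{[n]}}(\alpha^{jk})$. For cardinality-restricted basic species this is a finite combination; for unrestricted $\Set$ and $\Cyc$ it is an infinite series in $j$ whose convergence must be justified by the analyticity of $\bd{\tilde S}$ at $|\alpha|$ and the domination ${\bd{\tilde Y}^{[n]}}\lhd\bd{\tilde S}$. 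Once this bookkeeping is in order, the rest of the argument is a straightforward combination of positivity and monotone convergence.
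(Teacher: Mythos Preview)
Your approach is essentially the paper's: establish $\bs y^{[n]}_k={\bd{\tilde Y}}^{[n]}(\alpha^k)$ by induction (with well-definedness of the sequence transformer secured by the nonnegativity of cycle index coefficients and domination ${\bd{\tilde Y}}^{[n]}\lhd{\bd{\tilde S}}$), then conclude numerical convergence from combinatorial convergence. The paper handles the second step by simply citing the argument of the exponential lemma, whereas you supply an explicit uniform bound over $k$; this extra detail is welcome.

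One small caveat: your claim $\rho\le 1$ holds only when ${\bd{\tilde S}}$ has infinitely many nonzero (hence $\ge 1$) coefficients. If $\bc S$ is polynomial, $\rho=\infty$ and your chain of inequalities breaks down for $|\alpha|>1$. This case is trivial, however, since the increasing sequence $(\bc Y^{[n]})$ of subspecies of a polynomial $\bc S$ stabilizes after finitely many steps; you should just say so.
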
   
This lemma applies in particular to a constructible species~$\bc H$; the convergence of~${\bd{\tilde S}}(z)$ in a neighborhood of the origin is then granted by Theorem~\ref{cor:ogf}. 
\begin{proof}
The convergence of the series~${\bd{\tilde Y}}^{[n]}(z)$ in $|z|<\rho$, as well as the convergence of the sequence~${\bd{\tilde Y}}^{[n]}(\alpha)$ to~${\bd{\tilde S}}(\alpha)$ follow from the same argument as in Lemma~\ref{lem:trans2}. 

We have to show that for all $\alpha$ such that $|\alpha|<\rho$, the sequence~$\bs{\Psi}_{\bc F}(\alpha, ({\bd{\tilde Y}}^{[n]}(\alpha^k))_{k\in\N^\star})$ is well defined, for then by induction~$(\bs y^{[n+1]}_k)_{k\in\N^\star}=({\bd{\tilde Y}}^{[n+1]}(\alpha^k))_{k\in\N^\star}$.
As in the case of exponential generating series, this is obtained by a positivity argument. Indeed, the cycle index series of~$\bc F$ from Eq.~\eqref{eq:cycle-index} has nonnegative coefficients, 
so that the evaluation of~$\bs{\Phi}_{\bc F}(z,{\bd{\tilde Y}}^{[n]}(z))$ is bounded termwise by the convergent evaluation of $\bs{\Phi}_{\bc F}(z,{\bd{\tilde S}}(z))={\bd{\tilde S}}(z)$, which shows that the iteration is well defined. 
\end{proof}

\begin{theorem}[Newton oracle for values of ordinary generating series]\label{th:newt_num_ord} Let $\bc Y=\bc H({\mathcal Z},\bc Y)$ be a well-founded system with $\bc H$ a constructible species. Let~$\alpha$ be inside the disk of convergence of the ordinary generating series~${\bd{\tilde S}}(z)$ of the solution. The following iteration converges to the sequence~$({\bd{\tilde S}}(\alpha^k))_{k\in\N^\star}$:
	\[(\bs y^{[n+1]}_k)_{k\in\N^\star}=\bs{\Psi}_{\bc N_{\bc H}}\left(\alpha, (\bs y^{[n]}_k)_{k\in\N^\star}\right),\qquad (\bs y^{[0]}_k)_{k\in\N^\star}=(\bd{0},\bd{0},\dotsc) ,\]
where~$\bc N_{\bc H}$	is defined by Equation~\eqref{eq:newton_operator}.
\end{theorem}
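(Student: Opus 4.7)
The plan is to prove this theorem as an essentially direct application of the transfer Lemma~\ref{lem:trans3}, instantiated with $\bc F := \bc N_{\bc H}$. First I would invoke Theorem~\ref{th:newton}: for the well-founded system $\bc Y=\bc H({\mathcal Z},\bc Y)$, the combinatorial Newton iteration $\bc Y^{[n+1]}=\bc N_{\bc H}({\mathcal Z},\bc Y^{[n]})$ with $\bc Y^{[0]}=\bs{0}$ is well defined, produces an increasing sequence of species (the first summand of $\bc N_{\bc H}(\bc Z,\bc Y)$ is $\bc Y$ itself, the others being genuine species along the iteration), and converges (quadratically, hence in particular in the sense of Lemma~\ref{lemma1}) to $\bc S$.

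Next, I would check the analyticity hypothesis of Lemma~\ref{lem:trans3}. Since $\bc H$ is constructible, so are its partial derivatives $\bd\partial\bc H/\bd\partial\bc Y$ (Table~\ref{tab:deriv}), and the $\Seq$ of that Jacobian matrix that appears in $\bc N_{\bc H}$ is itself constructible as a composition of constructible species. Therefore $\bc N_{\bc H}$ is constructible, and by Theorem~\ref{cor:ogf} its ordinary generating series is analytic at the origin, so it fits within the framework of Lemma~\ref{lem:trans3}. The mild subtlety that $\bc N_{\bc H}$ is \emph{a priori} only a virtual species is handled exactly as in the proof of Theorem~\ref{th:newton}: the inclusion $\bc Y^{[n]}\subset\bc H(\bc Z,\bc Y^{[n]})$ holds by induction along the iteration, so each $\bc N_{\bc H}({\mathcal Z},\bc Y^{[n]})$ is an actual subspecies of $\bc S$, which is all that the transfer argument needs.

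Applying Lemma~\ref{lem:trans3} with this choice of $\bc F$ then gives the result in one step: for $\alpha$ inside the disk of convergence of ${\bd{\tilde S}}(z)$, the sequence iteration $(\bs y^{[n+1]}_k)_{k\in\N^\star}=\bs{\Psi}_{\bc N_{\bc H}}\bigl(\alpha,(\bs y^{[n]}_k)_{k\in\N^\star}\bigr)$ starting from the zero sequence is well defined for all $n$ (the Pólya operator evaluations are dominated termwise by the convergent evaluation of $\bs\Phi_{\bc N_{\bc H}}(z,{\bd{\tilde S}}(z))={\bd{\tilde S}}(z)$), satisfies $\bs y^{[n]}_k={\bd{\tilde Y}}^{[n]}(\alpha^k)$ by induction on $n$, and converges to $({\bd{\tilde S}}(\alpha^k))_{k\in\N^\star}$.

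The main obstacle, and the only place where some care is required, is verifying the positivity/dominance chain that makes the Pólya evaluations legitimate: since the cycle index series of $\bc N_{\bc H}$ has nonnegative coefficients (as a consequence of the underlying combinatorial construction) and $\bc Y^{[n]}\subset\bc S$ for all $n$, one gets $\bs\Phi_{\bc N_{\bc H}}(\alpha^k,{\bd{\tilde Y}}^{[n]}(\alpha^\ell))\lhd\bs\Phi_{\bc N_{\bc H}}(\alpha^k,{\bd{\tilde S}}(\alpha^\ell))\lhd{\bd{\tilde S}}(\alpha^k)$, which guarantees that the numerical iteration stays bounded, well defined, and inherits convergence from the combinatorial one. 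With this bookkeeping in place, Lemma~\ref{lem:trans3} closes the proof.
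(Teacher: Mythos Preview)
Your approach is the paper's: the theorem is stated without proof there, as an immediate application of Lemma~\ref{lem:trans3} with $\bc F=\bc N_{\bc H}$ (just as the exponential analogue, Theorem~\ref{th:newtonnum}, is proved in one line from Lemma~\ref{lem:trans2}).

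Two small inaccuracies in the details you add. First, $\bc N_{\bc H}$ is \emph{not} constructible in the sense of Definition~\ref{def:constructible}: subtraction is not among the allowed operations, so the appeal to Theorem~\ref{cor:ogf} (which in any case concerns solutions of well-founded systems, not the right-hand side itself) does not go through as written. Second, and relatedly, the cycle index series of $\bc N_{\bc H}(\bc Z,\bc Y)$ as a multisort virtual species does \emph{not} have nonnegative coefficients, because of the $-\bc Y$ term. The positivity argument in Lemma~\ref{lem:trans3} therefore does not apply verbatim. The argument that actually works is the one you also give: along the iteration one has $\bc Y^{[n]}\subset\bc H(\bc Z,\bc Y^{[n]})$ and $\bc Y^{[n+1]}=\bc N_{\bc H}(\bc Z,\bc Y^{[n]})\subset\bc S$, so the ordinary generating series satisfy ${\bd{\tilde Y}}^{[n+1]}\lhd{\bd{\tilde S}}$ directly, and this is what makes each $\bs\Psi_{\bc N_{\bc H}}$ evaluation well defined and bounded. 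You should drop the constructibility and nonnegative-cycle-index claims and rest the justification squarely on $\bc Y^{[n+1]}\subset\bc S$.
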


\begin{example}\label{ex:catalan_num}
For flat species, the iteration is the same as in the previous section. For instance, the species of Catalan trees satisfies~${\mathcal T}={\mathcal Z}\Seq({\mathcal T})$. The combinatorial Newton operator is given in Example~\ref{ex:catalan_spec}. The corresponding Pólya operator is
\[Y(z)\mapsto Y(z)+\frac{zV(z)-Y(z)}{1-zV(z)^2},\qquad\text{with}\quad V(z)=\frac{1}{1-Y(z)}.\]
The associated sequence transformer is thus simply
\[(\alpha,(u_k)_{k\in\N^\star})\mapsto \left(u_k+\frac{\alpha^kv_k-u_k}{1-\alpha^kv_k^2}\right)_{k\in\N^\star},\quad\text{with}\quad v_k=\frac{1}{1-u_k},\]
and thus the value of~$T(\alpha)$ is given by the same iteration as in the labeled case:
\[y^{[n+1]}=y^{[n]}+\frac{\alpha v^{[n]}-y^{[n]}}{1-\alpha (v^{[n]})^2},\qquad v^{[n]}=\frac{1}{1-y^{[n]}},\]
with~$y^{[0]}=0$. For $\alpha=1/10$, the first few values of the series of Catalan trees are:
\begin{align*}
&\mathbf{0.11}1111111111111111111,\\
&\mathbf{0.112701}252236135957066,\\
&\mathbf{0.1127016653792}30322595,\\ 
&\mathbf{0.112701665379258311482}.
\end{align*}
\end{example}

\begin{example}\label{numer_Co} The ordinary generating series~$\tilde{G}$ of Cayley trees satisfies
\[
\tilde{G}(z)=z\exp(\tilde{G}(z)+\frac12\tilde{G}(z^2)+\dotsb)=z+z^2+2z^3+4z^4+9z^5+\dotsb.
\]
The Pólya operator for its Newton iterator was given in Example~\ref{ex:unorderedrooted_series} as
\[\Phi_{{\mathcal N}_{\mathcal H}}:Y(z)\mapsto Y(z)+\frac{B(z)-Y(z)}{1-B(z)}\quad\text{with}\quad B(z)=z\exp(Y(z)+\frac12Y(z^2)+\dotsb).\]
This translates directly into a Newton iteration on sequences converging to values of~$\tilde{G}$ at powers of $\alpha$ by the corollary above. 
In the table below $\alpha=0.3$, and  we show the evaluation of  the first three values of the sequence $(\tilde{{Y}}^{[n]}(\alpha^k))_{k\in\N^\star}$, up to the fifth iteration. 
\medskip
\begin{center}
\begin{tabular}{llll}                  
	$n$&$Y^{[n]}(0.3)$&$Y^{[n]}(0.3^2)$&$Y^{[n]}(0.3^3)$\\   \hline
	0&{\bf 0}&{\bf 0}&{\bf 0}\\
	1&{\bf 0.}42857142857142857& {\bf 0.09}8901098901098901 & {\bf 0.0277}49229188078108\\
	2&{\bf 0.5}4831147767352699& {\bf 0.099887}063059789885 & {\bf 0.0277706291}77403332\\
	3&{\bf 0.557}09091792164806& {\bf 0.09988716235706}3362 & {\bf 0.027770629192262428}\\
	4&{\bf 0.55713917}646743778& {\bf 0.099887162357064255} & {\bf 0.027770629192262428}\\
	5&{\bf 0.55713917793231456}& {\bf 0.099887162357064255} & {\bf 0.027770629192262428}\\
	\hline
\end{tabular}
\end{center}
\end{example}
The rest of this section describes ways to compute this iteration in practice. 

\paragraph{Hybrid Method}

We now have two convergent iterations at our disposal: one on truncated power series and one on infinite sequences of values at powers of~$\alpha$. Our method below consists in combining these iterations in parallel so as to get rid of infinite sequences, using the iteration of Theorem~\ref{th:newt_num_ord} with
power series to evaluate values at high exponents of~$\alpha^i$, where they converge fast.
It depends on a threshold~$K$ to switch the use of power series. This is illustrated in Figure~\ref{fig-hybrid-algo}.

\begin{figure}
\begin{center}
\begin{tikzpicture} [scale=3,line cap=round]
\def\aa{0.8}
        \draw (0,0) circle (.88);
        \draw[color=black!20, fill=black!20] (0,0) circle (0.29);
        \draw[->] (-1,0) -- (1,0) node[right] {}; 
        \draw[->] (0,-1) -- (0,1) node[above] {};
        \foreach \x in {1,...,40}
                \pgfmathpow{\aa}{\x}
                \draw[xshift= \pgfmathresult cm] (0pt,1pt) -- (0pt,-1pt) node[below,fill=white] {};
        \draw[->] (0,0) -- (xyz polar cs:angle=45,radius=0.88) node [midway,above]{$\rho$};
        \draw (0.34, 0.11) node[rectangle, scale=.7,inner sep=0.5pt]{$\alpha^{K+1}$};
        \draw (0.57,-0.11) node[scale=.7]{$\alpha^K\;\dotsc\;\;\alpha^2\;\;\;\alpha$};
\end{tikzpicture}
\end{center}
\caption{Hybrid method: inside the gray disk, the evaluation uses a truncated power series\label{fig-hybrid-algo}}
\end{figure}
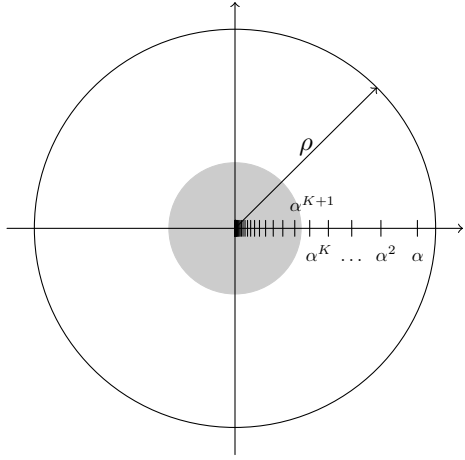

The method applies to a species $\bc F$, with Pólya operator~$\bs{\Phi}_{\bc F}$ and associated sequence  transformer~$\bs{\Psi}_{\bc F}$. The ordinary generating series ${{\bd{\tilde F}}(z)}$ has radius of convergence $\rho$, and the numerical evaluation concerns~$\alpha<\rho$. The input of the method consists of
\begin{enumerate}[i.]\setlength{\itemsep}{0pt}
	\item a $K$-tuple  $(\bs{{y}}^{[n]}_1,\dots,\bs y^{[n]}_K)$ of approximations of 
	$(\bs Y^{[n]}(\alpha),\dots,\bs Y^{[n]}(\alpha^K))$; 
	\item a truncation $\bs T_m^{[n]}(z):=\bs Y^{[n]}(z)\bmod z^m$ of the Taylor series of~$\bs Y^{[n]}$ at order $m$ (which is used to evaluate values of $(\bs y^{[n]}_k)$ for indices ${k> K}$);
	\item a truncation order $M$ on series;
	\item a precision~$\epsilon>0$.
\end{enumerate}
The idea is to compute
\[(\bs y^{[n+1]}_k)_{k\in\N^\star}\simeq\bs{\Psi}_{\bc N_{\bc H}}\left(\alpha, (\bs y^{[n]}_1,\dots,\bs y^{[n]}_K,\bs T_m^{[n]}(\alpha^{K+1}),\bs T_m^{[n]}(\alpha^{K+2}),\dotsc)\right).\]
In practice, we cannot apply $\bs{\Psi}_{\bc N_{\bc H}}$ to an infinite sequence, thus we need to determine the number $L$ of nonzero terms that are necessary to compute the sequence $(\bs y^{[n+1]}_k)$ at precision~$\epsilon$.
The output of the method is the vector $(\bs y^{[n+1]}_{1:K})$, and the truncated series $\bs Y^{[n+1]}(z)\bmod z^M$. When using a Pólya operator given by Newton's iteration, this method is an algorithm if we use~$m=2^n$ and $M=2^{n+1}$ by Proposition~\ref{th:newt_sg_trunc}. These bounds and the choice of~$K$ and~$L$ are discussed below. 

\medskip
\noindent{\bf Method:} Numerical evaluation of ordinary generating series.\\
{\bf Input:} 
$(\bs y^{[n]}_{1:K})$;
 $\bs T^{[n]}_m(z):=\bs Y^{[n]}(z)\bmod z^m$; $M\in\mathbb N$; $\epsilon>0$\\
{\bf Output:} an approximation of $(\bs y^{[n+1]}_{1:K})$; $\bs Y^{[n+1]}(z)\bmod z^{M}$.
\begin{enumerate}
	\item Compute a bound~$L$ such that 
\[\left\|\bs{\Psi}_{\bc F}\left(\alpha, (\bs y^{[n]}_k)_{k\in\N^\star})\right)
-\bs{\Psi}_{\bc F}\left(\alpha, (\bs y^{[n]}_1,\dots,\bs y^{[n]}_L,0,0,\dotsc)\right)\right\|<\epsilon;\]
		\item Compute $(\bs y^{[n+1]}_{1:K})$ the first $K$ values of the sequence \[\bs{\Psi}_{\bc F}\left(\alpha, (\bs y^{[n]}_1,\dots,\bs y^{[n]}_K,\bs T^{[n]}_m(\alpha^{K+1}),\dots,\bs T^{[n]}_m(\alpha^{L}),0,0,\dotsc)\right);\]
\item Compute $\bs Y^{[n+1]}(z):={\Phi}_{\bc F}(\bs Y^{[n]}(z)\bmod z^m)\bmod z^M$;
\item {\bf Return} $\bs Y^{[n+1]}(z)\bmod z^M$ and $(\bs y^{[n+1]}_{1:K})$.
\end{enumerate}

\paragraph{Numerical Evaluation of Pólya operators}
The first step of this hybrid method is to find a bound for the truncation of the Pólya operators. We now turn to the computation of such a bound.
We deal with Sets and Cycles. Other cases can be treated in a similar way.
\begin{lemma}\label{numer_polya}Let ${\mathcal Y}$ be an analytic species such that~${\mathcal Y}(0)=0$. Let~$0\le\alpha<1$ be inside the disk of convergence of the ordinary generating series~${\tilde{Y}}$ of ${\mathcal Y}$. Let $\tilde{S}(z)$ (resp. $\tilde{C}(z)$) be the ordinary generating series of $\Set({\mathcal Y})$ (resp. $\Cyc({\mathcal Y})$). Then $\tilde{S}(z)$ converges at~$\alpha$ and for any~$L>0$, the following inequalities hold
\[s_L\le \tilde{S}(\alpha)\le s_L\exp\!\left(\frac{\tilde{Y}(\alpha^L)}{\alpha^L}\sum_{i\ge L}{\frac{\alpha^i}{i}}\right)\qquad\text{with}\qquad s_L=\exp\!\left(\sum_{i=1}^{L-1}{\frac{\tilde{Y}(\alpha^i)}{i}}\right).\]
If moreover $\tilde{Y}(\alpha)<1$, then $\tilde{C}(z)$ converges at~$\alpha$ and for any $L>0$
\[c_L\le \tilde{C}(\alpha)\le c_L+\frac{\tilde{Y}(\alpha^L)}{\alpha^L}\frac{\alpha^L}{1-\alpha}\qquad\text{with}\qquad c_L=\sum_{i=1}^{L-1}{\frac{\phi(i)}{i}\log\frac{1}{1-\tilde{Y}(z^i)}}.\]
\end{lemma}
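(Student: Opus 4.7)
The plan is to obtain both pairs of inequalities by explicit computation on the series given in Table~\ref{tab:sum_esp_sg}, combined with one elementary monotonicity argument on $\tilde{Y}$ and, for the cycle case, a single standard inequality on $\log(1/(1-x))$.

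First I would spell out the two explicit formulas we need from Table~\ref{tab:sum_esp_sg}:
\[
\tilde{S}(\alpha)=\exp\!\Bigl(\sum_{i\ge1}\frac{\tilde{Y}(\alpha^i)}{i}\Bigr)
=s_L\cdot\exp\!\Bigl(\sum_{i\ge L}\frac{\tilde{Y}(\alpha^i)}{i}\Bigr),
\qquad
\tilde{C}(\alpha)=c_L+\sum_{k\ge L}\frac{\varphi(k)}{k}\log\frac{1}{1-\tilde{Y}(\alpha^k)}.
\]
The analyticity of $\tilde{Y}$ at~$\alpha$, together with the fact that $\tilde{Y}(0)=0$, guarantees that $\tilde{Y}(\alpha^i)\to0$ as $i\to\infty$, fast enough for the series in $\tilde{S}(\alpha)$ to converge; the assumption $\tilde{Y}(\alpha)<1$ ensures the same for $\tilde{C}(\alpha)$ (since $\tilde{Y}(\alpha^k)\le\tilde{Y}(\alpha)<1$ for all $k\ge1$). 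The two lower bounds $s_L\le\tilde{S}(\alpha)$ and $c_L\le\tilde{C}(\alpha)$ are then immediate, because both remaining tails consist of nonnegative terms (the coefficients of $\tilde{Y}$ are nonnegative, and $-\log(1-x)\ge0$ for $x\in[0,1)$).

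The key observation for the upper bounds is the following monotonicity lemma, which I would prove in one line: since ${\mathcal Y}(0)=0$ and $\tilde{Y}$ has nonnegative coefficients, the series
\[
\frac{\tilde{Y}(z)}{z}=\sum_{n\ge1}\tilde{y}_n\,z^{n-1}
\]
is nondecreasing on $[0,1)$. Because $0\le\alpha^k\le\alpha^L$ for $k\ge L$, this gives
\[
\frac{\tilde{Y}(\alpha^k)}{\alpha^k}\le\frac{\tilde{Y}(\alpha^L)}{\alpha^L},\qquad k\ge L,
\]
which is exactly the factor appearing in both announced bounds. For $\tilde{S}(\alpha)$ I would then write
\[
\sum_{i\ge L}\frac{\tilde{Y}(\alpha^i)}{i}
\le\frac{\tilde{Y}(\alpha^L)}{\alpha^L}\sum_{i\ge L}\frac{\alpha^i}{i},
\]
and take the exponential to obtain the claimed bound on $\tilde{S}(\alpha)$.

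For $\tilde{C}(\alpha)$ the additional ingredient is a classical bound $\log(1/(1-x))\le x/(1-x)$ for $x\in[0,1)$, proved by comparing the series $\sum_{j\ge1}x^j/j$ with $\sum_{j\ge1}x^j$. Combining this with $\varphi(k)/k\le1$ and the monotonicity lemma yields
\[
\sum_{k\ge L}\frac{\varphi(k)}{k}\log\frac{1}{1-\tilde{Y}(\alpha^k)}
\le\sum_{k\ge L}\tilde{Y}(\alpha^k)
\le\frac{\tilde{Y}(\alpha^L)}{\alpha^L}\sum_{k\ge L}\alpha^k
=\frac{\tilde{Y}(\alpha^L)}{\alpha^L}\cdot\frac{\alpha^L}{1-\alpha},
\]
up to the standard factor $1/(1-\tilde{Y}(\alpha))$ coming from the $\log$-estimate, which the assumption $\tilde{Y}(\alpha)<1$ controls uniformly. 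The mildly delicate point, and the only place that really needs care, is the justification of this termwise use of $\log(1/(1-x))\le x/(1-x)$ together with the geometric majoration of $\tilde{Y}(\alpha^k)$; all other manipulations are routine, since positivity and absolute convergence make all rearrangements legitimate.
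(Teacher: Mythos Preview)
Your treatment of the $\Set$ case is correct and matches the paper's line for line: split off the partial sum in the exponent, observe that $\tilde{Y}(z)/z$ has nonnegative coefficients and is therefore nondecreasing on $[0,\alpha]$, and bound each $\tilde{Y}(\alpha^i)$ by $(\tilde{Y}(\alpha^L)/\alpha^L)\,\alpha^i$ for $i\ge L$.

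For the $\Cyc$ case there is a genuine gap. Your route via $\log\frac{1}{1-x}\le\frac{x}{1-x}$ together with the crude bound $\varphi(k)/k\le1$ only yields
\[
\tilde{C}(\alpha)\;\le\; c_L+\frac{1}{1-\tilde{Y}(\alpha)}\cdot\frac{\tilde{Y}(\alpha^L)}{\alpha^L}\cdot\frac{\alpha^L}{1-\alpha},
\]
and the extra factor $1/(1-\tilde{Y}(\alpha))$ does not go away: the hypothesis $\tilde{Y}(\alpha)<1$ makes it finite, not equal to~$1$. This is not a ``mildly delicate point'' of justification; it is a strictly weaker inequality than the one stated in the lemma, and no amount of care in the estimates you list will remove that factor.

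The missing idea, which the paper exploits, is to keep the arithmetic weight $\varphi(i)/i$ rather than discard it. After the same first step $\tilde{Y}(\alpha^i)\le\delta\alpha^i$ with $\delta=\tilde{Y}(\alpha^L)/\alpha^L$, one expands $\log\frac{1}{1-\delta\alpha^i}$ as its power series, obtaining a double sum over $i\ge L$ and $j\ge1$ of terms $\frac{\varphi(i)}{i}\cdot\frac{\alpha^{ij}}{j}$ (up to the factor~$\delta$). Regrouping by $n=ij$ and invoking the classical identity $\sum_{i\mid n}\varphi(i)=n$ collapses the combined weight $\varphi(i)/(i\cdot j)=\varphi(i)/n$ exactly, leaving the clean geometric tail $\delta\sum_{n\ge L}\alpha^n=\delta\,\alpha^L/(1-\alpha)$. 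Your inequality $\varphi(k)/k\le1$ throws away precisely the structure that makes this cancellation possible.
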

The argument of the exponential in the right-hand side of the first inequality tends to~0 as $L$ tends to infinity, so that these inequalities can be used to compute $\tilde{S}(\alpha)$ and~$\tilde{C}(\alpha)$ with arbitrary precision, provided one can compute values of $\tilde{Y}$ at the powers of~$\alpha$.
\begin{proof}
Table~\ref{tab:sum_esp_sg} gives
\[\log(\tilde{S}(\alpha)/s_L)=\sum_{i\ge L}{\frac{\tilde{Y}(\alpha^i)}{i}}.\]
Since ${\mathcal Y}(0)=0$, the generating series $\tilde{Y}(z)$ satisfies $\tilde{Y}(0)=0$ and therefore $\tilde{Y}(z)/z$ itself is a power series with positive coefficients. It follows that for any $\beta$ such that $0\le\beta\le\alpha^L$, $\tilde{Y}(\beta)/\beta\le \delta$, with ${\delta:=\tilde{Y}(\alpha^L)/\alpha^L}$. Using this inequality for $\beta=\alpha^i$, $i=L, L+1,\dotsc$ gives the result for $\tilde{S}(\alpha)$.

In the case of $\Cyc$, the same reasoning leads to the sequence of inequalities
\begin{align*}
0\le \tilde{C}(\alpha)-c_L&=\sum_{i\ge L}{\frac{\phi(i)}{i}\log\frac{1}{1-\tilde{Y}(\alpha^i)}}\le\sum_{i\ge L}{\frac{\phi(i)}{i}\log\frac{1}{1-\delta \alpha^i}},\\
&\le\sum_{i\ge L,j>0}{\frac{\phi(i)}{i}\delta\frac{\alpha^{ij}}{j}}
\le\sum_{n\ge L}{\delta \left(\sum_{i\mid n}{\phi(i)}\right)\frac{\alpha^n}{n}}\le \delta \frac{\alpha^L}{1-\alpha},
\end{align*}
where we have used the classical identity $\sum_{i\mid n}\phi(i)=n$.
\end{proof}

\begin{example} In order to compute the value of 
	\[\tilde{S}(z)=\exp(\tilde{Y}(z)+\frac12\tilde{Y}(z^2)+\dotsb)\]
 at $\alpha=.3$ when $\tilde{Y}(z)=z+z^2$, we compute the first few values of 
\[\exp\left(\frac{\tilde{Y}(\alpha^L)}{\alpha^L}(\ln\frac{1}{1-\alpha}-\sum_{i<L}\frac{\alpha^i}{i})\right)-1,\] which gives
\[0.59, 0.064, 0.012, 0.0027, 0.00065, 0.00016,0.000042, 0.000011, 0.0000030, 0.00000081\]
showing that an error smaller than~$10^{-6}$ is achieved as soon as~$L=10$.
By monotonicity, this bound is also sufficient for the evaluation at the smaller~$\alpha^i$, $i>1$.

This is used to compute the numerical iteration for Cayley trees; starting from the row~$n=2$ in Example~\ref{numer_Co}, where the truncation of~$Y^{[2]}(z)$ is~$z+z^2$, the hybrid method then proceeds as follows: the values of~$z+z^2$ at $0.3^i$ for $i=4,\dots,10$ are first computed and appended to the estimates of~$Y^{[2]}(0.3^i)$, $i=1,\dots,3$, which leads to the 10-tuple of values 
\[{c}=(0.5483115,0.0998871,0.0277706,0.008165,0.0024359,\dots,0.0000059).\]
These values are then used to compute the values of $\bs{\Psi}_{\bc F}\left(0.3, c,0,0,\dotsc)\right)$ at the second step of the hybrid method. In particular, the necessary values of~$B(\alpha^i)$ are obtained by
\begin{align*}
	B(\alpha)&\simeq\alpha\exp(c_1+c_2/2+\dots+c_{10}/10)\\
	B(\alpha^2)&\simeq\alpha^2\exp(c_2+c_4/2+\dots+c_{10}/5)\\
	B(\alpha^3)&\simeq\alpha^3\exp(c_3+c_6/2+c_9/3)\\
	B(\alpha^4)&\simeq\alpha^4\exp(c_4+c_8/2)\\
	B(\alpha^5)&\simeq\alpha^5\exp(c_5+c_{10}/2)\\
	B(\alpha^i)&\simeq\alpha^i\exp(c_i),\quad i=6,\dots,10.
\end{align*}
\end{example}

\paragraph{Truncation orders}
In order to compute values of~$\bs Y^{[n]}(z)$, we use a truncation of this power series. 
Although the order $2^n$ is valid in a Newton iteration for~$\bs Y^{[n]}(z)$, it is desirable for efficiency reasons to use fewer terms if possible. We now discuss the choice of the order of truncation in terms of the desired accuracy. 

This is achieved using dominant species.
If~$\hatH$ is a flat constructible species dominating~$\bc H$, and $\,\bc U$ is defined by~$\,\bc U=\hatH({\mathcal Z},\bc U)$, then $\,\bc U$ is flat and values of its generating series can be computed by the iteration for exponential generating series. This value is then used to bound truncation orders for subspecies of~$\bc S$ thanks to the following result.
\begin{lemma}\label{tail-dominant}
	Let $\bs U(z)=\bs u_0+\bs u_1z+\dotsb$ and $\bs V(z)=\bs v_0+\bs v_1z+\dotsb$ be the generating series of two species $\,\bc U$ and~$\bc V$, such that~$\bc V\lhd\bc U$. Assume that the series~$\bs U$ converges at ${r}\ge0$. Then for any~$\alpha$ with~$|\alpha|\le{r}$, \[\|\bs U({r})-(\bs u_0+\bs u_1{r}+\dots+\bs u_N{r}^N)\|<\epsilon\Rightarrow
	\|\bs V(\alpha)-(\bs v_0+\bs v_1\alpha+\dots+\bs v_N\alpha^N)\|<\epsilon.\]
\end{lemma}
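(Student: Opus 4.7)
The plan is short because this lemma is really just a statement about termwise comparison of series with nonnegative coefficients. First I would unfold the definition of dominance: the hypothesis $\bc V \lhd \bc U$ gives $\tilde{\bs V}(z) \lhd \tilde{\bs U}(z)$ directly, and Lemma~\ref{lemma:egf} provides the analogous majorization for exponential generating series. So regardless of which type of generating series $\bs U$ and $\bs V$ denote, we obtain the coefficientwise inequality $0 \le \bs v_n \le \bs u_n$ for every $n \ge 0$ (componentwise on each coordinate of the vector).

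Second, I would rewrite both tails as nonnegative-coefficient series, since both the hypothesis and the conclusion can be restated as
\[
\bs U(r) - \sum_{n=0}^{N} \bs u_n r^n = \sum_{n > N} \bs u_n r^n, \qquad
\bs V(\alpha) - \sum_{n=0}^{N} \bs v_n \alpha^n = \sum_{n > N} \bs v_n \alpha^n.
\]
Then the conclusion follows from a one-line chain of inequalities: the triangle inequality gives $\|\sum_{n>N} \bs v_n \alpha^n\| \le \sum_{n>N} \|\bs v_n\|\,|\alpha|^n$; the hypothesis $|\alpha|\le r$ combined with $\bs v_n \ge 0$ gives $\|\bs v_n\|\,|\alpha|^n \le \|\bs v_n\|\, r^n$; and the dominance inequality $\bs v_n \le \bs u_n$ together with $r \ge 0$ gives $\|\bs v_n\|\, r^n \le \|\bs u_n\|\, r^n$. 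Summing and using the nonnegativity of $\bs u_n r^n$ to pull the norm out of the sum, this is exactly $\|\bs U(r) - \sum_{n=0}^N \bs u_n r^n\| < \epsilon$.

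There is no real obstacle here. The only minor point of care is that the norm on vectors of series must be a monotone norm on the nonnegative orthant (e.g.\ the componentwise max norm or $\ell^1$ norm), which is implicit in the context where such norms are always coefficientwise comparisons on counts of structures. Once this is noted, the argument is essentially the classical majorant-series comparison applied to each coordinate.
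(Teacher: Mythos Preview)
Your proof is correct and follows essentially the same approach as the paper: obtain the coefficientwise inequality $0\le\bs v_n\le\bs u_n$ from dominance, then bound the tail $\|\sum_{n>N}\bs v_n\alpha^n\|$ by $\|\sum_{n>N}\bs u_n r^n\|$ via the chain of inequalities using $|\alpha|\le r$ and $\bs v_n\le\bs u_n$. The paper compresses this into two lines, whereas you spell out the triangle-inequality step and the appeal to Lemma~\ref{lemma:egf} for the exponential case more explicitly, but the argument is the same.
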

\begin{proof}
The coefficients of the species and its subspecies are related by~$0\le \bs v_n\le \bs u_n$. Therefore,
\[\|\sum_{n>N}{\bs v_n\alpha^n}\|\le\|\sum_{n>N}{\bs v_n{r}^n}\|\le
\|\sum_{n>N}{\bs u_n{r}^n}\|\le\epsilon. \qedhere\] 
\end{proof}

This technique yields the following truncation algorithm.

\medskip
\noindent{\bf Algorithm:} Truncation order\\
{\bf Input:} $\widehat{\bc H}$, $\rho$, $\epsilon$, with $\rho\ge0$ inside the disk of convergence of~$\bs U$\\
{\bf Output:} A bound on the truncation orders for generating series of subspecies of $\bc S$ for~$|\alpha|\le\rho$
\begin{enumerate}
	\item Compute $\bs R:=\bs U(\rho)$ by Theorem~\ref{th:newtonnum}.
\item $i:=0;$
\item {\bf Repeat:}
\begin{itemize}
	\item[] $i:=i+1;$ Compute $[z^i]\bs U$ by Theorem~\ref{th:newt_sg};
	\item[] $\bs R:=\bs R-[z^i]\bs U|\rho|^i;$
\end{itemize}
\item {\bf Until} $\|\bs R\|<\epsilon$
\item {\bf Return} ${i}$.
\end{enumerate}

\begin{example} By Example~\ref{ex:dom2}, the species of Cayley trees (nonordered trees) is dominated by the species of Catalan trees. For $\rho=1/10$, Example~\ref{ex:catalan_num} gives $\bs U(\rho)\simeq0.112701665379258311482$, while Example~\ref{ex:catalan_series} gives the first terms of the series, showing that 9 terms are sufficient to guarantee a precision $\epsilon=10^{-6}$ for the computation, since
	\[\bs U(\rho)-(\rho+\rho^2+2\rho^3+5\rho^4+14\rho^5+42\rho^6+132\rho^7+429\rho^8+1430\rho^9)\simeq 0.74\cdot 10^{-6}.\]
It follows that the computation of~$\tilde{C}$ at precision~$\epsilon$ at any~$\alpha\le1/10$ can be performed using~$M=10$ (9~terms) in the hybrid method.
\end{example}

If Newton's iteration is used, the computation of $\bs[z^i]{U}$ computes several coefficients at once. Obviously, they are all used in the next step to improve the precision of $\bs R=\bs U(\rho)-(\bs U_0+\dots+[z^i]\bs U\rho^i)$.
The termination of this algorithm follows from the convergence of the series $\bs U$ at $\rho$ which implies that $\|\bs R\|\rightarrow0$ as $i\rightarrow\infty$.

If $N$ is the bound computed by this algorithm, 
our hybrid method can be used with~$k$ chosen in such a way that $\alpha^{k+1}\le\rho$ and $M=\min(2m,N)$. By Lemma~\ref{tail-dominant}, all the evaluations of the series~$\bs Y^{[n]}(z)$ at~$\alpha^i$ with $i>k$ then have an error bounded by~$\epsilon$. 

\begin{example} To compute the numerical values of the series $\tilde{G}$ of Cayley trees at $\alpha=3/10$, we use their domination by Catalan trees.  While $3/10$ is larger than the radius $1/4$ of convergence of the generating series of Catalan trees, the computation of~$\tilde{G}(3/10)$ can be done by the hybrid method, as soon as~$K>1$. With~$K=2$ and a precision~$10^{-20}$ we get $M=16$ while $K=4$ leads to a smaller $M=7$. With these values the numerical computation of the Pólya operator for sets uses at most 36~terms of the sum in Lemma~\ref{numer_polya} and leads to the values given in Example~\ref{numer_Co}. 
\end{example}

\paragraph{Optimized Newton iteration} The preceding methods for numerical evaluation also apply in the case of the optimized Newton iteration.
We just give some examples.

\begin{example} The exponential generating series of Cayley trees is evaluated in Example~\ref{ex:num-cayley} by the simple Newton iteration:
	\[g^{[n+1]}=g^{[n]}+\frac{\alpha e^{g^{[n]}}-g{[n]}}{1-\alpha e^{g^{[n]}}}.\]
The optimized iteration is
\[t^{[n+1]}=t^{[n]}+u^{[n+1]}(\alpha e^{t^{[n]}}-t^{[n]}),\quad u^{[n+1]}=u^{[n]}+u^{[n]}(\alpha e^{t^{[n]}}u^{[n]}-u^{[n]}+1),\]
both with initial value~$t^{[n]}=0$, and $u^{[0]}=1$; the latter one trades one division for one multiplication. For $\alpha=1/10$, the first few values are: \begin{align*}
&\mathbf{0.11}000000000000000000,\\
&\mathbf{0.11183}139629772798888,\\
&\mathbf{0.111832559158}22437627,\\
&\mathbf{0.11183255915896296483}. 
\end{align*}
The convergence is mildly slower, but still quadratic and the complexity is slightly smaller. The real gain of this method is for systems, where inverses of matrices can become expensive. 
\end{example}     

\begin{example} For series-parallel graphs, the optimized iteration is given in the introduction to this article.	
\end{example}

\section{Extensions}\label{sec:extend}
In this section, we discuss two extensions of our work.

%
In the first part, we generalize and extend our results to combinatorial systems including integrals. We define well-founded integral systems, that possess a unique linear species solution and we adapt Newton's iteration to their resolution, with quadratic convergence. From there, we compute the generating series with quasi-optimal arithmetic complexity. This framework also provides an efficient way to compute with equations involving $\Set $ and $\Cyc$, by expressing them in terms of simple differential equations (without exponentials or logarithms).

The second extension deals with the construction \emph{Powerset}, which is not a species \emph{per se}, but is amenable to Newton iteration nonetheless.

\subsection{Linear Species and Differential Equations}
Combinatorial differential equations can be stated for classical species, but such a simple equation as ${\mathcal Y}'={\mathcal Z}\Set({\mathcal Z})$, ${\mathcal Y}(0)=0$ does not have a solution, and others may have an infinite number of solutions  (this is proved by using a decomposition in molecular species~\cite{Labelle86b}). A nicer framework is provided by \emph{linear species}, that make use of a linear order on the underlying set, and it is possible in this framework to define structures such as alternating permutations or increasing rooted trees.       

A concise presentation of linear species is given in the last chapter of~\cite{BeLaLe98}. We summarize here the salient points.
Compared to Definition~\ref{def:species}, the differences are that the finite sets~$U$ are endowed with a total order and the bijections preserve orders. The automorphisms become trivial and there are no unlabeled structures. Only exponential generating series are used and two linear species are isomorphic if and only if their exponential generating series are identical. Any classical species gives rise to a linear species by restricting it to totally ordered sets and increasing bijections, and we keep the same name and notation for sets, cycles,\dots. Composition, derivative, as well as sum, product and more general multisort linear species are defined by paying attention to the order. With the new definitions for these operations, the exponential generating series still satisfy the same equations as before.

A new important operation  on linear species is \emph{combinatorial integration}, defined by
\[\left(\int_0^{{\mathcal Z}}{{\mathcal F}}({\mathcal T})d{\mathcal T}\right)[U]=\begin{cases}\eset\quad&\text{if $U=\eset$,}\\ {{\mathcal F}}[U\setminus\{\min(U)\}],\quad&\text{otherwise.}\end{cases}\]
A graphical representation of an $\int{{\mathcal F}}$-structure is given by Figure~\ref{fig:def_int}. The exponential generating series of~$\int_0^{{\mathcal Z}}{\mathcal F}({\mathcal T})d{\mathcal T}$ is $\int_0^zF(t)\,dt$. Indeed, the number of structures of size $n+1$ (including the minimum) in the combinatorial integration is $f_n$; thus the exponential generating function is $\sum  f_n \frac{z^{n+1}}{(n+1)!}= \int \sum f_n \frac{z^n}{n!}$.

\begin{figure}[t]
        \centering
                \includegraphics[width=0.2\textwidth]{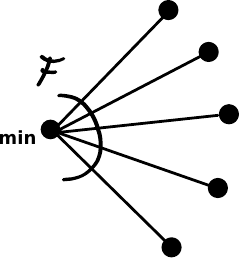}
        \caption{Representation of an $\int{{\mathcal F}}$-structure.}
        \label{fig:def_int}
\end{figure}

\begin{example} Alternating permutations. Any permutation~$\sigma$ can be written as a word whose $i$th letter is $\sigma(i)$. One can then associate a tree to $\sigma$, with $\min(\sigma)$ at the root and the left and right subtrees constructed from the factors on the left and on the right of $\min(\sigma)$. In this bijection, alternating permutations, i.e., of the form $\sigma(1)>\sigma(2)<\sigma(3)>\sigma(4)<\dotsb$ are in correspondence with binary increasing trees (binary labeled trees whose labels increase along each branch). These trees satisfy the equation~${{\mathcal B}}={{\mathcal Z}}+\int{{{\mathcal B}}^2}$. Their exponential generating series satisfies~$B'=1+B^2$ and $B(0)=0$, so that $B(z)=\tan(z)$. 
\end{example}

The  following two examples show how to express $\Cyc$ and  $\Set$ in terms of differential equations. These expressions are used, in particular, to get the complexity of constructible species in a self-contained way, in Proposition~\ref{prop:constructible}.

\begin{example} Cycles. 
        The equation for $\Cyc(\bc A)$ is obtained  as the integral of its derivative (see Table~\ref{tab:deriv}):
\[\Cyc(\bc A({\mathcal Z}))=\int_0^{{\mathcal Z}}{\Seq(\bc A({\mathcal T}))\bc A'({\mathcal T})\,d{\mathcal T}}.\]
Thus the computation of the generating series is obtained simply by expanding
\[\int_0^{z}{\frac{A'(t)}{1-A(t)}\,dt}.\]
This is a special case of the classical way of computing the power series expansion of $\log(u)$ as the integral of its derivative.
\end{example}

\begin{example} Sets. When viewed as linear species, sets satisfy an integral equation, again obtained as the integral of their derivative. If $\bc A$ is a species such that $\bc A(0)=0$, then the species $\Set(\bc A)$ satisfies
	\[\Set(\bc A({\mathcal Z}))= 1+\int_0^{{\mathcal Z}}{\Set(\bc A({\mathcal T}))\bc A'({\mathcal T})\,d{\mathcal T}}.\]
The algorithms developped in this section solve such equations and thus provide fast ways of computing the ``$\exp$'' needed in the computation of the exponential generating series for $\Set$, and we will show more generally how to compute the composition $\exp(u)$ where $u$ is a power series, so that the ordinary generating series of $\Set$ can be computed efficiently too.
\end{example}

\medskip
The rest of this section is devoted to the resolution of integral systems.
The first step is to  define general combinatorial  integral systems over linear species, and find  suitable conditions
 ensuring that they have a solution which is unique up to isomorphism : this is done in \ref{sec:wfis}, relying on previous work by Leroux and Viennot. Then in \ref{sec:niis}, generalizing a result by Labelle, we define Newton's Iteration for integral system and show that the solution of the linearized system doubles the precision. Newton's Iteration reduces a non linear problem to a linear integral system. In order to solve linear integral system, we propose in \ref{subseq:varconst} a  Variation of the Constant method, which applies to homogeneous  and non homogeneous, and finally results in an efficient algorithm for the resolution of general well-founded integral systems by Newton's Iteration in \ref{subseq:isni}. Some examples are given in  \ref{subseq:ex}, showing that specifications containing $\Set $ and $\Cyc$ can be transformed into integral systems with rapid enumeration algorithms.

\subsubsection{Well-founded Integral Systems}\label{sec:wfis}
With the combinatorial integration operator, integral systems over linear species can be defined. It turns out that they are all ``well founded'' in the sense that they define unique linear species, and we show a general implicit theorem for integral systems in \ref{th:well_founded_integral_system}. Our result relies on a proposition due to Leroux and Viennot, concerning purely integral systems:
\begin{property}{\cite{LerouxViennot1986}}\label{prop:LV86}
For arbitrary linear species ${\mathcal W}_1, \dots, {\mathcal W}_m$, the system
\begin{equation}\label{diff_sys}
{\mathcal Y}_i({\mathcal Z},\bc X)={\mathcal X}_i+\int_0^{{\mathcal Z}}{{\mathcal W}_i({\mathcal T},\bc Y({\mathcal T},\bc X))d{\mathcal T}},\quad i=1,\dots,m
\end{equation}
has a linear species solution that is unique up to isomorphism.
\end{property}
The solution is called a $\bc W$-enriched increasing rooted tree.
Many other systems, including higher-order differential or integral equations, can be reduced to the form of the theorem by adding new variables. 
This property shows that, thanks to the integral, all systems of type~\eqref{diff_sys} are unconditionally well founded. 
 
We now extend this result in two directions: first we generalize these integral systems to systems admitting a non-integral part, so that the implicit species theorem is also a special case; next, we consider well-founded systems of that type.
\begin{proposition} Let~$\bc H_{1:m}$ and~$\bc G_{1:m}$ be vectors of $m+1$-sort linear species such that
$\bc H(0,\bs{0})=\bs{0}$ and $\bs{\partial}\bc H/\bs{\partial}\bc Y(0,\bs{0})$ is nilpotent. Then the system
\[\bc Y({\mathcal Z})=\bc H({\mathcal Z},\bc Y({\mathcal Z}))+\int_0^{{\mathcal Z}}{\bc G({\mathcal T},\bc Y({\mathcal T}))\,d{\mathcal T}}\]
has a linear species solution~$\bc S$ such that~$\bc S(0)=\bs{0}$, that is unique up to isomorphism.
\end{proposition}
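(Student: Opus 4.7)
The plan is to decouple the system into two parts—an implicit algebraic part and a pure integral part—and then apply Joyal's Implicit Species Theorem (Thm.~\ref{th:IST}) to the first and Property~\ref{prop:LV86} to the second. This mirrors what one would do in classical analysis to reduce an integro-functional equation to a pure integral equation after inverting the algebraic part.

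More precisely, I would introduce an auxiliary linear species $\bc V({\mathcal Z})$ and rewrite the given system as the coupled system
\[\bc Y = \bc H({\mathcal Z}, \bc Y) + \bc V, \qquad \bc V({\mathcal Z}) = \int_0^{{\mathcal Z}} \bc G({\mathcal T}, \bc Y({\mathcal T}))\, d{\mathcal T}.\]
The first equation is viewed as an implicit problem for $\bc Y$ driven by the $m+1$-sort species $\bc K({\mathcal Z}, \bc V, \bc Y) := \bc H({\mathcal Z}, \bc Y) + \bc V$. Since $\bc K(0, \bs{0}, \bs{0}) = \bc H(0, \bs{0}) = \bs{0}$ and $\bs{\partial}\bc K/\bs{\partial}\bc Y(0, \bs{0}, \bs{0}) = \bs{\partial}\bc H/\bs{\partial}\bc Y(0, \bs{0})$ is nilpotent by hypothesis, Joyal's theorem applies and produces a linear species $\bc F({\mathcal Z}, \bc V)$ with $\bc F(0, \bs{0}) = \bs{0}$, unique up to isomorphism, such that $\bc F = \bc H({\mathcal Z}, \bc F) + \bc V$.

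Substituting $\bc Y = \bc F({\mathcal Z}, \bc V)$ into the integral equation yields the purely integral system
\[\bc V({\mathcal Z}) = \int_0^{{\mathcal Z}} \bc W({\mathcal T}, \bc V({\mathcal T}))\, d{\mathcal T}, \qquad \bc W({\mathcal T}, \bc V) := \bc G({\mathcal T}, \bc F({\mathcal T}, \bc V)),\]
to which Property~\ref{prop:LV86} (with the initial condition $\bc X = \bs{0}$) furnishes a unique linear species solution $\bc V$. Setting $\bc S := \bc F({\mathcal Z}, \bc V({\mathcal Z}))$ then gives a solution of the original system with $\bc S(0) = \bc F(0, \bs{0}) = \bs{0}$.

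For uniqueness, given any other solution $\bc S'$ with $\bc S'(0)=\bs{0}$, I would set $\bc V' := \int_0^{{\mathcal Z}} \bc G({\mathcal T}, \bc S'({\mathcal T}))\, d{\mathcal T}$; then $\bc S' = \bc H({\mathcal Z}, \bc S') + \bc V'$ with $\bc V'(0)=\bs{0}$, so uniqueness in Joyal's theorem gives $\bc S' \cong \bc F({\mathcal Z}, \bc V')$, after which $\bc V'$ satisfies the same pure integral system as $\bc V$ and the Leroux--Viennot result forces $\bc V' \cong \bc V$ and hence $\bc S' \cong \bc S$. The main delicate point I anticipate is verifying that the composite species $\bc W = \bc G(\cdot, \bc F(\cdot, \bc V))$ is a legitimate linear species expression—i.e.\ that the linear-species composition rules apply—which reduces to checking $\bc F(0, \bs{0}) = \bs{0}$ (guaranteed by Joyal's theorem) together with the analogous condition for $\bc G$ on its second argument, so that the substitution and the integral can both be performed without size-$0$ ambiguities.
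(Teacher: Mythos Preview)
Your argument is correct and takes a genuinely different route from the paper's. The paper does not prove this proposition directly; it passes immediately to the more general Theorem~\ref{th:well_founded_integral_system}, whose proof proceeds by \emph{differentiating} the system: from $\bc Y=\bc H({\mathcal Z},\bc Y)+\int\bc G$ one derives an implicit equation for $\bc Y'$, namely $\bc Y'=\frac{\bs\partial\bc H}{\bs\partial\bc Y}\bc Y'+\frac{\bs\partial\bc H}{\partial{\mathcal Z}}+\bc G$, applies the Implicit Species Theorem to that linear system to obtain $\bc Y'=\bc W({\mathcal Z},\bc Y)$, and then integrates back to land in the Leroux--Viennot form~\eqref{diff_sys}. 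You instead keep the integral term as a new unknown $\bc V$, use the Implicit Species Theorem on the \emph{algebraic} part $\bc Y=\bc H({\mathcal Z},\bc Y)+\bc V$ to express $\bc Y=\bc F({\mathcal Z},\bc V)$, and then substitute to reach a pure integral system for~$\bc V$. Both reductions invoke the same two ingredients (Theorem~\ref{th:IST} and Property~\ref{prop:LV86}) but apply the implicit theorem at different spots: the paper solves for the derivative, you solve for the value. Your route is more direct for this particular statement and avoids differentiation altogether; the paper's route has the advantage of extending cleanly to the setting of Theorem~\ref{th:well_founded_integral_system} where $\bc H(0,\bs 0)\neq\bs 0$, since the initial condition $\bc Y(0)$ is isolated naturally after differentiation and integration. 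One small remark on your final paragraph: the well-definedness of $\bc W=\bc G({\mathcal T},\bc F({\mathcal T},\bc V))$ requires only $\bc F(0,\bs 0)=\bs 0$; no separate condition on $\bc G$ in its second argument is needed for the composition to make sense.
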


Instead of proving this result, we proceed directly to the following more general one that encompasses the general implicit species theorem as well. Apart from technical conditions ensuring that all compositions with species that are not~0 at~0 are possible, the idea consists in reducing to an integral system of the type~\eqref{diff_sys}.
\begin{theorem}[Well-founded integral systems]\label{th:well_founded_integral_system} Let~$\bc H_{1:m}$ and~$\bc G_{1:m}$ be vectors of $m+1$-sort linear species. The system
\begin{equation}\label{eq:integral_system}
\bc Y({\mathcal Z})=\bc H({\mathcal Z},\bc Y({\mathcal Z}))+\int_0^{{\mathcal Z}}{\bc G({\mathcal T},\bc Y({\mathcal T}))d{\mathcal T}}
\end{equation}
admits a linear species solution~$\bc S$ such that~$\bc S(0)=\bc H^m(0,\bs{0})$, that is unique up to isomorphism when the following two conditions hold:
\begin{enumerate}
	\item $\bs{\partial}\bc H/\bs{\partial}\bc Y(0,\bs{0})$ is nilpotent;
	\item if $\bc H(0,\bs{0})\neq\bs{0}$ then
	\begin{enumerate}
 		\item $\bc H(0,\bc Y)$ is polynomial and $\bs{\partial}\bc H/\bs{\partial}\bc Y(0,\bc Y)$ is nilpotent;
		\item if~$\bc R$ is the solution of~$\bc Y=\bc H(0,\bc Y)$ then for all~$i\in\{1,\dots,m\}$, either ${{\mathcal R}}_i=0$ or the solution of~\eqref{diff_sys} is polynomial in~${{\mathcal X}}_i$, with $\bc W$ solution of the implicit system
\begin{equation}\label{def_W}
\bc Q=\frac{\bs{\partial}\bc H}{\bs{\partial}\bc Y}({\mathcal Z},\bc Y)\cdot\bc Q+\frac{\bs{\partial}\bc H}{\partial{\mathcal Z}}({\mathcal Z},\bc Y)+\bc G({\mathcal Z},\bc Y).
\end{equation}
	\end{enumerate}
\end{enumerate}
A system satisfying these conditions will be called \emph{well-founded}.
\end{theorem}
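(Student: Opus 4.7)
The strategy is to reduce~\eqref{eq:integral_system} to a purely integral system of Leroux--Viennot form~\eqref{diff_sys}, by isolating the ``derivative'' $\bc Y'$ of a hypothetical solution. Formal differentiation of~\eqref{eq:integral_system}, using the chain rule~\eqref{eq:diffcomp} for $\bc H(\bc Z, \bc Y(\bc Z))$ together with the fact that differentiation and integration of linear species are inverse to each other on species vanishing at $0$, yields
\[
\bc Y' = \frac{\bs{\partial}\bc H}{\partial{\mathcal Z}}(\bc Z, \bc Y) + \frac{\bs{\partial}\bc H}{\bs{\partial}\bc Y}(\bc Z, \bc Y)\cdot \bc Y' + \bc G(\bc Z, \bc Y),
\]
which is exactly~\eqref{def_W} with $\bc Q$ playing the role of $\bc Y'$. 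Accordingly, I would first invoke the General Implicit Species Theorem~\ref{th:GIST} on~\eqref{def_W}, viewed as an implicit system for $\bc Q$ parametrized by $(\bc Z, \bc Y)$: the Jacobian of its right-hand side with respect to $\bc Q$ is $\bs{\partial}\bc H/\bs{\partial}\bc Y(\bc Z, \bc Y)$, nilpotent at $(0,\bs{0})$ by hypothesis~1; combined with condition~2a when $\bc H(0,\bs{0})\ne \bs{0}$, this produces a unique linear species $\bc W(\bc Z,\bc Y)$.

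Next, let $\bc R$ denote the solution of $\bc Y = \bc H(0,\bc Y)$: by Propositions~\ref{prop:polynomial_implicit2} and~\ref{prop:polynomial_implicit} it is polynomial and equals $\bc H^m(0,\bs{0})$ (reducing to $\bc R=\bs{0}$ when $\bc H(0,\bs{0})=\bs{0}$). Property~\ref{prop:LV86} applied to
\[
\bc Y({\mathcal Z},\bc X) = \bc X + \int_0^{{\mathcal Z}} \bc W(\bc T,\bc Y(\bc T,\bc X))\, d\bc T
\]
yields a unique (up to isomorphism) linear species $\bc Y({\mathcal Z},\bc X)$; I would then specialise $\bc X := \bc R$ and set $\bc S(\bc Z) := \bc Y(\bc Z,\bc R)$ as the candidate solution. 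Condition~2b is precisely what legitimates this substitution: it guarantees that $\bc Y({\mathcal Z},\bc X)$ is polynomial in each ${\mathcal X}_i$ for which ${\mathcal R}_i \ne 0$, so that the composition with the polynomial species $\bc R$ makes sense.

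To check that $\bc S$ solves~\eqref{eq:integral_system}, introduce $\bc F(\bc Z) := \bc H(\bc Z,\bc S(\bc Z)) + \int_0^{\bc Z} \bc G(\bc T,\bc S(\bc T))\, d\bc T$. By the chain rule and the defining identity~\eqref{def_W},
\[
\bc F' = \frac{\bs{\partial}\bc H}{\partial{\mathcal Z}} + \frac{\bs{\partial}\bc H}{\bs{\partial}\bc Y}\cdot \bc S' + \bc G = \frac{\bs{\partial}\bc H}{\partial{\mathcal Z}} + \frac{\bs{\partial}\bc H}{\bs{\partial}\bc Y}\cdot \bc W(\bc Z,\bc S) + \bc G = \bc W(\bc Z,\bc S) = \bc S',
\]
while $\bc F(0) = \bc H(0,\bc R) = \bc R = \bc S(0)$; integrating back gives $\bc F = \bc S$, which is~\eqref{eq:integral_system}. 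Conversely, any solution $\tilde{\bc S}$ of~\eqref{eq:integral_system} with $\tilde{\bc S}(0) = \bc H^m(0,\bs{0})$ satisfies, upon differentiation, $\tilde{\bc S}' = \bc W(\bc Z,\tilde{\bc S})$, hence solves the $\bc W$-enriched Leroux--Viennot system with the same initial value and is isomorphic to $\bc S$ by Property~\ref{prop:LV86}. The main obstacle is the careful bookkeeping in Step~1 and the substitution step: verifying that conditions~2a and~2b provide exactly what is needed for GIST to apply to~\eqref{def_W} and for $\bc X = \bc R$ to compose meaningfully with the Leroux--Viennot solution. Once this technical verification is in place, the differentiate-then-integrate argument is routine.
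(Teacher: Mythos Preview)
Your overall strategy matches the paper's: differentiate~\eqref{eq:integral_system}, solve implicitly for $\bc Y'$ as a species $\bc W(\bc Z,\bc Y)$, apply Leroux--Viennot to the resulting pure integral system, and then substitute the initial value $\bc R=\bc H^m(0,\bs 0)$ using condition~2(b). Your explicit verification that the candidate $\bc S$ solves the original equation (the ``$\bc F'=\bc S'$, $\bc F(0)=\bc S(0)$'' argument) is actually more detailed than what the paper writes out.

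The one genuine soft spot is your Step~1, and the paper handles it differently. You invoke GIST directly on~\eqref{def_W} as an implicit system for $\bc Q$. But GIST (Theorem~\ref{th:GIST}) requires the system to be well founded in the sense of Definition~\ref{def:wf}, which in particular demands that the limit have \emph{no zero coordinate}; nothing in hypotheses~1 or~2(a) guarantees this for the coordinates of $\bc W$, and condition~2(a) concerns $\bc H(0,\bc Y)$, not the $\bc Q$-system. The paper sidesteps this entirely: it introduces an auxiliary sort $\bc X$ and considers instead
\[
\bc U(\mathcal Z,\bc X,\bc Y)=\frac{\bs\partial\bc H}{\bs\partial\bc Y}(\mathcal Z,\bc Y)\cdot\bc U+\bc X,
\]
which has right-hand side $\bs 0$ at the origin and Jacobian $\bs\partial\bc H/\bs\partial\bc Y(0,\bs 0)$ nilpotent by hypothesis~1, so the \emph{original} IST (Theorem~\ref{th:IST}) applies with no zero-coordinate condition needed. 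Since the solution is visibly linear in $\bc X$, the substitution $\bc X=\bs\partial\bc H/\partial\mathcal Z+\bc G$ is automatic, producing $\bc W$; uniqueness of $\bc W$ then comes from the homogeneous system $\bc U=\bs\partial\bc H/\bs\partial\bc Y\cdot\bc U$ having only the zero solution. This auxiliary-variable trick is the one idea you are missing; once you adopt it, the rest of your argument goes through unchanged.
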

\begin{proof}
Differentiating~\eqref{eq:integral_system} (see~\eqref{eq:diffcomp}) gives
\[\bc Y'({\mathcal Z})=\frac{\bs{\partial}\bc H}{\bs{\partial}\bc Y}({\mathcal Z},\bc Y)\cdot\bc Y'(\mathcal Z)+\frac{\bs{\partial}\bc H}{\partial{\mathcal Z}}({\mathcal Z},\bc Y)+\bc G({\mathcal Z},\bc Y),\]
which shows that the derivative of the solution satisfies~\eqref{def_W}.
Since~$\bs{\partial}\bc H/\bs{\partial}\bc Y(0,\bs{0})$ is nilpotent, the implicit species theorem~\ref{th:IST} implies that the related linear system
\begin{equation}\label{eqU}
\bc U(\mathcal Z,\bc X,\bc Y)={\bs{\partial}\bc H}/{\bs{\partial}\bc Y}({\mathcal Z},\bc Y)\cdot\bc U(\mathcal Z,\bc X,\bc Y)+\bc X,
\end{equation}
with $\bc X$ a vector of sorts, is well founded. Its solution is polynomial (even linear!) in~$\bc X$, so that the composition with $\bc X=\frac{\bs{\partial}\bc H}{\partial{\mathcal Z}}({\mathcal Z},\bc Y)+\bc G({\mathcal Z},\bc Y)$ is possible, giving a solution~$\bc W(\mathcal Z,\bc Y)$ to~\eqref{def_W}. Moreover, such an equation has a unique solution, since Eq.~\eqref{eqU} with~$\bc X=\bs{0}$ is well founded. Thus~$\bc Y'=\bc W(\mathcal Z,\bc Y)$ and
the system~\eqref{eq:integral_system} rewrites
\[\bc Y({\mathcal Z})=\bc Y(0)+\int_0^{{\mathcal Z}}{\bc W({\mathcal T},\bc Y({\mathcal T}))\,d{\mathcal T}}.\]

By Property~\ref{prop:LV86}, the same system with the sorts~$\bc X_{1:m}$ in place of~$\bc Y_{1:m}(0)$ has a linear species solution~$\bc V({\mathcal Z},\bc X)$.
Now, if~$\bc S({\mathcal Z})$ is a solution of~\eqref{eq:integral_system}, then~$\bc S(0)$ satisfies the equation~$\bc Y=\bc H(0,\bc Y)$. This equation is well founded as a consequence of condition \emph{2.(a)} and Theorem~\ref{th:WF}. Thus~$\bc S(0)$ is its solution, up to isomorphism.
Finally, condition \emph{2.(b)} ensures that~$\bc V(\mathcal Z,\bc X)$ can be composed with~$\bc X=\bc S(0)$, giving the solution~$\bc S({\mathcal Z})=\bc V({\mathcal Z},\bc S(0))$. 

Uniqueness up to isomorphism is a consequence of the uniqueness at each stage of the construction.
\end{proof}
                                                                                                           
\subsubsection{Newton's Iteration}\label{sec:niis}
We now consider Newton's Iteration by linearizing integral systems.
Again we start with the purely integral system~\eqref{diff_sys}. Newton's iteration has been lifted to this combinatorial level  by Labelle:
\begin{property}[\cite{Labelle86b}]\label{prop:newtondiff}          
Let $\bc S$ be the solution of~\eqref{diff_sys}. If a subspecies~$\bc A\subset\bc S$ has contact of order~$k$ with~$\bc S$, then the solution of the linearized system
\begin{equation}\label{newton_diff}
\bc B=\left(\int_0^{{\mathcal Z}}{\frac{\bs\partial\bc W}{\bs\partial\bc Y}(\bc A)\cdot \bc B}\right)+\bc X+\left(\int_0^{{\mathcal Z}}{\bc W(\bc A)}\right)-\bc A
\end{equation}
is such that $\bc A+\bc B$ has contact of order~$2k$ with~$\bc S$. 
\end{property}

The following result is a generalization of this property to systems that are well founded in the sense of Theorem~\ref{th:well_founded_integral_system}.
\begin{theorem}[Newton's iteration for integral systems]\label{th:newt_diff}
Let $\bc S$ be the solution of the well-founded system
\begin{equation}\tag{\ref{eq:integral_system}}
\bc Y({\mathcal Z})=\bc H({\mathcal Z},\bc Y({\mathcal Z}))+\int_0^{{\mathcal Z}}{\bc G({\mathcal T},\bc Y({\mathcal T}))d{\mathcal T}}.
\end{equation}
If a subspecies~$\bc A\subset\bc S$ has contact of order~$k$ with~$\bc S$, then the solution of the linearized system
\begin{equation}\label{eq:lin_int_sys}
	\bc B({\mathcal Z},\bc X)=\bc X+\int_0^{{\mathcal Z}}{\frac{\bs\partial\bc G}{\bs\partial\bc Y}({\mathcal T},\bc A({\mathcal T}))\left(\Id-\frac{\bs\partial\bc H}{\bs\partial\bc Y}({\mathcal T},\bc A({\mathcal T}))\right)^{-1}\bc B({\mathcal T},\bc X) \,d{{\mathcal T}}}
\end{equation}
is such that $\bc A({\mathcal Z})+
\left(\Id-\frac{\bs\partial\bc H}{\bs\partial\bc Y}({\mathcal Z},\bc A({\mathcal Z}))\right)^{-1}
\bc B\left({\mathcal Z},\bc H({\mathcal Z},\bc A)-\bc A+\int{\bc G(\bc A)}\right)$ has contact of order $2k$ with~$\bc S$.
\end{theorem}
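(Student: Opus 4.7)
My plan is to reduce to Labelle's Newton iteration for purely integral systems (Property~\ref{prop:newtondiff}) and then identify the resulting species-theoretic formulas with the stated expression. The reduction is the same one used in the proof of Theorem~\ref{th:well_founded_integral_system}: writing $\bc W$ for the unique solution of the linear implicit system~\eqref{def_W}, the well-founded integral system~\eqref{eq:integral_system} is equivalent to the pure integral system $\bc Y = \bc S(\bs{0}) + \int_0^{{\mathcal Z}} \bc W({\mathcal T},\bc Y({\mathcal T}))d{\mathcal T}$. Since $\bc W = (\Id - \bs\partial\bc H/\bs\partial\bc Y)^{-1}\cdot (\bs\partial\bc H/\partial{\mathcal Z} + \bc G)$, attempting to expand $\bs\partial\bc W/\bs\partial\bc Y$ directly leads to a cumbersome expression involving the derivative of an inverse. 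The cleaner route, which I would adopt, is to linearize the original equation in $\bc Y$ and only afterward identify the resulting system with a specialization of Labelle's iteration.

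Concretely, looking for an improved approximation of the form $\bc Y = \bc A + \bc E$ and applying the first-order Taylor formula~\eqref{eq:taylor1} to both $\bc H$ and $\bc G$ gives a linearized equation
\[
\bc E = \bc H({\mathcal Z},\bc A)-\bc A + \frac{\bs\partial\bc H}{\bs\partial\bc Y}({\mathcal Z},\bc A)\cdot\bc E + \int_0^{{\mathcal Z}}\left(\bc G({\mathcal T},\bc A)+\frac{\bs\partial\bc G}{\bs\partial\bc Y}({\mathcal T},\bc A)\cdot\bc E\right)d{\mathcal T},
\]
where the discrepancy between the two sides is a collection of $\bc H$- and $\bc G$-assemblies containing at least two members of $\bc S-\bc A$, hence of size at least $2(k+1)$. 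Introducing the change of unknown $\bc C = (\Id-\bs\partial\bc H/\bs\partial\bc Y(\bc A))\cdot\bc E$ (which is well defined since $\bs\partial\bc H/\bs\partial\bc Y(\bs{0},\bs{0})$ is nilpotent, as required by well-foundedness) turns this equation into
\[
\bc C = \bc R + \int_0^{{\mathcal Z}} \frac{\bs\partial\bc G}{\bs\partial\bc Y}({\mathcal T},\bc A)\left(\Id-\frac{\bs\partial\bc H}{\bs\partial\bc Y}({\mathcal T},\bc A)\right)^{-1}\cdot\bc C \,d{{\mathcal T}},\qquad \bc R:=\bc H({\mathcal Z},\bc A)-\bc A+\int_0^{{\mathcal Z}}\bc G({\mathcal T},\bc A)d{\mathcal T}.
\]

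This is exactly~\eqref{eq:lin_int_sys} specialized at $\bc X = \bc R$: by Property~\ref{prop:LV86} applied to the $m$-sort species $\bs\partial\bc G/\bs\partial\bc Y\cdot(\Id-\bs\partial\bc H/\bs\partial\bc Y)^{-1}$, the linear parameterized equation~\eqref{eq:lin_int_sys} defines $\bc B({\mathcal Z},\bc X)$ as the species of enriched increasing rooted trees, and this species is linear in the sort $\bc X$; hence $\bc C = \bc B({\mathcal Z},\bc R)$, and the corresponding Newton correction is precisely
\[
\bc A + \left(\Id-\frac{\bs\partial\bc H}{\bs\partial\bc Y}({\mathcal Z},\bc A)\right)^{-1}\cdot \bc B({\mathcal Z},\bc R),
\]
as claimed. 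It remains to certify the quadratic improvement of contact. Here is where I would lean on Property~\ref{prop:newtondiff}: interpreting the equation for $\bc C$ as an instance of~\eqref{newton_diff} (with the inner species $\bc W$ replaced by a perturbed one, and the sort $\bc X$ playing the role of the residue $\bc R$) transfers the doubling-of-contact statement from Labelle's theorem to our setting.

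The main obstacle, in my view, is not the algebraic manipulation above, which is essentially mechanical, but rather justifying that the ``higher-order'' terms dropped in the linearization are genuinely of contact $2k$. This requires showing two things combinatorially: (i)~$\bc R$ has valuation at least $k+1$, which follows because the equation $\bc S = \bc H(\bc S)+\int\bc G(\bc S)$ and the $k$-contact of $\bc A$ with $\bc S$ imply that the obstruction $\bc R$ can only consist of $\bc H$- or $\int\bc G$-assemblies containing at least one member from $\bc S-\bc A$, all of size $>k$; and (ii)~any structure of size $\le 2k+1$ produced by substituting $\bc A+\bc E$ into the right-hand side either already appears in the linearized correction or contains at least two independent perturbations from $\bc S-\bc A$, which forces its size to exceed $2k+1$. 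The bookkeeping is the same in spirit as in the proof of Lemma~\ref{lem:quad-cvg}, adapted to the linear-species setting with the integration operator providing the extra unit of valuation that makes the entire iteration well-defined.
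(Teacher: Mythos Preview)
Your approach is essentially the paper's: linearize~\eqref{eq:integral_system} via first-order Taylor expansion, absorb the implicit part through the change of variable $\bc C=(\Id-\bs\partial\bc H/\bs\partial\bc Y(\bc A))\cdot\bc E$, recognize the resulting equation as~\eqref{eq:lin_int_sys} specialized at $\bc X=\bc R$, and justify the quadratic remainder by size-counting in the spirit of Lemma~\ref{lem:quad-cvg}. The paper does exactly this, writing the Taylor step directly as a contact identity
\[
\bc S =_{2k} \bc H({\mathcal Z},\bc A)+\frac{\bs\partial\bc H}{\bs\partial\bc Y}({\mathcal Z},\bc A)\cdot(\bc S-\bc A)+\int_0^{{\mathcal Z}}\bc G(\bc A)+\int_0^{{\mathcal Z}}\frac{\bs\partial\bc G}{\bs\partial\bc Y}(\bc A)\cdot(\bc S-\bc A),
\]
and then concluding by uniqueness from Property~\ref{prop:LV86}.

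One remark: your appeal to Property~\ref{prop:newtondiff} is a detour you do not need and that does not quite work as stated. The equation you obtain for $\bc C$ has coefficient $\bs\partial\bc G/\bs\partial\bc Y(\bc A)\cdot(\Id-\bs\partial\bc H/\bs\partial\bc Y(\bc A))^{-1}$ inside the integral, which is not $\bs\partial\bc W/\bs\partial\bc Y$ evaluated at a subspecies of the pure-integral solution; matching the two would require precisely the ``cumbersome'' differentiation of the inverse you set out to avoid. The paper bypasses Property~\ref{prop:newtondiff} entirely: once the contact identity above is established (your points (i)--(ii) are exactly the justification), the decomposition $\bc S-\bc A=(\Id-\bs\partial\bc H/\bs\partial\bc Y(\bc A))^{-1}\bc V$ and uniqueness from Property~\ref{prop:LV86} finish the argument. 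Your direct size-counting argument is the right one; drop the reference to Property~\ref{prop:newtondiff}.
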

Note that setting~$\bc H=\bc X$ in this theorem yields Property~\ref{prop:newtondiff}, while setting $\bc G=0$ produces Theorem~\ref{th:newton}.
\begin{proof}
The proof follows the same reasoning as that of Lemma~\ref{lem:quad-cvg}. Instead of decomposing an $\bc S$-structure of size at most~$2k$ that is not an $\bc A$-structure, we write directly contacts between species:
\[\bc S=_{2k}\bc H({\mathcal Z},\bc A)+\frac{\bs\partial\bc H}{\bs\partial\bc Y}({\mathcal Z},\bc A)\cdot(\bc S-\bc A)
+\int_0^{{\mathcal Z}}{\bc G({\mathcal T},\bc A({\mathcal T}))\,d{\mathcal T}}+\int_0^{{\mathcal Z}}{\frac{\bs\partial\bc G}{\bs\partial\bc Y}({\mathcal T}, \bc A({\mathcal T}))\cdot(\bc S({\mathcal T})-\bc A({\mathcal T}))\,d{\mathcal T}}.\]
Indeed further terms in the Taylor expansions of $\bc H$ and $\bc G$ are supported only by structures of size at least~$2k+2$, which reduces to $2k+1$ by integration. Now, as in Lemma~\ref{lem:quad-cvg}, a structure in $\bc S-\bc A$ starts with a string of $\bs\partial\bc H/\bs\partial\bc Y(\bc A)$. Thus~$\bc S-\bc A$ decomposes as $(\Id-\bs\partial\bc H/\bs\partial\bc Y(\bc A))^{-1}\cdot\bc V$ for a species $\bc V$ that satisfies
\[\bc V=_{2k}\bc H({\mathcal Z},\bc A)-\bc A+\int_0^{{\mathcal Z}}{\bc G({\mathcal T},\bc A)\,d{\mathcal T}}+
\int_0^{{\mathcal Z}}{\frac{\bs\partial\bc G}{\bs\partial\bc Y}({\mathcal T}, \bc A({\mathcal T}))(\Id-\frac{\bs\partial\bc H}{\bs\partial\bc Y}({\mathcal Z},\bc A))^{-1}\bc V({\mathcal T})\,d{\mathcal T}}.\]
This is exactly~\eqref{eq:lin_int_sys}, with~$\bc X$ replaced by $\bc H({\mathcal Z},\bc A)-\bc A+\int{\bc G(\bc A)}$, and the result follows from the uniqueness of Prop.~\ref{prop:LV86}.
\end{proof}

\subsubsection{Virtual Species and Variation of the Constant}\label{subseq:varconst}
Newton's iteration reduces the nonlinear problem~\eqref{diff_sys} 
to the linear integral system~\eqref{newton_diff}.
In the case of \emph{one} equation, this linear integral equation takes the form
\[{{\mathcal B}}={{\mathcal Q}}+\int_0^{{\mathcal Z}}{{\mathcal P}{\mathcal B}}.\]
Leroux and Viennot~\cite{LerouxViennot1986} have shown that its resolution can be reduced further to the solution~${\mathcal V}$ of the homogeneous equation ${{\mathcal V}}'={{\mathcal P}{\mathcal V}}$ by a combinatorial lifting of Lagrange's method of variation of the constant:
\begin{equation}\label{varcte}
{{\mathcal B}}=\Set\left(\int_0^{{\mathcal Z}}{\mathcal P}\right)\cdot\left(\int_0^{{\mathcal Z}}{\Set\left(-\int_0^{{\mathcal T}}{{\mathcal P}}\right)\cdot{\mathcal Q}}\right),
\end{equation}
using $\Set$ in place of the analytical $\exp$ and virtual linear species to account for the minus sign in the inner integral. Virtual species are presented in~\cite[\S2.5]{BeLaLe98}; they are to species what the relative integers are to the natural integers; they let one define the opposite of a species and the reciprocal of a species $\bc F$, the latter when $\bc F(0)=1$; their generating series obey the same rules as those of the usual species. 

We now apply variation of the constant to systems of equations. In this setting, the linear homogeneous system is no longer solved by an exponential (or combinatorially, by a $\Set$) in general. However, fast power series expansion via Newton's iteration is still possible in this context as shown in~\cite{BostanChyzakOllivierSalvySchostSedoglavic2007}. 
We lift the idea of~\cite{BostanChyzakOllivierSalvySchostSedoglavic2007} combinatorially, using virtual species. The key idea is to compute not only a vector solution of the homogeneous equation~$\bc V'=\bc P\cdot\bc V$ used for variation of the constant, but a square matrix solution~$\bc V$ together with its inverse.
The result is summarized in the following statement, where we define the \emph{valuation} of a species $\bc F$ as that of its generating series and denote it $\val(\bc F)$.
\begin{proposition}[Variation of the constant]\label{prop:var_cte}
Let $\bc M$ be a matrix species solution of $\bc Y'=\bc A\bc Y$, with $\bc Y(0)=\Id$ and let $\bc W\subset\bc M$ have contact of order $k$ with $\bc M$. Let $\bc B$ be a species with valuation at least~$\ell$. Then the species~$\bc W\int\bc W^{-1}\bc B$ has contact of order $k+\ell+1$ with the solution~$\bc S$ of~$\bc Y'=\bc A\bc Y+\bc B$.
\end{proposition}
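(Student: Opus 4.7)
\medskip
\noindent\textbf{Proof plan.} The starting observation is that variation of the constants gives the exact solution as $\bc S=\bc M\int_0^{{\mathcal Z}}\bc M^{-1}\bc B$, which one checks by differentiation using $\bc M'=\bc A\bc M$:
\[\left(\bc M\int_0^{{\mathcal Z}}\bc M^{-1}\bc B\right)'=\bc A\bc M\int_0^{{\mathcal Z}}\bc M^{-1}\bc B+\bc M\bc M^{-1}\bc B=\bc A\bc S+\bc B,\]
together with the initial condition $\bc S(0)=\bs{0}$, which agrees with the equation since $\bc Y=\bc A\bc Y+\bc B$ integrated from $0$ gives $\bc Y(0)=\bs 0$ (absorbed here in the integral notation). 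The inverse $\bc M^{-1}$ is a well-defined virtual matrix species because $\bc M(0)=\Id$. Since $\bc W$ has contact of order $k\ge 0$ with $\bc M$, we also have $\bc W(0)=\Id$ and so $\bc W^{-1}$ is a well-defined virtual matrix species. The approximate solution $\bc W\int\bc W^{-1}\bc B$ is therefore meaningful in the ring of virtual species.

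\medskip
\noindent The key step will be a telescoping estimate of the difference between the true and approximate solutions. Adding and subtracting $\bc W\int\bc M^{-1}\bc B$, and then using the standard identity $\bc M^{-1}-\bc W^{-1}=\bc M^{-1}(\bc W-\bc M)\bc W^{-1}$ valid in the virtual species ring, I would write
\[\bc S-\bc W\int_0^{{\mathcal Z}}\bc W^{-1}\bc B=(\bc M-\bc W)\int_0^{{\mathcal Z}}\bc M^{-1}\bc B\;+\;\bc W\int_0^{{\mathcal Z}}\bc M^{-1}(\bc W-\bc M)\bc W^{-1}\bc B.\]
Each factor now has a controlled valuation: $\bc W(0)=\bc M(0)=\Id$ and $\bc W^{-1}(0)=\bc M^{-1}(0)=\Id$ contribute valuation $0$; by hypothesis $\val(\bc M-\bc W)\ge k+1$ (definition of contact of order~$k$) and $\val(\bc B)\ge\ell$; finally $\int_0^{\mathcal Z}$ raises valuation by one.

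\medskip
\noindent A direct bookkeeping then shows that both terms have valuation at least $k+\ell+2$: in the first term, $(\bc M-\bc W)$ contributes $k+1$ and $\int\bc M^{-1}\bc B$ contributes at least $\ell+1$; in the second, $\bc W$ and $\bc W^{-1}$ contribute $0$, $\bc M^{-1}$ contributes $0$, $(\bc W-\bc M)$ contributes $k+1$, and together with $\bc B$ (valuation $\ell$) and the integral sign, the total is again $\ge k+\ell+2$. Consequently the whole difference vanishes up to size $k+\ell+1$, i.e.\ $\bc W\int\bc W^{-1}\bc B$ has contact of order $k+\ell+1$ with $\bc S$, as claimed.

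\medskip
\noindent The main subtlety will be working cleanly in the virtual species framework: one must justify the product/inverse manipulations (in particular the identity $\bc M^{-1}-\bc W^{-1}=\bc M^{-1}(\bc W-\bc M)\bc W^{-1}$ and the fact that valuation is additive under product and raised by one under integration) for virtual species rather than for ordinary species. Since these are exactly the rules of the ring of virtual species recalled in~\cite[\S2.5]{BeLaLe98}, no additional difficulty arises once one has checked that $\bc W$ and $\bc M$ are invertible there.
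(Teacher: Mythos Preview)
Your proof is correct but takes a different route from the paper's. You use the exact variation-of-constants formula $\bc S=\bc M\int\bc M^{-1}\bc B$ and compare it to the approximation $\bc W\int\bc W^{-1}\bc B$ via a telescoping decomposition, bounding each piece by additivity of valuation. The paper instead never writes down $\bc M^{-1}$ or the exact formula: it sets $\bc S=\bc W\,\bc U$, substitutes into the differential equation, and obtains the integral equation
\[\bc U=\int\bc W^{-1}\bc B+\int\bc W^{-1}(\bc A\bc W-\bc W')\,\bc U,\]
where the second integral is the error term. The key observation there is that the \emph{defect} $\bc A\bc W-\bc W'$ has valuation at least~$k$ (since $\bc A\bc M-\bc M'=0$ and differentiation drops valuation by one), which together with $\val(\,\bc U)\ge\ell+1$ makes that term negligible to order $k+\ell+1$.

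Your approach is arguably more transparent once one accepts the exact formula, and the telescoping identity $\bc M^{-1}-\bc W^{-1}=\bc M^{-1}(\bc W-\bc M)\bc W^{-1}$ is clean. The paper's approach has the practical virtue of never needing $\bc M$ or $\bc M^{-1}$ explicitly---only $\bc W$ and its defect---which mirrors how the iteration is actually run in Theorem~\ref{newtdiffsys}. One small wrinkle in your write-up: the line ``$\bc Y=\bc A\bc Y+\bc B$ integrated from $0$ gives $\bc Y(0)=\bs 0$'' is garbled (you mean $\bc Y'=\bc A\bc Y+\bc B$, and the initial condition $\bc S(0)=\bs 0$ is an implicit hypothesis of the statement, not a consequence of the equation); and you should note that uniqueness of $\bc S$ with this initial condition comes from Property~\ref{prop:LV86}, so that your displayed formula really is $\bc S$.
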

\begin{proof}
First, we observe that the valuation of $\bc A\bc W-\bc W'$ is at least $k$. Indeed,  the difference $\bc M-\bc W$ has valuation at least $k+1$ by definition of contact and valuation. This valuation does not decrease when multiplying by~$\bc A$ on the left, while it decreases by 1 by differentiation.

Next, since $\bc W(0)=\Id$, the valuation of $\bc W$ is~0 and moreover, a virtual species $\,\bc U$ is defined by $\bc S=\bc W\bc U$. Injecting into the differential equation yields
\begin{align*}
\bc W'\bc U+\bc W\bc U'&=\bc A\bc W\bc U+\bc B,\\
\bc U'&=\bc W^{-1}\bc B+\bc W^{-1}(\bc A\bc W-\bc W')\bc U,\\
\bc U&=\int\bc W^{-1}\bc B+\int\bc W^{-1}(\bc A\bc W-\bc W')\bc U.
\end{align*}
The valuation of the second integrand is at least~$0+k+\val(\,\bc U)$, the first term coming from~$\bc W^{-1}$, the next one from $\bc A\bc W-\bc W'$. Consequently, the second integral has valuation at least~$1+k+\val(\,\bc U)$, while $\val(\,\bc U)$ is given by the first integral as being at least $\ell+1$. In summary, 
\[\bc U=_{k+\ell+1}\int\bc W^{-1}\bc B,\]
which implies $\bc S=_{k+\ell+1}\bc W\int\bc W^{-1}\bc B$, concluding the proof.
\end{proof}
As a first application, we get a quadratically convergent iteration to the solution of the homogeneous system itself.
\begin{corollary}\label{cor:homdiffsys}
Let $\bc M$ be a matrix species solution of $\bc Y'=\bc A\bc Y$, with $\bc Y(0)=\Id$ and let $\bc W\subset\bc M$ have contact of order $k$ with $\bc M$. Then 
\[\bc W+\bc W\int{\bc W^{-1}(\bc A\bc W-\bc W')}\]
has contact of order~$2k+1$ with $\bc M$.
\end{corollary}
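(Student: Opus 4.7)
This is a direct Newton-style consequence of Proposition~\ref{prop:var_cte} applied to the error species. My plan is to show that $\bc C := \bc M - \bc W$ satisfies an inhomogeneous linear integral equation to which variation of the constant can be applied, and that the correction term $\bc W\int\bc W^{-1}(\bc A\bc W-\bc W')$ approximates $\bc C$ with contact~$2k+1$.

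First I would observe that since $\bc W\subset\bc M$ has contact of order $k\ge 0$ with $\bc M$ and $\bc M(0)=\Id$, one has $\bc W(0)=\Id$, so that $\bc W$ admits a virtual inverse $\bc W^{-1}$ and $\bc C(0)=\bs{0}$. Differentiating and using $\bc M'=\bc A\bc M$, a short calculation gives
\[
\bc C' \;=\; \bc A\bc M-\bc W' \;=\; \bc A\bc C+(\bc A\bc W-\bc W'),
\]
so $\bc C$ is the solution with $\bc C(0)=\bs 0$ of the inhomogeneous linear system $\bc Y'=\bc A\bc Y+\bc B$ with right-hand side $\bc B:=\bc A\bc W-\bc W'$.

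Next I would invoke the valuation bound from the proof of Proposition~\ref{prop:var_cte}: since $\bc M-\bc W$ has valuation at least $k+1$ and differentiation drops valuation by at most~$1$, while multiplication by $\bc A$ on the left does not decrease it, the defect $\bc B=\bc A\bc W-\bc W'=\bc A(\bc W-\bc M)-(\bc W-\bc M)'$ has valuation at least~$k$. Applying Proposition~\ref{prop:var_cte} with this $\bc B$ (so $\ell=k$) yields
\[
\bc W\int\bc W^{-1}(\bc A\bc W-\bc W') \;=_{2k+1}\; \bc C \;=\; \bc M-\bc W,
\]
since the proposition characterizes its output as approximating the unique $\bc Y'=\bc A\bc Y+\bc B$ solution vanishing at $0$. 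Adding $\bc W$ on both sides gives the claimed contact of order $2k+1$ with $\bc M$. The only step requiring care is confirming that $\bc W(0)=\Id$ (so that $\bc W^{-1}$ makes sense as a virtual species) and the valuation bound on $\bc B$; both are inherited transparently from the hypothesis and the argument already made inside Proposition~\ref{prop:var_cte}, so no serious obstacle is expected.
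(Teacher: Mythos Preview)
Your proposal is correct and follows essentially the same approach as the paper: set the error $\bc C=\bc M-\bc W$, observe it satisfies $\bc C'=\bc A\bc C+(\bc A\bc W-\bc W')$ with $\bc C(0)=\bs 0$, and apply Proposition~\ref{prop:var_cte} with $\ell=k$ (the valuation bound on $\bc A\bc W-\bc W'$ being exactly the one established at the start of that proposition's proof). The paper's proof is simply a two-line version of what you wrote.
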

\begin{proof}
	Set~$\bc M:=\bc W+\bc U$, then $\,\bc U$ satisfies $\,\bc U'=\bc A\bc U+(\bc A\bc W-\bc W')$ and the result follows from the previous proposition.
\end{proof}

\subsubsection{Resolution of Integral Systems by Newton's Iteration} \label{subseq:isni}
At this stage, our aim is to solve~\eqref{eq:integral_system}. We proceed as in the optimized Newton iteration of~\S\ref{subsec:optimized_Newton} by setting up an iteration that converges to its solution~$\bc S$, but also to related quantities. 

The result is the following generalization of both Prop.~\ref{prop:newt_opt} (in the case of linear species) and Prop.~\ref{prop:newtondiff}.
\begin{theorem}[Resolution of Integral Systems]\label{newtdiffsys}
Let $\bc S$ be the solution of the well-founded integral system
\begin{equation}\tag{\ref{eq:integral_system}}
\bc Y({\mathcal Z})=\bc H({\mathcal Z},\bc Y({\mathcal Z}))+\int_0^{{\mathcal Z}}{\bc G({\mathcal T},\bc Y({\mathcal T}))d{\mathcal T}}.
\end{equation}
Let also $\,\bc U$ be the species $(\Id-\bs\partial\bc H/\bs\partial\bc Y({\mathcal Z},\bc S))^{-1}$, $\bc A$ be the species $\frac{\bs\partial\bc G}{\bs\partial\bc Y}({\mathcal Z},\bc S)\bc U$ and 
$\bc M$ be the virtual species solution of $\bc M'=\bc A\bc M$ with~$\bc M(0)=\Id$.
For a positive integer $k$, assume that $\bs u, \bs m, \overline{\bs m}$ have contact of order $\lfloor k/2\rfloor$ with $\,\bc U, \bc M, \bc M^{-1}$ and that $\bs s$ has contact of order $k$ with~$\bc S$. Then
\begin{itemize}
	\item $\tilde{\bs u}:=\bs u+\bs u(\frac{\bs\partial\bc H}{\bs\partial\bc Y}(\bs s)\bs u+\Id-\bs u)$ has contact of order $k$ with $\,\bc U$;
	\item $\tilde{\bs m}:=\bs m+\bs m\int{\overline{\bs m}(\frac{\bs\partial\bc G}{\bs\partial\bc Y}(\bs s)\tilde{\bs u}\bs m-\bs m')}$ has contact of order $k$ with $\bc M$;
	\item $\tilde{\overline{\bs m}}:=\overline{\bs m}+\overline{\bs m}(\Id-\tilde{\bs m}\overline{\bs m})$ has contact of order $k$ with $\bc M^{-1}$;
	\item $\bs s+\tilde{\bs u}\tilde{\bs m}\int{\tilde{\overline{\bs m}}(\frac{\partial\bc H}{\partial{\mathcal Z}}(\bs s)+\frac{\bs\partial\bc H}{\bs\partial\bc Y}(\bs s)\bs s'-\bs s'+\bc G(\bs s))}$ has contact of order $2k$ with $\bc S$.
\end{itemize}
\end{theorem}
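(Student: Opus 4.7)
The four items jointly implement one optimized Newton step for integral systems, in direct analogy with Proposition~\ref{prop:newt_opt}. The plan is to prove them in order, each using the preceding ones. Items~1 and~3 are Schulz-style matrix inversions (doubling the valuation of the inversion residual); item~2 is one step of the homogeneous-ODE Newton iteration from Corollary~\ref{cor:homdiffsys} with approximate coefficients; item~4 is the variation-of-constants rewriting of the exact Newton update from Theorem~\ref{th:newt_diff}. Throughout I translate ``contact of order $k$'' into ``the difference of generating series has valuation $\ge k+1$'', and track valuations under sums and products (which do not decrease), differentiation ($-1$), and integration ($+1$). A repeated substep is that since $\bs s$ has contact $k$ with $\bc S$, any composition $\bc F(\bs s)$ differs from $\bc F(\bc S)$ by a series of valuation $\ge k+1$.

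For \emph{item~1}, set $K := \Id - \bs\partial\bc H/\bs\partial\bc Y(\bs s)$. A direct computation gives the Schulz identity $\Id - K\tilde{\bs u} = (\Id - K\bs u)^2$. Combined with $\val(\bs u - \,\bc U)\ge\lfloor k/2\rfloor+1$ and $\val(K^{-1} - \,\bc U)\ge k+1$, this yields $\val(\Id - K\bs u)\ge\lfloor k/2\rfloor+1$ and hence $\val(\Id - K\tilde{\bs u})\ge k+1$. Writing $\tilde{\bs u} - \,\bc U = K^{-1}(K\tilde{\bs u} - \Id) + (K^{-1} - \,\bc U)$ then delivers the required contact $k$. \emph{Item~3} is the same Schulz argument with $\tilde{\bs m}$ in place of $K$, once item~2 has established contact $k$ between $\tilde{\bs m}$ and $\bc M$.

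For \emph{item~2}, Corollary~\ref{cor:homdiffsys} applied with $\bc W := \bs m$ says that the fully exact iterate $\bs m + \bs m\int\bs m^{-1}(\bc A\bs m - \bs m')$ has contact $2\lfloor k/2\rfloor+1 \ge k$ with $\bc M$. The actual iterate $\tilde{\bs m}$ replaces $\bs m^{-1}$ by $\overline{\bs m}$ and $\bc A$ by $\tilde{\bc A}_0 := \bs\partial\bc G/\bs\partial\bc Y(\bs s)\,\tilde{\bs u}$. The difference splits as
\[
\bs m\!\int\!\bigl[(\overline{\bs m} - \bs m^{-1})(\tilde{\bc A}_0\bs m - \bs m') + \bs m^{-1}(\tilde{\bc A}_0 - \bc A)\bs m\bigr],
\]
and using $\val(\overline{\bs m} - \bs m^{-1})\ge\lfloor k/2\rfloor+1$, $\val(\tilde{\bc A}_0\bs m - \bs m')\ge\lfloor k/2\rfloor$ (from $\bc A\bc M = \bc M'$ together with item~1), and $\val(\tilde{\bc A}_0 - \bc A)\ge k+1$, both summands reach valuation $\ge k+1$ after integration.

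For \emph{item~4}, the key algebraic observation is that the Newton update of Theorem~\ref{th:newt_diff} admits a variation-of-constants form. Linearizing the integral equation around $\bs s$ gives $(\Id - \bs\partial\bc H/\bs\partial\bc Y(\bs s))\,\delta - \int\bs\partial\bc G/\bs\partial\bc Y(\bs s)\,\delta = \bc R$ with $\bc R := \bc H(\bs s) - \bs s + \int\bc G(\bs s)$. Setting $\delta = (\Id - \bs\partial\bc H/\bs\partial\bc Y(\bs s))^{-1}\eta$ and differentiating turns the equation into $\eta' - \bc A_{\bs s}\eta = \bc R' = \bc C$ with $\eta(\bs{0})=\bs{0}$ (using $\bc R(\bs{0})=\bs{0}$, which follows from $\bs s(\bs{0})=\bc S(\bs{0})$ and the equation). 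Proposition~\ref{prop:var_cte} then gives $\eta = \bc M_{\bs s}\int\bc M_{\bs s}^{-1}\bc C$, where $\bc A_{\bs s}$ and $\bc M_{\bs s}$ denote $\bc A$ and $\bc M$ built from $\bs s$ rather than $\bc S$. So the exact Newton update is $\bs s + (\Id - \bs\partial\bc H/\bs\partial\bc Y(\bs s))^{-1}\bc M_{\bs s}\int\bc M_{\bs s}^{-1}\bc C$, which has contact $2k$ with $\bc S$ by Theorem~\ref{th:newt_diff}. Because $\bs s$ has contact $k$ with $\bc S$, the three factors $(\Id - \bs\partial\bc H/\bs\partial\bc Y(\bs s))^{-1}$, $\bc M_{\bs s}$, $\bc M_{\bs s}^{-1}$ each differ from $\,\bc U$, $\bc M$, $\bc M^{-1}$ by valuation $\ge k+1$, and by items~1--3 the factors used in the theorem differ from these by valuation $\ge k+1$ as well. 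Since $\val(\bc C) \ge k$, a short telescoping estimate — replacing one factor at a time and gaining $+1$ from the integration — bounds the total approximation error by valuation $\ge 2k+2$, i.e.\ contact $\ge 2k+1$. Combined with the $2k$ contact of the exact update, this yields the claimed contact $2k$ with $\bc S$. The main obstacle is precisely this last item: identifying the variation-of-constants form of the Newton step, and then tracking the valuations carefully through the three nested approximations.
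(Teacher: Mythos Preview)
Your proof is correct and follows essentially the same route as the paper: Schulz iteration for items~1 and~3, Corollary~\ref{cor:homdiffsys} for item~2, and Theorem~\ref{th:newt_diff} combined with Proposition~\ref{prop:var_cte} for item~4. The only organizational difference is in item~4: you introduce the auxiliary fundamental matrix $\bc M_{\bs s}$ (built from $\bs s$) to write the exact variation-of-constants update and then telescope through three factor replacements, whereas the paper applies Proposition~\ref{prop:var_cte} directly with $\tilde{\bs m}$ as the approximant~$\bc W$ and handles $\tilde{\bs u}$ separately via the valuation of~$\bs b$; both lead to the same valuation count.
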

\begin{proof}
The result for $\tilde{u}$ is the familiar Newton iteration for matrix inverses of~\eqref{eq:U}. The contact follows from the fact that $\bs s$ and $\bc S$ having contact of order $k$ implies that so do $\frac{\bs\partial\bc H}{\bs\partial\bc Y}(\bs s)$ and $\frac{\bs\partial\bc H}{\bs\partial\bc Y}(\bc S)$.
The result for $\tilde{m}$ is a consequence of Corollary~\ref{cor:homdiffsys}, where again, the use of $\frac{\bs\partial\bc G}{\bs\partial\bc Y}(\bs s)\tilde{\bs u}$ instead of $\frac{\bs\partial\bc G}{\bs\partial\bc Y}(\bc S){\bc U}$ is made possible by their contact being of order~$k$.
The result for $\tilde{\overline{\bs m}}$ is again Newton's iteration for matrix inversion.

The last formula is where we exploit the work done up to now.
Newton's iteration for integral systems (Thm.~\ref{th:newt_diff}) implies that $\bc S-\bs s$ has contact of order~$2k$ with the product $(\Id-\frac{\bs\partial\bc H}{\bs\partial\bc Y}(\bs s))^{-1}\cdot \bs b$ where $\bs b$ is the solution of
\[\bs b'=\frac{\bs\partial\bc G}{\bs\partial\bc Y}(\bs s)(\Id-\frac{\bs\partial\bc H}{\bs\partial\bc Y}(\bs s))^{-1}\bs b+\bs q\quad\text{with}\quad \bs q=\frac{\partial\bc H}{\partial{\mathcal Z}}(\bs s)+\frac{\bs\partial\bc H}{\bs\partial\bc Y}(\bs s)\bs s'-\bs s'+\bc G(\bs s).\]
Due to differentiation, $\bs q$ has contact $k-1$ with its counterpart in~$\bc S$, which is~0. Its valuation is therefore at least $k$. Then by variation of the constant (Prop.~\ref{prop:var_cte}), $\tilde{\bs m}\int{\tilde{\overline{\bs m}}\bs q}$ has contact of order $2k+1$ with $\bs b$. Now, by integration, $\bs b$ has valuation at least $k+1$ and thus $\tilde{\bs u}\bs b$ has contact of order $2k$  with $(\Id-\frac{\bs\partial\bc H}{\bs\partial\bc Y}(\bs s))^{-1}\cdot \bs b$ and therefore with $\bc S-\bs s$, which concludes the proof.
\end{proof}

This theorem translates into Algorithm~\ref{algo:newtonSeriesIntegralSystem}, an analogue of Algorithm~\ref{algo:newtonSeries} p.~\pageref{algo:newtonSeries} for generating series.

\SetKwFunction{sK}{\large sK}
\SetKwFunction{sL}{\large sL}
\SetKwFunction{sG}{\large sG}
\begin{Algorithm}[tb]{0.85\textwidth}   
\SetAlgoRefName{newtonSeriesIntegralSystem}
\caption{Computation of generating series with a given precision\label{algo:newtonSeriesIntegralSystem}}
\DontPrintSemicolon
\Input{Two vectors of species $\bc H$ and $\bc G$, such that $\bc Y({\mathcal Z})=\bc H({\mathcal Z},\bc Y({\mathcal Z}))+\int_0^{{\mathcal Z}}{\bc G({\mathcal Z},\bc Y({\mathcal T}))\,d{\mathcal T}}$ is well founded} 
\Input{An integer $N$}
\Output{The first $N$ terms of the generating series $\bs S(z)$ of the linear species solution}
\SetAlgoNoLine
\Begin{
\SetAlgoVlined
                Set up a procedure \sG: $(\bs Y(z), N)\mapsto\bs G(z,\bs Y(z))\bmod z^N$\;
                Set up a procedure \sJ: $(\bs Y(z), N)\mapsto{\partial\bs H}/{\partial\bs Y}(z,\bs Y(z))\bmod z^N$\;
                Set up a procedure \sK: $(\bs Y(z), N)\mapsto{\partial\bs G}/{\partial\bs Y}(z,\bs Y(z))\bmod z^N$\;
                Set up a procedure \sL: $(\bs Y(z), N)\mapsto{\partial\bs H}/{\partial z}(z,\bs Y(z))\bmod z^N$\;
                \Comment*[r]{ $\bs G, \bs H$ are the generating series of $\bc G,\bc H$}
        
        $\bs U$, $\bs M$, $\overline{\bs M}$, $\bs Y$:= \recSeries($N$)\;
        Return $\bs Y$
        }
 
\medskip

\Function(\recSeries){
\Input{An integer $N$}
\Output{$\bs U$: $\left(\Id-{\bd\partial\bc H}/{\bd\partial\bc Y}(z,\bs S(z))\right)^{-1}\bmod z^{\lceil N/2\rceil}$}
\Output{$\bs M$: solution of $\bs Y'(z)={\bd\partial\bc G}/{\bd\partial\bc Y}(z,\bs S(z))\bs U(z)\bs Y(z)$ with $\bs Y(0)=\Id$, $\mod z^{\lceil N/2\rceil}$}
\Output{$\overline{\bs M}$: $\bs M^{-1}\bmod z^{\lceil N/2\rceil}$}
\Output{$\bs Y$: $\bs S(z) \bmod z^N$}
\SetAlgoNoLine
\Begin{
\SetAlgoVlined
       \lIf{$N = 1$}{$\bs M:=\overline{\bs M}:=\Id;\bs Y:= \bs H^m(0,\bs{0});\bs U:=(\Id-\bs J(0,\bs Y))^{-1}$}\\
        \Else{
                $\bs U,\bs M, \overline{\bs M}, \bs Y:=$ \recSeries($\lceil N/2\rceil$)\;
                $\bs U:= \bs U + \bs U\cdot($\sJ($\bs Y,{\lceil N/2\rceil}$) $\cdot \bs U +\Id-\bs U)) \bmod z^{\lceil N/2\rceil}$\;
		$\bs M:=\bs M+\bs M\int_0^z{\overline{\bs M}(\sK(\bs Y,{\lceil N/2\rceil})\bs U\bs M-\bs M')}\bmod z^{\lceil N/2\rceil}$\;
		$\overline{\bs M}:=\overline{\bs M}+\overline{\bs M}(\Id-\bs M\overline{\bs M})\bmod z^{\lceil N/2\rceil}$\;
                $\bs Y:= \bs Y+ \bs U\bs M\int_0^z{\overline{\bs M}\left(\sL(\bs Y,N)+\sJ(\bs Y,N)\cdot\bs Y'-\bs Y'+\sG(\bs Y,N)\right)} \bmod z^N$ \;
                }
                \Return $\bs U,\bs M, \overline{\bs M}, \bs Y$ 
        }
}
\end{Algorithm}

In terms of complexity, we get the following result.
\begin{proposition}Let $\bc Y({\mathcal Z})=\bc H({\mathcal Z},\bc Y)+\int_0^{{\mathcal Z}}{\bc G({\mathcal T},\bc Y({\mathcal T}))\,d{\mathcal T}}$ be a well-founded system such that $\bc G$, $\partial\bc H/\partial{\mathcal Z}$, $\bs\partial\bc H/\bs\partial\bc Y$, and $\bs\partial\bc G/\bs\partial\bc Y$ are linear species with arithmetic complexity~$C(N)$. Then the generating series of the linear species~$\bc S$ solution of the system can be computed in arithmetic complexity~$O(C(N)+\M(N))$.
\end{proposition}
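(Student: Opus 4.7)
The plan is to analyze Algorithm \ref{algo:newtonSeriesIntegralSystem}, which is a direct transcription on truncated series of the combinatorial iteration of Theorem~\ref{newtdiffsys}. Correctness is immediate from that theorem: by induction on~$n$, if the recursive call with $N'=\lceil N/2\rceil$ returns the correct truncations of $\bc U$, $\bc M$, $\bc M^{-1}$ and $\bc S$ up to precision $N'$, then the four update formulas produce truncations that match the species $\bc U$, $\bc M$, $\bc M^{-1}$ to order~$N$ (actually to order~$\lceil N/2\rceil$, which is all we need for the inner data) and the species $\bc S$ to order~$2N'\ge N$. The base case $N=1$ is provided by the general implicit species theorem, which tells us that $\bc S(0)=\bc H^m(\bs 0,\bs 0)$, and that the Jacobian at $(0,\bc S(0))$ is nilpotent so that its inverse $\bc U$ is well defined; $\bc M(0)=\bc M^{-1}(0)=\Id$ is definitional.

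For the arithmetic complexity, let $T(N)$ denote the cost of one call to \texttt{recSeries}($N$). Each non-base call performs a single recursive call at precision $\lceil N/2\rceil$, and then:
\begin{itemize}
\item four evaluations of $\bs G$, $\partial\bs H/\partial\bc Y$, $\partial\bs G/\partial\bc Y$, $\partial\bs H/\partial z$ at the current iterate truncated at precision~$N$, each of cost~$O(C(N))$;
\item a constant number (depending only on the dimension~$m$) of matrix-matrix or matrix-vector products of truncated series at precision~$N$, each of cost~$O(\M(N))$;
\item differentiations and integrations of truncated series, each of cost~$O(N)$.
\end{itemize}
This yields the recurrence
\[
T(N)\;\le\;T(\lceil N/2\rceil)+K\bigl(C(N)+\M(N)\bigr)
\]
for some constant $K$ depending only on~$m$. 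Under the standing assumptions $\M(N_1+N_2)\le\M(N_1)+\M(N_2)$ and the analogous property for~$C$ (which holds for all constructible species by the bounds used in Proposition~\ref{prop:constructible}), the geometric series $\sum_{j\ge 0}\bigl(C(N/2^j)+\M(N/2^j)\bigr)$ is bounded by a constant multiple of $C(N)+\M(N)$, so $T(N)=O(C(N)+\M(N))$, as claimed.

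The only delicate point is to make sure that the hypotheses of Theorem~\ref{newtdiffsys} are transferred to the truncated setting: the theorem applies to the \emph{species}, while the algorithm manipulates their generating series. This is handled exactly as in Lemma~\ref{lem:trans1} by observing that any contact of order~$k$ between species translates into an equality of the first $k+1$ coefficients of the associated generating series, so the quadratic doubling of contact at the species level matches the doubling of precision at the series level. The (virtual) species $\bc M$ and $\bc M^{-1}$ cause no difficulty, since their generating series are honest formal power series over $\mathbb Q$. No further step is conceptually demanding; the main obstacle is purely bookkeeping, namely checking that the $O()$ constants in the per-iteration cost indeed depend only on the dimension $m$ of the system and not on the iteration index~$n$.
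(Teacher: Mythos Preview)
Your proof is correct and follows essentially the same approach as the paper: correctness is inherited from Theorem~\ref{newtdiffsys}, and the complexity follows from the recurrence $T(N)=T(N/2)+O(C(N)+\M(N))$ obtained by counting the evaluations of $\bc G$, $\bs\partial\bc H/\partial{\mathcal Z}$, $\bs\partial\bc H/\bs\partial\bc Y$, $\bs\partial\bc G/\bs\partial\bc Y$ and the series products in one step of Algorithm~\ref{algo:newtonSeriesIntegralSystem}. The paper's version is terser (it writes the recurrence as $T(N)=T(N/2)+2C(N/2)+2C(N)+K\M(N)+O(N)$, distinguishing the half-precision and full-precision calls), but the argument is the same; your added remarks on the transfer from species to series and on the virtual species $\bc M,\bc M^{-1}$ are extra detail rather than a different route.
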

Note that this is not a generalization of Theorem~\ref{prop:complexityFixedPoint}: while we obtain the desired complexity for the generating series of~$\bc S$, we do not claim that for any linear species~$\bc A$, the generating series of the composition $\bc S(\bc A)$ can be obtained within the same complexity bound.
\begin{proof}
The recursion of Algorithm~\ref{algo:newtonSeriesIntegralSystem} is correct by the previous Theorem. Its complexity~$T(N)$ obeys the recurrence
\[T(N)=T(N/2)+2C(N/2)+2C(N)+K\M(N)+O(N),\]
where $K$ counts the number of products involved. Since $C(N/2)=O(C(N))$, this behaves as in the proof of Theorem~\ref{prop:complexityFixedPoint} and leads to the result.
\end{proof}

The special case of interest is that of constructible species:
\begin{theorem}\label{th:constructible_linear_species}
If $\bc H$ and $\bc G$ are constructible species such that the system~\eqref{eq:integral_system} is well founded, then the solution of this system has a generating series that can be computed in~$O(\M(N))$ arithmetic operations.	
\end{theorem}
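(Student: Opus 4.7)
The plan is to reduce the statement to the preceding proposition, so it suffices to verify its hypothesis: namely that $\bc G$, $\partial\bc H/\partial{\mathcal Z}$, $\bs\partial\bc H/\bs\partial\bc Y$, and $\bs\partial\bc G/\bs\partial\bc Y$ are all linear species whose exponential generating series can be evaluated on a given input vector $\bc U({\mathcal Z})$ at precision $N$ in $O(\M(N))$ arithmetic operations.

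First I would observe that the linear species framework uses only exponential generating series, and these obey the same equational rules (sum, product, composition) as for ordinary species; the only new operation is combinatorial integration, which at the level of exponential generating series is simply $F(z)\mapsto\int_0^z F(t)\,dt$, computable at precision $N$ in $O(N)$ arithmetic operations. In particular, the complexity bounds for constructible species established in Proposition~\ref{prop:constructible} transfer verbatim to the linear setting, both by direct inspection of the operators of Table~\ref{tab:sum_esp_sg} and by Lemma~\ref{lemma:complexitycomposition} on composition. For the case of recursively defined constructible components, the same reasoning as in Theorem~\ref{prop:complexityFixedPoint} (the internal Newton iteration on power series has $O(\M(N))$ cost per level, summing to $O(\M(N))$ total) applies without change.

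Second I would invoke Table~\ref{tab:deriv}, which shows that the derivative of a constructible species is constructible, extended to partial derivatives in the multisort case by the usual chain rule and the product rule. Hence from the constructibility of $\bc H$ and $\bc G$ we obtain that each of $\partial\bc H/\partial{\mathcal Z}$, $\bs\partial\bc H/\bs\partial\bc Y$, and $\bs\partial\bc G/\bs\partial\bc Y$ is constructible, and so inherits the $O(\M(N))$ arithmetic complexity from the previous paragraph.

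With these two observations, the hypothesis of the preceding proposition is satisfied with $C(N)=O(\M(N))$, and its conclusion yields the desired arithmetic complexity $O(C(N)+\M(N))=O(\M(N))$ for the generating series of the solution $\bc S$. There is essentially no obstacle here beyond checking that the complexity estimates for constructible operators and their derivatives go through unchanged when moving from the species setting to the linear species setting — which they do, since linear species manipulate only exponential generating series and the combinatorial operators degenerate to the same analytic operations on these series.
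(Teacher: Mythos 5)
Your proof is correct and follows essentially the same route as the paper's one-line justification: reduce to the preceding proposition, note that derivatives of constructible species are constructible (Table~\ref{tab:deriv}), and invoke Corollary~\ref{prop:combSystem} for the $O(\M(N))$ arithmetic complexity of constructible species. You spell out explicitly a point the paper leaves implicit — that these complexity bounds apply unchanged in the linear-species setting because only exponential generating series are involved and combinatorial integration costs $O(N)$ — but this is elaboration rather than a genuinely different argument.
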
        
\begin{proof}
This is a consequence of the previous result, the observation that the derivatives of constructible species are constructible and the arithmetic complexity of constructible species (Cor.~\ref{prop:combSystem}).
\end{proof}

\subsubsection{Examples} \label{subseq:ex}
\paragraph{Exponential}
The equation for $\Set(\bc A)$ is
\[\bc Y({\mathcal Z})=1+\int_0^{{\mathcal Z}}{\bc A'({\mathcal T})\bc Y({\mathcal T})\,d{\mathcal T}}.\]
The specialization of Algorithm~\ref{algo:newtonSeriesIntegralSystem} retrieves a recent algorithm for the computation of the power series $\exp(A(z))$ given $A(z)$~\cite{HanrotZimmermann2002}, see also~\cite{Bernstein2004}. In that case no matrices are involved (the dimension is~1), $\bs U=1$ and we are left with the following recursive part:
\begin{equation}\label{newton_exp}
m:=m+m\int_0^z{\overline{m}(A'(t)m-m')\,dt},\quad
\overline{m}:=\overline{m}+\overline{m}(1-m\overline{m}).
\end{equation}
In this case, as in all the linear cases (i.e., when $\bc H$ is a constant), it is not necessary to compute the iteration for~$y$: this iteration reads $y:=y+m\int_0^z{\overline{m}(A'(t)y-y')\,dt}$ so that an easy induction shows that $y=my(0)$.

\paragraph{Cayley Trees}
The generating series of Cayley trees (${\mathcal Y}={\mathcal Z}\Set({\mathcal Y})$) has been computed in Example~\ref{ex:newton_cayley} appealing to an external "$\exp$" for power series. This can be achieved by using the iteration of~\eqref{newton_exp}. Another approach, without exponential, is to use the following system:
\[{\mathcal Y}_1={\mathcal Z}{\mathcal Y}_2,\quad {\mathcal Y}_2=1+\int{{\mathcal Y}_1'{\mathcal Y}_2}.\]
This is not strictly in the format required, since the integral involves a derivative of ${\mathcal Y}_1$. Introducing ${\mathcal Y}_3$ to represent ${\mathcal Y}_1'$, we obtain the following system
\[{\mathcal Y}_1={\mathcal Z}{\mathcal Y}_2, \quad {\mathcal Y}_2=1+\int{{\mathcal Y}_3{\mathcal Y}_2},\quad{\mathcal Y}_3={\mathcal Y}_2+{\mathcal Z}{\mathcal Y}_2{\mathcal Y}_3.\]
It is thus only necessary to compute ${\mathcal Y}_2$ and ${\mathcal Y}_3$ recursively, since ${\mathcal Y}_1$ is then simply recovered by multiplication by~${\mathcal Z}$.
In the notations of Theorem~\ref{newtdiffsys}, we have
\[\bc Y=\begin{pmatrix}{\mathcal Y}_2\\ {\mathcal Y}_3\end{pmatrix},\quad \bc H=\begin{pmatrix}1\\ {\mathcal Y}_2+{\mathcal Z}{\mathcal Y}_2{\mathcal Y}_3\end{pmatrix},\quad \bc G=\begin{pmatrix}{\mathcal Y}_2{\mathcal Y}_3\\ 0\end{pmatrix}.\]
The iteration for generating series then reads
\begin{align*}
U&:=U+U\cdot\left(\begin{pmatrix}0&0\\1+zY_3&zY_2\end{pmatrix}\cdot U+\Id-U\right),\\
M&:=M+M\cdot \int{\overline{M}\cdot \begin{pmatrix}Y_3&Y_2\\0&0\end{pmatrix}\cdot U\cdot M-M'},\\
\overline{M}&:=\overline{M}+\overline{M}\cdot(\Id-M\cdot \overline{M}),\\
Y&:=Y+U\cdot M\cdot\int{\overline{M}\cdot\begin{pmatrix}Y_2Y_3-Y_2'\\ Y_2Y_3+(1+zY_3)Y_2'+(zY_2-1)Y_3'\end{pmatrix}}.
\end{align*}
The only costly operations needed are products of power series, the other ones (addition, differentiation, integration) having linear complexity.
Several of the entries of the Jacobian matrices are~0, so that not that many products are actually used in the iteration.

\paragraph{Mobiles}
These ``trees'' are defined by~${\mathcal Y}={\mathcal Z}+\int{\Cyc({\mathcal Y})}$: to the root is attached a cycle of similar trees, the labels increasing along the ``branches''. These were studied in~\cite{BergeronFlajoletSalvy1992} from the asymptotic point of view. The generating series does not appear to have a nice closed form. It satisfies the differential equation
\[y(z)=z+\int{\log\frac{1}{1-y(t)}\,dt}.\]
The species equation is reduced to purely elementary operations by introducing new species for $\Cyc(\bc Y)$ and using the fact that $\bc Y'=1+\Cyc(\bc Y)$, so that we consider
\[{\mathcal Y}_1={\mathcal Z}+\int{{\mathcal Y}_2},\quad {\mathcal Y}_2=\int{\Seq({\mathcal Y}_1)(1+{\mathcal Y}_2)}.\]
Now, in the notations of Theorem~\ref{newtdiffsys}, we have
\[\bc H=\begin{pmatrix}{\mathcal Z}\\ 0\end{pmatrix},\quad \bc G=\begin{pmatrix}{\mathcal Y}_2\\ \Seq({\mathcal Y}_1)(1+{\mathcal Y}_2)\end{pmatrix}.\]
This is very similar to the previous example.
The general recursion simplifies: since $\bc H$ does not depend on~$\bc Y$ the matrix $\bs U$ is $\Id$. Thus we are left with the recursion
\begin{align*}
M&:=M+M\cdot\int{\overline{M}\cdot\begin{pmatrix}0&1\\ \frac{1+Y_2}{(1-Y_1)^2}&\frac{1}{1-Y_1}\end{pmatrix}},\\
\overline{M}&:=\overline{M}+\overline{M}\cdot(\Id-M\cdot \overline{M}),\\
Y&:=Y+M\cdot\int{\overline{M}\cdot\begin{pmatrix}1+Y_2-Y_1'\\ \frac{1+Y_2}{1-Y_1}-Y_2'\end{pmatrix}}.
\end{align*}
Starting with initial value~$Y=0$, we get successively
\begin{align*}
Y_1^{[1]}&=z,\\
Y_1^{[2]}&=z+\frac{1}{2}z^2,\\
Y_1^{[3]}&=z+\frac{1}{2}z^2+\frac{1}{3}z^3+\frac{7}{24}z^4,\\
Y_1^{[4]}&=z+\frac{1}{2}z^2+\frac{1}{3}z^3+\frac{7}{24}z^4+\frac{3}{10}z^5+\frac{49}{144}z^6+\frac{173}{420}z^7+\frac{21059}{40320}z^8.\\
\end{align*}
More terms are easily obtained: each iteration has the cost of a few multiplication of power series.
A small constant factor could further be saved by introducing another variable for $\Seq({\mathcal Y}_1)$ so as to avoid the computations of the inverses of $1-Y_1$.

\subsection{Powersets}\label{sec:powerset}
The constructible classes of~\cite{FlSe09} also involve  a construction called \emph{PowerSet}, which is not a species \emph{per~se}. We show in this section that Newton's iteration can still be applied to species defined with it.
\subsubsection{Definition}
When counting unlabeled structures, the species~$\Set$ lets multiple identical unlabelled structures be counted. The principle of the \emph{PowerSet} construction, abbreviated $\PSet$, is to remove duplicates. 
An alternative construction, inspired by ``Vallée's identity''~\cite[p.~30]{FlSe09} is as follows. Given 
a species~$\mathcal F$, the equation
\[\Set(\mathcal{F}(\mathcal Z))=\mathcal{P}(\mathcal Z)\cdot\Set(\mathcal{F}(\mathcal Z^2))\]
defines a virtual species~$\mathcal{P}$ that we denote~$\PSet(\mathcal F)$. The intuition is to decompose the groups of duplicates according to their parity.
Note that $\PSet$ is not a species  (this is shown by examples below), but rather a mapping sending species to virtual species.

\subsubsection{Generating Series}
By Property~\ref{prop:plethysm}, the cycle index series of~$\mathcal{P}$ satisfies
\[Z_{\Set(\mathcal F)}(z_1,z_2,z_3,\dotsc)=Z_{\mathcal P}(z_1,z_2,z_3,\dotsc)Z_{\Set(\mathcal F)}(z_1^2,z_2^2,z_3^2,\dotsc),\]
so that 
\[Z_{\mathcal P}(z_1,z_2,z_3,\dotsc)=\exp\left(\sum_{k>0}{\frac1kZ_{\mathcal F}(z_k,z_{2k},z_{3k},\dotsc)}\right)\cdot \exp\left(-\sum_{k>0}{\frac1kZ_{\mathcal F}(z_k^2,z_{2k}^2,z_{3k}^2,\dotsc)}\right).\]
Setting~$z_k=z^k$ yields the ordinary generating series
\[\tilde{P}(z)=\exp\left(\sum_{k>0}{\frac1k\tilde{F}(z^k)}-\sum_{k>0}{\frac1k\tilde{F}(z^{2k})}\right),\]
which simplifies to
\[\tilde{P}(z)=\exp\left(\sum_{k>0}{\frac{(-1)^{k+1}}k\tilde{F}(z^k)}\right).\]
Thus we recover the classical formula used by~\cite{FlSe09}. A simple consequence is the following.
\begin{lemma}If the species~$\mathcal F$ has ordinary arithmetic complexity~$C_F$, then the species~$\PSet(\mathcal F)$ has ordinary arithmetic complexity~$C_F+O(\M)$.
\end{lemma}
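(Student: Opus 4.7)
The plan is direct: apply the OGS formula derived just above the lemma, splitting the computation into (i) computing the OGS of $\mathcal F$ and (ii) applying the outer $\PSet$ operator on top. Stage~(i) costs $C_F(N)$ arithmetic operations by the very definition of the ordinary arithmetic complexity of $\mathcal F$: given the OGS of whatever species $\mathcal U$ is being plugged into $\mathcal F$, we produce $\tilde F(z)\bmod z^N$ within this budget.

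Stage~(ii) is handled exactly as for the $\Set$ and $\Cyc$ Pólya operators in the proof of Proposition~\ref{prop:constructible}. Given $\tilde F(z)\bmod z^N$, the partial sum
\[
S(z) \;:=\; \sum_{k=1}^{N} \frac{(-1)^{k+1}}{k}\,\tilde F(z^k)\bmod z^N
\]
is assembled by reading off coefficients of $\tilde F$ (no arithmetic is required, since $\tilde F(z^k)$ is obtained from $\tilde F$ by simple reindexing) and accumulating them into $S$ with scalar factor $\pm 1/k$. The total number of scalar operations is therefore $\sum_{k=1}^{N}\lfloor N/k\rfloor = O(N\log N) = O(\M(N))$. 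The exponential $\exp(S(z))\bmod z^N$ is then computed in $O(\M(N))$ further arithmetic operations via the classical Newton iteration for the exponential of a power series, so stage~(ii) runs in $O(\M(N))$ altogether.

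Adding the costs of the two stages yields the announced total of $C_F(N)+O(\M(N))$ arithmetic operations. There is no real obstacle here: the alternating signs in the sum play no role in the complexity analysis, and the argument is a verbatim adaptation of the treatment of $\Set$ and $\Cyc$ already carried out in Proposition~\ref{prop:constructible}. The only point worth making explicit is the reading of $\PSet(\mathcal F)(\mathcal U)$, which we take to mean ``apply $\PSet$ after the composition has been performed inside $\mathcal F$''; this matches the way $\PSet$ is introduced in the excerpt as an operator on (ordinary) generating series rather than as a genuine species-level composition.
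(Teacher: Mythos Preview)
Your proof is correct and follows exactly the same approach as the paper, whose entire proof reads: ``The argument is the same as for sets and cycles in the proof of Proposition~\ref{prop:constructible}.'' You have simply spelled out that argument in full, including the $O(N\log N)$ cost of assembling the alternating sum and the $O(\M(N))$ cost of the final exponential.
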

\begin{proof}
The argument is the same as for sets and cycles in the proof of Proposition~\ref{prop:constructible}.
\end{proof}

Exponential generating series can be computed as well. Substituting~$z_1=z$ and~$z_k=0$ for~$k>1$ in the cycle index series gives
\[P(z)=\exp(F(z)-F(z^2)).\]
For instance, the special case~$\mathcal{F}=\mathcal{Z}$ leads to~$\exp(z-z^2)=1+z-z^2/2+O(z^3)$. The presence of a minus sign shows that~$\PSet(\mathcal Z)$ is a virtual species that is not a real species. Also, the fact that $P(z)\neq\exp(F(z)-F(z)^2)$ in general shows that $\PSet$ is not a species, for otherwise exponential generating series would compose.

\subsubsection{Newton's Iteration}
Since~$\PSet$ is not a species, its derivative is not defined, so that another approach is needed for Newton's iteration. The idea is to perform a first order Taylor expansion of~$\Set$ on both sides of
\[\Set(\mathcal F(\mathcal Z)+\mathcal U(\mathcal Z))=\PSet(\mathcal F+\mathcal U)\cdot\Set(\mathcal F(\mathcal Z^2)+\mathcal U(\mathcal Z^2)).\]
If~$\,\mathcal U=_k0$ then $\mathcal U(\mathcal Z^2)=_{2k+1}\mathcal U(\mathcal Z)^2=_{2k+1}0$, whence
\[\Set(\mathcal F(\mathcal Z))\cdot(1+\mathcal U(\mathcal Z))=_{2k+1}\PSet(\mathcal F+\mathcal U)\cdot\Set(\mathcal F(\mathcal Z^2))\]
and thus
\[\PSet(\mathcal F+\mathcal U)=_{2k+1}\PSet(\mathcal F)\cdot (1+\mathcal U).\]  
This is the key identity in the proof of the quadratic convergence of Newton's iteration. Thus, $\PSet$ behaves as if it had itself for derivative, exactly like~$\Set$, and the same iteration as for $\Set$ converges quadratically.

\subsubsection{Example}When~$\mathcal F$ is a flat species (see Section~\ref{subsec:flat}), then $\PSet(\mathcal F)$ is also equal to the flat part of $\Set(\mathcal F)$. Thus in this case, we recover species that can also be enumerated by the method of \emph{asymmetry index series}, for which we refer to~\cite[\S4.4]{BeLaLe98} and~\cite{Labelle1992a}.

An example of this type is provided by asymmetric rooted trees, with equation~$\mathcal A=\mathcal Z\cdot \PSet(\mathcal A)$. By application of the general rule outlined above, Newton's iteration is the same as for Cayley trees (Ex.~\ref{ex:unorderedrooted_series}), with the species $\Set$ replaced by the mapping $\PSet$:
\[\mathcal N(\mathcal Y)=\mathcal Y+\Seq(\mathcal B)\cdot(\mathcal B-\mathcal Y)\qquad\text{with}\qquad
\mathcal B=\mathcal Z\cdot\PSet(\mathcal Y).\]
The associated Pólya operator is given by
\[\Phi_{{\mathcal N}_{\mathcal H}}:Y(z)\mapsto Y(z)+\frac{B(z)-Y(z)}{1-B(z)}\quad\text{with}\quad B(z)=z\exp\!\left(Y(z)-\frac12Y(z^2)+\dotsb\right).\]
The first few steps give:
\begin{align*}
\tilde{G}^{[1]}&=\mathbf{z+z^2+z^3}+z^4+z^5+z^6+z^7+z^8+z^9+z^{10}+z^{11}+z^{12}+\dotsb,\\ 
\tilde{G}^{[2]}&=\mathbf{z+z^2+z^3+2z^4+3z^5+6z^6+12z^7+25z^8+52z^9}+111z^{10}+237z^{11}+507z^{12}+\dotsb,\\
\tilde{G}^{[3]}&=\mathbf{z+z^2+z^3+2z^4+3z^5+6z^6+12z^7+25z^8+52z^9+113z^{10}+247z^{11}+548z^{12}}+\dotsb.\\     
\end{align*}
Thus we recover a classical sequence~\cite[p.~330]{BeLaLe98} for which we obtain the first terms in quasi-optimal complexity.

%
%

\section*{Acknowledgements}
Gilbert Labelle gave us many encouragements and useful comments on an early version of this work.

%
\def\cprime{$'$} \def\gathen#1{{#1}}


\end{document}